\documentclass[a4paper,oneside,reqno,final]{amsart}
\usepackage[utf8]{inputenc}
\usepackage[a4paper,margin=3cm]{geometry} 
\usepackage{bm}
\usepackage{amsmath}
\usepackage{amsthm}
\usepackage{amscd}
\usepackage{amssymb}
\usepackage{MnSymbol}
\usepackage{latexsym}
\usepackage{eucal}
\usepackage{dsfont}
\usepackage{mathtools}
\usepackage{enumitem}
\usepackage[notref,notcite]{showkeys}
\usepackage{todonotes}

\usepackage{verbatim}
\usepackage{graphicx}
\usepackage{float}
\usepackage{array,booktabs}

\usepackage[colorlinks,pdftex]{hyperref}
\hypersetup{
linkcolor=black,
citecolor=black,
pdftitle={},
pdfauthor={Franziska K\"uhn},
pdfkeywords={},
}

\widowpenalty=10000
\clubpenalty=10000
\displaywidowpenalty=10000

\makeatletter
\renewcommand\section{\@startsection{section}{1}{0mm}{-1.5\baselineskip}{\baselineskip}{\normalsize\bfseries\sffamily}}
\renewcommand\subsection{\@startsection{subsection}{1}{0mm}{-\baselineskip}{\baselineskip}{\normalsize\bfseries\sffamily}}
\makeatother

\makeatletter
\def\@fnsymbol#1{\ensuremath{\ifcase#1\or *\or **\or \dagger\or \ddagger\or
   \mathsection\or \mathparagraph\or \|\or \dagger\dagger
   \or \ddagger\ddagger \else\@ctrerr\fi}}

\newlength{\preskip}
\setlength{\preskip}{11\p@ \@plus.1\p@ minus 1\p@}
\newlength{\postskip}
\setlength{\postskip}{11\p@ \@plus.1\p@ minus 1\p@}
\makeatother

\newtheoremstyle{theorem}{\preskip}{\postskip}{\itshape}{}{\bfseries}{}
{.5em}{\textbf{\thmname{#1}\thmnumber{ #2} (\thmnote{ #3})}}
\newtheoremstyle{definition}{\preskip}{\postskip}{\normalfont}{0pt}{\bfseries}{}{.5em}{}
\newtheoremstyle{remark}{\preskip}{\postskip}{\normalfont}{0pt}{\bfseries}{}{.5em}{}

\swapnumbers
\theoremstyle{theorem} \newtheorem{thm}{Theorem}[section]
\theoremstyle{theorem} \newtheorem{lem}[thm]{Lemma}
\theoremstyle{theorem} \newtheorem{prop}[thm]{Proposition}
\theoremstyle{theorem} \newtheorem{kor}[thm]{Corollary}
\theoremstyle{definition} \newtheorem{defn}[thm]{Definition}
\theoremstyle{remark} 
\theoremstyle{remark} 
\theoremstyle{definition} 
\theoremstyle{definition} \newtheorem*{ack}{Acknowledgements}
\theoremstyle{remark} \newtheorem{bem}[thm]{Remark}
\theoremstyle{remark} 
\theoremstyle{definition}  
\theoremstyle{definition}  
\theoremstyle{definition}

\DeclareMathOperator \spt {supp}

\newcommand{\I}{\mathds{1}}
\newcommand\floor[1]{\left\lfloor #1 \right\rfloor}
\newcommand\fa{\qquad \text{for all \ }}
\newcommand{\firstpara}[1]{ \ \textup{#1\ \ }}
\newcommand{\para}[1]{\bigskip\textup{#1\ \ }}

\newcommand\mc[1] {\mathcal{#1}}
\newcommand\mbb[1] {\mathds{#1}}

\newcommand{\eps}{\varepsilon}


\hyphenation{Ha-bi-li-ta-ti-ons-schrift}

\setlength{\parindent}{0pt}

\begin{document}

\title[Schauder estimates for equations associated with Feller generators]{Schauder estimates for Poisson equations associated with non-local Feller generators}
\author[F.~K\"{u}hn]{Franziska K\"{u}hn} 
\address[F.~K\"{u}hn]{Institut de Math\'ematiques de Toulouse, Universit\'e Paul Sabatier III Toulouse, 118 Route de Narbonne, 31062 Toulouse, France.  \emph{On leave from:} TU Dresden, Fachrichtung Mathematik, Institut f\"{u}r Mathematische Stochastik, 01062 Dresden, Germany.}
\email{franziska.kuhn@math.univ-toulouse.fr}
\subjclass[2010]{Primary: 60J25. Secondary: 45K05,35B65,60J35, 60J75}
\keywords{Feller process, infinitesimal generator, regularity, H\"{o}lder space of variable order, Favard space}

\begin{abstract}
	We show how H\"older estimates for Feller semigroups can be used to obtain regularity results for solutions to the Poisson equation $Af=g$ associated with the (extended) infinitesimal generator of a Feller process. The regularity of $f$ is described in terms of H\"older--Zygmund spaces of variable order and, moreover, we establish Schauder estimates.  Since H\"{o}lder estimates for Feller semigroups have been intensively studied in the last years, our results apply to a wide class of Feller processes, e.\,g.\ random time changes of L\'evy processes and solutions to L\'evy-driven stochastic differential equations. Most prominently, we establish Schauder estimates for the Poisson equation associated with the fractional Laplacian of variable order. As a by-product, we obtain new regularity estimates for semigroups associated with stable-like processes.
\end{abstract}
\maketitle

\section{Introduction} \label{intro}

Let $(X_t)_{t \geq 0}$ be an $\mbb{R}^d$-valued Feller process with semigroup $P_t f(x) = \mbb{E}^x f(X_t)$, $x \in \mbb{R}^d$. In this paper, we study the regularity of functions in the abstract H\"older space \begin{equation*}
	F_1 := \left\{f \in \mc{B}_b(\mbb{R}^d); \sup_{t \in (0,1)} \sup_{x \in \mbb{R}^d} \left| \frac{P_t f(x)-f(x)}{t} \right| < \infty \right\},
\end{equation*}
the so-called Favard space of order $1$, cf.\ \cite{butzer,engel}. It is known that for any $f \in F_1$ the limit \begin{equation}
	A_e f(x) := \lim_{t \to 0} \frac{\mbb{E}^x f(X_t)-f(x)}{t} \label{intro-eq2}
\end{equation}
exists up to a set of potential zero, cf.\ \cite{foellmer74}, and this gives rise to the extended infinitesimal generator $A_e$  which maps the Favard space $F_1$ into the space of bounded Borel measurable functions $\mc{B}_b(\mbb{R}^d)$, cf.\ Section~\ref{def} for details. It is immediate from Dynkin's formula that $A_e$ extends the (strong) infinitesimal generator $A$ of $(X_t)_{t \geq 0}$, in particular $F_1$ contains the domain $\mc{D}(A)$ of the infinitesimal generator. We are interested in the following questions: \begin{itemize}
	\item What does the existence of the limit \eqref{intro-eq2} tell us about the regularity of $f \in F_1$? In particular: How smooth are functions in the domain of the infinitesimal generator of $(X_t)_{t \geq 0}$?
	\item If $f \in F_1$ is a solution to the equation $A_e f = g$ and $g$ has a certain regularity, say $g$ is H\"{o}lder continuous of order $\delta \in (0,1)$, then what additional information do we get on the smoothness of $f$?
\end{itemize} 
Our aim is to describe the regularity of $f$ in terms of H\"{o}lder spaces of variable order.   More precisely, we are looking for a mapping $\kappa: \mbb{R}^d \to (0,2)$ such that \begin{equation*}
	f \in F_1 \implies f \in \mc{C}_b^{\kappa(\cdot)}(\mbb{R}^d)
\end{equation*}
where $\mc{C}_b^{\kappa(\cdot)}(\mbb{R}^d)$ denotes the H\"{o}lder--Zygmund space of variable order equipped with the norm \begin{equation*}
	\|f\|_{\mc{C}_b^{\kappa(\cdot)}(\mbb{R}^d)} := \|f\|_{\infty} + \sup_{x \in \mbb{R}^d} \sup_{0<|h| \leq 1} \frac{|f(x+2h)-2f(x+h)+f(x)|}{|h|^{\kappa(x)}},
\end{equation*}
cf.\ Section~\ref{def} for details. If $A_e f=g \in \mc{C}_b^{\delta}(\mbb{R}^d)$ for some $\delta>0$, then it is natural to expect that $f$ ``inherits'' some regularity from $g$, i.\,e.\ \begin{equation*}
	f \in F_1, A_e f = g \in \mc{C}_b^{\delta}(\mbb{R}^d) \implies f \in \mc{C}_b^{\kappa(\cdot)+\varrho}(\mbb{R}^d)
\end{equation*}
for some constant $\varrho=\varrho(\delta)>0$. Moreover, we are interested in establishing Schauder estimates, i.\,e.\ estimates of the form 
\begin{equation}
	\|f\|_{\mc{C}_b^{\kappa(\cdot)}(\mbb{R}^d)} \leq C (\|f\|_{\infty} + \|A_e f\|_{\infty}) \quad \text{and} \quad
\|f\|_{\mc{C}_b^{\kappa(\cdot)+\varrho}(\mbb{R}^d)} \leq C' (\|f\|_{\infty} + \|A_e f\|_{\mc{C}_b^{\delta}(\mbb{R}^d)}). \label{intro-eq6}
\end{equation}
Let us mention that the results, which we present in this paper, do \emph{not} apply to Feller semigroups with a roughening effect (see e.\,g.\ \cite{hairer12} for examples of such semigroups); we study exclusively Feller semigroups with a smoothing effect (see below for details). \par
The toy example, which we have in mind, is the stable-like Feller process $(X_t)_{t \geq 0}$ with infinitesimal generator $A$, \begin{equation}
	Af(x) = c_{d,\alpha(x)} \int_{y \neq 0} \left( f(x+y)-f(x)-y \cdot \nabla f(x) \I_{(0,1)}(|y|)\right) \, \frac{1}{|y|^{d+\alpha(x)}} \, dy, \quad f \in C_c^{\infty}(\mbb{R}^d), \label{intro-eq3}
\end{equation}
which is, rougly speaking, a fractional Laplacian of variable order, i.\,e.\ $A=-(-\Delta)^{\alpha(\bullet)/2}$. Intuitively, $(X_t)_{t \geq 0}$ behaves locally like an isotropic stable L\'evy process but its index of stability depends on the current position of the process. In view of the results in \cite{reg-levy,ihke}, it is an educated guess that any function $f \in \mc{D}(A)$ is ``almost'' locally H\"{o}lder continuous with H\"{o}lder exponent $\alpha(\cdot)$, in the sense that \begin{equation}
	|f(x+2h)-f(x+h)+f(x)| \leq C_{f,\eps} |h|^{\alpha(x)-\eps}, \qquad x,h \in \mbb{R}^d \label{intro-eq4}
\end{equation}
for any small $\eps>0$. We will show that this is indeed true and, moreover, we will establish Schauder estimates for the equation $-(-\Delta)^{\alpha(\bullet)/2}f=g$, cf.\ Theorem~\ref{ex-21} and Corollary~\ref{ex-23}.  \par
Let us comment on related literature. For some particular examples of Feller generators $A$ there are Schauder estimates for solutions to the integro-differential equation $Af=g$ available in the literature; for instance, Bass obtained Schauder estimates for a class of stable-like operators ($\nu(x,dy)=c(x,y) |y|^{-d-\alpha}$ with $c:\mbb{R}^2 \to (0,\infty)$ bounded and $\inf_{x,y} c(x,y)>0$) and Bae \& Kassmann \cite{bae} studied operators with functional order of differentiability ($\nu(x,dy) = c(x,y)/(|y|^d\varphi(y) \,dy)$ for ``nice'' $\varphi$). The recent article \cite{reg-levy} establishes Schauder estimates for a large class of L\'evy generators using gradient estimate for the transition density $p_t$ of the associated L\'evy process. Moreover, we would like to mention the article \cite{ihke} which studies a complementary question -- namely, what are sufficient conditions for the existence of the limit \eqref{intro-eq2} in the space $C_{\infty}(\mbb{R}^d)$ of continuous functions vanishing at infinity -- and which shows that certain H\"older space of variable order are contained in the domain of the (strong) infinitesimal generator. Schauder estimates have interesting applications in the theory of stochastic differential equations (SDES), they can be used to obtain uniqueness results for solutions to SDEs driven by L\'evy processes and to study the convergence of the Euler--Maruyama approximation, see e.g.\ \cite{chen17,euler-maruyama,priola15} and the references therein. \par
This paper consists of two parts. In Section~\ref{main} we show how regularity estimates on Feller semigroups can be used to establish Schauder estimates \eqref{intro-eq6} for functions $f$ in the Favard space of a Feller process $(X_t)_{t \geq 0}$.  Our first result, Proposition~\ref{feller-7}, states that if the semigroup $P_t u(x) := \mbb{E}^x u(X_t)$ satisfies\begin{equation*}
	\|P_t u\|_{\mc{C}_b^{\kappa}(\mbb{R}^d)} \leq c t^{-\beta} \|u\|_{\infty}, \qquad t \in (0,1), \,u \in \mc{B}_b(\mbb{R}^d)
\end{equation*}
for some $\beta \in [0,1)$ and $\kappa>0$, then $F_1 \subseteq \mc{C}_b^{\kappa}(\mbb{R}^d)$ and \begin{equation*}
	\|f\|_{\mc{C}_b^{\kappa}(\mbb{R}^d)} \leq C \left( \|f\|_{\infty} + \|A_e f\|_{\infty} \right) \fa f \in F_1.
\end{equation*}
Proposition~\ref{feller-7} has interesting applications but it does, in general, not give optimal regularity results but rather a worst-case estimate on the regularity of $f \in F_1$; for instance, if $(X_t)_{t \geq 0}$ is an isotropic stable-like process with infinitesimal generator $A = -(-\Delta)^{\alpha(\bullet)/2}$, cf.\ \eqref{intro-eq3}, then an application of Proposition~\ref{feller-7} shows
\begin{equation*}
	|f(x+2h)-2f(x+h)+f(x)| \leq C_{f,\eps} |h|^{\alpha_0-\eps}, \qquad x,h \in \mbb{R}^d, \,f \in \mc{D}(A)
\end{equation*}
where $\alpha_0 := \inf_{x \in \mbb{R}^d} \alpha(x)$, and this is much weaker than the regularity \eqref{intro-eq4} which we would expect. Our main result in Section~\ref{main} is a ``localized'' version of Proposition~\ref{feller-7} which takes into account the local behaviour of the Feller process $(X_t)_{t \geq 0}$ and which allows us to describe the local regularity of a function $f \in F_1$,  cf.\ Theorem~\ref{feller-9} and Corollary~\ref{feller-11}. As an application, we obtain a regularity result for solutions to the Poisson equation $A_e f=g$ with $g \in \mc{C}_b^{\delta}(\mbb{R}^d)$, cf.\ Theorem~\ref{feller-12}. \par 

In the second part of the paper, Section~\ref{ex}, we illustrate the results from Section~\ref{main} with several examples. Applying the results to isotropic-stable like processes, we establish Schauder estimates for the Poisson equation $-(-\Delta)^{\alpha(\bullet)/2}f=g$ associated with the fractional Laplacian of variable order, cf.\ Theorem~\ref{ex-21} and Corollary~\ref{ex-23}. Schauder estimates of this type seem to be a novelty in the literature. As a by-product of the proof, we obtain H\"{o}lder estimates for semigroups of isotropic stable-like processes which are of independent interest, see Section~\ref{iso-reg}. Furthermore, we present Schauder estimates for random time changes of L\'evy processes (Proposition~\ref{ex-9}) and solutions to L\'evy-driven SDEs (Proposition~\ref{ex-5}) and discuss possible extensions. 

\section{Basic definitions and notation} \label{def}

We consider the Euclidean space $\mbb{R}^d$ with the canonical scalar product $x \cdot y := \sum_{j=1}^d x_j y_j$ and the Borel $\sigma$-algebra $\mc{B}(\mbb{R}^d)$ generated by the open balls $B(x,r)$ and closed balls $\overline{B(x,r)}$.  As usual, we set $x \wedge y := \min\{x,y\}$ and $x \vee y := \max\{x,y\}$ for $x,y \in \mbb{R}$. If $f$ is a real-valued function, then $\spt f$ denotes its support, $\nabla f$ the gradient and $\nabla^2 f$ the Hessian of $f$. For two stochastic processes $(X_t)_{t \geq 0}$ and $(Y_t)_{t \geq 0}$ we write $(X_t)_{t \geq 0} \stackrel{d}{=} (Y_t)_{t \geq 0}$ if $(X_t)_{t \geq 0}$ and $(Y_t)_{t \geq 0}$ have the same finite-dimensional distributions.

\emph{Function spaces:} $\mc{B}_b(\mbb{R}^d)$ is the space of bounded Borel measurable functions $f: \mbb{R}^d \to \mbb{R}$. The smooth functions with compact support are denoted by $C_c^{\infty}(\mbb{R}^d)$, and $C_{\infty}(\mbb{R}^d)$ is the space of continuous functions $f: \mbb{R}^d \to \mbb{R}$ vanishing at infinity. Superscripts $k\in\mbb{N}$ are used to denote the order of differentiability, e.\,g.\ $f \in C_{\infty}^k(\mbb{R}^d)$ means that $f$ and its derivatives up to order $k$ are $C_{\infty}(\mbb{R}^d)$-functions. For $U \subseteq \mbb{R}^d$ and $\alpha: U \to [0,\infty)$ bounded we define H\"{o}lder--Zygmund spaces of variable order by \begin{equation*}
	\mc{C}^{\alpha(\cdot)}(U) := \bigg\{f \in C(U); \forall x \in U: \, \, \sup_{\substack{0<|h| \leq 1 \\ x \pm h \in U}} \frac{|\Delta_h^k f(x)|}{|h|^{\alpha(x)}} < \infty \bigg\}
\end{equation*}
and \begin{equation*}
	\mc{C}_b^{\alpha(\cdot)}(U) := \bigg\{f \in C_b(U); \|f\|_{\mc{C}_b^{\alpha(\cdot)}(U)} := \sup_{x \in U} |f(x)| + \sup_{\substack{x \in U, 0<|h| \leq1 \\ \overline{B(x,k|h|)} \subset U}} \frac{|\Delta_h^k f(x)|}{|h|^{\alpha(x)}} < \infty \bigg\}
\end{equation*}
where $k \in \mbb{N}$ is the smallest number which is strictly larger than $\|\alpha\|_{\infty}$ and \begin{equation}
	\Delta_h f(x) := f(x+h)-f(x), \qquad \Delta_h^m f(x) := \Delta_h \Delta_h^{m-1} f(x), \quad m \geq 2, \label{def-eq3}
\end{equation}
are the iterated difference operators. Moreover, we set \begin{equation*}
	\mc{C}_b^{\alpha(\cdot)+}(U) := \bigcup_{\eps>0} \mc{C}_b^{\alpha(\cdot)+\eps}(U) \quad \text{and} \quad \mc{C}_b^{\alpha(\cdot)-}(U) := \bigcap_{\eps>0} \mc{C}_b^{\max\{\alpha(\cdot)-\eps,0\}}(U).
\end{equation*}
Clearly, \begin{equation*}
	\mc{C}_b^{\alpha(\cdot)+}(U) \subseteq \mc{C}_b^{\alpha(\cdot)}(U) \subseteq \mc{C}_b^{\alpha(\cdot)-}(U) \quad \text{and} \quad \mc{C}_b^{\alpha(\cdot)}(U) \subseteq \mc{C}^{\alpha(\cdot)}(U).
\end{equation*}
If $\alpha(x)=\alpha$ is constant, then we write $\mc{C}^{\alpha}(U)$ and $\mc{C}_b^{\alpha}(U)$ for the associated H\"{o}lder--Zygmund spaces. For $U=\mbb{R}^d$ and $\alpha \notin \mbb{N}$ the H\"{o}lder--Zygmund space $\mc{C}_b^{\alpha}(\mbb{R}^d)$ is the ``classical'' H\"{o}lder space $C_b^{\alpha}(\mbb{R}^d)$ equipped with the norm \begin{equation*}
	\|f\|_{C_b^{\alpha}(\mbb{R}^d)} := \|f\|_{\infty} +\sum_{j=0}^{\lfloor \alpha \rfloor}
		\sum_{\substack{\beta \in \mbb{N}_0^d \\ |\beta| = j}} \|\partial^{\beta} f\|_{\infty}
		+   \max_{\substack{\beta \in \mbb{N}_0^d \\ |\beta| = \lfloor \alpha \rfloor}}   \sup_{x \neq y} \frac{|\partial^{\beta} f(x)-\partial^{\beta} f(y)|}{|x-y|^{\alpha-\lfloor \alpha \rfloor}},
\end{equation*}
cf.\ \cite[Section 2.7]{triebel78}. For $\alpha=1$ it is possible to show that $\mc{C}_b^1(\mbb{R}^d)$ is strictly larger than the space of bounded Lipschitz continuous functions, cf.\ \cite[p.~148]{stein}, which is in turn strictly larger than $C_b^1(\mbb{R}^d)$.

\emph{Feller processes:} A Markov process $(X_t)_{t \geq 0}$ is a \emph{Feller process} if the associated transition semigroup $P_t f(x) := \mbb{E}^x f(X_t)$ is a \emph{Feller semigroup}, see e.\,g.\ \cite{ltp,jacob123} for details.  Without loss of generality, we may assume that $(X_t)_{t \geq 0}$ has right-continuous sample paths with finite left-hand limits. Following \cite[II.5.(b)]{engel} we call \begin{equation}
	F_1 := F_1^X := \left\{ f \in \mc{B}_b(\mbb{R}^d); \sup_{t \in (0,1)} \left\| \frac{P_t f-f}{t} \right\|_{\infty}  < \infty \right\} \label{fav}
\end{equation}
the \emph{Favard space of order $1$}. 	The \emph{(strong) infinitesimal generator} $(A,\mc{D}(A))$ is defined by \begin{align*}
	\mc{D}(A) &:= \left\{f \in C_{\infty}(\mbb{R}^d); \exists g \in C_{\infty}(\mbb{R}^d): \, \, \lim_{t \to 0} \left\| \frac{P_tf-f}{t} -g \right\|_{\infty}=0 \right\} \\
	Af &:= \lim_{t \to 0} \frac{P_t f-f}{t}, \quad f \in \mc{D}(A).
\end{align*}
If $\mc{D}(A)$ is rich, in the sense that $C_c^{\infty}(\mbb{R}^d) \subseteq \mc{D}(A)$, then a result by Courr\`ege \& van Waldenfels, see e.\,g.\ \cite[Theorem 2.21]{ltp}, shows that $A|_{C_c^{\infty}(\mbb{R}^d)}$ is a pseudo-differential operator, \begin{equation}
	Af(x) = -q(x,D) f(x) :=  - \int_{\mbb{R}^d} q(x,\xi) e^{ix \cdot \xi} \hat{f}(\xi) \, d\xi, \qquad f \in C_c^{\infty}(\mbb{R}^d), \,x \in \mbb{R}^d \label{pseudo}
\end{equation}
where $\hat{f}(\xi)  := (2\pi)^{-d} \int_{\mbb{R}^d} e^{-ix \cdot \xi} f(x) \, dx$ is the Fourier transform of $f$ and \begin{equation}
	q(x,\xi) = q(x,0) -i b(x) \cdot \xi + \frac{1}{2} \xi \cdot Q(x) \xi + \int_{y \neq 0} \left(1- e^{iy \cdot \xi} +iy \cdot \xi \I_{(0,1)}(|y|) \right) \, \nu(x,dy). \label{symbol}
\end{equation}
is a continuous negative definite \emph{symbol}. If \eqref{pseudo} holds, then we say that $(X_t)_{t \geq 0}$  is a Feller process with symbol $q$. We assume from now on that $q(x,0)=0$. For each fixed $x \in \mbb{R}^d$, $(b(x),Q(x),\nu(x,dy))$ is a L\'evy triplet, i.\,e.\ $b(x) \in \mbb{R}^d$, $Q(x) \in \mbb{R}^{d \times d}$ is symmetric positive semidefinite and $\nu(x,\cdot)$ is a measure on $\mbb{R}^d \backslash \{0\}$ satisfying $\int_{y \neq 0} \min\{1,|y|^2\} \, \nu(x,dy)<\infty$. The symbol $q$ has \emph{bounded coefficients} if \begin{equation*}
	\sup_{x \in \mbb{R}^d} \left(  |b(x)| + |Q(x)| + \int_{y \neq 0} \min\{1,|y|^2\} \, \nu(x,dy) \right)<\infty;
\end{equation*}
by \cite[Lemma 6.2]{schnurr}, $q$ has bounded coefficients if, and only if, $\sup_{x \in \mbb{R}^d} \sup_{|\xi| \leq 1} |q(x,\xi)| < \infty$.  If $(X_t)_{t \geq 0}$ is a Feller process with symbol $q$, then \begin{equation}
	\mbb{P}^x \left( \sup_{s \leq t} |X_s-x| > r \right) \leq ct \sup_{|y-x| \leq r} \sup_{|\xi| \leq r^{-1}} |q(y,\xi)|, \qquad r>0, \, t>0, \, x \in \mbb{R}^d\label{max}
\end{equation}
holds for an absolute constant $c>0$; this maximal inequality goes back to Schilling \cite{rs-growth}, see also \cite[Theorem 5.1]{ltp} or \cite[Lemma 1.29]{matters}. If the symbol $q(\xi)=q(x,\xi)$ of a Feller process $(L_t)_{t \geq 0}$ does not depend on $x \in \mbb{R}^d$, then $(L_t)_{t \geq 0}$ is a \emph{L\'evy process}. By \cite[Theorem 2.6]{ltp} this is equivalent to saying that $(L_t)_{t \geq 0}$ has stationary and independent increments. Later on, we will use that any Feller process $(X_t)_{t \geq 0}$ with infinitesimal generator $(A,\mc{D}(A))$ solves the $(A,\mc{D}(A))$-martingale problem, i.\,e.\ \begin{equation*}
	M_t := f(X_t)-f(X_0)-\int_0^t Af(X_s) \, ds
\end{equation*}
is a $\mbb{P}^x$-martingale for any $x \in \mbb{R}^d$ and $f \in \mc{D}(A)$. Our standard reference for Feller processes are the monographs \cite{ltp,jacob123}, and for further information on martingale problems we refer the reader to \cite{ethier,hoh}.  \par
In the remaining part of this section we define the extended infinitesimal generator  and state some results which we will need later on. Following \cite{meyer76} we define the \emph{extended (infinitesimal) generator} $A_e$ in terms of the $\lambda$-potential operator $R_{\lambda}$, that is, \ $f \in \mc{D}(A_e)$ and $g=A_e f$ if, and only if,  \begin{enumerate}
		\item\label{gen-1-i} $f \in \mc{B}_b(\mbb{R}^d)$ and $g$ is a measurable function such that $\|R_{\lambda}(|g|)\|_{\infty}< \infty$ for some (all) $\lambda>0$,
		\item\label{gen-1-iii} $f = R_{\lambda}(\lambda f-g)$ for all $\lambda>0$.
	\end{enumerate}
The mapping $g=A_e f$ is defined up to a set of potential zero, i.e.\ up to a set $B \in \mc{B}(\mbb{R}^d)$ which satisfies $\mbb{E}^x \int_{(0,\infty)} \I_B(X_t) \, dt = 0$ for all $x \in \mbb{R}^d$. We will often choose a representative with a certain property; for instance, if we write ``$A_e f$ is continuous'', this means that there exists a continuous function $g$ such that \eqref{gen-1-i},\eqref{gen-1-iii} hold. In abuse of notation we set  \begin{equation*}
	\|A_e f\|_{\infty} := \inf\{c>0; |A_e f| \leq c \, \, \text{up to a set of potential zero}\}.
\end{equation*}
Clearly, the extended infinitesimal generator $(A_e,\mc{D}(A_e))$ extends the (strong) infinitesimal generator $(A,\mc{D}(A))$.  The following result is essentially due to Airault \& F\"{o}llmer \cite{foellmer74} and shows the connection to the Favard space of order $1$, cf.\ \eqref{fav}.

\begin{thm} \label{gen-3}
	Let $(X_t)_{t \geq 0}$ be a Feller process with semigroup $(P_t)_{t \geq 0}$ and extended generator $(A_e,\mc{D}(A_e))$. The associated Favard space $F_1$ of order $1$ satisfies \begin{equation*}
		F_1 = \{f \in \mc{D}(A_e); \|A_e f\|_{\infty}< \infty\}.
	\end{equation*}
	If $f \in F_1$ then \begin{equation}
		\sup_{t \in (0,1)} \frac{1}{t} \|P_t f-f\|_{\infty} = \|A_e f\|_{\infty} \label{gen-eq5}
	\end{equation}
	and, moreover, Dynkin's formula \begin{equation}
		\mbb{E}^x f(X_{\tau})-f(x) = \mbb{E}^x \left( \int_0^{\tau} A_e f(X_s) \, ds \right) \label{gen-eq6}
	\end{equation}
	holds for any $x \in \mbb{R}^d$ and any stopping time $\tau$ such that $\mbb{E}^x \tau<\infty$.
 \end{thm}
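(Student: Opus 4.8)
The plan is to run everything through the $\lambda$-potential operators $R_\lambda=\int_0^\infty e^{-\lambda t}P_t\,dt$, $\lambda>0$, on $\mc{B}_b(\mbb{R}^d)$; I will use $\|R_\lambda u\|_\infty\le\lambda^{-1}\|u\|_\infty$, the resolvent equation $R_\lambda-R_\mu=(\mu-\lambda)R_\lambda R_\mu$, and two observations read off directly from the definition of $A_e$: for $u\in\mc{B}_b(\mbb{R}^d)$ one has $R_\lambda u\in\mc{D}(A_e)$ with $A_e R_\lambda u=\lambda R_\lambda u-u$ (conditions \eqref{gen-1-i} and \eqref{gen-1-iii} are checked directly, the latter using the resolvent equation), and for $v\in\mc{D}(A_e)$ rewriting \eqref{gen-1-iii} gives $R_\mu A_e v=\mu R_\mu v-v$. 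The key intermediate step is the pointwise identity
\begin{equation*}
P_t f-f=\int_0^t P_s A_e f\,ds,\qquad t\ge 0,\ f\in\mc{D}(A_e),
\end{equation*}
where the right-hand side is well defined and independent of the chosen representative of $A_e f$ because a set of potential zero is Lebesgue-null along $\mbb{P}^x$-almost every path. To prove it I would fix $x$, set $h:=\lambda f-A_e f$ so that $f=R_\lambda h$, and use the semigroup property to write $P_t f(x)=P_t R_\lambda h(x)=e^{\lambda t}\int_t^\infty e^{-\lambda r}P_r h(x)\,dr$; since $r\mapsto e^{-\lambda r}P_r h(x)$ is locally integrable (this is where $\|R_\lambda(|A_e f|)\|_\infty<\infty$ enters), the map $t\mapsto P_t f(x)$ is locally absolutely continuous with a.e.\ derivative $\lambda P_t f(x)-P_t h(x)=P_t A_e f(x)$, and integrating over $[0,t]$ (with $P_0 f=f$) yields the identity. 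As an immediate consequence, if $f\in\mc{D}(A_e)$ with $\|A_e f\|_\infty<\infty$ then $\|P_t f-f\|_\infty\le t\|A_e f\|_\infty$ for all $t>0$, hence $f\in F_1$ and $\sup_{t\in(0,1)}t^{-1}\|P_t f-f\|_\infty\le\|A_e f\|_\infty$; this settles the inclusion ``$\supseteq$'' and one half of \eqref{gen-eq5}.

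For the converse inclusion, let $f\in F_1$ and $c:=\sup_{t\in(0,1)}t^{-1}\|P_t f-f\|_\infty$. From $\|P_s f-P_{s+r}f\|_\infty=\|P_s(f-P_r f)\|_\infty\le cr$ for $0<r<1$ and chaining, $t\mapsto P_t f$ is $c$-Lipschitz on $[0,\infty)$, so $\|P_t f-f\|_\infty\le ct$ for all $t>0$ and therefore $\|\lambda R_\lambda f-f\|_\infty=\|\lambda\int_0^\infty e^{-\lambda t}(P_t f-f)\,dt\|_\infty\le c\lambda\int_0^\infty te^{-\lambda t}\,dt=c/\lambda\to 0$. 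Put $h_\lambda:=\lambda(\lambda R_\lambda f-f)=A_e(\lambda R_\lambda f)$, so $\|h_\lambda\|_\infty\le c$ for all $\lambda>0$; a short computation with the resolvent equation gives $R_\mu h_\lambda=\tfrac{\lambda}{\lambda-\mu}(\mu R_\mu f-\lambda R_\lambda f)$, which (since $\lambda R_\lambda f\to f$ uniformly) converges uniformly to $\mu R_\mu f-f$ as $\lambda\to\infty$, for every $\mu>0$. The remaining step is the one I expect to be the main obstacle: passing from the uniformly bounded family $(h_\lambda)$ to a limit. One extracts $g$ as a limit of some sequence $h_{\lambda_n}$, $\lambda_n\to\infty$, in a sense compatible with the action of the potential operators (e.g.\ a weak-$*$ limit, followed by dominated convergence in the integrals defining $R_\mu$), obtaining a measurable $g$ with $|g|\le c$ up to a set of potential zero and $R_\mu g=\mu R_\mu f-f$ for all $\mu>0$; this is the point ``essentially due to Airault \& F\"ollmer'' \cite{foellmer74}. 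Rearranging, $f=R_\mu(\mu f-g)$, and since $\|R_\mu(|g|)\|_\infty\le c/\mu<\infty$, conditions \eqref{gen-1-i} and \eqref{gen-1-iii} hold; thus $f\in\mc{D}(A_e)$, $A_e f=g$, $\|A_e f\|_\infty\le c$. Together with the first paragraph this gives $F_1=\{f\in\mc{D}(A_e);\|A_e f\|_\infty<\infty\}$ and the equality \eqref{gen-eq5}.

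Finally, for Dynkin's formula I would fix $f\in F_1$, write $g=A_e f$ (bounded), and use the Markov property together with the pointwise identity applied at the random point $X_u$ to obtain
\begin{equation*}
\mbb{E}^x\bigl(f(X_t)-f(X_u)\mid\mc{F}_u\bigr)=P_{t-u}f(X_u)-f(X_u)=\int_0^{t-u}P_r g(X_u)\,dr=\mbb{E}^x\Bigl(\int_u^t g(X_s)\,ds\;\Big|\;\mc{F}_u\Bigr),
\end{equation*}
so $M_t:=f(X_t)-f(X_0)-\int_0^t g(X_s)\,ds$ is a $\mbb{P}^x$-martingale. Optional stopping at $t\wedge\tau$ gives $\mbb{E}^x f(X_{t\wedge\tau})-f(x)=\mbb{E}^x\int_0^{t\wedge\tau}g(X_s)\,ds$; letting $t\to\infty$ and using $\tau<\infty$ $\mbb{P}^x$-a.s.\ (as $\mbb{E}^x\tau<\infty$), right-continuity of the paths, $\|f\|_\infty<\infty$, and $\int_0^\tau|g(X_s)|\,ds\le\|g\|_\infty\tau\in L^1(\mbb{P}^x)$, dominated convergence on both sides yields \eqref{gen-eq6}.
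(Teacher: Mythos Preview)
Your treatment of the inclusion $\{f\in\mc{D}(A_e):\|A_ef\|_\infty<\infty\}\subseteq F_1$ and of Dynkin's formula is correct and actually more self-contained than the paper's version. The paper obtains $P_tf-f=\int_0^t P_sA_ef\,ds$ by invoking the equivalent martingale characterization \eqref{gen-1-iv} of $\mc{D}(A_e)$ (so that $\mbb{E}^x M_t=\mbb{E}^x M_0$ gives the identity directly), whereas your absolute-continuity argument via $P_tR_\lambda h(x)=e^{\lambda t}\int_t^\infty e^{-\lambda r}P_rh(x)\,dr$ derives it purely from the resolvent definition. Similarly, for \eqref{gen-eq6} the paper just cites \cite[Corollary~5.11]{foellmer74}, while you write out the optional-stopping-plus-dominated-convergence argument in full.

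The point where your proposal diverges from the paper, and where it has a genuine soft spot, is the converse inclusion $F_1\subseteq\mc{D}(A_e)$. Your weak-$*$ extraction of $g$ from the bounded family $(h_\lambda)$ is not easy to make rigorous: $\mc{B}_b(\mbb{R}^d)$ with the sup norm is not the dual of a separable space, so Banach--Alaoglu does not hand you a sequentially convergent subsequence; and even after embedding into $L^\infty$ of some reference measure, it is not clear how the limit interacts with $R_\mu$, which integrates against the transition kernels rather than Lebesgue measure. The paper uses a \emph{different} input from Airault--F\"ollmer: for $f\in F_1$ the pointwise limit $g(x)=\lim_{t\to 0}t^{-1}(P_tf(x)-f(x))$ exists for every $x$ outside a set of potential zero, and the associated process $M_t=f(X_t)-f(X_0)-\int_0^t g(X_s)\,ds$ is already a martingale \cite[pp.~320--322]{foellmer74}. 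With $g$ produced this way one has $\|g\|_\infty\le K(f)$ immediately, and checking \eqref{gen-1-iii} reduces to a short integration-by-parts computation starting from $P_tf=f+\int_0^t P_sg\,ds$. So the Airault--F\"ollmer ingredient here is a pointwise-existence theorem, not a compactness principle; if you want to keep your resolvent-only outline, you would need to replace the weak-$*$ step by that result.
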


The next corollary shows how the Favard space can be defined in terms of the stopped process $X_{t \wedge \tau_r^x}$. It plays an important role in our proofs since we will frequently use stopping techniques. 

\begin{kor} \label{gen-5}
	Let $(X_t)_{t \geq 0}$ be a Feller process with semigroup $(P_t)_{t \geq 0}$,  extended generator $(A_e,\mc{D}(A_e))$ and symbol $q$. Denote by \begin{equation*}
	\tau_r^x := \inf\{t>0; |X_t-x|>r\}
\end{equation*}
the exit time of $(X_t)_{t \geq 0}$ from the closed ball $\overline{B(x,r)}$. If $q$ has bounded coefficients, then the following statements are equivalent for any $f \in \mc{B}_b(\mbb{R}^d)$. 
\begin{enumerate}
	\item\label{gen-5-i} $f \in F_1$, i.\,e.\ $f \in \mc{D}(A_e)$ and $\sup_{t \in (0,1)} t^{-1} \|P_t f-f\|_{\infty} = \|A_e f\|_{\infty}< \infty$,
	\item\label{gen-5-iii} There exists $r>0$ such that \begin{equation*}
		K_r(f) := \sup_{t \in (0,1)} \frac{1}{t} \sup_{x \in \mbb{R}^d} |\mbb{E}^x f(X_{t \wedge \tau_r^x})-f(x)|<\infty. \end{equation*}
\end{enumerate}
If one (hence both) of the conditions is satisfied, then \begin{equation}
	A_e f(x) = \lim_{t \to 0} \frac{\mbb{E}^x f(X_{t \wedge \tau_r^x})-f(x)}{t},
	\label{gen-eq7}
\end{equation}
up to a set of potential zero, for any $r>0$. In particular, $\|A_e f\|_{\infty} \leq K_r(f)$ for $r>0$.
\end{kor}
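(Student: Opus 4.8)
The plan is to derive the equivalence from Theorem~\ref{gen-3} together with the maximal inequality \eqref{max}, which is available because $q$ has bounded coefficients. First I would prove the implication \eqref{gen-5-i} $\Rightarrow$ \eqref{gen-5-iii}. Suppose $f \in F_1$, so $f \in \mc{D}(A_e)$ with $\|A_e f\|_{\infty} < \infty$. Fix $r > 0$. For every $x$ and every $t \in (0,1)$ the stopping time $\tau := t \wedge \tau_r^x$ satisfies $\mbb{E}^x \tau \leq t < \infty$, so Dynkin's formula \eqref{gen-eq6} applies and gives
\begin{equation*}
	\left| \mbb{E}^x f(X_{t \wedge \tau_r^x}) - f(x) \right| = \left| \mbb{E}^x \int_0^{t \wedge \tau_r^x} A_e f(X_s) \, ds \right| \leq \|A_e f\|_{\infty} \, \mbb{E}^x(t \wedge \tau_r^x) \leq t \, \|A_e f\|_{\infty}.
\end{equation*}
Dividing by $t$ and taking the supremum over $x$ and $t \in (0,1)$ shows $K_r(f) \leq \|A_e f\|_{\infty} < \infty$; in particular the same computation works for every $r > 0$, which already yields the final assertion $\|A_e f\|_{\infty} \geq K_r(f)$ once we know $f \in F_1$.

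For the converse \eqref{gen-5-iii} $\Rightarrow$ \eqref{gen-5-i} I would compare the stopped quantity with the unstopped one $P_t f(x) - f(x) = \mbb{E}^x f(X_t) - f(x)$. Splitting on the event $\{\tau_r^x \leq t\}$,
\begin{equation*}
	\left| \mbb{E}^x f(X_t) - \mbb{E}^x f(X_{t \wedge \tau_r^x}) \right| \leq 2 \|f\|_{\infty} \, \mbb{P}^x(\tau_r^x \leq t) = 2\|f\|_{\infty} \, \mbb{P}^x\Big( \sup_{s \leq t} |X_s - x| > r \Big),
\end{equation*}
and by the maximal inequality \eqref{max} the right-hand side is bounded by $2\|f\|_{\infty} \, c t \sup_{|y-x|\le r}\sup_{|\xi|\le r^{-1}}|q(y,\xi)| \le C_r t$, where $C_r < \infty$ uses boundedness of the coefficients of $q$. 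Hence
\begin{equation*}
	\frac{1}{t} \left| P_t f(x) - f(x) \right| \leq \frac{1}{t}\left| \mbb{E}^x f(X_{t \wedge \tau_r^x}) - f(x) \right| + C_r \leq K_r(f) + C_r
\end{equation*}
uniformly in $x$ and $t \in (0,1)$, so $\sup_{t \in (0,1)} t^{-1}\|P_t f - f\|_{\infty} < \infty$, i.e.\ $f \in F_1$ by Theorem~\ref{gen-3}.

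Finally, for the identification \eqref{gen-eq7} of $A_e f$ via the stopped process, note that once $f \in F_1$ we have from Theorem~\ref{gen-3} that $A_e f$ is a well-defined (up to potential zero) bounded function, and Dynkin's formula gives $t^{-1}(\mbb{E}^x f(X_{t\wedge\tau_r^x}) - f(x)) = t^{-1}\mbb{E}^x\int_0^{t\wedge\tau_r^x} A_e f(X_s)\,ds$. The claim is that this converges to $A_e f(x)$ as $t \to 0$ for $x$ outside a set of potential zero; the cleanest route is to observe that $\int_0^{t\wedge\tau_r^x} A_e f(X_s)\,ds$ differs from $\int_0^t A_e f(X_s)\,ds$ by at most $\|A_e f\|_\infty (t - t\wedge\tau_r^x) \le \|A_e f\|_\infty\, t\, \I_{\{\tau_r^x \le t\}}$, whose expectation divided by $t$ vanishes as $t \to 0$ by \eqref{max} again, and then invoke the known fact (Theorem~\ref{gen-3}, i.e.\ the Airault--F\"ollmer result) that $t^{-1}\mbb{E}^x\int_0^t A_e f(X_s)\,ds \to A_e f(x)$ up to potential zero. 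The main obstacle is purely bookkeeping: making sure the exceptional potential-zero set is the same one on which the unstopped limit \eqref{intro-eq2} exists, and checking that "bounded coefficients" is exactly what makes $C_r$ finite; both are routine given \eqref{max} and Theorem~\ref{gen-3}.
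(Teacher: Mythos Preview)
Your proposal is correct and follows essentially the same route as the paper: Dynkin's formula \eqref{gen-eq6} for \eqref{gen-5-i} $\Rightarrow$ \eqref{gen-5-iii}, the splitting $|\mbb{E}^x f(X_t)-f(x)| \le |\mbb{E}^x f(X_{t\wedge\tau_r^x})-f(x)| + 2\|f\|_\infty\,\mbb{P}^x(\tau_r^x\le t)$ together with \eqref{max} for the converse, and then the comparison of stopped and unstopped quotients via Dynkin again for \eqref{gen-eq7}. The only cosmetic difference is that the paper uses right-continuity of sample paths to get $\mbb{P}^x(\tau_r^x\le t)\to 0$, whereas you invoke \eqref{max} once more; either works, and since the difference between the stopped and unstopped quotient vanishes for \emph{every} $x$, the exceptional potential-zero set is automatically just the one from the Airault--F\"ollmer limit, so your ``bookkeeping obstacle'' is in fact no obstacle at all.
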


For the proof of Theorem~\ref{gen-3} and Corollary~\ref{gen-5} and some further remarks we refer to the appendix.

\section{Main results} \label{main}

Let $(X_t)_{t \geq 0}$ be a Feller process with semigroup $(P_t)_{t \geq 0}$. Throughout this section, \begin{equation*}
	F_1^X := F_1 := \left\{f \in \mc{B}_b(\mbb{R}^d); \sup_{t \in (0,1)} \left\| \frac{P_tf-f}{t} \right\|_{\infty} < \infty \right\}
\end{equation*}
is the Favard space of order $1$ associated with $(X_t)_{t \geq 0}$. By Theorem~\ref{gen-3}, we have \begin{equation*}
	F_1 = \{f \in \mc{D}(A_e); \|A_e f\|_{\infty}< \infty\}
\end{equation*}
where $A_e$ denotes the extended infinitesimal generator. The results which we present in this section will be proved in Section~\ref{proofs}. \par \medskip 
Our first result, Proposition~\ref{feller-7}, shows how regularity estimates for the semigroup $(P_t)_{t \geq 0}$ can be used to obtain Schauder estimates of the form \begin{equation*}
\|f\|_{\mc{C}_b^{\kappa}(\mbb{R}^d)} \leq C (\|f\|_{\infty}+\|A_e f\|_{\infty}), \qquad f \in F_1.
\end{equation*}

\begin{prop} \label{feller-7}
	Let $(X_t)_{t \geq 0}$ be a Feller process with semigroup $(P_t)_{t \geq 0}$, extended generator $(A_e,\mc{D}(A_e))$ and Favard space $F_1$. If there exist constants $M>0$, $T>0$, $\kappa \geq 0$ and $\beta \in (0,1)$ such that 
	\begin{equation}
		\|P_t u\|_{\mc{C}_b^{\kappa}(\mbb{R}^d)} \leq M  t^{-\beta} \|u\|_{\infty} \label{feller-eq15}
	\end{equation}
	for all $u \in \mc{B}_b(\mbb{R}^d)$ and $t \in (0,T]$, then  \begin{equation*}
		F_1\subseteq \mc{C}_b^{\kappa}(\mbb{R}^d) 
	\end{equation*}
	and \begin{equation*}
	\|f\|_{\mc{C}^{\kappa}_b(\mbb{R}^d)} \leq C (\|f\|_{\infty} + \|A_ef\|_{\infty}), \qquad f \in F_1,
	\end{equation*}
	for some constant $C=C(T,M,\kappa,\beta)$.
\end{prop}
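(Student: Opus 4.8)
The plan is to represent $f \in F_1$ as a $\lambda$-potential and to feed the smoothing estimate~\eqref{feller-eq15} into the resolvent integral. By Theorem~\ref{gen-3} we have $f \in \mc{D}(A_e)$ with $\|A_e f\|_{\infty} < \infty$; fix a bounded measurable representative $g$ of $A_e f$ with $\|g\|_{\infty} = \|A_e f\|_{\infty}$. Property~\eqref{gen-1-iii} of the extended generator, together with the formula $R_{\lambda} h = \int_0^{\infty} e^{-\lambda t} P_t h \, dt$ for the $\lambda$-potential operator, gives
\begin{equation*}
	f = R_{\lambda}(\lambda f - g) = \int_0^{\infty} e^{-\lambda t} P_t(\lambda f - g) \, dt, \qquad \lambda > 0,
\end{equation*}
and the idea is simply to take $\mc{C}_b^{\kappa}(\mbb{R}^d)$-norms on both sides.

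First I would upgrade~\eqref{feller-eq15} to all positive times: for $t > T$ the semigroup property gives $P_t u = P_T(P_{t-T} u)$, and since $\|P_{t-T} u\|_{\infty} \le \|u\|_{\infty}$ this yields
\begin{equation*}
	\|P_t u\|_{\mc{C}_b^{\kappa}(\mbb{R}^d)} \le M (t \wedge T)^{-\beta} \|u\|_{\infty}, \qquad t > 0, \ u \in \mc{B}_b(\mbb{R}^d);
\end{equation*}
in particular $P_t u \in C_b(\mbb{R}^d)$ for every $t > 0$. Then I would pass the $\mc{C}_b^{\kappa}$-norm inside the $t$-integral: for fixed $x \in \mbb{R}^d$ and $0 < |h| \le 1$ the iterated difference $\Delta_h^k$ is a finite linear combination of point evaluations, hence commutes with the Lebesgue integral applied to the bounded measurable map $t \mapsto P_t(\lambda f - g)$; dividing by $|h|^{\kappa}$, taking suprema over $x$ and $h$, and adding the elementary bound for the sup-norm, one obtains
\begin{equation*}
	\|f\|_{\mc{C}_b^{\kappa}(\mbb{R}^d)} \le \int_0^{\infty} e^{-\lambda t} \|P_t(\lambda f - g)\|_{\mc{C}_b^{\kappa}(\mbb{R}^d)} \, dt \le M \Big( \int_0^{\infty} e^{-\lambda t} (t \wedge T)^{-\beta} \, dt \Big) \|\lambda f - g\|_{\infty}.
\end{equation*}
The bracketed constant is finite precisely because $\beta \in (0,1)$ makes $t^{-\beta}$ integrable at $0$.

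To conclude, estimate $\|\lambda f - g\|_{\infty} \le \lambda \|f\|_{\infty} + \|A_e f\|_{\infty}$ and specialize $\lambda$ (for instance $\lambda = 1$, or $\lambda = 1/T$) to get $\|f\|_{\mc{C}_b^{\kappa}(\mbb{R}^d)} \le C(\|f\|_{\infty} + \|A_e f\|_{\infty})$ with $C = C(M,T,\beta)$. Finally, $f \in \mc{C}_b^{\kappa}(\mbb{R}^d)$ requires $f \in C_b(\mbb{R}^d)$, which the resolvent representation also yields: each integrand $P_t(\lambda f - g)$ is continuous by the first step, and dominated convergence with dominating function $t \mapsto e^{-\lambda t} \|\lambda f - g\|_{\infty}$ transfers continuity to $f$; since $f$ is bounded with finite $\mc{C}_b^{\kappa}$-seminorm, this proves $F_1 \subseteq \mc{C}_b^{\kappa}(\mbb{R}^d)$.

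I do not expect a real obstacle; the only points needing care are the bookkeeping with sets of potential zero — so that $g$ can be taken bounded and $t \mapsto P_t g(x)$ is unambiguously defined for almost every $t$ — and the (elementary) interchange of $\Delta_h^k$ with the integral. One could instead argue via Dynkin's formula~\eqref{gen-eq6} at the deterministic time $T$, writing $f = P_T f - \int_0^T P_s(A_e f) \, ds$, but the resolvent route is cleaner because it uses~\eqref{feller-eq15} only for the potential operator and not for the difference $P_T f - f$.
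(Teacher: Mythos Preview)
Your proof is correct and follows essentially the same route as the paper: represent $f = R_\lambda(\lambda f - A_e f)$ and feed the smoothing estimate into the resolvent integral. The only difference is cosmetic---the paper extends \eqref{feller-eq15} to all $t>0$ via a halving argument that introduces an exponential factor $e^{mt}$ (Lemma~\ref{feller-3}\eqref{feller-3-i}) and then requires $\lambda > m$, whereas your use of $P_t = P_T P_{t-T}$ gives the cleaner bound $M(t\wedge T)^{-\beta}$ and works for any $\lambda>0$.
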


Since the domain $\mc{D}(A)$ of the (strong) infinitesimal generator of $(X_t)_{t \geq 0}$ is contained in $F_1$, Proposition~\ref{feller-7} gives, in particular, $\mc{D}(A) \subseteq \mc{C}_b^{\kappa}(\mbb{R}^d)$.  \par 

Proposition~\ref{feller-7} is a useful tool but it does, in general, not give optimal regularity results. Since Feller processes are inhomogeneous in space, the regularity of $f \in F_1$ will, in general, depend on the space variable $x$, e.\,g.\ \begin{equation}
	|\Delta_h^2 f(x)| = |f(x+2h)-2f(x+h)+f(x)| \leq C |h|^{\kappa(x)}, \qquad |h| \leq 1, \label{feller-st1} 
\end{equation}
and therefore it is much more natural to use H\"older--Zygmund spaces of variable order to describe the regularity; this is also indicated by the results obtained in \cite{ihke}. \par
Our second result, Theorem~\ref{feller-9}, shows how H\"older estimates for Feller semigroups can be used to establish local H\"older estimates \eqref{feller-st1}. Before stating the result, let us explain the idea. Let $(X_t)_{t \geq 0}$ be a Feller process with symbol $q$ and Favard space $F_1^X$, and fix $x \in \mbb{R}^d$.  Let $(Y_t)_{t \geq 0}$ be another Feller process which has the same behaviour as $(X_t)_{t \geq 0}$ in a neighbourhood of $x$, in the sense that its symbol $p$ satisfies \begin{equation}
p(z,\xi) = q(z,\xi), \qquad  z \in B(x,\delta), \,\xi \in \mbb{R}^d \label{feller-eq16}
\end{equation}
for some $\delta>0$. The aim is to choose $(Y_t)_{t \geq 0}$ in such a way that its semigroup $(T_t)_{t \geq 0}$ satisfies a ``good'' regularity estimate \begin{equation*}
\|T_t u\|_{\mc{C}_b^{\kappa}(\mbb{R}^d)} \leq M t^{-\beta} \|u\|_{\infty}, \qquad u \in \mc{B}_b(\mbb{R}^d);
\end{equation*}
here ``good'' means that $\kappa$ is large. Because of \eqref{feller-eq16} it is intuitively clear that \begin{equation}
	|\mbb{E}^z f(X_t)- f(z)| \approx |\mbb{E}^z f(Y_t)-f(z)| \quad \text{for $z$ close to $x$ and ``small'' $t$}. \label{feller-eq17}
\end{equation}
If $\chi$ is a truncation function such that $\I_{B(x,\eps)} \leq \chi \leq  \I_{B(x,2\eps)}$ for small $\eps>0$, then it is, because of \eqref{feller-eq17}, natural to expect that for any $f \in F_1^X$ the truncated mapping $g := f \cdot \chi$ is in the Favard space $F_1^Y$ associated with $(Y_t)_{t \geq 0}$, i.\,e.\ \begin{equation*}
	\sup_{t \in (0,1)} \sup_{z \in \mbb{R}^d} t^{-1}|\mbb{E}^z(f \cdot \chi)(Y_t)-(f \cdot \chi)(z)| < \infty.
\end{equation*}
Since, by Proposition~\ref{feller-7}, $g \in F_1^Y \subseteq \mc{C}_b^{\kappa}(\mbb{R}^d)$, and $g = f$ in a neighbourhood of $x$, this entails that $f(\cdot)$ is $\kappa$-H\"older continuous in a neighbourhood of $x$.  Since $\kappa=\kappa(x)$ depends on the point $x \in \mbb{R}^d$, which we fixed at the beginning, this localizing procedure allows us to obtain local H\"older estimates \eqref{feller-st1} for $f$.

\begin{thm} \label{feller-9} 
	Let $(X_t)_{t \geq 0}$ be a Feller process with extended generator $(A_e,\mc{D}(A_e))$ and Favard space $F_1^X$ such that \begin{equation*}
	A_e f(z) = -q(z,D) f(z), \qquad f \in C_c^{\infty}(\mbb{R}^d), \,z \in \mbb{R}^d,
	\end{equation*}
	for a continuous negative definite symbol $q$, cf.\ \eqref{pseudo}. Let $x \in \mbb{R}^d$ and $\delta \in (0,1)$ be such that there exists a Feller process $(Y_t^{(x)})_{t \geq 0}$ with the following properties: 
	\begin{enumerate}[label*=\upshape (C\arabic*),ref=\upshape C\arabic*] 
		\item\label{C1} The infinitesimal generator $(L^{(x)},\mc{D}(L^{(x)}))$ of $(Y_t^{(x)})_{t \geq 0}$ equals when restricted to $C_c^{\infty}(\mbb{R}^d)$ a pseudo-differential operator with negative definite symbol $p^{(x)}$, \begin{equation*}
		p^{(x)}(z,\xi) = -i b^{(x)}(z) \cdot \xi + \int_{y \neq 0} \left(1-e^{iy \cdot \xi}+i y \cdot \xi \I_{(0,1)}(|y|) \right) \, \nu^{(x)}(z,dy), \quad z,\xi \in \mbb{R}^d;
		\end{equation*}
		$p^{(x)}$ has bounded coefficients and \begin{equation}
		p^{(x)}(z,\xi) = q(z,\xi) \fa \xi \in \mbb{R}^d, \,|z-x| \leq 4\delta. \label{feller-eq21}
		\end{equation}
		\item\label{C2} The $(L^{(x)},C_c^{\infty}(\mbb{R}^d))$-martingale problem is well-posed.
		\item\label{C4} There exist constants $M(x)>0$,  $\kappa(x)  \in [0,2]$ and $\beta(x) \in (0,1)$ such that the semigroup $(T_t^{(x)})_{t \geq 0}$ associated with $(Y_t^{(x)})_{t \geq 0}$ satisfies \begin{equation*}
			\|T_t^{(x)} u\|_{\mc{C}_b^{\kappa(x)}(\mbb{R}^d)} \leq M(x)  t^{-\beta(x)} \|u\|_{\infty}
		\end{equation*}
		for all $u \in \mc{B}_b(\mbb{R}^d)$, $t \in (0,1)$.
	\end{enumerate}
	If $f \in F_1^X$  and $\varrho(x) \in [0,1]$ are such that \begin{equation} \label{feller-eq22}
	\|f\|_{\mc{C}_b^{\varrho(x)}(\overline{B(x,4\delta)})} < \infty \quad \text{and} \quad \sup_{|z-x| \leq 4 \delta} \int_{|y| \leq 1} |y|^{1+\varrho(x)} \, \nu^{(x)}(z,dy)<\infty,
	\end{equation}
	then \begin{equation}
	|\Delta_h^2 f(x)| \leq C |h|^{\kappa(x)} \left(\|f\|_{\infty}+\|A_ef\|_{\infty}+\|f\|_{\mc{C}_b^{\varrho(x)}(\overline{B(x,4\delta)})}\right) \label{feller-eq23}
	\end{equation}
	for all $|h| \leq \delta/2$. The finite constant $C>0$ depends continuously on $M(x) \in [0,\infty)$, $\beta(x) \in [0,1)$ and $K(x) \in [0,\infty)$, \begin{equation*}
	K(x) :=  \sup_{z \in \mbb{R}^d} \left( |b^{(x)}(z)| + \int_{y \neq 0} \min\{1,|y|^2\} \, \nu^{(x)}(z,dy) \right)+ \sup_{|z-x| \leq 4 \delta}  \int_{y \neq 0} \min\{|y|^{\varrho(x)+1},1\} \, \nu^{(x)}(z,dy).
	\end{equation*}
\end{thm}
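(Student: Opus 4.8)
The plan is to transfer the problem from the process $(X_t)_{t\ge 0}$ to the auxiliary process $(Y_t^{(x)})_{t\ge 0}$, which by assumption \ref{C4} has a good smoothing semigroup, and then to apply Proposition~\ref{feller-7} to a suitably truncated version of $f$. First I would fix $x$ and $\delta$ as in the statement, pick a cut-off $\chi\in C_c^\infty(\mbb{R}^d)$ with $\I_{B(x,\delta)}\le\chi\le\I_{B(x,2\delta)}$, and set $g:=f\cdot\chi$. Note $g\in\mc{B}_b(\mbb{R}^d)$, $g=f$ on $B(x,\delta)$, and $g$ is supported in $\overline{B(x,2\delta)}$; by \eqref{feller-eq22}, $g\in\mc{C}_b^{\varrho(x)}(\mbb{R}^d)$ with norm controlled by $\|f\|_{\mc{C}_b^{\varrho(x)}(\overline{B(x,4\delta)})}$ and $\|f\|_\infty$ (the product rule for these variable-order Zygmund norms against a fixed smooth compactly supported factor is routine). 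The key claim is then that $g\in F_1^Y$, the Favard space of $(Y_t^{(x)})_{t\ge0}$, with $\|L^{(x)}_e g\|_\infty$ bounded by the right-hand side of \eqref{feller-eq23} (without the $|h|^{\kappa(x)}$ factor). Granting this, Proposition~\ref{feller-7} applied to $(Y_t^{(x)})_{t\ge0}$ (legitimate by \ref{C4}) yields $g\in\mc{C}_b^{\kappa(x)}(\mbb{R}^d)$ with the corresponding norm bound, and since $g=f$ near $x$, for $|h|\le\delta/2$ we have $\Delta_h^2 f(x)=\Delta_h^2 g(x)$, which gives \eqref{feller-eq23}.

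The heart of the argument is the Favard-space transfer claim. To establish it I would use Corollary~\ref{gen-5}: it suffices to bound $t^{-1}\sup_{z}|\mbb{E}^z g(Y^{(x)}_{t\wedge\sigma_r^z})-g(z)|$ for one radius $r$, where $\sigma_r^z$ is the exit time of $Y^{(x)}$ from $\overline{B(z,r)}$; I would take $r=\delta$. The point is that the behaviour of $g$ and of the generator $L^{(x)}$ depends only on what happens inside balls of radius comparable to $\delta$ around $x$. Concretely, split into two cases. If $z\in B(x,3\delta)$: then on the event $\{t<\sigma_\delta^z\}$ the path stays in $B(x,4\delta)$, where by \eqref{feller-eq21} the symbol $p^{(x)}$ agrees with $q$, and hence — using the martingale problem for $X$ (from Theorem~\ref{gen-3}/Dynkin) combined with well-posedness of the $(L^{(x)},C_c^\infty)$-martingale problem \ref{C2}, or more directly a localized Dynkin formula — the stopped expectation $\mbb{E}^z g(Y^{(x)}_{t\wedge\sigma_\delta^z})$ can be compared to $\mbb{E}^z f(X_{t\wedge\tau_\delta^z})$ up to the error caused by $\chi\ne 1$, which is controlled because $f=g$ there; this is where $f\in F_1^X$ enters, giving a bound by $t(\|f\|_\infty+\|A_ef\|_\infty+\dots)$. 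If $z\notin B(x,3\delta)$: then $g\equiv0$ on $B(x,\delta)\supset$ (a neighbourhood not containing $z$ is the wrong phrasing) — rather, one must still estimate $\mbb{E}^z g(Y^{(x)}_{t\wedge\sigma_\delta^z})-g(z)$; here $g$ need not vanish at $z$, but $g\in\mc{C}_b^{\varrho(x)}$ globally and the generator has bounded coefficients, so a direct estimate of the form $|\mbb{E}^z g(Y^{(x)}_{t\wedge\sigma_\delta^z})-g(z)|\le Ct\big(\|g\|_\infty+\|g\|_{\mc{C}_b^{\varrho(x)}}\big)$ follows from applying the generator to a mollified version of $g$ and using the moment bound $\int_{|y|\le1}|y|^{1+\varrho(x)}\,\nu^{(x)}(z,dy)<\infty$ to control the nonlocal part acting on a $\varrho(x)$-Hölder function — together with the drift term $b^{(x)}$ acting on the (Lipschitz-on-compacts after mollification) function. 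The two estimates combine to give $K_\delta(g)<\infty$ with the stated dependence on $M(x),\beta(x),K(x)$.

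I expect the main obstacle to be making the comparison in the first case rigorous: showing that for $z$ near $x$ the stopped semigroups of $X$ and $Y^{(x)}$ produce the same increment of $g$ up to $O(t)$. The clean way is to invoke the well-posedness of both martingale problems on $B(x,4\delta)$, where the symbols coincide: by localization of martingale problems (stopping both processes upon exit from $B(x,4\delta)$), the laws of $X_{\cdot\wedge\tau_{4\delta}^z}$ and $Y^{(x)}_{\cdot\wedge\sigma_{4\delta}^z}$ started at $z$ agree; applying this to the test function $g$ (which is $C_c^\infty$? no — $g$ is only $\mc{C}_b^{\varrho(x)}$, so one works with $f\in F_1^X$ via Dynkin's formula \eqref{gen-eq6} for the stopped process, using $A_ef=-q(\cdot,D)f$ on the region where it matters, or approximates $g$ by smooth functions) then transfers the Favard bound from $X$ to $Y^{(x)}$. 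Care is needed because $g$ is not smooth, so I would either phrase everything through the extended generator and Dynkin's formula \eqref{gen-eq6} for stopped times with $\mbb{E}^z(t\wedge\sigma_\delta^z)\le t<\infty$, or approximate and pass to the limit; the moment condition in \eqref{feller-eq22} is exactly what guarantees $g\in\mc{D}(L^{(x)}_e)$ with bounded $L^{(x)}_e g$. The remaining estimates — the product rule for variable-order Hölder norms, the elementary bound for $z$ far from $x$, and the final assembly via Proposition~\ref{feller-7} — are routine.
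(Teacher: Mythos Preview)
Your overall strategy is exactly the paper's: localize $f$ by a cut-off $\chi$, show $v:=f\chi$ lies in the Favard space of $(Y_t^{(x)})_{t\ge0}$ via Corollary~\ref{gen-5} with $r=\delta$, then apply Proposition~\ref{feller-7}. However, two of the key steps are not yet in place.

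\emph{The far case is simpler than you think.} For $z\notin B(x,3\delta)$ you write ``$g$ need not vanish at $z$'' and then try to bound the increment using the H\"older norm of $g$ and the moment condition. But since $\spt\chi\subseteq\overline{B(x,2\delta)}$, the function $v=f\chi$ vanishes identically on $\overline{B(z,\delta)}$ whenever $|z-x|\ge 3\delta$. Hence $v(Y_{t\wedge\tau_\delta^z})-v(z)=0$ on $\{\tau_\delta^z>t\}$, and the whole thing is bounded by $2\|f\|_\infty\,\mbb{P}^z(\tau_\delta^z\le t)\le c\,t\,\|f\|_\infty$ via the maximal inequality \eqref{max}. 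Your proposed route (mollify $g$, apply the generator) is circular here: it essentially assumes what you are trying to prove.

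\emph{The near case needs the product decomposition.} For $z\in B(x,3\delta)$ you correctly identify that the stopped laws of $X$ and $Y^{(x)}$ agree (this is Lemma~\ref{feller-13}, using \eqref{C2}), but you then get stuck because $v=f\chi$ is not smooth and you cannot apply Dynkin's formula to it directly. The device that resolves this is the algebraic identity
\begin{equation*}
v(Y_{t\wedge\tau})-v(z)=\chi(z)\big(f(Y_{t\wedge\tau})-f(z)\big)+f(z)\big(\chi(Y_{t\wedge\tau})-\chi(z)\big)+\big(f(Y_{t\wedge\tau})-f(z)\big)\big(\chi(Y_{t\wedge\tau})-\chi(z)\big),
\end{equation*}
which splits the increment into three pieces $I_1,I_2,I_3$ that can each be handled without ever needing $v$ to be smooth. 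For $I_1$ one uses the identification of stopped laws to replace $Y$ by $X$ and then invokes Dynkin's formula \eqref{gen-eq6} for $f\in F_1^X$; for $I_2$ one applies the classical Dynkin formula to $\chi\in C_c^\infty(\mbb{R}^d)\subseteq\mc{D}(L^{(x)})$; and $I_3$ is bounded by $c\,\|f\|_{\mc{C}_b^{\varrho(x)}(\overline{B(x,4\delta)})}\,\mbb{E}^z\big(|Y_{t\wedge\tau}-z|^{1+\varrho(x)}\wedge 1\big)$, which is where the moment condition in \eqref{feller-eq22} enters via a fractional moment estimate (Lemma~\ref{feller-15}). This decomposition is the missing idea; once you have it, no mollification of $g$ is needed and your worries about $g\notin C_c^\infty$ disappear.
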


\begin{bem} \label{feller-10} \begin{enumerate}[wide, labelwidth=!, labelindent=0pt]
		\item\label{feller-10-i} The assumption $f \in C_b^{\varrho(x)}(\overline{B(x,4\delta)})$ is an a-priori estimate on the regularity of $f$. If the semigroup $(P_t)_{t \geq 0}$ of $(X_t)_{t \geq 0}$ satisfies a regularity estimate of the form \eqref{feller-eq15}, then such an a-priori estimate can be obtained from Proposition~\ref{feller-7}. Note that, by \eqref{feller-eq22}, there is a trade-off between the required a-priori regularity of $f$ and the roughness of the measures $\nu^{(x)}(z,dy)$, $z \in \overline{B(x,4\delta)}$. If the measures $\nu^{(x)}(z,dy)$ only have a weak singularity at $y=0$, in the sense that \begin{equation*}
			\sup_{|z-x| \leq 4\delta} \int_{|y| \leq 1} |y| \, \nu^{(x)}(z,dy)<\infty,
		\end{equation*}
		then we can choose $\varrho(x)=0$, i.\,e.\  it suffices that $f$ is continuous. In contrast, if (at least) one of the measures has a strong singularity at $y=0$, then we need a higher regularity of $f$ (in a neighbourhood of $x$).
		\item\label{feller-10-iii} It is not very restrictive to assume that $(Y_t^{(x)})_{t \geq 0}$ has bounded coefficients since $(Y_t^{(x)})_{t \geq 0}$ is only supposed to mimic the behaviour of $(X_t)_{t \geq 0}$ in a neighbourhood of $x$, cf.\ \eqref{feller-eq21}. We are, essentially, free to choose the behaviour of the process far away from $x$.  In dimension $d=1$ it is, for instance, a natural idea is to consider \begin{equation*}
		p^{(x)}(z,\xi) := \begin{cases} q(x-4\delta,\xi), & z \leq x-4 \delta, \\ q(z,\xi), & |z-x| < 4 \delta; \\ q(x+4\delta,\xi), & z \geq x+4\delta \end{cases} 
		\end{equation*}
		note that $p^{(x)}$ has bounded coefficients even if $q$ has unbounded coefficients.
		\item\label{feller-10-iv} Condition \eqref{C2} is automatically satisfied if $C_c^{\infty}(\mbb{R}^d)$ is a core for the infinitesimal generator of $(Y_t^{(x)})_{t \geq 0}$, see e.\,g.\ \cite[Proposition 3.9.3]{kol} or \cite[Theorem 1.38]{matters}.
		\item\label{feller-10-ii} It is possible to extend Theorem~\ref{feller-9} to Feller processes with a non-vanishing diffusion part.  The idea of the proof is similar but we need to impose stronger assumptions on the regularity on $f$, e.\,g.\ that $f|_{B(x,4\delta)}$ is differentiable.
\end{enumerate} \end{bem}

As a direct consequence of Theorem~\ref{feller-9} we obtain the following corollary.

\begin{kor} \label{feller-11}
	Let $(X_t)_{t \geq 0}$ be a Feller process with extended generator $(A_e,\mc{D}(A_e))$ and symbol $q$. If there exist $U \subseteq \mbb{R}^d$ open, $\delta>0$ and  $\varrho: U \to [0,1]$ such that for any $x \in U$ the assumptions of Theorem~\ref{feller-9} hold, then the Favard space of order $1$ satisfies \begin{equation*}
	   \mc{C}^{\varrho(\cdot)}(U) \cap F_1 \subseteq \mc{C}^{\kappa(\cdot)}(U).
	\end{equation*}
	If additionally \begin{equation}
	\sup_{x \in U} (M(x)+K(x)) <\infty \quad \text{and} \quad \sup_{x \in U} \beta(x)<1, \label{main-eq9}
	\end{equation}
	then $\mc{C}_b^{\varrho(\cdot)}(U) \cap F_1 \subseteq \mc{C}_b^{\kappa(\cdot)}(U)$ and there exists a constant $C>0$ such that \begin{equation}
	\|f\|_{\mc{C}_b^{\kappa(\cdot)}(U)} \leq C \left(\|f\|_{\infty}+\|A_e f\|_{\infty}+\|f\|_{\mc{C}^{\varrho(\cdot)}_b(U)} \right) \fa f \in \mc{C}_b^{\varrho(\cdot)}(U) \cap F_1; \label{main-eq11}
	\end{equation}
	in particular, the the infinitesimal generator $(A,\mc{D}(A))$ satisfies $\mc{C}_b^{\varrho(\cdot)}(U) \cap \mc{D}(A) \subseteq \mc{C}_b^{\kappa(\cdot)}(U)$ and \eqref{main-eq11} holds for any $f \in \mc{C}_b^{\varrho(\cdot)}(U) \cap \mc{D}(A)$.
\end{kor}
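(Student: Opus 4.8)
The plan is to apply Theorem~\ref{feller-9} at each point $x\in U$ and then paste the resulting local estimates together; since all the substance already sits in Theorem~\ref{feller-9}, what remains is essentially bookkeeping. Fix $f\in\mc{C}^{\varrho(\cdot)}(U)\cap F_1$ and $x\in U$. By hypothesis the structural conditions \eqref{C1}--\eqref{C4} and the $\nu^{(x)}$-moment bound from \eqref{feller-eq22} hold at $x$, and the remaining, $f$-dependent requirement $\|f\|_{\mc{C}_b^{\varrho(x)}(\overline{B(x,4\delta)})}<\infty$ is supplied by $f\in\mc{C}^{\varrho(\cdot)}(U)$ -- in the applications this a priori bound will usually come from the global estimate of Proposition~\ref{feller-7}. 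So Theorem~\ref{feller-9} applies at $x$ and yields, with a constant $C(x)$ depending continuously on $M(x)\in[0,\infty)$, $\beta(x)\in[0,1)$ and $K(x)\in[0,\infty)$,
\[
|\Delta_h^2 f(x)|\le C(x)\bigl(\|f\|_{\infty}+\|A_e f\|_{\infty}+\|f\|_{\mc{C}_b^{\varrho(x)}(\overline{B(x,4\delta)})}\bigr)\,|h|^{\kappa(x)}, \qquad |h|\le\delta/2.
\]
For $\delta/2<|h|\le 1$ I would just use the crude bound $|\Delta_h^2 f(x)|\le 4\|f\|_{\infty}\le 4(2/\delta)^2\|f\|_{\infty}\,|h|^{\kappa(x)}$ (here $\kappa(x)\le 2$ and $\delta/2<1$), so the same estimate, with a larger constant, holds for all $0<|h|\le 1$; the passage from this control of second differences to control of $\Delta_h^k f(x)$, with $k$ the integer entering the definition of $\mc{C}^{\kappa(\cdot)}(U)$ (note $k\in\{1,2,3\}$ since $\|\kappa\|_{\infty}\le 2$), is the usual Hölder--Zygmund identity between difference operators of different orders. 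As $x\in U$ was arbitrary, this gives $f\in\mc{C}^{\kappa(\cdot)}(U)$, which is the first inclusion.

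For the quantitative statement I would add \eqref{main-eq9} and argue as follows. As $x$ runs over $U$, the triple $(M(x),\beta(x),K(x))$ stays in a bounded subset of $[0,\infty)\times[0,1)\times[0,\infty)$ whose closure is compact and still contained in that set -- this is where $\sup_{x\in U}\beta(x)<1$ enters -- so by continuous dependence $C_0:=\sup_{x\in U}C(x)<\infty$. Next, for $f\in\mc{C}_b^{\varrho(\cdot)}(U)$ one has $\sup_{x\in U}\|f\|_{\mc{C}_b^{\varrho(x)}(\overline{B(x,4\delta)})}\le C_1\bigl(\|f\|_{\infty}+\|f\|_{\mc{C}_b^{\varrho(\cdot)}(U)}\bigr)$ for a constant $C_1=C_1(\delta,\|\varrho\|_{\infty})$. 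Inserting both uniform bounds into the estimate above yields
\[
\sup_{x\in U}\ \sup_{0<|h|\le 1}\frac{|\Delta_h^k f(x)|}{|h|^{\kappa(x)}}\le C\bigl(\|f\|_{\infty}+\|A_e f\|_{\infty}+\|f\|_{\mc{C}_b^{\varrho(\cdot)}(U)}\bigr),
\]
and adding $\|f\|_{\infty}$ on both sides is precisely \eqref{main-eq11}; in particular $\mc{C}_b^{\varrho(\cdot)}(U)\cap F_1\subseteq\mc{C}_b^{\kappa(\cdot)}(U)$. The statements for the strong generator are then immediate: $\mc{D}(A)\subseteq F_1$ and $A_e f=Af\in C_{\infty}(\mbb{R}^d)$ for $f\in\mc{D}(A)$, hence $\|A_e f\|_{\infty}=\|Af\|_{\infty}<\infty$, and the above applies verbatim with $F_1$ replaced by $\mc{D}(A)$.

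The hard part will be the two bookkeeping points hidden above: (i) turning the a priori regularity of $f$ into the precise ball-wise form \eqref{feller-eq22} that Theorem~\ref{feller-9} requires, where one has to be a little careful about reducing pointwise/local variable-order regularity to uniform control on each small ball; and (ii) upgrading ``$C$ depends continuously on $M(x),\beta(x),K(x)$'' together with \eqref{main-eq9} to an $x$-independent constant, a compactness argument whose only subtlety is keeping $\beta$ bounded away from $1$. Of these, (i) is the one I expect to need the most attention.
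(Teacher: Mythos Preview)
Your proposal is correct and is exactly what the paper has in mind: the corollary is stated without proof, simply as ``a direct consequence of Theorem~\ref{feller-9}'', and your argument---apply Theorem~\ref{feller-9} at each $x\in U$, then use the continuous dependence of the constant together with \eqref{main-eq9} to make the bound uniform---is precisely how one unpacks that phrase. The two bookkeeping points (i) and (ii) you isolate are the only content beyond Theorem~\ref{feller-9} itself, and the paper does not comment on them.
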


In many examples, see e.\,g.\ Section~\ref{ex}, it is possible to choose the mapping $\varrho$ in such a way that $F_1 \subseteq \mc{C}_b^{\varrho(\cdot)}(U)$; in this case, Corollary~\ref{feller-11} shows that $F_1 \subseteq \mc{C}^{\kappa(\cdot)}(U)$ (resp.\ $F_1 \subseteq \mc{C}_b^{\kappa(\cdot)}(U)$) and the Schauder estimate \eqref{main-eq11} holds for any function $f \in F_1$. In our applications we will even have $\|f\|_{\mc{C}_b^{\varrho(\cdot)}(U)} \leq c(\|f\|_{\infty}+\|A_e f\|_{\infty})$ and therefore \eqref{main-eq11} becomes \begin{equation*}
	\|f\|_{\mc{C}_b^{\kappa(\cdot)}(U)} \leq C' \left(\|f\|_{\infty}+\|A_e f\|_{\infty} \right) \fa f \in F_1.
\end{equation*}
In Section~\ref{ex} we will apply Corollary~\ref{feller-11} to isotropic stable-like processes, i.\,e.\ Feller processes with symbol of the form $q(x,\xi) = |\xi|^{\alpha(x)}$. The study of the domain $\mc{D}(A)$ of the infinitesimal generator $A$ is particularly interesting since $A$ is an operator of variable order. We will show that any function $f \in \mc{D}(A)$ satisfies the H\"older estimate of variable order \begin{equation*}
	|\Delta_h^2 f(x)| \leq C_{\eps} |h|^{\alpha(x)-\eps} (\|f\|_{\infty}+\|Af\|_{\infty}), \qquad |h| \leq 1, \,x \in \mbb{R}^d,
\end{equation*}
for $\eps>0$, cf.\ Theorem~\ref{ex-21} for the precise statement. \par \medskip

Our final result in this section is concerned with Schauder estimates for solutions to the equation $A_e f = g$ for H\"{o}lder continuous mappings $g$. To establish such Schauder estimates we need additional assumptions on the regularity of the symbol and improved regularity estimates for the semigroup of the ``localizing'' Feller process $(Y_t^{(x)})_{t \geq 0}$ in Theorem~\ref{feller-9}.

\begin{thm} \label{feller-12}
Let $(X_t)_{t \geq 0}$ be a Feller process with extended generator $(A_e,\mc{D}(A_e))$ and Favard space $F_1^X$ such that \begin{equation*}
	A_e f(z) = -q(z,D) f(z), \qquad f \in C_c^{\infty}(\mbb{R}^d), \,z \in \mbb{R}^d,
\end{equation*}
for a continuous negative definite symbol $q$. Assume that there exists $\delta \in (0,1)$ such that for any $x \in \mbb{R}^d$ there exists a Feller process $(Y_t^{(x)})_{t \geq 0}$ with symbol \begin{equation}
	p^{(x)}(z,\xi) = -ib^{(x)}(z) \cdot \xi + \int_{y \neq 0} \left(1-e^{iy \cdot \xi} + iy \cdot \xi \I_{(0,1)}(|y|) \right) \, \nu^{(x)}(z,dy), \qquad z, \xi \in \mbb{R}^d, \label{main-eq15}
\end{equation}
satisfying \eqref{C1}-\eqref{C4} in Theorem~\ref{feller-9}. Assume additionally that the following conditions hold for absolute constants $C_1,C_2>0$. 
\begin{enumerate}[label*=\upshape (S\arabic*),ref=\upshape S\arabic*]
	\item\label{S1} For any $x,z \in \mbb{R}^d$ there exists $\alpha^{(x)}(z) \in (0,2)$ such that \begin{equation*}
		\nu^{(x)}(z,dy) \leq C_1 |y|^{-d-\alpha^{(x)}(z)} \, dy \quad \text{on $B(0,1)$}
	\end{equation*}
	and $0<\inf_{x,z \in \mbb{R}^d} \alpha^{(x)}(z) \leq \sup_{x,z \in\mbb{R}^d} \alpha^{(x)}(z) < 2$. 
	\item\label{S2} There exists $\theta \in (0,1]$ such that \begin{equation}
		|b^{(x)}(z)-b^{(x)}(z+h)| \leq C_2 |h|^{\theta}, \qquad x,z,h \in \mbb{R}^d, \label{main-eq16}
	\end{equation}
	and the following statement holds true for any $r \in (0,1)$ and $x,z \in \mbb{R}^d$: If $u: \mbb{R}^d \to \mbb{R}$ is a measurable mapping such that \begin{equation*}
		|u(y)| \leq c_u \min\{|y|^{\alpha^{(x)}(z)+r},1\}, \qquad y \in \mbb{R}^d,
	\end{equation*}
	for some $c_u>0$, then there exist $C_{3,r}>0$ and $H_r>0$ (not depending on $u$, $x,z$) such that \begin{equation}
		\left| \int u(y) \, \nu^{(x)}(z,dy) - \int u (y) \, \nu^{(x)}(z+h,dy) \right|
		\leq C_{3,r} c_u |h|^{\theta} \fa |h| \leq H_r. \label{main-eq17}
	\end{equation}
	\item\label{S3} There exists $\Lambda>0$ such that the semigroup $(T_t^{(x)})_{t \geq 0}$ of the Feller process $(Y_t^{(x)})_{t \geq 0}$ satisfies \begin{equation}
		\|T_t^{(x)} u\|_{\mc{C}_b^{\lambda+\kappa(x)}(\mbb{R}^d)} \leq M(x) t^{-\beta(x)} \|u\|_{\mc{C}_b^{\lambda}(\mbb{R}^d)}, \quad u \in \mc{C}_b^{\lambda}(\mbb{R}^d), \,t \in (0,1), \label{main-eq19}
	\end{equation}
	for any $x \in \mbb{R}^d$ and $\lambda \in [0,\Lambda]$; here $M(x)$, $\kappa(x)$ and $\beta(x)$ denote the constants from \eqref{C4}.
	\item\label{S5} The mapping $\kappa:\mbb{R}^d \to (0,\infty)$ is uniformly continuous and bounded away from zero, i.\,e.\ $\kappa_0 := \inf_{x \in \mbb{R}^d} \kappa(x)>0$.
	\item\label{S4} It holds that \begin{equation*}
		 \sup_{x \in \mbb{R}^d} M(x)< \infty \qquad \sup_{x \in \mbb{R}^d} \beta(x)<1 \qquad
		\sup_{x,z \in \mbb{R}^d} \left( |b^{(x)}(z)| + \int_{|y| \geq 1} \,\nu^{(x)}(z,dy) \right)< \infty.
	\end{equation*}
\end{enumerate}
Let $\varrho: \mbb{R}^d \to [0,2]$ be a uniformly continuous function satisfying \begin{equation}
	\sigma := \inf_{x \in \mbb{R}^d} \inf_{|z-x| \leq 4 \delta} \left(1+\varrho(x)-\alpha^{(x)}(z) \right) >0. \label{main-eq21}
\end{equation}
If $f \in F_1^X$ is such that $f \in \mc{C}_b^{\varrho(\cdot)}(\mbb{R}^d)$ and \begin{equation*}
	A_e f = g \in \mc{C}_b^{\lambda}(\mbb{R}^d)
\end{equation*}
for some $\lambda \in [0,\Lambda]$, then $f \in \mc{C}_b^{(\kappa(\cdot)+\min\{\theta,\lambda,\sigma\})-}(\mbb{R}^d)$, i.\,e.\ \begin{equation}
	f \in \bigcap_{\eps \in (0,\kappa_0)} \mc{C}_b^{\kappa(\cdot)+\min\{\theta,\lambda,\sigma\}-\eps}(\mbb{R}^d). \label{main-eq25}
\end{equation}
Moreover,  the Schauder estimate \begin{equation}
	\|f\|_{\mc{C}_b^{\kappa(\cdot)+\min\{\theta,\lambda,\sigma\}-\eps}(\mbb{R}^d)} \leq C_{\eps} \left(\|A_ef\|_{\mc{C}_b^{\lambda}(\mbb{R}^d)} + \|f\|_{\mc{C}_b^{\varrho(\cdot)}(\mbb{R}^d)}\right) \label{main-eq27}
\end{equation}
holds for any $\eps \in (0,\kappa_0)$ and some finite constant $C_{\eps}$ which does not depend on $f$, $g$.
\end{thm}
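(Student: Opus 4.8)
\emph{Strategy.} The plan is to combine Theorem~\ref{feller-9} (used as an a priori input, and then iterated) with a resolvent bootstrap for the localising process $(Y_t^{(x)})_{t\ge0}$, driven by the improved smoothing estimate \eqref{main-eq19}. Everything is local: I fix $x\in\mbb{R}^d$ and show that $f$ is H\"older of order $\kappa(x)+\min\{\theta,\lambda,\sigma\}-\eps$ on a fixed-size neighbourhood of $x$, with constants uniform in $x$; the variable-order conclusions \eqref{main-eq25} and \eqref{main-eq27} then follow by a routine globalisation (the contribution of increments $h$ with $|h|$ bounded below is absorbed into $\|f\|_\infty$), using the uniform continuity of $\kappa$ together with \eqref{S4}, \eqref{S5} and $\|A_ef\|_\infty\le\|g\|_{\mc{C}_b^\lambda(\mbb{R}^d)}$. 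As a preliminary (Step 1) I observe that the hypotheses of Theorem~\ref{feller-9}, resp.\ Corollary~\ref{feller-11}, hold at every $x$: the integrability required in \eqref{feller-eq22}, namely $\sup_{|z-x|\le4\delta}\int_{|y|\le1}|y|^{1+\varrho(x)}\,\nu^{(x)}(z,dy)<\infty$, follows from the kernel bound \eqref{S1} together with the margin \eqref{main-eq21} (which is exactly $1+\varrho(x)-\alpha^{(x)}(z)>0$ on $\overline{B(x,4\delta)}$), while the uniformity \eqref{main-eq9} follows from \eqref{S1}, \eqref{S4}, \eqref{S5}. Hence $f\in\mc{C}_b^{\kappa(\cdot)}(\mbb{R}^d)$ with $\|f\|_{\mc{C}_b^{\kappa(\cdot)}(\mbb{R}^d)}\le C(\|f\|_\infty+\|A_ef\|_\infty+\|f\|_{\mc{C}_b^{\varrho(\cdot)}(\mbb{R}^d)})$; in particular, after this step $f$ is everywhere H\"older of order $\ge\kappa_0-\eps>0$, which I use freely below.

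\emph{Step 2: localisation and the key lemma.} Fix $x$, pick smooth cut-offs with $\chi\equiv1$ on $\overline{B(x,\delta)}$, $\spt\chi\subseteq B(x,2\delta)$, and $\psi\equiv1$ near $x$, $\spt\psi\subseteq B(x,\delta)$, and set $\phi:=f\chi$. As in the proof of Theorem~\ref{feller-9} — coupling $(X_t)_{t\ge0}$ and $(Y_t^{(x)})_{t\ge0}$ up to the exit from a small ball around $x$, which is legitimate by \eqref{feller-eq21} and the well-posedness \eqref{C2}, and using $f\in F_1^X$ together with Corollary~\ref{gen-5} applied to $(Y_t^{(x)})_{t\ge0}$ — one obtains that $\phi\in F_1^{Y^{(x)}}$ with $\|\phi\|_\infty+\|L^{(x)}_e\phi\|_\infty\le C(\|f\|_\infty+\|A_ef\|_\infty+\|f\|_{\mc{C}_b^{\varrho(\cdot)}(\mbb{R}^d)})$. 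The new point is to \emph{locate} $L^{(x)}_e\phi$: a representative of it satisfies, on $\overline{B(x,\delta)}$,
\[ L^{(x)}_e\phi = g + R_x,\qquad g=A_ef, \]
where the remainder $R_x$, which collects the ``commutator'' of $L^{(x)}$ with the cut-off $\chi$, obeys — thanks to the kernel bound \eqref{S1}, the H\"older continuity \eqref{S2} of $b^{(x)}$ and $\nu^{(x)}$ (in particular \eqref{main-eq17}), and the margin $\sigma>0$ from \eqref{main-eq21} — the estimate
\[ \|R_x\|_{\mc{C}_b^{\min\{\theta,\sigma\}}(\overline{B(x,\delta)})}\le C\big(\|f\|_\infty+\|f\|_{\mc{C}_b^{\varrho(\cdot)}(\mbb{R}^d)}\big), \]
with $C$ independent of $x$. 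I expect this step to be the main obstacle: one has to identify $L^{(x)}_e\phi$ near $x$ and control the H\"older modulus of the remainder while $f$ has only the limited regularity from Step~1 and the kernels $\nu^{(x)}$ are singular (of order up to $\sup\alpha^{(x)}(z)<2$); this is precisely where \eqref{S1}, \eqref{S2}, \eqref{main-eq21} enter, refining the already delicate reasoning behind Theorem~\ref{feller-9}.

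\emph{Step 3: resolvent bootstrap.} Write $R_1^{(x)}u:=\int_0^\infty e^{-t}T_t^{(x)}u\,dt$. By the definition of the extended generator, $\phi=R_1^{(x)}(\phi-L^{(x)}_e\phi)$, whence, since $\psi L^{(x)}_e\phi=\psi(g+R_x)$,
\[ \phi = R_1^{(x)}\phi \;-\; R_1^{(x)}\big(\psi(g+R_x)\big) \;-\; R_1^{(x)}\big((1-\psi)L^{(x)}_e\phi\big). \]
Set $m:=\min\{\theta,\lambda,\sigma\}$, so $m\le\lambda\le\Lambda$. For the middle term, $\psi(g+R_x)\in\mc{C}_b^m(\mbb{R}^d)$, being the product of $\psi\in C_c^\infty(\mbb{R}^d)$ with a function which, on $\spt\psi\subseteq\overline{B(x,\delta)}$, is H\"older of order $\min\{\lambda,\theta,\sigma\}$; applying \eqref{main-eq19} once, splitting $\int_0^1+\int_1^\infty$ and using $\beta(x)<1$, gives $R_1^{(x)}(\psi(g+R_x))\in\mc{C}_b^{m+\kappa(x)}(\mbb{R}^d)$ with the appropriate bound. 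For the last term, $(1-\psi)L^{(x)}_e\phi$ is bounded and vanishes on a neighbourhood of $x$; using the maximal inequality \eqref{max} for $(Y_t^{(x)})_{t\ge0}$ (its symbol has bounded coefficients by \eqref{C1}, \eqref{S4}) and iterating the Markov property as for the ``far part'' in the proof of Theorem~\ref{feller-9}, one finds $R_1^{(x)}((1-\psi)L^{(x)}_e\phi)$ to be arbitrarily smooth on a neighbourhood of $x$, with norm $\lesssim\|L^{(x)}_e\phi\|_\infty$. Finally, if $\phi$ is already known to be H\"older of order $r$ near $x$, then, being compactly supported, it is of order $r':=\min\{r,\kappa_0\}-\eps$ on $\mbb{R}^d$, and \eqref{main-eq19} with source index $\min\{r',\Lambda\}$ gives $R_1^{(x)}\phi\in\mc{C}_b^{\kappa(x)+\min\{r',\Lambda\}}$ near $x$. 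Combining the three contributions, whenever $\phi\in\mc{C}_b^r$ near $x$ one gets (up to a harmless $\eps$-loss) $\phi\in\mc{C}_b^{\min\{\min\{r,\Lambda\}+\kappa(x),\,m+\kappa(x)\}}$ near $x$. Starting from $r=\kappa(x)$ (Step~1) and iterating: since $\kappa(x)\ge\kappa_0>0$ by \eqref{S5}, after at most $\lceil m/\kappa_0\rceil+1$ rounds — a number uniform in $x$ — the H\"older order of $\phi$ near $x$ reaches $\kappa(x)+m-\eps$. As $\phi=f$ near $x$, this yields $|\Delta_h^2f(x)|\le C_\eps|h|^{\kappa(x)+m-\eps}(\|A_ef\|_{\mc{C}_b^\lambda(\mbb{R}^d)}+\|f\|_{\mc{C}_b^{\varrho(\cdot)}(\mbb{R}^d)})$ for all $|h|$ below a fixed threshold, with $C_\eps$ independent of $x$; globalising as described in the Strategy gives \eqref{main-eq25} and \eqref{main-eq27}. (Re-choosing $\eps$ at the outset compensates for the finitely many $\eps$-losses accumulated over the bootstrap rounds.)
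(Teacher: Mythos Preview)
Your overall plan --- localise with a cut-off, show $\phi=f\chi$ lies in the Favard space of $(Y_t^{(x)})_{t\ge0}$, identify $L_e^{(x)}\phi$, push through the resolvent using \eqref{main-eq19}, and iterate --- is exactly the paper's strategy. The gap is in Step~3, in the treatment of the ``far part''. The claim that $R_1^{(x)}\big((1-\psi)L_e^{(x)}\phi\big)$ is \emph{arbitrarily smooth} near $x$ is not justified, and in general false for non-local generators. The maximal inequality \eqref{max} only tells you that $T_t^{(x)}w(z)=O(t)$ for $z$ near $x$ when $w$ vanishes near $x$; it says nothing about differences $\Delta_h^2 T_t^{(x)}w(z)$ beyond what \eqref{C4} gives for an arbitrary bounded input, namely $\mc{C}_b^{\kappa(x)}$-regularity. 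Iterating the Markov property (or writing $T_t=T_{t/2}T_{t/2}$) does not help: the exit distribution of a jump process from a ball has no extra smoothness in the starting point beyond what the global semigroup estimates already encode. With only $\mc{C}_b^{\kappa(x)}$ for the far term, your bootstrap is stuck: after one round $\phi$ near $x$ lies in $\min\{\kappa(x)+m,\;\kappa(x)+\min\{r',\Lambda\},\;\kappa(x)\}=\kappa(x)$, no gain.

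The paper avoids this by \emph{not} splitting into near and far parts. Instead it identifies $L_e^{(x)}(f\chi)$ \emph{globally} as $fL^{(x)}\chi+\chi A_ef+\Gamma(f,\chi)$ (the carr\'e du champ term) and proves that the whole function belongs to $\mc{C}_b^{\min\{\varrho_0,\lambda,\theta,\sigma\}-\eps}(\mbb{R}^d)$, with a uniform constant. This requires estimating each of the three pieces on all of $\mbb{R}^d$, including the region $|z-x|>3\delta$ where $\chi$ vanishes; there the non-locality of $L^{(x)}\chi$ and $\Gamma(f,\chi)$ is handled directly via \eqref{S1}, \eqref{S2}, \eqref{S4}. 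Once $L_e^{(x)}\phi$ is globally H\"older, one applies \eqref{main-eq19} to the single function $\mu\phi-L_e^{(x)}\phi$ (no $\psi$), getting $\phi\in\mc{C}_b^{\kappa(x)+\min\{\varrho_0,\lambda,\theta,\sigma\}-\eps}$. The iteration is then \emph{global} in $x$: the improved variable-order regularity of $f$ on all of $\mbb{R}^d$ feeds back into the next round's estimate of $\Gamma(f,\chi)$, raising $\varrho_0$ step by step until it exceeds $\min\{\theta,\lambda,\sigma\}$. Your local iteration cannot do this, because improving $\phi$ near $x$ does not improve $(1-\psi)L_e^{(x)}\phi$, which lives away from $x$.
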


\begin{bem} \label{feller-125} \begin{enumerate}[wide, labelwidth=!, labelindent=0pt] 
	\item\label{feller-125-i} In our examples in Section~\ref{ex} we will be able to choose $\varrho$ in such a way that $\alpha^{(x)}(z)-\varrho(z)$ is arbitrarily small for $x \in \mbb{R}^d$ and $z \in \overline{B(x,4\delta)}$, and therefore the constant $\sigma$ in \eqref{main-eq21} will be close to $1$. Noting that $\theta \leq 1$, it follows that we can discard $\sigma$ in \eqref{main-eq25} and \eqref{main-eq27} i.\,e.\ we get \begin{equation}
		f \in \mc{C}_b^{\kappa(\cdot)+\min\{\theta,\lambda\}-\eps}(\mbb{R}^d), \qquad \eps \in (0,\kappa_0). \label{main-eq29}
	\end{equation}
	We would like to point out that it is, in general, \emph{not} possible to improve this estimate and to obtain that $f \in \mc{C}_b^{\kappa(\cdot)+\lambda-\eps}(\mbb{R}^d)$, $\eps \in (0,\kappa_0)$. To see this consider a Feller process $(X_t)_{t \geq 0}$ with symbol $q(x,\xi) = i b(x) \xi$, $x, \xi \in \mbb{R}$, for a mapping $b \in C_b(\mbb{R}^d)$ with $\inf_x b(x)>0$. If we define \begin{equation*}
		f(x) := \int_0^x \frac{1}{b(y)} \, dy, \qquad x \in \mbb{R}^d,
	\end{equation*}
	then $A_e f= b \, f'=1$ is smooth. However, the regularity of $f$ clearly depends on the regularity of $b$, \begin{equation*}
		\text{regularity of $f$} \, \approx 1 + \, \text{regularity of $b$}
	\end{equation*}
	which means that $f$ is \emph{less} regular than $A_e f$. 
	\item\label{feller-125-ii} It suffices to check \eqref{main-eq19} for $\lambda=\Lambda$; for $\lambda \in (0,\Lambda)$ the inequality then follows from the interpolation theorem, see e.\,g. \cite[Section 1.3.3]{triebel78} or \cite[Theorem 1.6]{lunardi}, and the fact that $\mc{C}_b^{\gamma}(\mbb{R}^d)$ can be written as a real interpolation space, see \cite[Theorem 2.7.2.1]{triebel78} for details.
	\item\label{feller-125-v} \eqref{main-eq17} is an assumption on the regularity of $z \mapsto \nu^{(x)}(z,dy)$. If $\nu^{(x)}(z,dy)$ has a density, say $m^{(x)}(z,y)$, with respect to Lebesgue measure, then a sufficient condition for \eqref{main-eq17} is \begin{equation*}
		\int_{y \neq 0} \min\{1,|y|^{\alpha^{(x)}(z)+r}\} |m^{(x)}(z,y)-m^{(x)}(z+h,y)| \, dy \leq C_{3,r} |h|^{\theta}.
	\end{equation*}
	\item\label{feller-125-iii} Condition \eqref{S1} is not strictly necessary for the proof of Theorem~\ref{feller-12}; essentially we need suitable upper bounds for \begin{equation*}
		\int_{|y| \leq r} |y|^{\gamma} \, \nu^{(x)}(z,dy) \quad \text{and} \quad \int_{r < |y| \leq R} |y|^{\gamma} \, \nu^{(x)}(z,dy)
	\end{equation*}
	where $0<r<R<1$, $x,z \in \mbb{R}^d$ and $\gamma \in (0,3)$. 
	\item\label{feller-125-iv} In \eqref{S2} we assume that $\theta \leq 1$; this assumption can be relaxed. To this end, we have to replace in \eqref{main-eq16} and \eqref{main-eq17} the differences of first order, \begin{equation*}
		|b^{(x)}(z)-b^{(x)}(z+h)| \quad \text{and} \quad \left| \int u(y) \, \nu^{(x)}(z,dy) - \int u (y) \, \nu^{(x)}(z+h,dy) \right|,
	\end{equation*}
	by iterated differences of higher order, cf.\ \eqref{def-eq3}. This makes the proof more technical but the idea of the proof stays the same.
\end{enumerate}
\end{bem}

The proofs of the results, which we stated in this section, will be presented in Section~\ref{proofs}. 

\section{Applications} \label{ex}

In this section we apply the results from the previous section to various classes of Feller processes. We will study processes of variable order (Theorem~\ref{ex-21} and Corollary~\ref{ex-23}), random time changes of L\'evy processes (Proposition~\ref{ex-9}) and solutions to L\'evy-driven SDEs (Proposition~\ref{ex-5}). Our aim is to illustrate the range of applications, and therefore we do not strive for the greatest generality of the examples; we will, however, point the reader to possible extensions of the results which we present. We remind the reader of the notation \begin{equation*}
	\mc{C}_b^{\alpha(\cdot)+}(\mbb{R}^d) := \bigcup_{\eps>0} \mc{C}_b^{\alpha(\cdot)+\eps}(\mbb{R}^d) \qquad \quad \mc{C}_b^{\alpha(\cdot)-}(\mbb{R}^d) := \bigcap_{\eps>0} \mc{C}_b^{\max\{\alpha(\cdot)-\eps,0\}}(\mbb{R}^d)
\end{equation*}
which we introduced in Section~\ref{def}.\par \medskip
 
The first part of this section is devoted to isotropic stable-like processes, i.\,e.\ Feller processes $(X_t)_{t \geq 0}$ with symbol of the form $q(x,\xi) = |\xi|^{\alpha(x)}$. A sufficient condition for the existence of such a Feller process is that $\alpha: \mbb{R}^d \to (0,2]$ is H\"{o}lder continuous and bounded from below, cf.\ \cite[Theorem 5.2]{matters}. If $\alpha(\mbb{R}^d) \subseteq (0,2)$ then the infinitesimal generator $A$ of $(X_t)_{t \geq 0}$ satisfies \begin{equation*}
	Af(x) = c_{d,\alpha(x)} \int_{y \neq 0} \left( f(x+y)-f(x)-y \cdot \nabla f(x) \I_{(0,1)}(|y|) \right) \, \frac{1}{|y|^{d+\alpha(x)}} \,dy, \quad f \in C_c^{\infty}(\mbb{R}^d),
\end{equation*}
which means that $A$ is a fractional Laplacian of variable order, i.e. $A = - (-\Delta)^{\alpha(\cdot)/2}$. This makes $A$ -- and hence the stable-like process $(X_t)_{t \geq 0}$ -- an interesting object of study. To our knowledge there are no Schauder estimates for the Poisson equation $Af=g$ available in the existing literature. Using the results from the previous section, we are able to derive Schauder estimates for functions $f$ in the Favard space $F_1$ (and, hence in particular, for $f \in \mc{D}(A)$), cf.\ Theorem~\ref{ex-21}, as well as Schauder estimates for solutions to $Af=g$, cf.\ Corollary~\ref{ex-23} below.

\begin{thm} \label{ex-21}
	Let $(X_t)_{t \geq 0}$ be a Feller process with symbol $q(x,\xi) = |\xi|^{\alpha(x)}$ for a H\"{o}lder continuous function $\alpha: \mbb{R}^d \to (0,2)$ such that  \begin{equation*}
		0< \alpha_L := \inf_{x \in \mbb{R}^d} \alpha(x) \leq \sup_{x \in \mbb{R}^d} \alpha(x)<2 .
	\end{equation*}
	The associated Favard space $F_1$ of order $1$, cf.\ \eqref{fav}, satisfies \begin{equation*}
		F_1 \subseteq \mc{C}_b^{\alpha(\cdot)-}(\mbb{R}^d).
	\end{equation*}
	For any $\eps \in (0,\alpha_L)$ there exists a finite constant $C=C(\eps,\alpha)$ such that \begin{equation}
		\|f\|_{\mc{C}_b^{\alpha(\cdot)-\eps}(\mbb{R}^d)} \leq C (\|f\|_{\infty}+\|A_e f\|_{\infty}), \qquad f \in F_1, \label{ex-eq2}
	\end{equation}
	where $A_e$ denotes the extended generator of $(X_t)_{t \geq 0}$. In particular, \eqref{ex-eq2} holds for any $f$ in the domain $\mc{D}(A)$ of the (strong) generator of $(X_t)_{t \geq 0}$, and $\mc{D}(A) \subseteq \mc{C}_b^{\alpha(\cdot)-}(\mbb{R}^d)$.
\end{thm}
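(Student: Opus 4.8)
The plan is to deduce Theorem~\ref{ex-21} from the localized Schauder estimate of Corollary~\ref{feller-11}, taking as localizing processes isotropic stable-like processes whose index agrees with $\alpha$ near the base point and stays close to $\alpha$ globally. Fix $\eps\in(0,\alpha_L)$. For $x\in\mbb{R}^d$ I would set $\alpha^{(x)}(z):=\max\{\alpha(z),\alpha(x)-\eps/2\}$; this keeps the H\"older exponent and seminorm of $\alpha$, has $\inf\alpha^{(x)}\ge\alpha(x)-\eps/2>0$ and $\sup\alpha^{(x)}\le\sup\alpha<2$, and takes values in a compact subinterval of $(0,2)$ independent of $x$. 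Picking $\delta\in(0,1)$ so small — depending only on $\eps$ and the H\"older data of $\alpha$, hence uniformly in $x$ — that $\alpha$ oscillates by less than $\eps/2$ on balls of radius $4\delta$, one has $\alpha^{(x)}\equiv\alpha$ on $\overline{B(x,4\delta)}$. Let $(Y_t^{(x)})_{t\ge0}$ be the isotropic stable-like Feller process with symbol $p^{(x)}(z,\xi)=|\xi|^{\alpha^{(x)}(z)}$; it exists and $C_c^\infty(\mbb{R}^d)$ is a core for its generator by \cite[Theorem~5.2]{matters} (so \eqref{C2} holds, cf.\ Remark~\ref{feller-10}), its generator agrees on $C_c^\infty(\mbb{R}^d)$ with the pseudo-differential operator of symbol $p^{(x)}$, and $p^{(x)}$ has coefficients bounded uniformly in $x$ because $c_{d,\gamma}$ stays bounded for $\gamma$ in a compact subset of $(0,2)$; this gives \eqref{C1}. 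For \eqref{C4} I would invoke the H\"older estimate for semigroups of isotropic stable-like processes that is the subject of Section~\ref{iso-reg}: a process with symbol $|\xi|^{\gamma(\cdot)}$, $\gamma$ H\"older with $\gamma_L:=\inf\gamma>0$ and $\sup\gamma<2$, has semigroup satisfying $\|T_tu\|_{\mc{C}_b^{\kappa}(\mbb{R}^d)}\le c\,t^{-\kappa/\gamma_L}\|u\|_\infty$ for $t\in(0,1)$ and $\kappa\in[0,\gamma_L)$, with $c$ depending only on $\kappa,\gamma_L,\sup\gamma,d$ and the H\"older seminorm of $\gamma$. Taking $\gamma=\alpha^{(x)}$ and $\kappa(x):=\alpha(x)-\eps<\alpha(x)-\eps/2\le\gamma_L$ gives \eqref{C4} with $\beta(x)=(\alpha(x)-\eps)/(\alpha(x)-\eps/2)$, and since $\alpha(x)\in[\alpha_L,\sup\alpha]\subset(0,2)$ one gets $\sup_x\beta(x)<1$ and $\sup_xM(x)<\infty$.

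Next I would supply the a-priori regularity demanded by \eqref{feller-eq22}. The same semigroup estimate applied to $(X_t)_{t\ge0}$ itself gives $\|P_tu\|_{\mc{C}_b^{\alpha_L-\eps/2}(\mbb{R}^d)}\le c\,t^{-\beta_0}\|u\|_\infty$ with $\beta_0=1-\eps/(2\alpha_L)\in(0,1)$, so Proposition~\ref{feller-7} yields $F_1\subseteq\mc{C}_b^{\alpha_L-\eps/2}(\mbb{R}^d)$ and $\|f\|_{\mc{C}_b^{\alpha_L-\eps/2}}\le C(\|f\|_\infty+\|A_ef\|_\infty)$. On $\overline{B(x,4\delta)}$ the kernel $\nu^{(x)}(z,dy)$ equals $c_{d,\alpha(z)}|y|^{-d-\alpha(z)}\,dy$, so $\int_{|y|\le1}|y|^{1+\varrho}\,\nu^{(x)}(z,dy)<\infty$ exactly when $\varrho>\alpha(z)-1$, and this also keeps $K(x)$ uniformly bounded once such a $\varrho$ is available. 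If $\sup\alpha<1+\alpha_L$ (shrink $\eps$ if needed) I would simply take $\varrho\equiv\lambda$ with $\max\{\sup\alpha-1,0\}<\lambda\le\min\{1,\alpha_L-\eps/2\}$; then $F_1\subseteq\mc{C}_b^\lambda(\mbb{R}^d)$, all hypotheses of Corollary~\ref{feller-11} hold with $U=\mbb{R}^d$, \eqref{main-eq9} is satisfied by the uniform bounds above, and Corollary~\ref{feller-11} gives $F_1\subseteq\mc{C}_b^{\kappa(\cdot)}(\mbb{R}^d)=\mc{C}_b^{\alpha(\cdot)-\eps}(\mbb{R}^d)$ and \eqref{main-eq11}; chaining \eqref{main-eq11} with the Proposition~\ref{feller-7} bound for $\|f\|_{\mc{C}_b^{\lambda}}$ gives \eqref{ex-eq2}. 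When $\alpha$ ranges over an interval of length $\ge1$ a constant $\varrho$ is inadmissible where $\alpha>1+\alpha_L$; there I would first raise the a-priori exponent — either by iterating Corollary~\ref{feller-11} (using the output of each step as the $\varrho$ of the next), or, more directly, by upgrading the Section~\ref{iso-reg} estimate to the variable-order bound $\|P_tu\|_{\mc{C}_b^{\alpha(\cdot)-\eps}(\mbb{R}^d)}\le c\,t^{-\beta}\|u\|_\infty$, $\beta<1$, and reading off the regularity of $f=R_1(f-A_ef)$ from $R_1h=\int_0^\infty e^{-t}P_th\,dt$. Finally, letting $\eps\downarrow0$ gives $F_1\subseteq\mc{C}_b^{\alpha(\cdot)-}(\mbb{R}^d)$, and since $\mc{D}(A)\subseteq F_1$ with $A_e=A$ on $\mc{D}(A)$, the assertions for $\mc{D}(A)$ follow.

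The main obstacle will be the semigroup estimate of Section~\ref{iso-reg}: showing, with constants depending only on the data, that an isotropic stable-like process with minimal index $\gamma_L$ smooths of order $\gamma_L-$, which calls for sharp gradient and iterated-difference bounds on its transition density, and then verifying that $M(x),\beta(x),K(x)$ entering Theorem~\ref{feller-9} stay bounded uniformly in $x\in\mbb{R}^d$ — without this uniformity \eqref{ex-eq2} would be only a pointwise, not a norm, estimate. The remaining delicate points are the finite bootstrap of the a-priori regularity when $\alpha$ varies widely, and controlling the compensator term in $p^{(x)}$ in the range $1<\alpha(z)<1+\varrho(x)$, where $f|_{\overline{B(x,4\delta)}}$ is only H\"older and not $C^1$.
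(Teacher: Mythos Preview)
Your proposal is essentially the paper's own strategy: same localizing symbol $\alpha^{(x)}(z)=\max\{\alpha(z),\alpha(x)-\eps/2\}$, same semigroup estimate from Section~\ref{iso-reg} (Proposition~\ref{app-1}) feeding into \eqref{C4}, same a-priori bound $F_1\subseteq\mc{C}_b^{\alpha_L-\eps/2}(\mbb{R}^d)$ via Proposition~\ref{feller-7}, and the same case distinction according to whether $\alpha(x)$ stays below $1+\alpha_L-\theta$ or not. For the low region the paper does exactly what you describe, applying Theorem~\ref{feller-9} with constant $\varrho=\alpha_L-\theta/4$.

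The one place where the paper is more explicit than your sketch is the bootstrap when $\alpha$ ranges over an interval of length $\ge 1$. Your suggestion to ``iterate Corollary~\ref{feller-11}, using the output of each step as the $\varrho$ of the next'' does not work directly: at a point $x$ with, say, $\alpha(x)=1.9$ and $\alpha_L$ small, the ball $\overline{B(x,4\delta)}$ lies entirely in the high region, so Step~1 supplies no regularity there to feed back in. The paper bridges this via an auxiliary lemma (Lemma~\ref{app-3}): once $|\Delta_h f(x)|\le M|h|^{\alpha(x)-\eps}$ is known on $\{\alpha\le 1\}$, one localizes with a \emph{different} truncated index $\tilde\alpha(z)=\max\{1-3\theta,\alpha(z)\}$ (floor near $1$) and multiplies $f$ not by a ball cutoff but by a smooth cutoff $\kappa$ adapted to the level sets $\{\alpha\le 1-2\theta\}$ and $\{\alpha\ge 1-\theta\}$. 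The product $f\kappa$ lands in the Favard space of $(Y_t)$, and Proposition~\ref{feller-7} then yields $|\Delta_h^2 f(x)|\le C|h|^{1-\theta}$ on $\{\alpha\ge 1\}$. With this improved a-priori $\varrho=1-\theta/2$ in hand, Theorem~\ref{feller-9} applies on the high region exactly as in Step~1. Your alternative route --- upgrading Proposition~\ref{app-1} to a genuine variable-order bound $\|P_t u\|_{\mc{C}_b^{\alpha(\cdot)-\eps}}\le c t^{-\beta}\|u\|_\infty$ --- would also close the argument, but the paper does not prove such a statement; the localization plus level-set cutoff is the substitute.
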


\begin{bem} \label{ex-22} \begin{enumerate}[wide, labelwidth=!, labelindent=0pt]
	\item\label{ex-22-i} Theorem~\ref{ex-21} allows us to obtain information on the regularity of the transition density $p(t,x,y)$ of $(X_t)_{t \geq 0}$. Since $p(t,\cdot,y) \in \mc{D}(A)$ for each $t>0$ and $y \in \mbb{R}^d$, cf.\ \cite[Corollary 3.6]{matters}, Theorem~\ref{ex-21} shows that $p(t,\cdot,y) \in  \mc{C}_b^{\alpha(\cdot)-}(\mbb{R}^d)$; in particular, $x \mapsto p(t,x,y)$ is differentiable at any $x \in \{\alpha>1\}$.  Moreover, $(\partial_t-A_x) p(t,x,y)=0$ entails by \cite[Theorem 3.8]{matters} that \begin{equation*}
		\|p(t,\cdot,y)\|_{\mc{C}_b^{\alpha(\cdot)-\eps}(\mbb{R}^d)} \leq C t^{-1-d/\alpha_L}, \qquad t \in (0,T), \,y \in \mbb{R}^d,
	\end{equation*}
	for a finite constant $C=C(\eps,\alpha,T)$. Some related results on the regularity of the transition density were recently obtained in \cite{chen18}.
	\item\label{ex-22-ii} Theorem~\ref{ex-21} gives a necessary condition for a function $f \in C_{\infty}(\mbb{R}^d)$ to be in the domain $\mc{D}(A)$ of the infinitesimal generator; sufficient conditions were established in \cite[Example 5.5]{ihke}. Combining both results it should be possible to show that $\mc{D}(A)$ is an algebra, i.\,e.\ $f,g \in \mc{D}(A)$ implies $f \cdot g \in \mc{D}(A)$, and that \begin{equation*}
		A(f \cdot g)= f Ag + g Af + \Gamma(f,g), \qquad f,g \in \mc{D}(A),
	\end{equation*}
	see \cite[Proof of Theorem 4.3(iii)]{reg-levy} for the idea of the proof; here \begin{equation*}
		\Gamma(f,g)(x) := c_{d,\alpha(x)} \int_{y \neq 0} \left( f(x+y)-f(x) \right) \left( g(x+y)-g(x) \right) \frac{1}{|y|^{d+\alpha(x)}} \, dy
	\end{equation*}
	is the so-called Carr\'e du Champ operator, cf.\ \cite{bouleau91,meyer4}, and $\nu(x,dy) = c_{d,\alpha(x)} |y|^{-d-\alpha(x)} \, dy$ is the family of L\'evy measures associated with the symbol $|\xi|^{\alpha(x)}$ via the L\'evy--Khintchine representation.
	\item Theorem~\ref{ex-21} can be generalized to a larger class of ``stable-like'' Feller processes, e.\,g.\ relativistic stable-like processes and tempered stable-like processes, cf.\ \cite[Section 5.1]{matters} or \cite[Example 4.7]{parametrix} for the existence of such processes. In order to apply the results from Section~\ref{main} we need two key ingredients: general existence results  -- which ensure the existence of a ``nice'' Feller process $(Y_t)_{t \geq 0}$ whose symbol is ``truncated'' in a suitable way, cf.\ Step 1 in the proof of Theorem~\ref{ex-21} -- and certain heat kernel estimate which are needed to establish H\"older estimates for the semigroup; in \cite{matters} both ingredients were established for a wide class of stable-like processes.
\end{enumerate} \end{bem}

As a corollary of Theorem~\ref{ex-21} and Theorem~\ref{feller-12} we will establish the following Schauder estimates for the elliptic equation $Af=g$ associated with the infinitesimal generator $A$ of the isotropic stable-like process.

\begin{kor} \label{ex-23}
	Let $(X_t)_{t \geq 0}$ be a Feller process with infinitesimal generator $(A,\mc{D}(A))$ and symbol $q(x,\xi) = |\xi|^{\alpha(x)}$ for a mapping $\alpha: \mbb{R}^d \to (0,2)$ which satisfies \begin{equation}
		0 < \alpha_L := \inf_{x \in \mbb{R}^d} \leq \sup_{x \in \mbb{R}^d} \alpha(x)<2 \label{ex-eq0}
	\end{equation}
	and $\alpha \in C_b^{\gamma}(\mbb{R}^d)$ for some $\gamma \in (0,1)$. If $f \in \mc{D}(A)$ is such that \begin{equation*}
		Af = g \in \mc{C}_b^{\lambda}(\mbb{R}^d)
	\end{equation*}
	for some $\lambda>0$, then $f \in \mc{C}_b^{(\alpha(\cdot)+ \min\{\lambda,\gamma\})-}(\mbb{R}^d)$. For any $\eps \in (0,\alpha_L)$ there exists a constant $C_{\eps}>0$ (not depending on $f$, $g$) such that \begin{equation}
		\|f\|_{\mc{C}_b^{\alpha(\cdot)+\min\{\lambda,\gamma\}-\eps}(\mbb{R}^d)} \leq C_{\eps}\left( \|Af\|_{\mc{C}_b^{\min\{\lambda,\gamma\}}(\mbb{R}^d)} + \|f\|_{\infty}\right).  \label{ex-eq1}
	\end{equation}
\end{kor}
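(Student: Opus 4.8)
The plan is to deduce Corollary~\ref{ex-23} from Theorem~\ref{feller-12}, using Theorem~\ref{ex-21} to supply the a-priori regularity needed as an input. Since $f \in \mc{D}(A) \subseteq F_1$ and $Af = A_e f$, the task reduces to verifying the hypotheses \eqref{C1}--\eqref{C4} and \eqref{S1}--\eqref{S5} of Theorem~\ref{feller-12} for the isotropic stable-like process with symbol $q(x,\xi) = |\xi|^{\alpha(x)}$, and then reading off the conclusion.

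First I would fix a small $\delta \in (0,1)$ and, for each $x \in \mbb{R}^d$, reuse the localizing process $(Y_t^{(x)})_{t \geq 0}$ from Step~1 of the proof of Theorem~\ref{ex-21}: a stable-like process with symbol $p^{(x)}(z,\xi) = |\xi|^{\alpha^{(x)}(z)}$, where $\alpha^{(x)} \in C_b^{\gamma}(\mbb{R}^d)$ agrees with $\alpha$ on $\overline{B(x,4\delta)}$, is obtained by freezing $\alpha$ at its boundary values outside a slightly larger ball, and satisfies $\alpha_L \leq \alpha^{(x)} \leq \sup\alpha$. By \cite[Theorem~5.2]{matters} such a process exists, $C_c^{\infty}(\mbb{R}^d)$ is a core for its generator (so \eqref{C2} holds by Remark~\ref{feller-10}\ref{feller-10-iv}), and its symbol has bounded coefficients and equals $q$ on $\overline{B(x,4\delta)}$, which is \eqref{C1}. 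Reading off the L\'evy--Khintchine data in \eqref{main-eq15} gives $\nu^{(x)}(z,dy) = c_{d,\alpha^{(x)}(z)} |y|^{-d-\alpha^{(x)}(z)}\,dy$ and $b^{(x)}(z) = b_{\alpha^{(x)}(z)}$; since $\alpha \mapsto (c_{d,\alpha}, b_{\alpha})$ is smooth on compact subsets of $(0,2)$ and $\alpha^{(x)} \in C_b^{\gamma}$ uniformly in $x$, assumption \eqref{S1} holds with $\alpha^{(x)}(z)$ as above (bounded strictly inside $(0,2)$), and \eqref{S2} holds with $\theta := \gamma$ --- for \eqref{main-eq17} I would invoke the density criterion of Remark~\ref{feller-125}\ref{feller-125-v}. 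The drift and tail bounds in \eqref{S4} are then immediate.

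The analytic core is the semigroup estimate. Fix $\eps \in (0,\alpha_L)$ and set $\kappa(x) := \alpha(x) - \eps$; after shrinking $\delta$ so that the oscillation of $\alpha$ on balls of radius $4\delta$ stays below $\eps$ (possible uniformly in $x$ since $\alpha$ is uniformly continuous), the localized exponent satisfies $\inf \alpha^{(x)} > \kappa(x)$, and the gradient and heat-kernel estimates for stable-like transition densities from \cite{matters} and Section~\ref{iso-reg} yield $\|T_t^{(x)} u\|_{\mc{C}_b^{\lambda+\kappa(x)}(\mbb{R}^d)} \leq M(x) t^{-\beta(x)} \|u\|_{\mc{C}_b^{\lambda}(\mbb{R}^d)}$ with $\beta(x) = \kappa(x)/(\inf \alpha^{(x)}) \in (0,1)$, for all $\lambda \in [0,\Lambda]$ and a fixed $\Lambda \in [\gamma,1]$; by Remark~\ref{feller-125}\ref{feller-125-ii} it is enough to check this at $\lambda = \Lambda$. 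This gives \eqref{C4} and \eqref{S3}, and since $M(x)$, $\beta(x)$ are controlled by $\alpha_L$, $\sup\alpha$ and $\|\alpha\|_{C_b^{\gamma}}$, the remaining bounds in \eqref{S4} follow; moreover $\kappa$ is uniformly continuous with $\kappa_0 = \alpha_L - \eps > 0$, which is \eqref{S5}. Finally I would take $\varrho(\cdot) := \alpha(\cdot) - \eps_0$ for a small $\eps_0 \in (0,\alpha_L)$: by Theorem~\ref{ex-21}, $\varrho$ is uniformly continuous, $F_1 \subseteq \mc{C}_b^{\varrho(\cdot)}(\mbb{R}^d)$ and $\|f\|_{\mc{C}_b^{\varrho(\cdot)}(\mbb{R}^d)} \leq C(\|f\|_{\infty}+\|A_e f\|_{\infty})$; since $\alpha^{(x)} = \alpha$ on $\overline{B(x,4\delta)}$, the constant in \eqref{main-eq21} satisfies $\sigma = \inf_{x}\inf_{|z-x| \leq 4\delta}(1+\varrho(x)-\alpha(z)) \geq 1 - \eps_0 - C_{\alpha}(4\delta)^{\gamma} > \gamma$ once $\delta$ and $\eps_0$ are small enough, so $\sigma$ drops out of \eqref{main-eq25} and \eqref{main-eq27}.

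With all hypotheses in place I would apply Theorem~\ref{feller-12} with $\lambda$ replaced by $\tilde\lambda := \min\{\lambda,\gamma\} \leq \Lambda$ (legitimate since $g \in \mc{C}_b^{\lambda}(\mbb{R}^d) \subseteq \mc{C}_b^{\tilde\lambda}(\mbb{R}^d)$): because $\theta = \gamma \geq \tilde\lambda$ and $\sigma > \gamma \geq \tilde\lambda$, one has $\min\{\theta,\tilde\lambda,\sigma\} = \tilde\lambda$, hence $f \in \mc{C}_b^{\kappa(\cdot)+\tilde\lambda-\eps'}(\mbb{R}^d) = \mc{C}_b^{\alpha(\cdot)+\min\{\lambda,\gamma\}-\eps-\eps'}(\mbb{R}^d)$ for every $\eps' \in (0,\alpha_L-\eps)$, together with the Schauder estimate $\|f\|_{\mc{C}_b^{\alpha(\cdot)+\min\{\lambda,\gamma\}-\eps-\eps'}(\mbb{R}^d)} \leq C(\|Af\|_{\mc{C}_b^{\tilde\lambda}(\mbb{R}^d)} + \|f\|_{\mc{C}_b^{\varrho(\cdot)}(\mbb{R}^d)})$. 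Letting $\eps,\eps' \downarrow 0$ gives $f \in \mc{C}_b^{(\alpha(\cdot)+\min\{\lambda,\gamma\})-}(\mbb{R}^d)$, and absorbing $\|f\|_{\mc{C}_b^{\varrho(\cdot)}(\mbb{R}^d)} \leq C(\|f\|_{\infty}+\|A_e f\|_{\infty}) \leq C(\|f\|_{\infty}+\|Af\|_{\mc{C}_b^{\min\{\lambda,\gamma\}}(\mbb{R}^d)})$ via Theorem~\ref{ex-21} yields \eqref{ex-eq1} for every $\eps \in (0,\alpha_L)$. The hard part will be the semigroup smoothing estimate \eqref{S3}: establishing, uniformly in $x$ and on the whole scale $\lambda \in [0,\Lambda]$, the gain $\mc{C}_b^{\lambda} \to \mc{C}_b^{\lambda+\kappa(x)}$ with exponent $\beta(x) < 1$ for the variable-order auxiliary processes, which rests on sharp heat-kernel estimates for stable-like densities; by comparison the bookkeeping that makes $\sigma$ irrelevant and the passage from $\lambda$ to $\min\{\lambda,\gamma\}$ on the right-hand side are routine.
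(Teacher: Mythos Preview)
Your proposal is correct and follows essentially the same route as the paper: construct localizing stable-like processes $(Y_t^{(x)})$ with truncated exponent $\alpha^{(x)}$, verify \eqref{C1}--\eqref{C4} and \eqref{S1}--\eqref{S5} using the semigroup estimates of Section~\ref{iso-reg} (Propositions~\ref{app-1} and~\ref{app-25}) and Lemma~\ref{aux-1} for \eqref{S2}, feed in the a-priori bound from Theorem~\ref{ex-21} as $\varrho(\cdot)=\alpha(\cdot)-\eps_0$, and invoke Theorem~\ref{feller-12}. One clarification: the localizing process from Step~1 of the proof of Theorem~\ref{ex-21} uses only a \emph{lower} truncation $\alpha^x(z)=\max\{\alpha(z),\alpha(x)-\eps/2\}$, so $\|\alpha^x\|_{\infty}=\|\alpha\|_{\infty}$ and the gap $\gamma_0=\|\alpha^x\|_{\infty}-\inf\alpha^x$ need not be small, which is precisely what Proposition~\ref{app-25} requires; the paper's proof of Corollary~\ref{ex-23} therefore uses the \emph{two-sided} truncation $\alpha^x(z)=(\alpha(x)-\eps/4)\vee\alpha(z)\wedge(\alpha(x)+\eps/4)$, which is what your informal description (``freezing at boundary values'') actually matches---just be aware that the reference to Step~1 is slightly off.
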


It is possible to extend Corollary~\ref{ex-23} to a larger class of ``stable-like'' processes, see also Remark~\ref{ex-22}\eqref{ex-22-ii}. Let us give some remarks on the assumption that $\alpha \in C_b^{\gamma}(\mbb{R}^d)$ for $\gamma \in (0,1)$.

\begin{bem} \label{ex-24} \begin{enumerate}[wide, labelwidth=!, labelindent=0pt]
	\item\label{ex-24-i} Let $\alpha$ be Lipschitz continuous function satisfying \eqref{ex-eq0}. Since $\alpha \in \mc{C}_b^{1-\eps}(\mbb{R}^d)$ for any $\eps \in (0,1)$, the Schauder estimate \eqref{ex-eq1} holds with $\gamma=1-\eps/2$ and $\eps \rightsquigarrow \eps/2$, and this entails that \eqref{ex-eq1} holds with $\gamma=1$. This means that Corollary~\ref{ex-23} remains valid for Lipschitz continuous functions (with $\gamma=1$ in \eqref{ex-eq1}).
	\item If $\alpha \in C_b^{\gamma}(\mbb{R}^d)$ for $\gamma>1$, we can apply Corollary~\ref{ex-23} with $\gamma=1$ but this gives a weaker regularity estimate for $f$ than we would expect; this is because we lose some information on the regularity of $\alpha$.  The reason why we have to restrict ourselves to $\gamma \in (0,1)$ is that two tools which we need for the proof (Theorem~\ref{feller-12} and Proposition~\ref{app-25}) are only available for $\gamma \in (0,1)$. However, we believe that both results are valid for $\gamma>0$, and that, hence, that the assumption $\gamma \in (0,1)$ in Corollary~\ref{ex-23} can be dropped. 
\end{enumerate} \end{bem}

Since the proofs of Theorem~\ref{ex-21} and Corollary~\ref{ex-23} are quite technical, we defer them to Section~\ref{iso}. The idea is to apply  Theorem~\ref{feller-9} and Theorem~\ref{feller-12}. As ``localizing'' process $(Y_t^{(x)})_{t \geq 0}$ we will use a Feller process with symbol \begin{equation*}
	p^{(x)}(z,\xi) := |\xi|^{\alpha^{(x)}(z)}, \qquad z,\xi \in \mbb{R}^d
\end{equation*}
where \begin{equation*}
	\alpha^{(x)}(z) := (\alpha(x)-\eps) \vee \alpha(z) \wedge (\alpha(x)+\eps), \qquad z \in \mbb{R}^d,
\end{equation*}
for fixed $x \in \mbb{R}^d$ and small $\eps>0$. In order to apply the results from the previous section,  we need suitable regularity estimates for the semigroup $(P_t)_{t \geq 0}$ associated with an isotropic stable-like process $(Y_t)_{t \geq 0}$. We will study the regularity of $x \mapsto P_t u(x)$ using the parametrix construction of (the transition density of) $(Y_t)_{t \geq 0}$ in \cite{matters}; the results are of independent interest, we refer the reader to Subsection~\ref{iso-reg}.  \par \medskip

Next we study Feller processes with symbols of the particular form $q(x,\xi) = m(x) |\xi|^{\alpha}$. They can be constructed as random time changes of isotropic $\alpha$-stable L\'evy processes, see e.\,g.\ \cite[Section 4.1]{ltp} and \cite{perpetual} for further details. This class of Feller processes includes, in particular, solutions to SDEs \begin{equation*}
	dX_t = \sigma(X_{t-}) \, dL_t, \qquad X_0 = x
\end{equation*}
driven by a one-dimensional isotropic $\alpha$-stable L\'evy process $(L_t)_{t \geq 0}$, $\alpha \in (0,2]$; for instance, if $\sigma>0$ is continuous and at most of linear growth, then there exists a unique weak solution to the SDE,  and the solution is a Feller process with symbol $q(x,\xi) = |\sigma(x)|^{\alpha} |\xi|^{\alpha}$,  cf.\ \cite[Example 5.4]{mp}.

\begin{prop} \label{ex-9}
	Let $(X_t)_{t \geq 0}$ be a Feller process with symbol $q(x,\xi)= m(x) |\xi|^{\alpha}$ for $\alpha \in (0,2)$ and a H\"{o}lder continuous function $m: \mbb{R}^d \to (0,\infty)$ such that \begin{equation*}
		0 < \inf_{x \in \mbb{R}^d} m(x) \leq \sup_{x \in \mbb{R}^d} m(x) < \infty.
	\end{equation*}
	\begin{enumerate}
		\item\label{ex-9-i} The infinitesimal generator $(A,\mc{D}(A))$ and the Favard space $F_1$ of order $1$ satisfy
		\begin{equation*}
			\mc{C}_{\infty}^{\alpha+}(\mbb{R}^d)  \subseteq \mc{D}(A) \subseteq F_1 \subseteq \mc{C}_b^{\alpha-}(\mbb{R}^d), 
		\end{equation*}
		where \begin{equation}
			\mc{C}_{\infty}^{\alpha+}(\mbb{R}^d) := \mc{C}_b^{\alpha+}(\mbb{R}^d) \cap C_{\infty}^{\floor{\alpha}}(\mbb{R}^d) = \begin{cases} \mc{C}_b^{\alpha}(\mbb{R}^d) \cap C_{\infty}(\mbb{R}^d), & \alpha \in (0,1) \\ \mc{C}_b^{\alpha}(\mbb{R}^d) \cap C_{\infty}^1(\mbb{R}^d), & \alpha \in [1,2). \end{cases} \label{ex-eq30}
		\end{equation}
		For any $\kappa \in (0,\alpha)$ there exists a finite constant $C_1>0$ such that \begin{equation}
			\|f\|_{\mc{C}_b^{\kappa}(\mbb{R}^d)} \leq C_1 (\|f\|_{\infty}+\|A_e f\|_{\infty}) \fa f \in F_1; \label{ex-eq31}
		\end{equation}
		here $A_e$ denotes the extended infinitesimal generator.
		\item\label{ex-9-ii} Let $\theta \in (0,1]$ be such that $m \in C_b^{\theta}(\mbb{R}^d)$. If $f \in \mc{D}(A)$ is such that $Af=g \in \mc{C}_b^{\lambda}(\mbb{R}^d)$ for some $\lambda>0$, then $f \in \mc{C}_b^{(\alpha+\min\{\lambda,\theta\})-}(\mbb{R}^d)$ and for any $\kappa \in (0,\alpha)$ there exists a constant $C_2>0$ (not depending on $f$, $Af$) such that \begin{equation*}
			\|f\|_{\mc{C}_b^{\kappa+\min\{\lambda,\theta\}}(\mbb{R}^d)} \leq C_2 \left( \|f\|_{\infty} + \|Af\|_{\mc{C}_b^{\min\{\lambda,\theta\}}(\mbb{R}^d)} \right).
		\end{equation*}
	\end{enumerate}
\end{prop}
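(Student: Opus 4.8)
The plan is to apply the general machinery of Section~\ref{main}, specialised to the symbol $q(x,\xi)=m(x)|\xi|^\alpha$, using as ``localising'' processes Feller processes with symbol $p^{(x)}(z,\xi)=m^{(x)}(z)|\xi|^\alpha$ where $m^{(x)}(z):=(m(x)-\eps)\vee m(z)\wedge(m(x)+\eps)$ for small $\eps>0$. Unlike the variable-order case, here the \emph{order} $\alpha$ of differentiation is constant, so the ``localising'' processes are spatially inhomogeneous random time changes of a fixed $\alpha$-stable L\'evy process; their semigroups inherit the regularity estimates of the $\alpha$-stable semigroup (which smooths to $\mc C_b^{\alpha+\lambda}$ from $\mc C_b^\lambda$), uniformly in $x$ because $m^{(x)}$ is bounded above and below by constants independent of $x$. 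So I expect $\kappa(x)\equiv\alpha$ and $\beta(x)\equiv$ (something in $(0,1)$) uniformly.

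For part~\eqref{ex-9-i}: the inclusion $\mc{C}_{\infty}^{\alpha+}(\mbb{R}^d)\subseteq\mc{D}(A)$ should follow from known sufficient conditions for membership in the domain of a stable-like generator (the operator $f\mapsto m(\cdot)(-\Delta)^{\alpha/2}f$ maps $\mc C_b^{\alpha+}\cap C_\infty^{\lfloor\alpha\rfloor}$ into $C_\infty$, since $(-\Delta)^{\alpha/2}$ maps $\mc C_b^{\alpha+}$ into $\mc C_b^{0+}\subseteq C_b$ and multiplication by the continuous bounded $m$ preserves vanishing at infinity after a truncation argument), together with the fact that $C_c^\infty$ is a core and a density/closedness argument; $\mc{D}(A)\subseteq F_1$ is immediate from Dynkin's formula as noted in the introduction. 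For the upper inclusion $F_1\subseteq\mc{C}_b^{\alpha-}(\mbb{R}^d)$ and the Schauder estimate~\eqref{ex-eq31}, I would invoke Corollary~\ref{feller-11}: first check conditions~\eqref{C1}--\eqref{C4} of Theorem~\ref{feller-9} with $U=\mbb{R}^d$, $\kappa(x)=\alpha$, $\varrho(x)=0$ if $\alpha\le 1$ (so that the a-priori hypothesis $f\in\mc C_b^{\varrho(\cdot)}=C_b$ is automatic) — here the moment condition $\int_{|y|\le1}|y|^{1+\varrho(x)}\nu^{(x)}(z,dy)<\infty$ holds because $\nu^{(x)}(z,dy)=m^{(x)}(z)c_{d,\alpha}|y|^{-d-\alpha}dy$ and $1+\varrho(x)=1>\alpha-d\cdot0$... more precisely we need $1+\varrho(x)>\alpha$, which forces the need for a bootstrap when $\alpha\ge1$. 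The clean way is to first run Proposition~\ref{feller-7} (using the uniform estimate $\|T_t^{(x)}u\|_{\mc C_b^{\alpha'}}\le Mt^{-\beta}\|u\|_\infty$ for the time-changed $\alpha$-stable semigroup with $\alpha'<\alpha$, which actually holds for the process $(X_t)$ itself since its symbol has bounded coefficients — giving $F_1\subseteq\mc C_b^{\alpha-}$ as a worst-case estimate — wait, that already gives it!). Indeed $q(x,\xi)=m(x)|\xi|^\alpha$ has $\alpha_0=\alpha$, so Proposition~\ref{feller-7} applied directly to $(X_t)_{t\ge0}$ yields $F_1\subseteq\mc C_b^{\alpha-\eps}(\mbb{R}^d)$ for every $\eps>0$, hence $F_1\subseteq\mc C_b^{\alpha-}(\mbb{R}^d)$ and~\eqref{ex-eq31}, \emph{without} needing the localisation at all. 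So part~\eqref{ex-9-i} reduces to: the semigroup regularity estimate $\|P_tu\|_{\mc C_b^{\kappa}}\le Mt^{-\beta}\|u\|_\infty$ for $\kappa<\alpha$, which I would obtain from the heat-kernel / gradient estimates for stable-like semigroups as in \cite{matters} (or cite Subsection~\ref{iso-reg}), plus an application of Proposition~\ref{feller-7}.

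For part~\eqref{ex-9-ii} I would apply Theorem~\ref{feller-12}. I need to verify conditions~\eqref{S1}--\eqref{S4}: \eqref{S1} holds with $\alpha^{(x)}(z)\equiv\alpha$ (and the $\nu^{(x)}$ are comparable to $|y|^{-d-\alpha}dy$); \eqref{S2} with the same $\theta$ as in $m\in C_b^\theta$, since $b^{(x)}\equiv0$ (no drift, as $\alpha$... for $\alpha\ge1$ one must be slightly careful about the compensator, but the symbol has the form~\eqref{main-eq15} with the standard truncation and $b^{(x)}\equiv0$) and the Lévy-measure difference $\int u\,d(\nu^{(x)}(z,\cdot)-\nu^{(x)}(z+h,\cdot))=(m^{(x)}(z)-m^{(x)}(z+h))c_{d,\alpha}\int u(y)|y|^{-d-\alpha}dy$, whose absolute value is $\lesssim|m(z)-m(z+h)|c_u\int\min\{|y|^{\alpha+r},1\}|y|^{-d-\alpha}dy\lesssim c_u|h|^\theta$; \eqref{S3} with $\Lambda$ any fixed positive number, from the smoothing estimate $\|T_t^{(x)}u\|_{\mc C_b^{\lambda+\alpha'}}\le Mt^{-\beta}\|u\|_{\mc C_b^\lambda}$ for the time-changed stable semigroup (again from \cite{matters}/Subsection~\ref{iso-reg}, using $\kappa(x)=\alpha'$ just below $\alpha$, or handling the borderline by the usual $\eps$-loss); \eqref{S5} trivially since $\kappa\equiv\alpha'$ is constant; \eqref{S4} from boundedness of $m$ and of the Lévy kernels. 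Then choose $\varrho(\cdot)\equiv\varrho_0$ with $\varrho_0$ close to $\alpha'<\alpha$, so that $\sigma=\inf(1+\varrho_0-\alpha)$ can be made $>1-\eps$ for the chosen $\alpha'$; since $\theta\le1<\sigma$, the $\sigma$ drops out of $\min\{\theta,\lambda,\sigma\}$ and Theorem~\ref{feller-12} gives $f\in\mc C_b^{(\kappa(\cdot)+\min\{\theta,\lambda\})-}(\mbb R^d)=\mc C_b^{(\alpha'+\min\{\theta,\lambda\})-}(\mbb R^d)$; letting $\alpha'\uparrow\alpha$ yields $f\in\mc C_b^{(\alpha+\min\{\lambda,\theta\})-}(\mbb R^d)$ and the stated Schauder estimate (absorbing $\|f\|_{\mc C_b^{\varrho(\cdot)}}\le c(\|f\|_\infty+\|Af\|_\infty)$ via part~\eqref{ex-9-i}). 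The main obstacle, as usual with these borderline Besov/Hölder smoothing statements, is making the regularity estimates for the stable-like semigroup $(T_t^{(x)})$ uniform in $x$ and valid up to the natural exponent — i.e.\ establishing~\eqref{main-eq19} with $\kappa(x)=\alpha$ rather than $\alpha-\eps$; I would sidestep this exactly as in the variable-order case by working with $\alpha'=\alpha-\eps$ throughout and passing to the limit $\eps\downarrow0$ at the end, at the cost of the $(\cdot)-$ in the conclusion, which is anyway what is claimed. The other slightly delicate point is the verification that $\mc C_\infty^{\alpha+}(\mbb R^d)\subseteq\mc D(A)$, which requires checking that $m\cdot(-\Delta)^{\alpha/2}f\in C_\infty(\mbb R^d)$ and that the convergence $t^{-1}(P_tf-f)\to Af$ is uniform; this I would do by a direct estimate combined with the fact that $C_c^\infty$ is a core (Remark~\ref{feller-10}\eqref{feller-10-iv}) and the closedness of $(A,\mc D(A))$, approximating $f\in\mc C_\infty^{\alpha+}$ by mollification and truncation.
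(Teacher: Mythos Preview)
Your approach is essentially correct, and for part~\eqref{ex-9-i} it coincides with the paper's: apply Proposition~\ref{feller-7} directly to $(X_t)_{t\ge0}$ once the semigroup estimate $\|P_tu\|_{\mc C_b^\kappa}\le Ct^{-\kappa/\alpha}\|u\|_\infty$ for $\kappa<\alpha$ is available (the paper obtains this from the parametrix construction in \cite{matters}, via the same reasoning as in Propositions~\ref{app-1} and~\ref{app-25}); the inclusion $\mc C_\infty^{\alpha+}(\mbb R^d)\subseteq\mc D(A)$ is simply cited from \cite[Example~5.4]{ihke} rather than argued from scratch.

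For part~\eqref{ex-9-ii} your argument via truncated coefficients $m^{(x)}(z)=(m(x)-\eps)\vee m(z)\wedge(m(x)+\eps)$ would work, but the paper takes a shorter route: since $m$ is already bounded above and below by positive constants (unlike $\alpha(\cdot)$ in the variable-order case, where truncation is essential to push $\alpha_L^{(x)}$ close to $\alpha(x)$), there is no need to localise at all---one simply sets $Y_t^{(x)}:=X_t$ for every $x\in\mbb R^d$. This collapses the verification of \eqref{C1}--\eqref{C4} and \eqref{S1}--\eqref{S4} to checking properties of a single process, and the uniformity-in-$x$ of the constants $M(x),\beta(x)$ is automatic. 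Your route costs an extra layer of bookkeeping (existence of the truncated processes, uniform dependence of the semigroup constants on $x$ via the continuous-dependence clauses of Propositions~\ref{app-1} and~\ref{app-25}) but gains nothing here, because the order $\alpha$ is constant and the semigroup of $X_t$ itself already satisfies the required smoothing estimate $\|P_tu\|_{\mc C_b^{\kappa+\lambda}}\le Ct^{-\kappa/\alpha}\|u\|_{\mc C_b^\lambda}$ for $\kappa<\alpha$ and $\lambda\le\theta$.
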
 

\begin{proof}
	It follows from \cite[Theorem 3.3]{matters} that there exists a unique Feller process $(X_t)_{t \geq 0}$ with symbol $q(x,\xi)=m(x) |\xi|^{\alpha}$, $x,\xi \in \mbb{R}^d$. Using a very similar reasoning as in the proof of Proposition~\ref{app-1} and Proposition~\ref{app-25}, it follows from the parametrix construction of the transition density $p$ in \cite{matters} that the semigroup $(P_t)_{t \geq 0}$ satisfies \begin{equation*}
		\|P_t u\|_{\mc{C}_b^{\kappa}(\mbb{R}^d)} \leq c_{1,\kappa} t^{-\kappa/\alpha} \|u\|_{\infty}, \qquad u \in \mc{B}_b(\mbb{R}^d),\, t \in (0,1),
	\end{equation*}
	and \begin{equation*}
		\|P_t u\|_{\mc{C}_b^{\kappa + \lambda}(\mbb{R}^d)} \leq c_{2,\kappa} t^{-\kappa/\alpha} \|u\|_{\mc{C}_b^{\lambda}(\mbb{R}^d)}, \quad u \in \mc{C}_b^{\lambda}(\mbb{R}^d),\,t \in (0,1),
	\end{equation*}
	for any $\kappa \in (0,\alpha)$ and $\lambda \in [0,\theta]$; for the particular case $\alpha \in (0,1]$ the first inequality follows from \cite{wang18-2}. Applying Proposition~\ref{feller-7} we get \eqref{ex-eq31}; in particular $F_1 \subseteq \mc{C}_b^{\alpha-}(\mbb{R}^d)$. The inclusion $\mc{C}_{\infty}^{\alpha+}(\mbb{R}^d) \subseteq \mc{D}(A)$ is a direct consequence of \cite[Example 5.4]{ihke}. The Schauder estimate in \eqref{ex-9-ii} follows Theorem~\ref{feller-12} applied with $Y_t^{(x)} := X_t$ for all $x \in \mbb{R}^d$ (using the regularity estimates for $(P_t)_{t \geq 0}$ from above).
\end{proof}

\begin{bem}[{Possible extensions of Proposition~\ref{ex-9}}] \label{ex-11}  \begin{enumerate}[wide, labelwidth=!, labelindent=0pt]
	\item\label{ex-11-i} Proposition~\ref{ex-9} can be extended to symbols $q(x,\xi) = m(x) \psi(\xi)$ for ``nice'' continuous negative definite functions $\psi$, e.\,g.\ the characteristic exponent of a relativistic stable or tempered stable L\'evy process, cf.\ \cite[Table 5.2]{matters} for further examples.
	\item\label{ex-11-ii} The family of L\'evy kernels associated with $(X_t)_{t \geq 0}$ is of the form $\nu(x,dy) = m(x) |y|^{-d-\alpha} \, dy$. More generally, it is possible to consider Feller processes with L\'evy kernels $\nu(x,dy) = m(x,y) \, \nu(dy)$, for instance \cite{bogdan17,wang18-2,szcz18} establish existence results as well as H\"{o}lder estimates under suitable assumptions on $m$ and $\nu$. Combining the results with Proposition~\ref{feller-7} we can obtain Schauder estimates for functions in the domain of the infinitesimal generator of $(X_t)_{t \geq 0}$. Let us mention that for $\nu(x,y) = m(x,y) |y|^{-d-\alpha} \, dy$ Schauder estimates were studied in \cite{bass08}.
\end{enumerate} \end{bem}

We close this section with some results on solutions to L\'evy-driven SDEs.

\begin{prop} \label{ex-5}
	Let $(L_t)_{t \geq 0}$ be a $1$-dimensional isotropic $\alpha$-stable L\'evy process, $\alpha \in (0,2)$. Consider the SDE \begin{equation}
		dX_t = b(X_{t-}) \, dt + \sigma(X_{t-}) \, dL_t, \qquad X_0 = x, \label{ex-eq3}
	\end{equation}
	for a bounded $\beta$-H\"{o}lder continuous mapping $b: \mbb{R} \to \mbb{R}$ and a bounded Lipschitz continuous mapping $\sigma: \mbb{R} \to (0,\infty)$. If \begin{equation}
		\beta+\alpha>1 \quad \text{and} \quad \sigma_L :=\inf_{x \in \mbb{R}} \sigma(x)>0, \label{ex-eq4}
	\end{equation}
	then there exists a unique weak solution $(X_t)_{t \geq 0}$ to \eqref{ex-eq3}, and it gives rise to a Feller process with infinitesimal generator $(A,\mc{D}(A))$. The associated Favard space $F_1$ of order $1$ satisfies \begin{equation*}
		\mc{D}(A) \subseteq F_1 \subseteq \bigcap_{k \in \mbb{N}} \mc{C}_b^{\min\{1,\alpha-1/k\}}(\mbb{R}),
	\end{equation*}
	and there exists for any $k \in \mbb{N}$ a finite constant $C>0$ such that 
	\begin{equation}
		\|f\|_{\mc{C}_b^{\min\{\alpha-1/k,1\}}(\mbb{R})} \leq C (\|f\|_{\infty}+\|A_e f\|_{\infty}) \fa f \in F_1 \label{ex-eq7}
	\end{equation}
	where $A_e$ denotes the extended generator.  In particular, \eqref{ex-eq7} holds for any $f \in \mc{D}(A)$ with $A_e f=Af$.
\end{prop}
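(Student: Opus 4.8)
The plan is to identify the symbol of the solution process, feed a H\"older estimate for its transition semigroup into Proposition~\ref{feller-7}, and let the exponent in that estimate range over all admissible values; the analytic heart is the semigroup estimate, everything else is routine.

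\emph{Well-posedness, Feller property, generator.} First I would settle the existence and uniqueness claim. Under \eqref{ex-eq4} the SDE \eqref{ex-eq3} is weakly well-posed: the non-degeneracy $\sigma_L>0$ together with $\sigma$ Lipschitz handles the diffusion coefficient, while $\alpha+\beta>1$ is exactly the threshold below which a merely $\beta$-H\"older drift is a sub-critical (lower-order) perturbation of the $\alpha$-stable noise, so that weak uniqueness and well-posedness of the $C_c^\infty(\mbb{R})$-martingale problem hold; I would quote this from \cite{mp,chen17,priola15}. The unique solution is then a Feller process. Its symbol I would compute directly from It\^o's formula: since the isotropic $\alpha$-stable L\'evy measure is symmetric, the truncation term $iy\cdot\xi\I_{(0,1)}(|y|)$ in \eqref{symbol} as well as the discrepancy between the cut-offs $\I_{(0,1)}(|y|)$ and $\I_{(0,\sigma(x))}(|y|)$ produced by the substitution $y\mapsto\sigma(x)y$ both integrate to zero, whence
\begin{equation*}
q(x,\xi)=-ib(x)\xi+|\sigma(x)|^{\alpha}|\xi|^{\alpha},\qquad Af(x)=b(x)f'(x)-|\sigma(x)|^{\alpha}(-\Delta)^{\alpha/2}f(x)\ \ (f\in C_c^{\infty}(\mbb{R})).
\end{equation*}
In particular $q$ has bounded coefficients and $C_c^{\infty}(\mbb{R})\subseteq\mc{D}(A)$, so $\mc{D}(A)\subseteq F_1$ and $A_ef=Af$ on $\mc{D}(A)$; note that this is exactly the Feller process of Proposition~\ref{ex-9} (with $m=|\sigma|^{\alpha}$) augmented by the drift $b$.

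\emph{Smoothing estimate for the semigroup.} The key input is that the transition semigroup $(P_t)_{t\geq0}$ satisfies
\begin{equation*}
\|P_tu\|_{\mc{C}_b^{\kappa}(\mbb{R})}\leq c_{\kappa}\,t^{-\kappa/\alpha}\|u\|_{\infty},\qquad u\in\mc{B}_b(\mbb{R}),\ t\in(0,1),
\end{equation*}
for every $\kappa\in(0,\min\{1,\alpha\})$, together with the gradient bound $\|(P_tu)'\|_{\infty}\leq c\,t^{-1/\alpha}\|u\|_{\infty}$ when $\alpha>1$ (which upgrades the above to $\kappa=1$, since the $\mc{C}_b^1(\mbb{R})$-seminorm is controlled by $\|(P_tu)'\|_\infty$). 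I would obtain this from the parametrix construction of the heat kernel $p_t(x,y)$ of $\partial_t-(b\,\partial_x-|\sigma|^{\alpha}(-\Delta)^{\alpha/2})$ together with the accompanying $x$-derivative estimates, citing \cite{chen17} and, for the drift-free part $q(x,\xi)=|\sigma(x)|^{\alpha}|\xi|^{\alpha}$, Proposition~\ref{ex-9} and \cite{matters}; the ceiling at order $1$ reflects that $b$ is only $\beta$-H\"older and $\sigma$ only Lipschitz, so $x\mapsto p_t(x,y)$ cannot be expected to be more regular than (essentially) Lipschitz.

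\emph{Conclusion via Proposition~\ref{feller-7}.} For each $k\in\mbb{N}$ with $\alpha-1/k>0$ I set $\kappa:=\min\{1,\alpha-1/k\}$ and $\beta:=\kappa/\alpha$; then $\beta\in(0,1)$ because $\kappa<\alpha$ in every case, and the previous step gives \eqref{feller-eq15} with $M=c_{\kappa}$, $T=1$ and these $\kappa,\beta$ (using the gradient bound when $\kappa=1$ and $\alpha>1$). Proposition~\ref{feller-7} then yields $F_1\subseteq\mc{C}_b^{\kappa}(\mbb{R})$ and $\|f\|_{\mc{C}_b^{\min\{\alpha-1/k,1\}}(\mbb{R})}\leq C_k(\|f\|_{\infty}+\|A_ef\|_{\infty})$ for all $f\in F_1$; intersecting over $k$ gives the inclusion $F_1\subseteq\bigcap_k\mc{C}_b^{\min\{1,\alpha-1/k\}}(\mbb{R})$ and the Schauder estimate \eqref{ex-eq7}, and the statement for $f\in\mc{D}(A)$ follows since $\mc{D}(A)\subseteq F_1$ with $A_ef=Af$. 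The only genuinely nontrivial ingredient is the semigroup estimate of the middle step; if one does not wish to quote it, one must run the parametrix series for $p_t(x,y)$ — whose convergence is precisely what $\alpha+\beta>1$ guarantees — and track the Lipschitz dependence on $\sigma$ to see where the order-$1$ ceiling enters. That is where the real work lies; the remaining bookkeeping mirrors the proof of Proposition~\ref{ex-9}.
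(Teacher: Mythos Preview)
Your overall strategy matches the paper's exactly: establish well-posedness and the Feller property, quote a H\"older/gradient estimate for the semigroup, then apply Proposition~\ref{feller-7} with $\kappa=\min\{1,\alpha-1/k\}$ and $\beta=\kappa/\alpha<1$. The only substantive difference is the source of the semigroup estimate. The paper does \emph{not} go through a parametrix construction; it cites \cite{kulik15} for weak well-posedness, \cite{schnurr,sde} for the Feller property, and then directly invokes the coupling-based gradient estimate of Liang--Wang \cite{wang18}, which gives $\|P_t u\|_{\mc{C}_b^{\kappa\wedge 1}(\mbb{R})}\leq c\,t^{-\kappa/\alpha}\|u\|_\infty$ for every $\kappa<\alpha$. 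Your plan to extract the estimate from the parametrix series (citing \cite{chen17}, \cite{matters}) is in fact discussed by the paper immediately \emph{after} the proof as a route that should work and would even remove the cap at order $1$, but which the author could not locate ready-made in the literature; so your version is morally correct but would require carrying out the parametrix argument in detail rather than quoting it, whereas the paper's choice of \cite{wang18} gives exactly the stated (capped) estimate off the shelf.
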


\begin{proof}
	It follows from \eqref{ex-eq4} that the SDE \eqref{ex-eq3} has a unique weak solution $(X_t)_{t \geq 0}$ for any $x \in \mbb{R}$, cf.\ \cite{kulik15}. By \cite{schnurr}, see also \cite{sde}, $(X_t)_{t \geq 0}$ is a Feller process. Moreover, \cite{wang18} shows that for any $\kappa<\alpha$ there exists a constant $c>0$ such that the semigroup $(P_t)_{t \geq 0}$ satisfies \begin{equation*}
		\|P_t u\|_{\mc{C}_b^{\kappa \wedge 1}(\mbb{R})}\leq c \|u\|_{\infty} t^{-\kappa/\alpha}
	\end{equation*}
	for all $t \in (0,1)$ and $u \in \mc{B}_b(\mbb{R})$. Applying Proposition~\ref{feller-7} proves the assertion.
\end{proof}

Before giving some remarks on possible extensions of Proposition~\ref{ex-5}, let us mention that sufficient conditions for a function $f$ to be in the domain $\mc{D}(A)$ were studied in \cite{ihke}; for instance if the SDE has no drift part, i.\,e.\ $b=0$, then it follows from Proposition~\ref{ex-5} and \cite[Example 5.6]{ihke} that \begin{equation}
	\mc{C}^{\alpha+}_{\infty}(\mbb{R}) \subseteq \mc{D}(A) \subseteq \mc{C}_b^{\alpha-}(\mbb{R}) \quad \text{if $\alpha \in (0,1]$} \label{ex-eq11}
\end{equation}
and \begin{equation}
	\mc{C}^{\alpha+}_{\infty}(\mbb{R}) \subseteq \mc{D}(A) \subseteq \mc{C}_b^{1}(\mbb{R}) \quad \text{if $\alpha \in (1,2)$;}  \label{ex-eq13}
\end{equation}
see \eqref{ex-eq30} for the definition of $\mc{C}^{\alpha+}_{\infty}(\mbb{R})$.  Intuitively one would expect that \eqref{ex-eq11} holds for $\alpha \in (0,2)$. If we knew that the semigroup $(P_t)_{t \geq 0}$ of the solution to \eqref{ex-eq3} satisfies \begin{equation}
	 \|P_t u\|_{\mc{C}_b^{\kappa}(\mbb{R})} \leq c t^{-\kappa/ \alpha} \|u\|_{\infty}, \qquad u \in \mc{B}_b(\mbb{R}), \,t \in (0,1), \,\kappa \in (0,\alpha), \label{ex-eq15}
\end{equation}
for some constant $c=c(\kappa)>0$, this would immediately follow from Proposition~\ref{feller-7}.  We could not find \eqref{ex-eq15} in the literature but we strongly believe that the parametrix construction of the transition density in \cite{kulik15} can be used to establish such an estimate; this is also indicated by the proof of Theorem~\ref{ex-21} (see in particular the proof of Proposition~\ref{app-1}). In fact, we are positive that the parametrix construction in \cite{kulik15} entails estimates of the form
 \begin{equation*}
	 \|P_t u\|_{\mc{C}_b^{\kappa+\min\{\lambda,\beta\}}(\mbb{R})} \leq c t^{-\kappa/ \alpha} \|u\|_{\mc{C}_b^{\min\{\lambda,\beta\}}(\mbb{R})}, \qquad u \in \mc{C}_b^{\lambda}(\mbb{R}), \,t \in (0,1), \,\kappa \in (0,\alpha), \lambda>0
\end{equation*}
(recall that $\beta$ is the H\"older exponent of the drift $b$) which would then allow us to establish Schauder estimates to the equation $Af=g$ for $g \in \mc{C}_b^{\lambda}(\mbb{R})$ using Theorem~\ref{feller-12}. 

\begin{bem}[{Possible extensions of Proposition~\ref{ex-5}}] \label{ex-7} \begin{enumerate}[wide, labelwidth=!, labelindent=0pt]
		\item\label{ex-7-i} The gradient estimates in \cite{wang18} were obtained under more general conditions, and (the proof of) Proposition~\ref{ex-5} extends naturally to this more general framework. Firstly, Proposition~\ref{ex-5} can be extended to higher dimensions; the assumption $\sigma_L>0$ in \eqref{ex-eq4} is then replaced by the assumption that $\sigma$ is uniformly non-degenerate in the sense that \begin{equation*}
			M^{-1} |\xi| 
			\leq \inf_{x \in \mbb{R}^d} \min\{|\sigma(x) \xi|, |\sigma(x)^{-1} \xi|\} 
			\leq \sup_{x \in \mbb{R}^d} \max\{|\sigma(x) \xi|, |\sigma(x)^{-1} \xi|\}
			\leq M |\xi|
		\end{equation*}
		for some absolute constant $M>0$ which does not depend on $\xi \in \mbb{R}^d$. Secondly, Proposition~\ref{ex-5} holds for a larger class of driving L\'evy processes; it suffices to assume that the L\'evy measure $\nu$ satisfies $\nu(dz) \geq c |z|^{-d-\alpha} \I_{\{|z| \leq \eta\}}$ for some $c,\eta>0$ and that the SDE \eqref{ex-eq3} has a unique weak solution. Under the stronger balance condition $\beta+\alpha/2>1$ this is automatically satisfied  for a large class of L\'evy processes, e.g.\ if $(L_t)_{t \geq 0}$ is an relativistic stable or a tempered stable L\'evy process, cf.\ \cite{chen17}.
		\item\label{ex-7-iii} Recently, Kulczycki et al.\ \cite{kulc18} established H\"{o}lder estimates for the semigroup associated with the solution to the SDE \begin{equation*}
			dX_t = \sigma(X_{t-}) \, dL_t
		\end{equation*}
		driven by a $d$-dimensional L\'evy process $(L_t)_{t \geq 0}$, $d \geq 2$, whose components are independent $\alpha$-stable L\'evy processes, $\alpha \in (0,1)$, under the assumption that $\sigma: \mbb{R}^d \to \mbb{R}^{d \times d}$ is bounded, Lipschitz continuous and satisfies $\inf_x \det(\sigma(x))>0$. Combining the estimates with Proposition~\ref{feller-7} we find that the assertion of Proposition~\ref{ex-5} remains valid in this framework, i.e.\ the Favard space $F_1$ associated with the unique solution $(X_t)_{t \geq 0}$ satisfies $F_1 \subseteq \mc{C}_b^{\alpha-}(\mbb{R}^d)$ and \begin{equation*}
			\|f\|_{\mc{C}_b^{\alpha-1/k}(\mbb{R}^d)} \leq C_k (\|f\|_{\infty}+\|A_e f\|_{\infty}), \qquad f \in F_1.
		\end{equation*}
		\item\label{ex-7-iv} Using coupling methods, Liang et. al \cite{wang18-3} recently studied the regularity of semigroups associated with solutions to SDEs with additive noise \begin{equation*}
			dX_t = b(X_{t-}) \, dt+ dL_t
		\end{equation*}
		for a large class of driving L\'evy processes $(L_t)_{t \geq 0}$. The results from \cite{wang18-3} and Section~\ref{main} can be used to obtain Schauder estimates for functions in the domain of the infinitesimal generator of $(X_t)_{t \geq 0}$.
		
\end{enumerate} \end{bem} 

\section{Proofs of results from Section~\ref{main}} \label{proofs}

For the proof of Proposition~\ref{feller-7} we use the following lemma which shows how H\"{o}lder estimates for a Feller semigroup translate to regularity properties of the $\lambda$-potential operator \begin{equation*}
	R_{\lambda} u := \int_{(0,\infty)} e^{-\lambda t} P_t u \, dt, \qquad u \in \mc{B}_b(\mbb{R}^d), \, \lambda>0.
\end{equation*}

\begin{lem} \label{feller-3}
	Let $(X_t)_{t \geq 0}$ be a Feller process with semigroup $(P_t)_{t \geq 0}$ and $\lambda$-potential operators $(R_{\lambda})_{\lambda>0}$. \begin{enumerate}
		\item\label{feller-3-i} If there exist $T>0$, $M \geq 0$, $\kappa \geq 0$ and $\beta \geq 0$ such that \begin{equation*}
		\|P_t u\|_{\mc{C}_b^{\kappa}(\mbb{R}^d)} \leq M t^{-\beta} \|u\|_{\infty}
		\end{equation*}
		for all $t \in (0,T)$ and $u \in \mc{B}_b(\mbb{R}^d)$, then \begin{equation}
		\|P_t u\|_{\mc{C}^{\kappa}_b(\mbb{R}^d)} \leq M e^{mt} t^{-\beta} \|u\|_{\infty} \label{proofs-eq3}
		\end{equation} 
		for all $t>0$ and $u \in \mc{B}_b(\mbb{R}^d)$ where $m := \log(2) \beta/T$.
		\item\label{feller-3-ii}  If $u \in \mc{B}_b(\mbb{R}^d)$ is such that \eqref{proofs-eq3} holds for some $\beta \in [0,1)$, then $R_{\lambda} u \in \mc{C}_b^{\kappa}(\mbb{R}^d)$ for any $\lambda>m$ and \begin{equation*}
		\|R_{\lambda} u\|_{\mc{C}_b^{\kappa}(\mbb{R}^d)} \leq \|u\|_{\infty}  \left( \frac{1}{\lambda-m} + \frac{1}{1-\beta} \right) (M+1).
		\end{equation*} \end{enumerate} 
\end{lem}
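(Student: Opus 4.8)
The plan is to prove part \eqref{feller-3-i} by the standard semigroup-bootstrapping trick and then part \eqref{feller-3-ii} by splitting the integral defining $R_\lambda$ at $t=1$ (or at some fixed threshold) and estimating the two pieces with the $\mc{C}_b^{\kappa}$-regularizing bound and the contractivity of $P_t$ on $\|\cdot\|_\infty$, respectively.

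For \eqref{feller-3-i}, fix $t>0$ and write $t = nT/2 + s$ with $n \in \mbb{N}_0$ and $s \in [0,T/2)$; equivalently, pick $n = \lceil 2t/T \rceil - 1$ or so, the precise bookkeeping being routine. Using the semigroup property $P_t = P_{t/2} P_{t/2}$ and the contractivity $\|P_r v\|_\infty \leq \|v\|_\infty$, one gets for $t \in [T/2, T)$ that $\|P_t u\|_{\mc{C}_b^{\kappa}} = \|P_{t/2}(P_{t/2}u)\|_{\mc{C}_b^{\kappa}} \leq M (t/2)^{-\beta} \|P_{t/2}u\|_\infty \leq M (t/2)^{-\beta}\|u\|_\infty \leq M 2^{\beta} t^{-\beta}\|u\|_\infty$. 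Iterating: for general $t$ choose the largest $n$ with $t/2^{n} < T$ (so $t/2^n \geq T/2$ eventually, up to trivial adjustment), apply the bound on $P_{t/2^n}$ and contractivity on the remaining $n-1$ compositions, and collect the factor $2^{n\beta}$. Since $2^{n\beta} = e^{n\beta \log 2} \leq e^{\beta \log 2 \cdot (2t/T)} \cdots$; choosing $m := \log(2)\beta/T$ and being slightly generous with the exponent gives $\|P_t u\|_{\mc{C}_b^{\kappa}} \leq M e^{mt} t^{-\beta}\|u\|_\infty$ for all $t>0$. I do not expect any genuine obstacle here; it is just careful choice of the dyadic exponent so that the accumulated constant is exactly $e^{mt}$ with $m = \log(2)\beta/T$.

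For \eqref{feller-3-ii}, assume $u$ satisfies \eqref{proofs-eq3} and let $\lambda>m$. Split
\begin{equation*}
	R_\lambda u = \int_0^1 e^{-\lambda t} P_t u \, dt + \int_1^\infty e^{-\lambda t} P_t u \, dt =: I_1 + I_2.
\end{equation*}
For $I_2$, since $\|P_t u\|_{\mc{C}_b^{\kappa}} \leq M e^{mt} t^{-\beta}\|u\|_\infty \leq M e^{mt}\|u\|_\infty$ for $t \geq 1$ (using $t^{-\beta}\leq 1$), we get $\|I_2\|_{\mc{C}_b^{\kappa}} \leq M\|u\|_\infty \int_1^\infty e^{-(\lambda-m)t}\,dt \leq M\|u\|_\infty \frac{1}{\lambda-m}$. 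For $I_1$, bound $e^{-\lambda t}\leq 1$ and use $\|P_t u\|_{\mc{C}_b^{\kappa}} \leq M e^{mt}t^{-\beta}\|u\|_\infty \leq M e^{m} t^{-\beta}\|u\|_\infty$ for $t\in(0,1)$; wait—to land on the clean constant $\frac{1}{1-\beta}$ one rather argues: on $(0,1)$ the simpler estimate $\|P_t u\|_{\mc{C}_b^{\kappa}}\leq M t^{-\beta}\|u\|_\infty$ from the hypothesis of \eqref{feller-3-i} (or from \eqref{proofs-eq3} with $e^{mt}\leq e^m$ absorbed) gives $\|I_1\|_{\mc{C}_b^{\kappa}} \leq M\|u\|_\infty \int_0^1 t^{-\beta}\,dt = \frac{M}{1-\beta}\|u\|_\infty$, which converges precisely because $\beta<1$. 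Summing, $\|R_\lambda u\|_{\mc{C}_b^{\kappa}} \leq M\|u\|_\infty\left(\frac{1}{\lambda-m}+\frac{1}{1-\beta}\right)$; the extra $+1$ in the stated constant $(M+1)$ is slack to absorb the $\|\cdot\|_\infty$ part of the $\mc{C}_b^\kappa$-norm if one prefers to bound $\|R_\lambda u\|_\infty$ separately, or simply harmless generosity. One should also note that the integral $\int_0^\infty e^{-\lambda t} P_t u\,dt$ converges in $\mc{C}_b^{\kappa}(\mbb{R}^d)$, not merely pointwise, because the integrand is $\mc{C}_b^{\kappa}$-valued with integrable norm by the bounds just derived, so $R_\lambda u$ genuinely lies in $\mc{C}_b^{\kappa}(\mbb{R}^d)$ (using that this space is Banach). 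The only point requiring a little care is the interplay between $\lambda>m$ and the exponential growth factor $e^{mt}$ on $[1,\infty)$—this is exactly why the hypothesis $\lambda>m$ appears—but it is immediate once the split is made.
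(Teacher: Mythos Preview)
Your argument is essentially the same as the paper's: the dyadic semigroup bootstrapping for \eqref{feller-3-i} and the split of the potential integral at $t=1$ for \eqref{feller-3-ii}. Two small points where the paper is cleaner:

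\emph{On the integral in \eqref{feller-3-ii}.} The paper does not pass the $\mc{C}_b^{\kappa}$-norm under the integral (which would require a Bochner-integral interpretation and hence strong measurability of $t \mapsto P_t u$ in $\mc{C}_b^{\kappa}$). Instead it fixes $x,h$ and uses the scalar identity $\Delta_h^K R_{\lambda} u(x) = \int_0^\infty e^{-\lambda t}\,\Delta_h^K P_t u(x)\,dt$, then bounds the integrand pointwise by $M e^{mt} t^{-\beta}\|u\|_\infty |h|^{\kappa}$. This is entirely elementary and yields exactly the same estimate without any functional-analytic overhead; your Bochner remark is then unnecessary.

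\emph{On the constant over $(0,1)$.} You discard $e^{-\lambda t}$ first and are then stuck with a spurious factor $e^{mt}\leq e^m$. The paper keeps both exponentials and combines them: $e^{-\lambda t}\cdot M e^{mt} t^{-\beta} = M e^{-(\lambda-m)t}t^{-\beta}\leq M t^{-\beta}$ for $\lambda>m$, giving $\int_0^1 M t^{-\beta}\,dt = M/(1-\beta)$ on the nose. The sup-norm contribution $\|R_{\lambda}u\|_\infty \leq \lambda^{-1}\|u\|_\infty \leq (\lambda-m)^{-1}\|u\|_\infty$ is what produces the ``$+1$'' in $(M+1)$, exactly as you guessed.
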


\begin{proof}
	\firstpara{\eqref{feller-3-i}} By the contraction property of $(P_t)_{t \geq 0}$, we have $\|P_{t} u\|_{\mc{C}_b^{\kappa}(\mbb{R}^d)} \leq \|P_{t/2} u\|_{\mc{C}_b^{\kappa}(\mbb{R}^d)}$ for all $t \geq 0$, and so \begin{equation*}
		\|P_{t} u\|_{\mc{C}_b^{\kappa}(\mbb{R}^d)} \leq M \left( \frac{t}{2} \right)^{-\beta} = M 2^{\beta} t^{-\beta} \fa t \in (0,2T).
	\end{equation*}
	Iterating the procedure, it follows easily that \eqref{proofs-eq3} holds. 
	
	\para{\eqref{feller-3-ii}} Let $u \in \mc{B}_b(\mbb{R}^d)$ be such that \eqref{proofs-eq3} holds for some $\beta <1$. If we choose $K>\kappa$, then \eqref{proofs-eq3} gives that the iterated difference operator $\Delta_h^K$, cf.\ \eqref{def-eq3}, satisfies \begin{equation*}
	|\Delta_h^K P_t u(x)| \leq M e^{mt} t^{-\beta} \|u\|_{\infty} |h|^{\kappa}
	\end{equation*}
	for any $x \in \mbb{R}^d$ and $|h| \leq 1$. Since, by the linearity of the integral, \begin{equation*}
	\Delta_h^K R_{\lambda} u(x) = \int_{(0,\infty)} e^{-\lambda t} \Delta_h^K P_t u(x) \, dt
	\end{equation*}
	we find that \begin{equation*}
	|\Delta_h^K R_{\lambda} u(x)| \leq M |h|^{\kappa} \|u\|_{\infty} \int_{(0,\infty)} e^{-t(\lambda-m)} t^{-\beta} \, dt.
	\end{equation*}
	On the other hand, we have $\|R_{\lambda} u\|_{\infty} \leq \lambda^{-1} \|u\|_{\infty}$, and therefore we get for all $\lambda>m$ \begin{equation*} 
	\|R_{\lambda} u\|_{\mc{C}_b^{\kappa}(\mbb{R}^d)}
	\leq \lambda^{-1} \|u\|_{\infty} + M \|u\|_{\infty} \left( \int_0^1 t^{-\beta} \, dt + \int_1^{\infty} e^{-t(\lambda-m)} \, dt \right)
	\end{equation*}
	which proves the assertion. 
\end{proof}

We are now ready to prove Proposition~\ref{feller-7}.

\begin{proof}[Proof of Proposition~\ref{feller-7}]
	It follows from Lemma~\ref{feller-3}\eqref{feller-3-i} that \eqref{proofs-eq3} holds with $m := \log(2) \beta/T$ for any $u \in \mc{B}_b(\mbb{R}^d)$. If we set $\lambda := 2m$ and $u := \lambda f-A_ef$ for $f \in F_1$, then $f = R_{\lambda} u$. Applying Lemma~\ref{feller-3}\eqref{feller-3-ii} we find that \begin{equation*}
	\|f\|_{\mc{C}_b^{\kappa}(\mbb{R}^d)}
	= \|R_{\lambda} u\|_{\mc{C}_b^{\kappa}(\mbb{R}^d)}
	\leq K \|u\|_{\infty}
	\leq \lambda K  \|f\|_{\infty} + K \|A_ef\|_{\infty}
	\end{equation*}
	for $K := 2m^{-1} + (1-\beta)^{-1}$.
\end{proof}

For the proof of Theorem~\ref{feller-9} we need two auxiliary results.

\begin{lem} \label{feller-13}
	Let $(X_t)_{t \geq 0}$ and $(Y_t)_{t \geq 0}$ be Feller processes with infinitesimal generator $(A,\mc{D}(A))$ and $(L,\mc{D}(L))$, respectively, such that \begin{equation*}
	Af(z) = -q(z,D) f(z) \quad \text{and} \quad Lf(z) = -p(z,D) f(z) \fa f \in C_c^{\infty}(\mbb{R}^d), \,z \in \mbb{R}^d,
	\end{equation*}
	cf.\ \eqref{pseudo}, and assume that the $(A,C_c^{\infty}(\mbb{R}^d))$-martingale problem is well-posed. Let $U \subseteq \mbb{R}^d$ be an open set such that \begin{equation*}
	p(z,\xi) = q(z,\xi) \fa z \in U, \,\xi \in \mbb{R}^d.
	\end{equation*}
	If $x \in U$ and $r>0$ are such that $\overline{B(x,r)} \subseteq U$, then for the stopping times \begin{equation}
	\tau^X := \inf\{t>0; |X_t-x|>r \} \qquad
	\tau^Y := \inf\{t>0; |Y_t-x|>r\}  \label{feller-eq25}
	\end{equation}
	the random variables $X_{t \wedge \tau^X}$ and $Y_{t \wedge \tau^Y}$ are equal in distribution with respect to $\mbb{P}^x$ for any $t \geq 0$.\footnote{Here and below we are a bit sloppy in our notation. The Feller processes $(X_t)_{t \geq 0}$ and $(Y_t)_{t \geq 0}$ each come with a family of probability measures, i.e.\,their semigroups are of the form $\int f(X_t) \, \mbb{P}^x(dy)$ and $\int f(Y_t) \, \tilde{\mbb{P}}^x(dy)$, respectively, for families of probability measures $(\mbb{P}^x)_{x \in \mbb{R}^d}$ and $(\tilde{\mbb{P}}^x)_{x \in \mbb{R}^d}$. To keep the notation simple, we will not distinguish these two families. Formally written, the assertion of Lemma~\ref{feller-12} reads $\mbb{P}^x(X_{t \wedge \tau^X} \in \cdot) = \tilde{\mbb{P}}^x(Y_{t \wedge \tau^Y} \in \cdot)$.}
\end{lem}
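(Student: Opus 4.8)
The plan is to prove the stopped processes agree in law by a localized martingale-problem argument. The key observation is that for a $C_c^\infty$ function $f$, the quantity $Af(z) = -q(z,D)f(z)$ depends on $f$ only through the values of $q(z,\cdot)$, and these coincide with $p(z,\cdot)$ for $z \in U$. So on the ball $\overline{B(x,r)} \subseteq U$ the two generators act identically. First I would set up the standard machinery: since $(X_t)_{t\geq 0}$ is a Feller process with generator extending $A|_{C_c^\infty}$, it solves the $(A,C_c^\infty(\mbb{R}^d))$-martingale problem, and likewise $(Y_t)_{t\geq 0}$ solves the $(L,C_c^\infty(\mbb{R}^d))$-martingale problem. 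The goal is to show that the law of $(Y_{t\wedge\tau^Y})_{t\geq 0}$ under $\mbb{P}^x$ also solves a \emph{stopped} martingale problem that, by well-posedness of the $(A,C_c^\infty)$-problem, must coincide with the law of $(X_{t\wedge\tau^X})_{t\geq 0}$.

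The core step is a localization lemma for martingale problems (this is classical, see e.g.\ Ethier--Kurtz or Stroock--Varadhan, and variants appear in \cite{ethier,hoh}): if $(Z_t)_{t\geq 0}$ solves the $(L,C_c^\infty)$-martingale problem started at $x \in U$ and $\tau$ is the exit time from $\overline{B(x,r)} \subseteq U$, then the stopped process solves a ``stopped'' martingale problem in which only the values of the generator on $\overline{B(x,r)}$ matter. Concretely, I would argue that for any $f \in C_c^\infty(\mbb{R}^d)$ the process
\begin{equation*}
	M_t^f := f(Z_{t\wedge\tau}) - f(x) - \int_0^{t\wedge\tau} Lf(Z_s)\,ds
\end{equation*}
is a martingale, and then note that on $\{s < \tau\}$ we have $Z_s \in \overline{B(x,r)} \subseteq U$, so $Lf(Z_s) = -p(Z_s,D)f(Z_s) = -q(Z_s,D)f(Z_s) = Af(Z_s)$. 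Hence $M_t^f$ is \emph{also} the stopped $A$-martingale. The same computation applies to the stopped process $X_{t\wedge\tau^X}$. Thus both $\law(X_{\cdot\wedge\tau^X})$ and $\law(Y_{\cdot\wedge\tau^Y})$ (pushed forward appropriately) are solutions, started from $x$, of the stopped $(A,C_c^\infty)$-martingale problem on $\overline{B(x,r)}$.

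The final step is to invoke uniqueness. Since the $(A,C_c^\infty(\mbb{R}^d))$-martingale problem is well-posed by hypothesis, its localized (stopped) version on $\overline{B(x,r)}$ is also well-posed --- this is another standard localization fact: well-posedness of the full martingale problem implies well-posedness of the stopped one (one can, for instance, restart the process after the exit time and patch solutions together, or directly cite the localization theorem for martingale problems). Therefore the two stopped laws coincide: $\mbb{P}^x(X_{t\wedge\tau^X}\in\cdot) = \tilde{\mbb{P}}^x(Y_{t\wedge\tau^Y}\in\cdot)$ for every $t \geq 0$, which is the claim. The main obstacle is the careful statement and application of the martingale-problem localization principle --- making precise that ``the generator outside $\overline{B(x,r)}$ is irrelevant for the stopped process'' and that well-posedness localizes. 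This is technical bookkeeping rather than a conceptual difficulty; I would either cite a clean reference (e.g.\ \cite[Chapter 4]{ethier} or the treatment in \cite{hoh}) or spell out the patching argument, being mindful that $\tau^X$ and $\tau^Y$ are exit times from the \emph{closed} ball and that right-continuity of paths ensures $Z_s \in \overline{B(x,r)}$ for $s < \tau$.
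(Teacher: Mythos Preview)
Your proposal is correct and follows essentially the same localization-of-martingale-problems argument as the paper. The only cosmetic difference is that the paper first localizes to the open set $U$ (using the exit time $\sigma$ from $U$ and citing \cite[Theorem~4.6.1]{ethier} for the well-posedness of the local martingale problem on an open set), obtains $(X_{t\wedge\sigma^X})_{t\ge0}\stackrel{d}{=}(Y_{t\wedge\sigma^Y})_{t\ge0}$ as processes, and then passes to $\tau^X\le\sigma^X$, $\tau^Y\le\sigma^Y$ via an approximation by discrete stopping times; you go directly to the stopped problem on $\overline{B(x,r)}$. Both routes are standard and yield the same conclusion.
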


\begin{proof}
	Set
	\begin{equation*}
	\sigma^X := \inf\{t>0; X_{t} \notin U \, \, \text{or} \, \, X_{t-} \notin U\} \qquad
	\sigma^Y := \inf\{t>0; Y_{t} \notin U \, \, \text{or} \, \, Y_{t-} \notin U\}
	\end{equation*}
	It follows from the well-posedness of the $(A,C_c^{\infty}(\mbb{R}^d))$-martingale problem that the local martingale problem for $U$ is well-posed, cf.\ \cite[Theorem 4.6.1]{ethier} or \cite{hoh} for details. On the other hand, Dynkin's formula shows that both $(X_{t \wedge \sigma^X})_{t \geq 0}$ and $(Y_{t \wedge \sigma^Y})_{t \geq 0}$ are solutions to the local martingale problem, and therefore $(X_{t \wedge \sigma^X})_{t \geq 0}$ equals in distribution $(Y_{t \wedge \sigma^Y})_{t \geq 0}$ with respect to $\mbb{P}^x$ for any $x \in U$. If $x \in U$ and $r>0$ are such that $\overline{B(x,r)} \subseteq U$, then it follows from the definition of $\tau^X$ and $\tau^Y$ that $\tau^X \leq \sigma^X$ and $\tau^Y \leq \sigma_U^Y$; in particular, \begin{equation*}
	X_{t \wedge \tau^X} = X_{t \wedge \tau^X \wedge \sigma^X} \quad \text{and} \quad Y_{t \wedge \tau^Y} = Y_{t \wedge \tau^Y \wedge \sigma^Y}.
	\end{equation*}
	Approximating $\tau^X$ and $\tau^Y$ from above by sequences of discrete-valued stopping times, we conclude from $(X_{t \wedge \sigma^X})_{t \geq 0} \stackrel{d}{=} (Y_{t \wedge \sigma^Y})_{t \geq 0}$ that $X_{t \wedge \tau^X} \stackrel{d}{=} Y_{t \wedge \tau^Y}$. 
\end{proof}

\begin{lem} \label{feller-15}
	Let $(Y_t)_{t \geq 0}$ be a Feller process with infinitesimal generator $(A,\mc{D}(A))$ and symbol \begin{equation*}
	p(x,\xi) = -i b(x) \cdot \xi + \int_{y \neq 0} \left( 1- e^{iy \cdot \xi} + i y \cdot \xi \I_{(0,1)}(|y|) \right) \, \nu(x,dy), \qquad x,\xi \in \mbb{R}^d.
	\end{equation*}
	If $\alpha>1$ and $U  \in \mc{B}(\mbb{R}^d)$ are such that \begin{equation*}
	\sup_{z \in U} \left( |b(z)| + \int_{y \neq 0} \min\{1,|y|^{\alpha}\} \, \nu(z,dy) \right)<\infty,
	\end{equation*}
	then there exists an absolute constant $c>0$ such that the stopped process $(Y_{t \wedge \tau_U})_{t \geq 0}$, \begin{equation*}
	\tau_U := \inf\{t \geq 0; Y_t \notin U\},
	\end{equation*}
	satisfies \begin{align}
	\mbb{E}^x(|Y_{t \wedge \tau_U}-x|^{\alpha} \wedge 1) \leq c t \sup_{z \in U} \left( |b(z)| + \int_{y \neq 0} \min\{1,|y|^{\alpha}\} \, \nu(z,dy) \right) \label{feller-eq28}
	\end{align}
	for all $x \in U$, $t \geq 0$.
\end{lem}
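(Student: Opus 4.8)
The plan is to apply Dynkin's formula to a suitably chosen $C_c^\infty$-approximation of the map $\phi(y) := \min\{|y-x|^\alpha, 1\}$ and then let the approximation exhaust $\mbb R^d$, keeping track only of the generator evaluated inside $U$ because of the stopping. First I would fix $x \in U$ and, for $R > 1$, pick $\phi_R \in C_c^\infty(\mbb R^d)$ with $\I_{\overline{B(x,R)}} \le \dots$ — more precisely, a cutoff so that $\phi_R(y) = \min\{|y-x|^\alpha, 1\}$ on $B(x,R)$, $0 \le \phi_R \le 1$ everywhere, and $\|\nabla \phi_R\|_\infty, \|\nabla^2 \phi_R\|_\infty$ bounded uniformly in $R$ (possible since $\alpha > 1$, so $y \mapsto \min\{|y-x|^\alpha,1\}$ is $C^1$ with Lipschitz gradient). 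Since $(Y_t)_{t\ge 0}$ solves the $(A,\mc D(A))$-martingale problem and $C_c^\infty(\mbb R^d) \subseteq \mc D(A)$, Dynkin's formula (Theorem~\ref{gen-3}, eq.~\eqref{gen-eq6}) applied with the stopping time $t \wedge \tau_U$ gives
\begin{equation*}
	\mbb E^x \phi_R(Y_{t \wedge \tau_U}) = \mbb E^x \int_0^{t \wedge \tau_U} A\phi_R(Y_s)\, ds,
\end{equation*}
using $\phi_R(x) = 0$.

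The key step is a pointwise bound $|A\phi_R(z)| \le c\,(|b(z)| + \int_{y\neq 0}\min\{1,|y|^\alpha\}\,\nu(z,dy))$ valid for $z \in U$ (indeed for all $z$), with $c$ absolute and independent of $R$. Writing out $A\phi_R(z) = -p(z,D)\phi_R(z)$ via \eqref{symbol}–\eqref{pseudo} in its integro-differential form,
\begin{equation*}
	A\phi_R(z) = b(z)\cdot\nabla\phi_R(z) + \int_{y\neq 0}\left(\phi_R(z+y) - \phi_R(z) - y\cdot\nabla\phi_R(z)\,\I_{(0,1)}(|y|)\right)\nu(z,dy),
\end{equation*}
the drift term is controlled by $\|\nabla\phi_R\|_\infty |b(z)|$. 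For the jump term I would split at $|y|=1$: on $|y|\le 1$ a second-order Taylor estimate gives $|\phi_R(z+y)-\phi_R(z)-y\cdot\nabla\phi_R(z)| \le \tfrac12\|\nabla^2\phi_R\|_\infty |y|^2 \le C|y|^\alpha$ (here $\alpha \le 2$, so $|y|^2 \le |y|^\alpha$ for $|y|\le 1$), and on $|y|>1$ simply $|\phi_R(z+y)-\phi_R(z)| \le 2\|\phi_R\|_\infty \le 2 = 2\min\{1,|y|^\alpha\}$; together these give the claimed bound against $\int\min\{1,|y|^\alpha\}\,\nu(z,dy)$. Crucially all constants here depend only on $\|\nabla^2\phi_R\|_\infty$ etc., which are uniform in $R$.

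Combining, for $z$ ranging over the path up to $\tau_U$ (hence $z \in \overline U$, but the bound holds everywhere anyway) we obtain $|A\phi_R(Y_s)| \le c\sup_{z\in U}(|b(z)| + \int\min\{1,|y|^\alpha\}\,\nu(z,dy))$ for $s < \tau_U$, so that
\begin{equation*}
	\mbb E^x \phi_R(Y_{t\wedge\tau_U}) \le c\, t\, \sup_{z\in U}\left(|b(z)| + \int_{y\neq 0}\min\{1,|y|^\alpha\}\,\nu(z,dy)\right).
\end{equation*}
Finally I let $R \to \infty$: since $0 \le \phi_R \le 1$, $\phi_R \to \min\{|\cdot - x|^\alpha,1\}$ pointwise, and $\phi_R(Y_{t\wedge\tau_U}) \to \min\{|Y_{t\wedge\tau_U}-x|^\alpha,1\}$ $\mbb P^x$-a.s. (for each fixed $\omega$ eventually $Y_{t\wedge\tau_U}(\omega) \in B(x,R)$), dominated convergence yields $\mbb E^x(\min\{|Y_{t\wedge\tau_U}-x|^\alpha,1\})$ on the left, which is \eqref{feller-eq28}.

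The main obstacle is constructing the cutoffs $\phi_R$ with gradient and Hessian bounds uniform in $R$ while matching $\min\{|y-x|^\alpha,1\}$ on large balls — one has to interpolate between $|y-x|^\alpha$ (which grows) and the constant $1$ in the annulus where $|y-x|^\alpha \approx 1$, and verify that this can be done with $R$-independent $C^2$ bounds; this is routine but must be done carefully so that the constant $c$ in \eqref{feller-eq28} is genuinely absolute. A secondary point is ensuring the Taylor remainder estimate on $|y|\le 1$ uses only $\|\nabla^2\phi_R\|_\infty$ and that the passage $R\to\infty$ is justified by a single dominating function (here the constant $1$).
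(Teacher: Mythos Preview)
Your overall strategy (approximate $\min\{|\cdot-x|^\alpha,1\}$ by $C_c^\infty$ functions, apply Dynkin's formula with the stopping time $t\wedge\tau_U$, bound the generator on $U$, pass to the limit) is the same as the paper's, but the construction of the approximants $\phi_R$ breaks down. The function $y\mapsto\min\{|y-x|^\alpha,1\}$ is \emph{not} $C^1$ with Lipschitz gradient: there is a kink at $|y-x|=1$, and even on $\{|y-x|<1\}$ the gradient $\alpha|y-x|^{\alpha-2}(y-x)$ is only $(\alpha-1)$-H\"older for $\alpha\in(1,2)$, with Hessian blowing up like $|y-x|^{\alpha-2}$ at $y=x$. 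Consequently no $\phi_R\in C_c^\infty(\mbb{R}^d)$ can agree with $\min\{|y-x|^\alpha,1\}$ on a neighbourhood of $x$, and any smooth approximation has $\|\nabla^2\phi_R\|_\infty$ unbounded as the approximation improves near $x$; your Taylor bound $\tfrac12\|\nabla^2\phi_R\|_\infty|y|^2\le C|y|^\alpha$ therefore does not yield an absolute constant.

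The remedy, which is what the paper does, is to control the remainder via the H\"older--Zygmund norm rather than the $C^2$-norm: for $\alpha>1$ and $g\in\mc{C}_b^\alpha(\mbb{R}^d)$ one has $|\nabla g(z)|\le C\|g\|_{\mc{C}_b^\alpha}$ and $|g(z+y)-g(z)-y\cdot\nabla g(z)\,\I_{(0,1)}(|y|)|\le C\|g\|_{\mc{C}_b^\alpha}\min\{1,|y|^\alpha\}$, which is exactly the estimate needed for $|Ag(z)|$. The paper takes compactly supported $f_k\in\mc{C}_b^\alpha(\mbb{R}^d)\cap C_c(\mbb{R}^d)$ agreeing with $\min\{1,|\cdot|^\alpha\}$ on growing balls and with $\sup_k\|f_k\|_{\mc{C}_b^\alpha}<\infty$, mollifies $f_k(\cdot-x)$ to obtain $f_{k,\eps}\in C_c^\infty(\mbb{R}^d)$ (mollification does not increase $\|\cdot\|_{\mc{C}_b^\alpha}$), applies Dynkin's formula to $f_{k,\eps}$, and passes to the limit $\eps\to0$, $k\to\infty$ via Fatou's lemma. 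Once you replace your $C^2$-bound by this $\mc{C}_b^\alpha$-bound, the rest of your argument goes through unchanged.
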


Note that \eqref{feller-eq28} implies, by Jensen's inequality,  that the moment estimate \begin{equation}
	\mbb{E}^x(|Y_{t \wedge \tau_U}-x|^{\beta} \wedge 1) \leq c' t^{\beta/\alpha} \sup_{z \in U} \left( |b(z)| + \int_{y \neq 0} \min\{1,|y|^{\alpha}\} \, \nu(z,dy) \right)^{\beta/\alpha} 
\end{equation}
holds for any $\beta \in [0,\alpha]$, $x \in U$ and $t \geq 0$. If $(Y_t)_{t \geq 0}$ has a compensated drift, in the sense that $b(z) = \int_{|y|<1} y \, \nu(z,dy)$ for all $z \in U$, then Lemma~\ref{feller-15} holds also for $\alpha \in (0,1]$. Let us mention that estimates for fractional moments of Feller processes were studied in \cite{moments}; it is, however, not immediate how Lemma~\ref{feller-15} can be derived from the results in \cite{moments}.

\begin{proof}[Proof of Lemma~\ref{feller-15}]
	Let $(f_k)_{k \in \mbb{N}} \subseteq \mc{C}_b^{\alpha}(\mbb{R}^d) \cap C_c(\mbb{R}^d)$ be such that $f_k \geq 0$, $f_k(z)=\min\{1,|z|^{\alpha}\}$ for $|z| \leq k$ and $M:=\sup_k \|f_k\|_{\mc{C}_b^{\alpha}}<\infty$. Pick a function $\chi \in C_c^{\infty}(\mbb{R}^d)$, $\chi \geq 0$ such that $\int_{\mbb{R}^d} \chi(x) \, dx=1$ and set $\chi_{\eps}(z) := \eps^{-1} \chi(\eps^{-1}z)$.  If we define for fixed $x \in U$ \begin{equation*}
	f_{k,\eps}(z) := (f_k(\cdot -x) \ast \chi_{\eps})(z) := \int_{\mbb{R}^d} f_k(z-x-y) \chi_{\eps}(y) \, dy, \qquad z \in \mbb{R}^d,
	\end{equation*}
	then $f_{k,\eps} \to f_k(\cdot-x)$ uniformly as $\eps \to 0$ and $\|f_{k,\eps}\|_{\mc{C}_b^{\alpha}(\mbb{R}^d)} \leq M$. As $f_{k,\eps} \in C_c^{\infty}(\mbb{R}^d) \subseteq \mc{D}(A)$ an application of Dynkin's formula shows that \begin{equation*}
	\mbb{E}^x f_{k,\eps}(Y_{t \wedge \tau_U})-f_{k,\eps}(x) = \mbb{E}^x \left( \int_{(0,t \wedge \tau_U)} Af_{k,\eps}(Y_s) \, ds \right)
	\end{equation*}
	for all $t \geq 0$. Since $\alpha>1$ there exists an absolute constant $C>0$ such that \begin{equation*}
		|\nabla f_{k,\eps}(z)| \leq C \|f_{k,\eps}\|_{\mc{C}_b^{\alpha}(\mbb{R}^d)} \leq C M
	\end{equation*} and \begin{align*}
	\left| f_{k,\eps}(z+y)-f_{k,\eps}(z)-y \cdot \nabla f_{k,\eps}(z) \I_{(0,1)}(|y|)\right| \leq C \|f_{k,\eps}\|_{\mc{C}_b^{\alpha}(\mbb{R}^d)} \min\{1,|y|^{\alpha}\}
	\end{align*}
	for all $z \in \mbb{R}^d$. This implies \begin{align*}
	|Af_{k,\eps}(z)| 
	&\leq |b(z)| \, |\nabla f_{k,\eps}(z)| + \int_{y \neq 0} \left| f_{k,\eps}(z+y)-f_{k,\eps}(z)-y \cdot \nabla f_{k,\eps}(z) \I_{(0,1)}(|y|) \right| \, \nu(z,dy) \\
	&\leq C M \left( |b(z)| + \int_{y \neq 0} \min\{1,|y|^{\alpha}\} \, \nu(z,dy) \right)
	\end{align*}
	for any $z \in U$. Hence, \begin{equation*}
	\mbb{E}^x f_{k,\eps}(Y_{t \wedge \tau_U}) \leq f_{k,\eps}(x)+ 2C Mt \sup_{z \in U} \left( |b(z)| + \int_{y \neq 0} \min\{1,|y|^{\alpha}\} \, \nu(z,dy) \right)
	\end{equation*}
	for $x \in U$. Applying Fatou's lemma twice we conclude that \begin{align*}
	\mbb{E}^x \min\{1,|Y_{t \wedge \tau_U}-x|^{\alpha}\} 
	&\leq \liminf_{k \to \infty} \liminf_{\eps \to 0} \mbb{E}^x f_{k,\eps}(Y_{t \wedge \tau_U})  \\
	&\leq 2C Mt \sup_{z \in U} \left( |b(z)| + \int_{y \neq 0} \min\{1,|y|^{\alpha}\} \, \nu(z,dy) \right). \qedhere
	\end{align*}
\end{proof}

We are now ready to prove Theorem~\ref{feller-9}.

\begin{proof}[Proof of Theorem~\ref{feller-9}]
	Since $x \in \mbb{R}^d$ is fixed throughout this proof, we will omit the superscript $x$ in the notation which we used in the statement of Theorem~\ref{feller-9}, e.g.\, we will write $(Y_t)_{t \geq 0}$ instead of $(Y_t^{(x)})_{t \geq 0}$, $L$ instead of $L^{(x)}$ etc. \par
	Denote by $(L_e,\mc{D}(L_e))$ the extended generator of $(Y_t)_{t \geq 0}$, and fix a truncation function $\chi \in C_c^{\infty}(\mbb{R}^d)$ such that $\I_{\overline{B(x,\delta)}} \leq \chi \leq \I_{\overline{B(x,2\delta)}}$ and $\|\chi\|_{C_b^2(\mbb{R}^d)} \leq 10 \delta^{-2}$. To prove the assertion it suffices by \eqref{C4} and Proposition~\ref{feller-7} to show that $v:=f \cdot \chi \in \mc{D}(L_e)$ and \begin{equation}
	\|L_e v\|_{\infty} 
	\leq C \left(\|A_e f\|_{\infty}+\|f\|_{\infty}+\|f\|_{\mc{C}_b^{\varrho(x)}(\overline{B(x,4\delta)})}\right) \label{feller-eq30}
	\end{equation}
	for a suitable constant $C>0$. The first -- and main-- step is to estimate \begin{equation}
	\sup_{t \in (0,1)} \frac{1}{t} \sup_{z \in \mbb{R}^d} |\mbb{E}^z v(Y_{t \wedge \tau_{\delta}^z})-v(z)| \label{feller-eq31}
	\end{equation}
	for the stopping time \begin{equation*}
	\tau_{\delta}^z := \inf\{t>0; |Y_t-z|> \delta\}.
	\end{equation*}
	We consider separately the cases  $z \in B(x,3\delta)$ and $z \in \mbb{R}^d \backslash B(x,3\delta)$. 	For $z \in \mbb{R}^d \backslash B(x,3\delta)$ it follows from $\spt \chi \subseteq \overline{B(x,2\delta)}$ that $v=0$ on $\overline{B(z,\delta)}$, and so \begin{equation*}
	v(Y_{t \wedge \tau_{\delta}^z}(\omega))-v(z)=0 \fa \omega \in \{\tau_{\delta}^z > t\}.
	\end{equation*}
	Hence, \begin{equation*}
	|\mbb{E}^z v(Y_{t \wedge \tau_{\delta}^z})-v(z)| 
	\leq 2\|v\|_{\infty} \mbb{P}^z(\tau_{\delta}^z \leq t).
	\end{equation*}
	Applying the maximal inequality \eqref{max} for Feller processes we find that there exists an absolute constant $c_1>0$ such that \begin{align*}
	|\mbb{E}^z v(Y_{t \wedge \tau_{\delta}^z})-v(z)| 
	&\leq c_1 t \|f\|_{\infty}  \sup_{y \in \mbb{R}^d} \sup_{|\xi| \leq \delta^{-1}} |p(y,\xi)|
	\end{align*}
	for all $z \in \mbb{R}^d \backslash B(x,3\delta)$; the right-hand side is finite since $p$ has, by assumption, bounded coefficients. \par
	For $z \in B(x,3\delta)$ we write \begin{equation*}
	|\mbb{E}^z v(Y_{t \wedge \tau_{\delta}^z})-v(z)| \leq I_1+I_2+I_3
	\end{equation*}
	for \begin{align*}
	I_1 &:= |\chi(z) \mbb{E}^z(f(Y_{t \wedge \tau_{\delta}^z})-f(z))| \\
	I_2 &:= |f(z) \mbb{E}^z(\chi(Y_{t \wedge \tau_{\delta}^z})-\chi(z))| \\
	I_3 &:= \left| \mbb{E}^z \left[ \big( f(Y_{t \wedge \tau_{\delta}^z})-f(z) \big)  \big( \chi(Y_{t \wedge \tau_{\delta}^z})-\chi(z) \big) \right] \right|.
	\end{align*}
	We estimate the terms separately. By \eqref{feller-eq21} and \eqref{C2}, it follows from Lemma~\ref{feller-13} that\begin{equation*}
	\mbb{E}^z f(X_{t \wedge \tau_{\delta}^z(X)}) = \mbb{E}^z f(Y_{t \wedge \tau_{\delta}^z}) \fa t \geq 0
	\end{equation*}
	where $\tau_{\delta}^z(X)$ is the exit time of $(X_t)_{t \geq 0}$ from $\overline{B(z,\delta)}$.  As $0 \leq \chi \leq 1$ we thus find \begin{align*}
	I_1
	\leq |\mbb{E}^z(f(X_{t \wedge \tau_{\delta}^z(X)})-f(z))| .
	\end{align*}
	Since $f \in F_1^X$ an application of Dynkin's formula \eqref{gen-eq6} shows that \begin{equation*}
	I_1 \leq
	\|A_e f\|_{\infty} \mbb{E}^z(t \wedge \tau_{\delta}^z(X))
	\leq \|A_e f\|_{\infty} t.
	\end{equation*}
	We turn to $I_2$. As $\chi \in C_c^{\infty}(\mbb{R}^d) \subseteq \mc{D}(L)$ we find from the (classical) Dynkin formula that \begin{align*}
	|I_2| 
	\leq \|f\|_{\infty} |\mbb{E}^z(\chi(Y_{t \wedge \tau_{\delta}^z})-\chi(z))|
	&= \|f\|_{\infty} \left| \mbb{E}^z \left( \int_{(0,t \wedge \tau_{\delta}^z)} L\chi(Y_s) \, ds \right) \right| 
	\leq t \|f\|_{\infty} \sup_{|z-x| \leq 4 \delta} |L\chi(z)|
	\end{align*}
	A straight-forward application of Taylor's formula shows that \begin{equation*}
	|L\chi(z)| \leq 2 \|\chi\|_{C_b^2(\mbb{R}^d)}  \left( |b(z)| + \int_{y \neq 0} \min\{1,|y|^2\} \, \nu(z,dy) \right). 
	\end{equation*}
	Since $0 \leq \varrho(x) \leq 1$ and $\chi$ is chosen such that $\|\chi\|_{C_b^2(\mbb{R}^d)} \leq 10 \delta^{-2}$  we thus get \begin{equation*}
	I_2 \leq 20 \delta^{-2} t \|f\|_{\infty}  \sup_{|z-x| \leq 4 \delta} \left( |b(z)| + \int_{y \neq 0} \min\{1,|y|^{1+\varrho(x)}\} \, \nu(z,dy) \right).
	\end{equation*}
	It remains to estimate $I_3$. Because of the assumptions on the H\"{o}lder regularity of $f$ on $\overline{B(x,4\delta)}$, we have \begin{equation*}
	I_3 \leq 16 \delta^{-2} (\|f\|_{\mc{C}_b^{\varrho(x)}(\overline{B(x,4\delta)})}+\|f\|_{\infty}) \|\chi\|_{C_b^1(\mbb{R}^d)} \mbb{E}^z (|Y_{t \wedge \tau_{\delta}^z}-z|^{1+\varrho(x)} \wedge 1).
	\end{equation*}
	It follows from Lemma~\ref{feller-15} that there exists an absolute constant $c_2>0$ such that \begin{align*}
	I_3
	&\leq c_2 \delta^{-4} t (\|f\|_{\mc{C}_b^{\varrho(x)}(\overline{B(x,4\delta)})}+\|f\|_{\infty}) \sup_{|z-x| \leq 4 \delta} \left( |b(z)|+ \int_{y \neq 0} \min\{|y|^{\varrho(x)+1},1\} \, \nu(z,dy) \right).
	\end{align*}
	Combining the estimates and applying Corollary~\ref{gen-5} we find that $v=\chi \cdot f \in \mc{D}(L_e)$ and \begin{align*}
	\|L_e v\|_{\infty} \leq C' \left(\|A_{\eps} f\|_{\infty} + \|f\|_{\infty} + \|f\|_{\mc{C}_b^{\varrho(x)}(\overline{B(x,4\delta)})} \right) 
	\end{align*}
	where \begin{align*}
	C'
	&:= c_3 \sup_{z \in \mbb{R}^d} \sup_{|\xi| \leq \delta^{-1}} |p(z,\xi)| +c_3 \delta^{-4} \sup_{|z-x| \leq 4 \delta} \left( |b(z)| + \int_{y \neq 0} \min\{|y|^{1+\varrho(x)},1\} \, \nu(z,dy) \right)
	\end{align*}
	for some absolute constant $c_3>0$. Since there exists an absolute constant $c_4>0$ such that \begin{equation*}
	\sup_{z \in \mbb{R}^d} \sup_{|\xi| \leq \delta^{-1}} |p(z,\delta)|
	\leq c_4 \sup_{z \in \mbb{R}^d} \left( |b(z)| + \int_{y \neq 0} \min\{1,|y|^2\} \, \nu(z,dy) \right) \delta^{-2}
	\end{equation*}
	for $\delta \in (0,1)$, cf.\ \cite[Lemma 6.2]{schnurr} and \cite[Theorem 2.31]{ltp}, we obtain, in particular, that 
	\begin{align*}
	\|L_e v\|_{\infty} \leq C'' \left(\|A_{\eps} f\|_{\infty} + \|f\|_{\infty} + \|f\|_{\mc{C}_b^{\varrho(x)}(\overline{B(x,4\delta)})}\right) 
	\end{align*}
	for \begin{align*}
	C''
	&:= c_5 \delta^{-4} \sup_{z \in \mbb{R}^d} \left( |b(z)| +\int_{y \neq 0} \min\{1,|y|^2\} \, \nu(z,dy) \right) 
	+ c_5 \delta^{-4} \sup_{|z-x| \leq 4 \delta}  \int_{|y| \leq 1} \min\{|y|^{1+\varrho(x)},1\} \, \nu(z,dy).
	\end{align*}
	This finishes the proof of \eqref{feller-eq30}. The continuous dependence of the constant $C>0$ in \eqref{feller-eq23} on the parameters $\beta(x) \in [0,1)$, $M(x) \in [0,\infty)$, $K(x) \in [0,\infty)$ follows from the fact that each of the constants in this proof depends continuously on these parameters, see also Lemma~\ref{feller-3}.
\end{proof}

The remaining part of this section is devoted to the proof of Theorem~\ref{feller-12}. We need the following auxiliary result. 

\begin{lem} \label{feller-21}
	Let $(Y_t)_{t \geq 0}$ be a Feller process with infinitesimal generator $(L,\mc{D}(L))$, symbol $p$ and characteristics $(b(x),Q(x),\nu(x,dy))$. For $x \in \mbb{R}^d$ and $r>0$ denote by \begin{equation*}
	\tau_r^x = \inf\{t>0; |Y_t-x|>r\}
	\end{equation*}
	the exit time from the closed ball $\overline{B(x,r)}$. For any fixed $x \in \mbb{R}^d$ and $r>0$ the family of measures \begin{equation*}
	\mu_t(x,B) := \frac{1}{t} \mbb{P}^x(Y_{t \wedge \tau_r^x}-x \in B), \qquad t>0,\, B \in \mc{B}(\mbb{R}^d \backslash \{0\}),
	\end{equation*}
	converges vaguely to $\nu(x,dy)$, i.\,e.\ \begin{equation*}
	\lim_{t \to 0} \frac{1}{t} \mbb{E}^x f(Y_{t \wedge \tau_r^x}-x) = \int_{y \neq 0} f(y) \, \nu(x,dy) \fa f \in C_c(\mbb{R}^d \backslash \{0\}). 
	\end{equation*}
\end{lem}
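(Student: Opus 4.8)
The plan is to prove the vague convergence in two stages: first for smooth test functions vanishing near the origin, where it drops out of Dynkin's formula, and then for arbitrary $f \in C_c(\mbb{R}^d \setminus \{0\})$ by a uniform approximation argument whose only nontrivial input is the maximal inequality \eqref{max}.

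\emph{First stage.} Fix $x$ and $r>0$ and take $g \in C_c^{\infty}(\mbb{R}^d)$ with $0 \notin \spt g$. Setting $h := g(\,\cdot\,-x) \in C_c^{\infty}(\mbb{R}^d) \subseteq \mc{D}(L)$, one has $h(x)=0$ and $\nabla h(x) = \nabla^2 h(x) = 0$. Since $t \wedge \tau_r^x$ is a bounded stopping time, I would apply Dynkin's formula \eqref{gen-eq6} to $(Y_t)_{t \geq 0}$ and $h$ (noting $h \in \mc{D}(L) \subseteq F_1^Y$ with $L_e h = Lh$) to get
\begin{equation*}
	\frac1t \mbb{E}^x g(Y_{t \wedge \tau_r^x}-x) = \frac1t \mbb{E}^x \int_0^{t \wedge \tau_r^x} Lh(Y_s) \, ds .
\end{equation*}
Because $Y_0 = x$ and the paths are \cadlag one has $\tau_r^x > 0$ $\mbb{P}^x$-a.s.; combining this with the right-continuity of $s \mapsto Lh(Y_s)$ and the uniform bound $|\frac1t \int_0^{t \wedge \tau_r^x} Lh(Y_s)\,ds| \leq \|Lh\|_{\infty}$ (valid since $Lh \in C_{\infty}(\mbb{R}^d)$), dominated convergence shows that the right-hand side tends to $Lh(x)$ as $t \to 0$. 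Finally I would evaluate the representation \eqref{pseudo}--\eqref{symbol} for $L$ at the point $x$: since $h$ and its first two derivatives vanish at $x$, the drift, diffusion and compensator terms all cancel and only $\int_{y\neq0} h(x+y)\,\nu(x,dy) = \int_{y\neq0} g(y)\,\nu(x,dy)$ survives, which is the claim for $g \in C_c^{\infty}(\mbb{R}^d \setminus \{0\})$.

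\emph{Second stage.} For general $f \in C_c(\mbb{R}^d\setminus\{0\})$, fix $\eps>0$ with $\spt f \subseteq \{|y| \geq \eps\}$ and, given $\eta>0$, approximate $f$ uniformly by $g \in C_c^{\infty}(\mbb{R}^d\setminus\{0\})$ with $\spt g \subseteq \{|y| \geq \eps/2\}$ and $\|f-g\|_{\infty} \leq \eta$. Since $f-g$ is supported in $\{|y|\ge\eps/2\}$ and $t \wedge \tau_r^x \leq t$,
\begin{equation*}
	\left| \frac1t \mbb{E}^x (f-g)(Y_{t \wedge \tau_r^x}-x) \right| \leq \eta \, \frac1t \mbb{P}^x\!\left( \sup_{s \leq t} |Y_s - x| \geq \tfrac{\eps}{2} \right) \leq \eta\, c \sup_{|z-x| \leq \eps/4} \sup_{|\xi| \leq 4/\eps} |p(z,\xi)|
\end{equation*}
by \eqref{max}, and the right-hand side is finite and independent of $t$ because $p$ is continuous. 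As $\nu(x,\cdot)$ is finite on $\{|y| \geq \eps/2\}$, also $|\int_{y\neq0}(f-g)\,d\nu(x,\cdot)| \leq \eta\, \nu(x,\{|y|\ge\eps/2\})$. Combining these two estimates with the first stage applied to $g$ and then letting $\eta \downarrow 0$ finishes the proof.

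The only point requiring care is bounding $\tfrac1t\mbb{P}^x(|Y_{t\wedge\tau_r^x}-x|\ge\eps/2)$ uniformly in $t$ as $t\to0$; this is exactly what \eqref{max} provides, and without it the approximation error in the second stage could not be controlled. Everything else is a routine combination of Dynkin's formula, the L\'evy--Khintchine form of the symbol, and dominated convergence.
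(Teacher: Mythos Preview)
Your proof is correct and takes a genuinely different route from the paper's. The paper proceeds via the Portmanteau theorem: it suffices to show $\limsup_{t\to 0}\mu_t(x,K)\le\nu(x,K)$ for compact $K\subseteq\mbb{R}^d\setminus\{0\}$. For this it invokes an external result (\cite[Theorem~4.2]{ihke}) establishing the vague convergence for the \emph{unstopped} process, and then uses Dynkin's formula to show that
\[
\tfrac1t\big|\mbb{E}^x\chi_n(Y_{t\wedge\tau_r^x}-x)-\mbb{E}^x\chi_n(Y_t-x)\big|\le \|L\chi_n\|_\infty\,\mbb{P}^x(\tau_r^x\le t)\xrightarrow[t\to0]{}0,
\]
so stopping does not affect the limit.

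Your argument is more self-contained: you apply Dynkin's formula directly to the stopped process for smooth test functions $g$ supported away from $0$, observe that $\tfrac1t\mbb{E}^x\!\int_0^{t\wedge\tau_r^x}Lh(Y_s)\,ds\to Lh(x)$ by right-continuity and dominated convergence, and read off $Lh(x)=\int g(y)\,\nu(x,dy)$ from the integro-differential representation since $h$, $\nabla h$, $\nabla^2 h$ all vanish at $x$. You then pass to $C_c(\mbb{R}^d\setminus\{0\})$ by uniform approximation, using \eqref{max} to control the error uniformly in $t$. This avoids both the Portmanteau reduction and the citation to \cite{ihke}; the price is that you need the maximal inequality (the paper does not), but that is available anyway and the overall argument is shorter.
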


The main ingredient for the proof of Lemma~\ref{feller-21} is \cite[Theorem 4.2]{ihke} which states that the family of measures $p_t(x,B) :=t^{-1}  \mbb{P}^x(Y_t - x \in B)$, $t>0$, converges vaguely to $\nu(x,dy)$ as $t \to 0$.

\begin{proof}[Proof of Lemma~\ref{feller-21}]
	By the Portmanteau theorem, it suffices to show that \begin{equation}
	\limsup_{t \to 0} \mu_t(x,K) \leq \nu(x,K) \label{proofs-eq21}
	\end{equation}
	for any compact set $K \subseteq \mbb{R}^d \backslash \{0\}$. For given $K \subseteq \mbb{R}^d \backslash \{0\}$ compact there exists by Urysohn's lemma a sequence $(\chi_n)_{n \in \mbb{N}} \subseteq C_c^{\infty}(\mbb{R}^d)$ and a constant $\delta>0$ such that $\spt \chi_n \subseteq B(0,\delta)^c$ for all $n \in \mbb{N}$ and $\I_K = \inf_{n \in \mbb{N}} \chi_n$. It follows from \cite[Theorem 4.2]{ihke} that \begin{equation*}
	\lim_{t \to 0} \frac{\mbb{E}^x \chi_n(Y_t-x)}{t} = \int_{y \neq 0} \chi_n(y) \, \nu(x,dy)
	\end{equation*}
	for all $n \in \mathbb{N}$. On the other hand, an application of Dynkin's formula yields that \begin{align*}
	|\mbb{E}^x \chi_n(Y_{t \wedge \tau_r^x}-x)-\mbb{E}^x \chi_n(Y_t-x)|
	&\leq \|L\chi_n\|_{\infty} \mbb{E}^x(t-\min\{t,\tau_r^x\}) 
	\leq t \|L\chi_n\|_{\infty} \mbb{P}^x(\tau_r^x \leq t).
	\end{align*}
	Since $(Y_t)_{t \geq 0}$ has right-continuous sample paths, we have $\mbb{P}^x(\tau_r^x \leq t) \to 0$ as $t \to 0$, and therefore we obtain that \begin{align*}
	\lim_{t \to 0} \frac{\mbb{E}^x \chi_n(Y_{t \wedge \tau_r^x}-x)}{t} = \int_{y \neq 0} \chi_n(y) \, \nu(x,dy).
	\end{align*}
	Hence, \begin{align*}
	\limsup_{t \to 0} \mu_t(x,K)
	\leq \limsup_{t \to 0} \frac{1}{t} \mbb{E}^x \chi_n(Y_{t \wedge \tau_r^x}-x)
	= \int_{y \neq 0} \chi_n(y) \, \nu(x,dy).
	\end{align*}
	As $\I_K = \inf_{n \in \mbb{N}} \chi_n$, the monotone convergence theorem gives \eqref{proofs-eq21}.
\end{proof}

\begin{proof}[Proof of Theorem~\ref{feller-12}]
	For fixed $x \in \mbb{R}^d$ let $(Y_t^{(x)})_{t \geq 0}$ be the Feller process from Theorem~\ref{feller-12}. Let $\chi_0 \in C_c^{\infty}(\mbb{R}^d)$ be a truncation function such that $\I_{\overline{B(0,\delta)}} \leq \chi_0 \leq \I_{\overline{B(0,2\delta)}}$, and set $\chi^{(x)}(z) := \chi_0(z-x)$, $z \in \mbb{R}^d$. Since $x \in \mbb{R}^d$ is fixed throughout Step 1-3 of this proof, we will often omit the superscript $x$ in our notation, i.e.\ we will write $(Y_t)_{t \geq 0}$ instead of $(Y_t^{(x)})_{t \geq 0}$, $\chi(z)$ instead of $\chi^{(x)}(z)$, etc. \par
	\textbf{Step 1:} Show that $v := \chi \cdot f$ is in the domain $\mc{D}(L_e)$ of the extended generator of $(Y_t)_{t \geq 0}$ and determine $L_e(v)$. \par
	First of all, we note that $(X_t)_{t \geq 0}$, $(Y_t)_{t \geq 0}$ and $f$ satisfy the assumptions of Theorem~\ref{feller-9}. Since we have seen in the proof of Theorem~\ref{feller-9} that $v=\chi \cdot f$ is in the Favard space $F_1^Y$ of order $1$ associated with $(Y_t)_{t \geq 0}$, it follows that $v \in \mc{D}(L_e)$ and $\|L_e(v)\|_{\infty}<\infty$. Applying Corollary~\ref{gen-5} we find that \begin{equation*}
	L_e v(z) = \lim_{t \to 0} \frac{\mbb{E}^z v(Y_{t \wedge \tau_{\delta}^z})-v(z)}{t} 
	\end{equation*}
	(up to a set of potential zero) where \begin{equation*}
	\tau_{\delta}^z := \inf\{t>0; |Y_t-z|>\delta\}.
	\end{equation*}
	On the other hand, the proof of Theorem~\ref{feller-9} shows  that \begin{align*}
	\frac{\mbb{E}^z v(Y_{t \wedge \tau_{\delta}^z})-v(z)}{t}
	= I_1(t)+I_2(t)+I_3(t)
	\end{align*}
	where \begin{align*}
	I_1(t) &:=t^{-1} f(z) (\mbb{E}^z \chi(Y_{t \wedge \tau_{\delta}^z})-\chi(z))  \\
	I_2(t) &:= t^{-1} \chi(z) (\mbb{E}^z f(X_{t \wedge \tau_{\delta}^z(X)})-f(z)) \\
	I_3(t) &:= t^{-1} \mbb{E}^z \big[ (f(Y_{t \wedge \tau_{\delta}^z})-f(z))( \chi(Y_{t \wedge \tau_{\delta}^z})-\chi(z)) \big];
	\end{align*}
	here $\tau_{\delta}^z(X)$ denotes the exit time of $(X_t)_{t \geq 0}$ from $\overline{B(z,\delta)}$.  Since $\chi \in C_c^{\infty}(\mbb{R}^d)$ is in the domain of the (strong) infinitesimal generator $L$ of $(Y_t)_{t \geq 0}$ and $f$ is the Favard space $F_1^X$ associated with $(X_t)_{t \geq 0}$, another application of Corollary~\ref{gen-5} shows that \begin{equation*}
	\lim_{t \to 0} I_1(t) = f(z) L\chi(z) \quad \text{and} \quad \lim_{t \to 0} I_2(t) = \chi(z) A_e f(z)
	\end{equation*}
	for all $z \in \mbb{R}^d$. We claim that \begin{equation}
	\lim_{t \to 0} I_3(t) = \Gamma(f,\chi)(z) := \int_{y \neq 0} (f(z+y)-f(z))(\chi(z+y)-\chi(z)) \, \nu(z,dy) \label{proofs-eq27}
	\end{equation}
	for all $z \in \mbb{R}^d$ where $\nu(z,dy)= \nu^{(x)}(z,dy)$ denotes the family of L\'evy measures associated with $(Y_t)_{t \geq 0} = (Y_t^{(x)})_{t \geq 0}$, cf.\ \eqref{main-eq15}. Once we have shown this, it follows that \begin{equation}
	L_e v = f L \chi + \chi A_e f + \Gamma(f,\chi). \label{proofs-eq29}
	\end{equation}
	To prove \eqref{proofs-eq27} we fix a truncation function $\varphi \in C_c^{\infty}(\mbb{R}^d)$ such that $\I_{B(0,1)} \leq \varphi \leq \I_{B(0,2)}$ and set $\varphi_{\eps}(y) := \varphi(\eps^{-1} y)$ for $\eps>0$, $y \in \mbb{R}^d$. Since $y \mapsto (1-\varphi_{\eps}(y))$ is zero in a neighbourhood of $0$, we find from Lemma~\ref{feller-21} that \begin{align*}
	&\frac{\mbb{E}^z\big[ (1-\varphi_{\eps}(Y_{t \wedge \tau_{\delta}^z}-z))  (f(Y_{t \wedge \tau_{\delta}^z})-f(z))( \chi(Y_{t \wedge \tau_{\delta}^z})-\chi(z)) \big]}{t} \\
	&\xrightarrow[]{t \to 0} \int_{y \neq 0} (1-\varphi_{\eps}(y)) (f(y+z)-f(z)) (\chi(z+y)-\chi(z)) \, \nu(z,dy).
	\end{align*}
	If $z \in \mbb{R}^d \backslash B(x,3\delta)$ then $\chi=0$ on $B(z,\delta)$, and therefore the integrand on the right hand side equals zero for $|y|< \delta$. Applying the dominated convergence theorem we thus find that the right-hand side converges to $\Gamma(f,\chi)(z)$, defined in \eqref{proofs-eq27}, as $\eps \to 0$. For $z \in B(x,3\delta)$ we note that $\chi \in C_b^1(\mbb{R}^d)$ and $f \in \mc{C}_b^{\varrho(\cdot)}(\mbb{R}^d)$ for $\varrho$ satisfying \eqref{main-eq21}; it now follows from \eqref{S1} and the dominated convergence theorem that the right-hand side converges to $\Gamma(f,\chi)(z)$ as $\eps \to 0$. To prove \eqref{proofs-eq27} it remains to show that \begin{equation*}
	J(\eps,t,z) := \left|\mbb{E}^z\big[ \varphi_{\eps}(Y_{t \wedge \tau_{\delta}^z}-z)  (f(Y_{t \wedge \tau_{\delta}^z})-f(z))( \chi(Y_{t \wedge \tau_{\delta}^z})-\chi(z)) \big] \right|
	\end{equation*} 
	satisfies \begin{equation*}
	\limsup_{\eps \to 0} \limsup_{t \to 0} \frac{1}{t} J(\eps,t,z)=0 \fa z \in \mbb{R}^d.
	\end{equation*}
	By \eqref{main-eq21} and \eqref{S1}, there exists some constant $\gamma>0$ such that \begin{equation}
	1+\min\{\varrho(z),1\} \geq \alpha(z)+2 \gamma \fa z \in \overline{B(x,3\delta)}. \label{proofs-eq31} 
	\end{equation} 
	Indeed: On $\{\varrho \geq 1\}$ this inequality holds since $\alpha$ is bounded away from $2$, cf.\ \eqref{S1}, and on $\{\varrho<1\}$ this is a direct consequence of \eqref{main-eq21}. Now fix some $z \in \overline{B(x,3\delta)}$. As $\spt \varphi_{\eps} \subseteq \overline{B(0,2\eps)}$ it follows from $f \in \mc{C}_b^{\varrho(\cdot)}(\mbb{R}^d)$ and $\chi \in C_b^1(\mbb{R}^d)$ that
	\begin{align*}
	J(\eps,t,z)
	&\leq c_1 \eps^{\gamma} \|f\|_{\mc{C}_b^{\varrho(\cdot)}(\mbb{R}^d)} \|\chi\|_{C_b^1(\mbb{R}^d)} \mbb{E}^z \min\{|Y_{t \wedge \tau_{\delta}^z}-z|^{\alpha(z)+\gamma},1\}
	\end{align*}
	with $\gamma$ from \eqref{proofs-eq31} and some constant $c_1>0$ (not depending on $f$, $x$, $z$). An application of Lemma~\ref{feller-15} now yields \begin{align*}
	J(\eps,t,z) \leq c_2 \eps^{\gamma} t \sup_{|z-x| \leq 4 \delta} \left( |b(z)| + \int_{y \neq 0} \min\{|y|^{\alpha(z)+\gamma},1\} \nu(z,dy) \right)
	\end{align*}
	which is finite because of \eqref{S1} and \eqref{S4}. Hence, \begin{equation*}
	\limsup_{t \to 0} \limsup_{\eps \to 0} \frac{1}{t} J(\eps,t,z)=0 \fa |z-x| \leq 3 \delta.
	\end{equation*}
	If $z \in \mbb{R}^d \backslash B(x,3\delta)$ then it follows from $\chi|_{B(z,\delta)}=0$ and $\spt \varphi \subseteq B(0,2\eps)$ that \begin{equation*}
	J(\eps,t,z) \leq 4 \eps \|f\|_{\infty} \|\chi\|_{C_b^1(\mbb{R}^d)} \mbb{P}^z(\tau_{\delta}^z \leq t).
	\end{equation*}
	Applying the maximal inequality \eqref{max} for Feller processes we conclude that \begin{equation*}
	\limsup_{\eps \to 0} \limsup_{t \to 0} t^{-1} J(\eps,t,z) = 0 \fa z \in \mbb{R}^d \backslash B(x,3\delta).
	\end{equation*}
	\textbf{Step 2:} If $\varrho: \mbb{R}^d \to [0,2]$ is a uniformly continuous function satisfying \eqref{main-eq21} and $\varrho_0 := \inf_z \varrho(z)>0$, then \begin{equation*}
	f \in F_1^X \cap \mc{C}_b^{\varrho(\cdot)}(\mbb{R}^d), A_e f = g \in  \mc{C}_b^{\lambda}(\mbb{R}^d) \implies \forall \eps>0: \, \, L_e(f \chi) \in \mc{C}_b^{(\varrho_0 \wedge \lambda \wedge \theta \wedge \sigma)-\eps}(\mbb{R}^d)
	\end{equation*}
	for any $\lambda \in [0,\Lambda]$ where $\chi=\chi^{(x)}$ is the truncation function chosen at the beginning of the proof; see \eqref{S2}, \eqref{S3} and \eqref{main-eq21} for  the definition of $\theta$, $\Lambda$ and $\sigma$. \par
	\emph{Indeed:} We know from Step 1 that \begin{equation*}
	L_e (f \chi) = f L \chi + \chi A_e f + \Gamma(f,\chi) =: I_1+I_2+I_3.
	\end{equation*}
	As $\theta \leq 1$ we have $\varrho_0 \wedge \lambda \wedge \theta \wedge \sigma \leq 1$, and therefore it suffices to estimate \begin{equation*}
	\sup_{z \in \mbb{R}^d} |I_k(z)| + \sup_{z,h \in \mbb{R}^d} |I_k(z+h)-I_k(z)|
	\end{equation*}
	for $k=1,2,3$. \par
	\textbf{Estimate of $I_1=f L \chi$:} First we estimate the H\"{o}lder norm of $L\chi$. As $\chi \in C_c^{\infty}(\mbb{R}^d)$ a straight-forward application of Taylor's formula shows that \begin{equation*}
	\|L \chi\|_{\infty} \leq 2 \|\chi\|_{C_b^2(\mbb{R}^d)} \sup_{z \in \mbb{R}^d} \left( |b(z)| + \int_{y \neq 0} \min\{1,|y|^2\} \, \nu(z,dy) \right).
	\end{equation*}
	If we set $D_y \chi(z) := \chi(z+y)-\chi(z)-\chi'(z) y \I_{(0,1)}(|y|)$, then \begin{align*}
	|L\chi(z)-L \chi(z+h)| &\leq |b(z)| \, |\nabla \chi(z+h)-\nabla \chi(z)| + |b(z+h)-b(z)| \, |\nabla \chi(z+h)| \\ &+ \int_{y \neq 0} |D_y \chi(z+h)-D_y \chi(z)| \, \nu(z,dy) + \left| \int_{y \neq 0} D_y \chi(z+h) \, (\nu(z+h,dy)-\nu(z,dy)) \right|.
	\end{align*}
	for all $z,h \in \mbb{R}^d$. To estimate the first two terms on the right-hand side we use the H\"{o}lder continuity of $b$, cf.\ \eqref{S2}, and the fact that $\chi \in C_b^2(\mbb{R}^d)$. For the third term we use \begin{equation*}
	|D_y \chi(z+h)-D_y \chi(z)| \leq \|\chi\|_{C_b^3(\mbb{R}^d)} |h| \min\{|y|^2,1\},
	\end{equation*}
	cf.\ \cite[Theorem 5.1]{bass08} for details, and noting that \begin{equation*}
		|D_y \chi(z+h)| \leq 2 \|\chi\|_{C_b^2(\mbb{R}^d)} \min\{1,|y|^2\}
	\end{equation*}
	we can estimate the fourth term for small $h$ by applying \eqref{S2}. Hence,	\begin{equation*}
	|L\chi(z)-L \chi(z+h)| \leq |h| \|\chi\|_{C_b^3(\mbb{R}^d)} \left(|b(z)|+ \int_{y \neq 0} \min\{1,|y|^2\} \, \nu(z,dy)\right) + 2C |h|^{\theta} \|\chi\|_{C_b^2(\mbb{R}^d)}
	\end{equation*}
	for small $h>0$. Hence, \begin{equation*}
	\|L \chi\|_{\mc{C}_b^{\theta}(\mbb{R}^d)} \leq c_1 \|\chi\|_{C_b^3(\mbb{R}^d)} \sup_{z \in \mbb{R}^d} \left(1+ |b(z)| + \int_{y \neq 0} \min\{1,|y|^2\} \, \nu(z,dy) \right)
	\end{equation*}
	for some absolute constant $c_1>0$. Since $f \in \mc{C}_b^{\varrho(\cdot)}(\mbb{R}^d) \subseteq \mc{C}_b^{\varrho_0}(\mbb{R}^d)$, this entails that \begin{equation*}
	\|f L \chi\|_{\mc{C}_b^{\theta \wedge \varrho_0}(\mbb{R}^d)} \leq  c_1' \|f\|_{\mc{C}_b^{\varrho_0}(\mbb{R}^d)} \|\chi\|_{C_b^3(\mbb{R}^d)} \sup_{z \in \mbb{R}^d} \left(1+ |b(z)| + \int_{y \neq 0} \min\{1,|y|^2\} \, \nu(z,dy) \right).
	\end{equation*}
	\textbf{Estimate of $I_2=\chi A_e f$:} By assumption, $A_e f = g \in \mc{C}_b^{\lambda}(\mbb{R}^d)$ and $\chi \in C_c^{\infty}(\mbb{R}^d)$. Thus, \begin{equation*}
	\|\chi A_e f\|_{\mc{C}_b^{\lambda}} \leq 2 \|\chi\|_{C_b^{\lambda}} \|A_e f\|_{\mc{C}_b^{\lambda}} < \infty.
	\end{equation*}
	\textbf{Estimate of $I_3 = \Gamma(f,\chi)$:}  As $f \in C_b^{\varrho(\cdot)}(\mbb{R}^d)$ and $\chi \in C_b^1(\mbb{R}^d)$, it follows from the definition of $\Gamma(f,\chi)$, cf.\ \eqref{proofs-eq27}, that \begin{equation*}
	|\Gamma(f,\chi)(z)| 
	\leq 4\|f\|_{\mc{C}_b^{\varrho(\cdot)}(\mbb{R}^d)} \|\chi\|_{C_b^1(\mbb{R}^d)} \int_{y \neq 0} \min\{|y|^{1+\min\{1,\varrho(z)\}} \wedge 1,1\} \, \nu(z,dy)<\infty
	\end{equation*}
	for all $|z-x| \leq 3\delta$. If $z \in \mbb{R}^d \backslash B(x,3\delta)$, then $\Delta_y \chi(z)=0$ for all $|y| \leq \delta$, and so \begin{equation*}
		|\Gamma(f,\chi)(z)| \leq 4 \|f\|_{\infty} \int_{|y|>\delta/2} \, \nu(z,dy)
	\end{equation*}
	for all $z \in \mbb{R}^d \backslash B(x,3\delta)$. Combining both estimates and using \eqref{main-eq21}, \eqref{S1} and \eqref{S4}, we get \begin{equation*}
	\|\Gamma(f,\chi)\|_{\infty} \leq c_2 \|f\|_{\mc{C}_b^{\varrho(\cdot)}(\mbb{R}^d)}
	\end{equation*}
	for some constant $c_2>0$ not depending on $x$, $z$ and $f$. To study the regularity of $\Gamma(f,\chi)$ we consider separately the cases $\|\varrho\|_{\infty} \leq 1$ and $\|\varrho\|_{\infty}>1$. We start with the case $\|\varrho\|_{\infty} \leq 1$, see the end of this step for the other case. To estimate $\Delta_h \Gamma(f,\chi)$ we note that \begin{equation}
	|\Delta_h \Gamma(f,\chi)(z)| = |\Gamma(f,\chi)(z+h)-\Gamma(f,\chi)(z)|
	\leq J_1+J_2+J_3 \label{proofs-eq39}
	\end{equation}
	where \begin{align*}
	J_1(z) &:= \int_{y \neq 0} |\Delta_y f(z+h) - \Delta_y f(z)| \, |\Delta_y \chi(z+h)| \, \nu(z,dy) \\
	J_2(z) &:= \int_{y \neq 0} |\Delta_y f(z)| \, |\Delta_y \chi(z+h)-\Delta_y \chi(z)| \, \nu(z,dy) \\
	J_3(z) &:= \left| \int_{y \neq 0} \Delta_y f(z+h) \Delta_y \chi(z+h) (\nu(z,dy)-\nu(z+h,dy)) \right|.
	\end{align*}
	We estimate the terms separately and start with $J_1$. Fix $\eps \in (0,\min\{\varrho_0,\sigma\}/2)$, cf.\ \eqref{main-eq21} for the definition of $\sigma$. Since $\varrho$ is uniformly continuous there exists $r \in (0,1)$ such that \begin{equation*}
	|\varrho(z)-\varrho(z+h)| \leq \eps \fa z \in \mbb{R}^d, \,|h| \leq r.
	\end{equation*}
	For $|h| \leq r$  and $|y| \leq r$ it then follows from $f \in \mc{C}_b^{\varrho(\cdot)}(\mbb{R}^d)$ that \begin{align*}
	|\Delta_y f(z+h)-\Delta_y f(z)| 
	&\leq 2 \|f\|_{\mc{C}_b^{\varrho(\cdot)}(\mbb{R}^d)} \min\{|y|^{\varrho(z) \wedge \varrho(z+h)},|h|^{\varrho(y+z) \wedge \varrho(z)}\} \\
	&\leq 2 \|f\|_{\mc{C}_b^{\varrho(\cdot)}(\mbb{R}^d)} \min\{|y|^{\varrho(z) -\eps},|h|^{\varrho(z)-\eps}\}.
	\end{align*}
	(Here we use $\|\varrho\|_{\infty} \leq 1$; otherwise we would need to replace $\varrho(z)$ by $\varrho(z) \wedge 1$ etc.) On the other hand, we also have \begin{equation}
	|\Delta_y f(z+h)-\Delta_y f(z)| \leq 2 \|f\|_{\mc{C}_b^{\varrho(\cdot)}(\mbb{R}^d)} |h|^{\varrho_0}  \label{proofs-eq41}
	\end{equation}
	for all $y \in \mbb{R}^d$. Combining both estimates yields  \begin{align*}
	J_1(z)
	\leq 2 \|f\|_{\mc{C}_b^{\varrho(\cdot)}(\mbb{R}^d)} \|\chi\|_{C_b^1(\mbb{R}^d)} \left( \int_{|y| \leq r} \min\{|y|^{\varrho(z)- \eps},|h|^{\varrho(z)-\eps}\} |y| \, \nu(z,dy) +|h|^{\varrho_0} \int_{|y|>r} \, \nu(z,dy) \right)
	\end{align*}
	for $|h| \leq r$. It is now not difficult to see from \eqref{S1} and \eqref{S4} that there exists a constant $c_3>0$ (not depending on $x$, $z$, $f$) such that \begin{equation*}
	J_1(z) \leq c_3  \|f\|_{\mc{C}_b^{\varrho(\cdot)}(\mbb{R}^d)} (|h|^{\varrho_0} + |h|^{\varrho(z)+1-\alpha(z)-\eps}) \fa |h| \leq r, z \in B(x,3\delta).
	\end{equation*}
	By the very definition of $\sigma$, cf.\ \eqref{main-eq21}, this implies that \begin{equation*}
	\sup_{z \in B(x,3\delta)} J_1(z) \leq c_3  \|f\|_{\mc{C}_b^{\varrho(\cdot)}(\mbb{R}^d)} |h|^{\min\{\varrho_0,\sigma\}-\eps} \fa |h| \leq r.
	\end{equation*}
	If $z \in \mbb{R}^d \backslash B(x,3\delta)$ then $\Delta_y \chi(z+h)=0$ for $|h| \leq \delta/2$ and $|y| \leq \delta/2$. Using \eqref{proofs-eq41} we get \begin{align*}
	J_1(z) \leq 2 |h|^{\varrho_0} \|f\|_{\mc{C}_b^{\varrho(\cdot)}(\mbb{R}^d)}  \int_{|y| \geq \delta/2} \, \nu(z,dy) \fa |h| \leq \delta/2.
	\end{align*}
	Invoking once more \eqref{S1} and \eqref{S4} we obtain that \begin{equation*}
	\sup_{z \in \mbb{R}^d \backslash B(x,3\delta)} J_1(z) \leq c_4 |h|^{\varrho_0}\|f\|_{\mc{C}_b^{\varrho(\cdot)}(\mbb{R}^d)}, \qquad |h| \leq \delta/2,
	\end{equation*}
	for some constant $c_4$ not depending on $x$, $z$ and $f$. In summary, we have shown that \begin{equation*} 
	\sup_{z \in \mbb{R}^d} J_1(z) \leq c_5 |h|^{\min\{\varrho_0, \sigma\}-\eps}\|f\|_{\mc{C}_b^{\varrho(\cdot)}(\mbb{R}^d)}.
	\end{equation*}
	To estimate $J_2$ we consider again separately the cases $z \in B(x,3\delta)$ and $z \in \mbb{R}^d \backslash B(x,3\delta)$. If $z \in \mbb{R}^d \backslash B(x,3\delta)$ then $\Delta_y \chi(z+h) = 0 = \Delta_y \chi(z)$ for all $|y| \leq \delta/2$ and $|h| \leq \delta/2$. Since we also have \begin{equation}
	|\Delta_y \chi(z+h)-\Delta_y \chi(z)| \leq 2 \|\chi\|_{C_b^2(\mbb{R}^d)} \min\{|y|,|h|\}  \label{proofs-eq43}
	\end{equation}
	we find that \begin{equation*}
	J_2(z) \leq 4 \|f\|_{\infty} \|\chi\|_{C_b^2(\mbb{R}^d)} |h| \int_{|y| \geq \delta/2} \, \nu(z,dy)
	\end{equation*}
	for $|h| \leq \delta/2$. Because of \eqref{S1} and \eqref{S4} this gives the existence of a constant $c_6>0$ (not depending on $f$, $x$ and $z$) such that \begin{equation*}
	\sup_{z \in \mbb{R}^d \backslash B(x,3\delta)} J_2(z) \leq c_6 \|f\|_{\infty} |h|.
	\end{equation*}
	For $z \in B(x,3\delta)$ we combine \begin{equation*}
	|\Delta_y f(z)| \leq 2\|f\|_{\mc{C}_b^{\varrho(\cdot)}(\mbb{R}^d)} \min\{|y|^{\varrho(z)},1\}
	\end{equation*}
	with \eqref{proofs-eq43} to get \begin{equation*}
	J_2(z) 
	\leq 4 \|f\|_{\mc{C}_b^{\varrho(\cdot)}(\mbb{R}^d)} \|\chi\|_{C_b^2(\mbb{R}^d)} \int_{y \neq 0} \min\{|y|^{\varrho(z)},1\} \min\{|y|,|h|\} \, \nu(z,dy)
	\end{equation*}
	which implies, by \eqref{S1}, \eqref{S4} and \eqref{main-eq21}, that \begin{equation*}
	\sup_{z \in B(x,3\delta)} J_2 \leq c_7 \|f\|_{\mc{C}_b^{\varrho(\cdot)}(\mbb{R}^d)} |h|^{\sigma \wedge 1}.
	\end{equation*}
	We conclude that \begin{equation*}
	\sup_{z \in \mbb{R}^d} J_2(z) \leq c_8  |h|^{\sigma \wedge 1} \|f\|_{\mc{C}_b^{\varrho(\cdot)}(\mbb{R}^d)}.
	\end{equation*}
	It remains to estimate $J_3$. By the uniform continuity of $\varrho$ there exists $r \in(0,1)$ such that $|\Delta_h \varrho(z)| \leq \sigma/2$ for all $|h| \leq r$. Since $f \in \mc{C}_b^{\varrho(\cdot)}(\mbb{R}^d)$ we have \begin{align*}
	|\Delta_y f(z+h) \Delta_y \chi(z+h)| 
	&\leq 4\|f\|_{\mc{C}_b^{\varrho(\cdot)}(\mbb{R}^d)} \|\chi\|_{C_b^1(\mbb{R}^d)} \min\{|y|^{\varrho(z+h)+1},1\} 
	\end{align*}
	and thus, by \eqref{main-eq21} and our choice of $r \in (0,1)$, \begin{align*}
	|\Delta_y f(z+h) \Delta_y \chi(z+h)| 
	&\leq 4\|f\|_{\mc{C}_b^{\varrho(\cdot)}(\mbb{R}^d)} \|\chi\|_{C_b^1(\mbb{R}^d)} \min\{|y|^{\varrho(z)+1-\sigma/2},1\}  \\
	&\leq 4\|f\|_{\mc{C}_b^{\varrho(\cdot)}(\mbb{R}^d)} \|\chi\|_{C_b^1(\mbb{R}^d)} \min\{|y|^{\sigma/2+\alpha(z)},1\}
	\end{align*}
	for all $|z-x| \leq 3\delta$ and $|h| \leq r$. On the other hand, if $z \in \mbb{R}^d \backslash B(x,3\delta)$, then $\chi=0$ on $\overline{B(z,\delta)}$ and so \begin{equation*}
	|\Delta_y f(z+h) \Delta_y \chi(z+h)| = 0 \fa |h| \leq \delta/2, \,|y| \leq \delta/2.
	\end{equation*}
	Consequently, there exists a constant $c_9 = c_9(\delta,r)>0$ such that \begin{equation*}
	|\Delta_y f(z+h) \Delta_y \chi(z+h)|  \leq c_9 \|f\|_{\mc{C}_b^{\varrho(\cdot)}(\mbb{R}^d)} \|\chi\|_{C_b^1(\mbb{R}^d)} \min\{|y|^{\sigma+\alpha(z)},1\}
	\end{equation*}
	for all $z \in \mbb{R}^d$, $y \in \mbb{R}^d$ and $|h| \leq \min\{r,\delta\}/2$. Applying \eqref{S2} we thus find \begin{equation*}
	\sup_{z \in \mbb{R}^d} J_3(z) \leq c_{10} |h|^{\theta} \|f\|_{\mc{C}_b^{\varrho(\cdot)}(\mbb{R}^d)} .
	\end{equation*}
	Combining the above estimates we conclude that \begin{equation*}
	\|\Gamma(f,\chi)\|_{\mc{C}_b^{\varrho_0 \wedge \theta \wedge \sigma - \eps}(\mbb{R}^d)} \leq c_{11} \|f\|_{\mc{C}_b^{\varrho(\cdot)}(\mbb{R}^d)}
	\end{equation*}
	provided that $\|\varrho\|_{\infty} \leq 1$. In the other case, i.\,e.\ if $\varrho$ takes values strictly larger than one, then we need to consider second differences $\Delta_h^2 \Gamma(f,\chi)(z)$ in order to capture the full information on the regularity of $f$. The calculations are very similar to the above ones but quite lengthy (it is necessary to consider nine terms separately) and therefore we do not present the details here. \par
	\textbf{Conclusion of Step 2:} For any small $\eps>0$ there exists a finite constant $K_{1,\eps}>0$ such that \begin{equation}
	\|L_e (f \chi)\|_{\mc{C}_b^{\min\{\varrho_0,\lambda,\theta,\sigma\}-\eps}(\mbb{R}^d)} \leq K_{1,\eps} \left( \|A_e f\|_{\mc{C}_b^{\lambda}(\mbb{R}^d)} + \|f\|_{\mc{C}_b^{\varrho(\cdot)}(\mbb{R}^d)} \right). \label{proofs-eq59}
	\end{equation}
	The constant $K_{1,\eps}$ does not depend on $x$, $z$ and $f$.  \par
	\textbf{Step 3:}  If $u \in \mc{D}(L_e)$ is such that $u \in \mc{C}_b^{\lambda}(\mbb{R}^d)$ and $L_e u \in \mc{C}_b^{\lambda}(\mbb{R}^d)$ for some $\lambda \leq \Lambda$ (cf.\ \eqref{S3}), then \begin{equation*}
	\|u\|_{\mc{C}_b^{\kappa(x)+\lambda}(\mbb{R}^d)} \leq K_2 (\|u\|_{\mc{C}_b^{\lambda}(\mbb{R}^d)} + \|L_e u\|_{\mc{C}_b^{\lambda}(\mbb{R}^d)})
	\end{equation*}
	for some constant $K_2>0$ which does not depend on $x$, $z$ and $f$. (Recall that $L_{e}=L_e^{(x)}$ is the extended generator of the Feller process $(Y_t)_{t \geq 0} = (Y_t^{(x)})_{t \geq 0}$; this explains the $x$-dependence of the regularity on the left-hand side of the inequality.) \par
	\emph{Indeed:} The $\mu$-potential operators $(R_{\mu})_{\mu>0}$ associated with $(Y_t)_{t \geq 0} = (Y_t^{(x)})_{t \geq 0}$ satisfies \begin{equation}
	\|R_{\mu} v\|_{\mc{C}_b^{\kappa(x)+\lambda}(\mbb{R}^d)} \leq K \|v\|_{\mc{C}_b^{\lambda}(\mbb{R}^d)}, \qquad v \in \mc{C}_b^{\lambda}(\mbb{R}^d), \,\lambda \leq \Lambda \label{proofs-eq61}
	\end{equation}
	for $\mu$ sufficiently large and some constant $K=K(\mu)>0$. This is a direct consequence of \eqref{S3} and Lemma~\ref{feller-3}. Now if $u \in \mc{D}(L_e)$ is such that $u \in \mc{C}_b^{\lambda}(\mbb{R}^d)$ and $L_e u \in \mc{C}_b^{\lambda}(\mbb{R}^d)$, then we have $u = R_{\mu}v$ for $v:=\mu u- L_e u \in \mc{C}_b^{\lambda}(\mbb{R}^d)$.  Applying \eqref{proofs-eq61} proves the desired estimate.  \par
	\textbf{Conclusion of the proof:} Let $f \in \mc{C}_b^{\varrho(\cdot)}(\mbb{R}^d) \cap F_1^X$ for $\varrho$ satisfying \eqref{main-eq21} be such that $A_e f \in \mc{C}_b^{\lambda}(\mbb{R}^d)$ for some $\lambda \leq \Lambda$. Without loss of generality we may assume that $\varrho_0 := \inf_x \varrho(x)>0$. \emph{Indeed}: It follows from Corollary~\ref{feller-11} that $f \in \mc{C}_b^{\kappa(\cdot)-\eps}(\mbb{R}^d)$ for $\eps := \kappa_0/2 := \inf_x \kappa(x)/2>0$, and therefore we may replace $\varrho$ by $\tilde{\varrho}(z) := \max\{\varrho(z),\kappa(z)-\eps\}$ which is clearly bounded away from zero and satisfies the assumptions of Theorem~\ref{feller-12}. \par
	For fixed $x \in \mbb{R}^d$ denote by $\chi= \chi^{(x)}$ the truncation function chosen at the beginning of the proof, and fix $\eps \in (0,\min\{\varrho_0,\kappa_0\}/2)$. It follows from Step 2 and Step 3 that there exists a constant $c_1>0$ such that \begin{equation*}
	\|f \chi^{(x)}\|_{\mc{C}_b^{\kappa(x)+\min\{\varrho_0,\sigma,\theta,\lambda\}-\eps}(\mbb{R}^d)} \leq c_1 \left(\|A_e f\|_{\mc{C}_b^{\lambda}(\mbb{R}^d)} + \|f\|_{\mc{C}_b^{\varrho(\cdot)}(\mbb{R}^d)} \right)
	\end{equation*}
	for all $x \in \mbb{R}^d$. As $\chi^{(x)}=1$ on $B(x,\delta)$ we obtain that \begin{equation*}
	\|f\|_{\mc{C}_b^{\kappa(\cdot)+\min\{\varrho_0,\sigma,\theta,\lambda\}-\eps}(\mbb{R}^d)} \leq c_1'  \left(\|A_e f\|_{\mc{C}_b^{\lambda}(\mbb{R}^d)} + \|f\|_{\mc{C}_b^{\varrho(\cdot)}(\mbb{R}^d)}\right).
	\end{equation*}
	Since, by assumption, $f \in \mc{C}_b^{\varrho(\cdot)}(\mbb{R}^d)$, this implies $f \in \mc{C}_b^{\varrho^1(\cdot)}(\mbb{R}^d)$ for \begin{equation*}
		\varrho^1(x) := \max\{\varrho(x),\kappa(x)-\eps+\min\{\varrho_0,\sigma,\theta,\lambda\}\}, \qquad x \in \mbb{R}^d,
	\end{equation*}
	and we have \begin{equation*}
	\|f\|_{\mc{C}_b^{\varrho^{1}(\cdot)}(\mbb{R}^d)} 
	\leq (c_1'+1)  \left(\|A_e f\|_{\mc{C}_b^{\lambda}(\mbb{R}^d)} + \|f\|_{\mc{C}_b^{\varrho(\cdot)}(\mbb{R}^d)}\right).
	\end{equation*}
	As $\varrho^{1}$ satisfies \eqref{main-eq21} (with $\varrho$ replaced by $\varrho^{1}$) we may apply Step 2 with $\varrho$ replaced by $\varrho^{1}$ to obtain that 
	\begin{equation*}
	\|f \chi^{(x)}\|_{\mc{C}_b^{\kappa(x)+\min\{\varrho_0^1,\sigma,\theta,\lambda\}-\eps}(\mbb{R}^d)} 
	\leq c_2 \left(\|A_e f\|_{\mc{C}_b^{\lambda}(\mbb{R}^d)} + \|f\|_{\mc{C}_b^{\varrho(\cdot)}(\mbb{R}^d)}\right)
	\end{equation*}
	where $\varrho_0^1 := \inf_{x \in \mbb{R}^d} \varrho^1(x)$. Repeating the argumentation from above, i.\,e.\ using that $\chi^{(x)}=1$ on $B(x,\delta)$, we obtain $f \in \mc{C}_b^{\varrho^2(\cdot)}(\mbb{R}^d)$ for $\varrho^2(x) := \max\{\varrho(x),\kappa(x)-\eps+\min\{\varrho_0^1,\sigma,\theta,\lambda\}\}$ and \begin{equation*}
	\|f\|_{\mc{C}_b^{\varrho^2(\cdot)}(\mbb{R}^d)} \leq c_2' \left(\|A_e f\|_{\mc{C}_b^{\lambda}(\mbb{R}^d)} + \|f\|_{\mc{C}_b^{\varrho(\cdot)}(\mbb{R}^d)}\right).
	\end{equation*}
	We proceed by iteration, i.\,e.\ we define $\varrho^n(x) := \max\{\varrho(x),\kappa(x)-\eps+\min\{\varrho_0^{n-1},\sigma,\theta,\lambda\}\}$, $n \geq 2$, where $\varrho_0^{n-1} := \inf_x \varrho^{n-1}(x)$. By Step 2 and 3, we then have \begin{equation}
	\|f\|_{\mc{C}_b^{\varrho^n(\cdot)}(\mbb{R}^d)} \leq c_n \left(\|A_e f\|_{\mc{C}_b^{\lambda}(\mbb{R}^d)} + \|f\|_{\mc{C}_b^{\varrho(\cdot)}(\mbb{R}^d)}\right) \label{proofs-eq63}
	\end{equation}
	for some constant $c_n>0$. Since $\kappa_0 = \inf_x \kappa(x)>0$ and $\eps<\kappa_0/2$ it is not difficult to see that we can choose $n \in \mathbb{N}$ sufficiently large such that $\varrho_0^n \geq \min\{\sigma,\theta,\lambda\}$ and so 
	\begin{equation*}
	\varrho^{n+1}(x) \geq \kappa(x)-\eps + \min\{\sigma,\theta,\lambda\}.
	\end{equation*}
	Using \eqref{proofs-eq63} (with $n$ replaced by $n+1$) we conclude that \begin{equation*}
	\|f\|_{\mc{C}_b^{\kappa(\cdot)+\min\{\sigma,\theta,\lambda\}-\eps}(\mbb{R}^d)} \leq c_{n+1} \left(\|A_e f\|_{\mc{C}_b^{\lambda}(\mbb{R}^d)} + \|f\|_{\mc{C}_b^{\varrho(\cdot)}(\mbb{R}^d)}\right) 
	\end{equation*}
	which proves the assertion. 	
\end{proof}

\section{Proof of Schauder estimates for isotropic stable-like processes} \label{iso}

In this section we present the proof of the Schauder estimates for isotropic stable-like processes which we stated in Theorem~\ref{ex-21} and Corollary~\ref{ex-23}. Throughout this section, $(X_t)_{t \geq 0}$ is an isotropic stable-like process, i.\,e.\ a Feller process with symbol of the form $q(x,\xi) = |\xi|^{\alpha(x)}$, $x,\xi \in \mbb{R}^d$, for a mapping $\alpha: \mbb{R}^d \to (0,2]$. We remind the reader that such a Feller process exists if $\alpha$ is H\"{o}lder continuous and bounded away from zero. \par
We will apply the results from Section~\ref{main} to establish the Schauder estimates. To this end, we need regularity estimates for the semigroup $(P_t)_{t \geq 0}$ associated with $(X_t)_{t \geq 0}$. The results, which we obtain, are of independent interest and we present them in Subsection~\ref{iso-reg} below. Once we have established another auxiliary statement in Subsection~\ref{iso-aux}, we will present the proof of Theorem~\ref{ex-21} and Corollary~\ref{ex-23} in Subsection~\ref{iso-proofs}.

\subsection{Regularity estimates for the semigroup of stable-like processes} \label{iso-reg}

Let $(P_t)_{t \geq 0}$ be the semigroup of an isotropic stable-like process $(X_t)_{t \geq 0}$ with symbol $q(x,\xi) = |\xi|^{\alpha(x)}$. In this subsection we study the regularity of the mapping $x \mapsto P_t u(x)$. We will see that there are several parameters which influence the regularity of $P_t u$: \begin{itemize}
	\item the regularity of $x \mapsto u(x)$,
	\item the regularity of $x \mapsto \alpha(x)$,
	\item $\alpha_L := \inf_{x \in \mbb{R}^d} \alpha(x)$;
\end{itemize}
the larger these quantities are, the higher the regularity of $P_t u$. The regularity estimates, which we present, rely on the parametrix construction of (the transition density of) $(X_t)_{t \geq 0}$ in \cite{matters}. Let us mention that there are other approaches to obtain regularity estimates for the semigroup. Using coupling methods, Luo \& Wang \cite{wang14} showed that for any $\kappa \in (0,\alpha_L)$ there exists $c>0$ such that \begin{equation*}
	\|P_t u\|_{\mc{C}_b^{\kappa \wedge 1}(\mbb{R}^d)} \leq c \|u\|_{\infty} t^{-(\kappa \wedge 1)/\alpha_L} \fa u \in \mc{B}_b(\mbb{R}^d), \, t \in (0,T].
\end{equation*}
For $\alpha_L>1$ this estimate is not good enough for our purpose, we need a higher regularity of $P_t u$.

\begin{prop} \label{app-1}
	Let $(X_t)_{t \geq 0}$ be a Feller  process with symbol $q(x,\xi) = |\xi|^{\alpha(x)}$, $x,\xi \in \mbb{R}^d$, for a mapping $\alpha: \mbb{R}^d \to (0,2)$ which is bounded away from zero, i.e.\ $\alpha_L := \inf_{x \in \mbb{R}^d} \alpha(x)>0$, and $\gamma$-H\"{o}lder continuous for $\gamma \in (0,1)$. For any $T>0$ and $\kappa \in (0,\alpha_L)$ there exists a constant  $C>0$ such that the semigroup $(P_t)_{t \geq 0}$ satisfies \begin{equation}
		\|P_t u\|_{\mc{C}_b^{\kappa}(\mbb{R}^d)} \leq C \|u\|_{\infty}  t^{-\kappa/\alpha_L} \fa u \in \mc{B}_b(\mbb{R}^d), \, t \in (0,T]. \label{app-eq1}
	\end{equation}
	In particular, $(P_t)_{t \geq 0}$ has the strong Feller property. The constant $C>0$ depends continuously on $\alpha_L \in (0,2)$, $\alpha_L-\kappa \in (0,\alpha_L)$, $\|\alpha\|_{\mc{C}_b^{\gamma}(\mbb{R}^d)} \in [0,\infty)$ and $T \in [0,\infty)$.
\end{prop}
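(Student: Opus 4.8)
The plan is to build the proof on the parametrix construction of the transition density $p(t,x,y)$ of the stable-like process, as carried out in \cite{matters}. The key structural input is that $p$ can be written as $p(t,x,y) = p_0(t,x,y) + \int_0^t \int_{\mbb{R}^d} p_0(t-s,x,z) \Phi(s,z,y) \, dz \, ds$, where $p_0(t,x,y) := g^{(\alpha(y))}_t(x-y)$ is the ``freezed'' stable density (with $g^{(\beta)}_t$ the density of an isotropic $\beta$-stable L\'evy process at time $t$) and $\Phi$ is the parametrix series; the point is that \cite{matters} provides uniform bounds for $\Phi$ and its $x$-derivatives/differences which decay like an integrable power of $t$. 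First I would recall from \cite{matters} the pointwise heat-kernel bounds, namely $|\partial_x^\beta p_0(t,x,y)| \le c\, t^{-(d+|\beta|)/\alpha_L}$ type estimates, together with the refined bounds on $|p(t,x,y)-p_0(t,x,y)|$ and, crucially, on the modulus of continuity (in the $x$-variable) of $p(t,\cdot,y)$ that the parametrix method yields. Using $P_t u(x) = \int p(t,x,y) u(y)\,dy$, the regularity of $P_t u$ in $x$ is then reduced to integrating these kernel bounds against $\|u\|_\infty$.

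The second step is to extract the $\mc{C}_b^\kappa$-estimate for $\kappa \in (0,\alpha_L)$. For $\kappa \le 1$ one controls the first difference $|\Delta_h P_t u(x)| = |\int (p(t,x+h,y)-p(t,x,y)) u(y)\,dy| \le \|u\|_\infty \int |p(t,x+h,y)-p(t,x,y)|\,dy$, and the parametrix estimates give $\int_{\mbb{R}^d} |p(t,x+h,y)-p(t,x,y)|\,dy \le C |h|^{\kappa} t^{-\kappa/\alpha_L}$ for $|h|\le 1$ (and the trivial bound $2$ otherwise, which is absorbed since $t \le T$). For $\kappa \in (1,\alpha_L)$ (possible only when $\alpha_L>1$) one differentiates once under the integral sign — legitimate by the $t^{-(d+1)/\alpha_L}$ kernel bound — and then applies the $\kappa-1$ difference estimate to $\nabla_x p(t,\cdot,y)$, again supplied by \cite{matters}. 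This also yields the strong Feller property: $P_t u$ is continuous (indeed H\"older) for bounded measurable $u$. The continuous dependence of $C$ on $\alpha_L$, $\alpha_L - \kappa$, $\|\alpha\|_{\mc{C}_b^\gamma}$ and $T$ is inherited from the fact that every constant appearing in the parametrix bounds of \cite{matters} depends continuously (in fact explicitly) on these quantities; one just tracks them through the finitely many steps.

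The main obstacle I expect is not the conceptual structure but the bookkeeping: one must verify that the bounds in \cite{matters} are stated (or can be re-derived) in precisely the form needed — uniform in $y$, with the correct power $t^{-\kappa/\alpha_L}$ rather than $t^{-\kappa/\alpha(x)}$ or some worse exponent, and with the H\"older-in-$x$ modulus controlled up to order $\alpha_L - \eps$ rather than merely order $1$. In particular, near the endpoint $\kappa \uparrow \alpha_L$ the relevant time integrals $\int_0^t (t-s)^{-\kappa/\alpha_L} s^{-1+\eta}\,ds$ are still finite but their constants blow up, which is exactly why $C$ must be allowed to depend on $\alpha_L - \kappa$; making this quantitative requires care. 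A secondary subtlety is justifying differentiation under the integral and the interchange of difference operators with the parametrix series, which is handled by dominated convergence using the integrable kernel bounds. I would organize the write-up as: (i) collect the needed estimates from \cite{matters} as a lemma; (ii) prove \eqref{app-eq1} for $\kappa \le 1$; (iii) prove it for $1<\kappa<\alpha_L$; (iv) remark on strong Feller and on the continuous dependence of the constant.
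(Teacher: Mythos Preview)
Your proposal is correct and follows essentially the same route as the paper: both use the parametrix decomposition $p = p_0 + p_0 \circledast \Phi$ from \cite{matters}, bound the $x$-differences of the kernel, and integrate against $\|u\|_\infty$. The one implementation difference worth noting is that the paper avoids your case split $\kappa \le 1$ versus $\kappa > 1$. It works uniformly with the second iterated difference $\Delta_h^2 p_0(t,\cdot,y)$: a Taylor bound gives $|\Delta_h^2 p_0| \lesssim |h|^2 t^{-2/\alpha_L} S$ (with $S$ the stable-type majorant), the trivial bound gives $|\Delta_h^2 p_0| \lesssim S$, and an elementary interpolation (Lemma~\ref{ineq-1}) yields $|h|^\kappa t^{-\kappa/\alpha_L}$ for every $\kappa \in (0,\alpha_L)$ at once. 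The $\Phi$-contribution is then handled by convolving this same $p_0$-estimate with the bound $\int |\Phi(s,z,y)|\,dy \le C s^{-1+\lambda}$. This is a bit cleaner than your plan because it stays in the Zygmund framework (where $\mc{C}_b^\kappa$ is characterized by $\Delta_h^2$ for all $\kappa < 2$) and never requires a H\"older estimate for $\nabla_x(p_0 \circledast \Phi)$; your differentiation step for $\kappa>1$ would need precisely that, and while it can be extracted from \cite{matters}, it is extra work you can bypass.
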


For the proof of Proposition~\ref{app-1} we use a representation for the transition density $p$ which was obtained in \cite{matters} using a parametrix construction, see also \cite{parametrix}. For $\varrho \in (0,2)$ denote by $p^{\varrho}(t,x)$ the transition density of an isotropic $\varrho$-stable L\'evy process and set \begin{equation*}
p_0(t,x,y) := p^{\alpha(y)}(t,x-y), \qquad t>0, \,x,y \in \mbb{R}^d.
\end{equation*}
The transition density $p$ of $(X_t)_{t \geq 0}$ has the representation \begin{equation}
p(t,x,y) = p_0(t,x,y)+ (p_0 \circledast \Phi)(t,x,y), \qquad t>0,\,x,y \in \mbb{R}^d \label{app-eq3}
\end{equation}
where $\circledast$ is the time-space convolution and $\Phi$ is a suitable function satisfying
\begin{equation}
	\sup_{x \in \mbb{R}^d} \int_{\mbb{R}^d} |\Phi(t,x,y)| \, dy \leq C_1 t^{-1+\lambda}, \qquad t \in (0,T) \label{app-eq5},
\end{equation}
for some constant $\lambda>0$ and $C_1=C_1(T)>0$. For further details we refer the reader to Appendix~\ref{pix} where we collect the material from \cite{matters} which we need in this article.

\begin{proof}[Proof of Proposition~\ref{app-1}]
	Fix $T>0$, $u \in \mc{B}_b(\mbb{R}^d)$ and $\kappa \in (0,\alpha_L)$. Since $\|P_t u\|_{\infty} \leq \|u\|_{\infty}$ it suffices to show that the iterated differences of order $2$, cf.\ \eqref{def-eq3}, satisfy \begin{equation*}
		\sup_{x \in \mbb{R}^d} |\Delta_h^2 P_t u(x)| \leq C t^{-\kappa/\alpha_L} \|u\|_{\infty} \fa t \in (0,T],\, |h| \leq 1.
	\end{equation*}
	Because of the representation \eqref{app-eq3} we have \begin{align*}
	|\Delta_h^2 P_t u(x)| 
	&\leq |\Delta_h^2 P_t^{(0)} u(x)| + |\Delta_h^2 P_t^{(1)} u(x)|
	\end{align*}
	for any $x,h \in \mbb{R}^d$ and $t \in (0,T]$ where \begin{align*}
	P_t^{(0)} u(z) := \int_{\mbb{R}^d} u(y) p_0(t,z,y) \, dy \quad \text{and} \quad
	P_t^{(1)} u(z) := \int_{\mbb{R}^d} u(y) (p_0 \circledast \Phi)(t,z,y) \, dy.
	\end{align*}
	We estimate the terms separately; we start with $P^{(0)}$. The transition density $p^{\varrho}(t,x)$ of an isotropic $\varrho$-stable L\'evy process is twice differentiable, and by  there exists a constant $c_1>0$ such that \begin{equation}
		|p^{\varrho}(t,x)| \leq c_1 S(x,\varrho,t) \quad |\partial_{x_i} p^{\varrho}(t,x)| \leq c_1 t^{-1/\varrho} S(x,\varrho,t) \quad |\partial_{x_i} \partial_{x_j} p^{\varrho}(t,x)| \leq c_1 t^{-2/\varrho} S(x,\varrho,t) \label{app-eq55}
	\end{equation}
	where \begin{equation}
		S(x,\varrho,t) := \min \left\{ t^{-d/\alpha}, \frac{t}{|x|^{d+\alpha}} \right\}, \label{app-eq56}
	\end{equation}
	and $\varrho \in [\alpha_L,\|\alpha\|_{\infty}]$, $t \in (0,T)$, $x \in \mbb{R}^d$ and $i,j \in \{1,\ldots,d\}$, cf.\ Lemma~\ref{pix-3}. For the parametrix $p_0(t,x,y) = p^{\alpha(y)}(t,x-y)$ this implies, by Taylor's formula, that there exists is $c_2>0$ such that \begin{equation*}
	|p_0(t,x+2h,y)-2p_0(t,x+h,y)+p_0(t,x,y)| \leq  c_2 t^{-2/\alpha(y)} |h|^2 S(\eta(x,h)-y,\alpha(y),t), \quad x,h \in \mbb{R}^d
	\end{equation*}
	for some intermediate value $\eta(x,h) \in B(x,2h)$. As $t \leq T$ we find that \begin{equation*}
	|p_0(t,x+2h,y)-2p_0(t,x+h,y)+p_0(t,x,y)| \leq  c_3 t^{-2/\alpha_L} |h|^2 S(\eta(x,h)-y,\alpha(y),t), \quad x,h \in \mbb{R}^d 
	\end{equation*}
	for a suitable constant $c_3 = c_3(T,\alpha_L,\|\alpha\|_{\infty})$. On the other hand, \eqref{app-eq55} gives \begin{align*}
	|p_0(t,x+2h,y)-&2p_0(t,x+h,y)+p_0(t,x,y)|\\
	&\leq c_1 (S(x+2h-y,\alpha(y),t)+2S(x+h-y,\alpha(y),t)+ S(x-y,\alpha(y),t)).
	\end{align*}
	Combining both estimates we obtain that there exists a constant $c_4 = c_4(T,\alpha_L,\|\alpha\|_{\infty})$ such that
	\begin{equation}
	|p_0(t,x+2h,y)-2p_0(t,x+h,y)+p_0(t,x,y)| \leq c_4 |h|^{\kappa} t^{-\kappa/\alpha_L} U(t,x,y,h) \label{app-eq6}
	\end{equation}
	for \begin{equation*}
	U(t,x,y,h) := S(\eta(x,h)-y,\alpha(y),t) + S(x+h-y,\alpha(y),t)+S(x-h-y,\alpha(y),t)+S(x-y,\alpha(y),t),
	\end{equation*}
	cf.\ Lemma~\ref{ineq-1} with $r:=t^{1/\alpha_L}$. Hence, \begin{align*}
	|P_t^{(0)}u(x+2h)-2P_t^{(0)}u(x+h)+P_t^{(0)}u(x)| 
	\leq c_4 \|u\|_{\infty} t^{-\kappa/\alpha_L} |h|^\kappa \int_{\mbb{R}^d} U(t,x,y,h) \, dy
	\end{align*}
	for any $x,h \in \mbb{R}^d$ and $t \in (0,T)$. Since  \begin{equation}
	c_T := \sup_{t \in (0,T)}\sup_{z \in \mbb{R}^d} \int_{\mbb{R}^d} S(z-y,\alpha(y),t) \, dy <\infty, \label{app-eq7}
	\end{equation}
	cf.\ Appndix~\ref{pix}, we have \begin{equation}
	\sup_{t \in (0,T)} \sup_{z \in \mbb{R}^d} \int_{\mbb{R}^d} U(t,z,y,h) \, dy \leq 4 c_T < \infty, \label{app-eq75}
	\end{equation}
	and therefore we conclude that \begin{equation*}
	|P_t^{(0)}u(x+2h)-2P_t^{(0)}u(x+h)+P_t^{(0)}u(x)|
	\leq 4c_4 c_T \|u\|_{\infty} t^{-\kappa/\alpha_L} |h|^\kappa.
	\end{equation*}
	It remains to establish the H\"{o}lder estimate for $P^{(1)}_t$. By \eqref{app-eq6}, we have \begin{align*}
	&|(p_0 \circledast \Phi)(t,x+2h,y)-2(p_0 \circledast \Phi)(t,x+h,y)+ (p_0 \circledast \Phi)(t,x,y)| \\
	&\leq c_4 |h|^{\kappa} \int_0^t \!\! \int_{\mbb{R}^d} (t-s)^{-\kappa/\alpha_L} U(t-s,x,z,h) |\Phi(s,z,y)| \, dz \, ds.
	\end{align*}
	Integrating with respect to $y \in \mbb{R}^d$, it follows from \eqref{app-eq5} and \eqref{app-eq75} that 
	\begin{align*}
	|P_t^{(1)} u(x+2h)-2P_t^{(1)}u(x+h)+P_t^{(1)}u(x)|
	&\leq c_6 |h|^{\kappa} \|u\|_{\infty} \int_0^t (t-s)^{-\kappa/\alpha_L} s^{-1+\lambda} \, ds \\
	&\leq c_7 |h|^{\kappa} t^{-\kappa/\alpha_L} \|u\|_{\infty}
	\end{align*}
	for suitable constants $c_6$ and $c_7$. 
	Combining the estimates we find that \eqref{app-eq1} holds for some finite constant $C>0$. The continous dependence of $C$ on the parameters $\alpha_L-\kappa \in (0,\alpha_L)$, $\alpha_L \in (0,2)$, $\|\alpha\|_{C_b^{\gamma}}>0$ and $T>0$ follows from the fact that each of the constants in this proof depends continuously on these parameters.
\end{proof}

In Proposition~\ref{app-1} we studied the regularity of $x \mapsto P_t u(x)$ for measurable functions $u$.  The next result is concerned with the regularity of $P_t u(\cdot)$ for H\"{o}lder continuous functions $u$. It is natural to expect that $P_t u$ ``inherits'' some regularity from $u$.

\begin{prop} \label{app-25}
	Let $(X_t)_{t \geq 0}$ be a Feller process with symbol $q(x,\xi) = |\xi|^{\alpha(x)}$, $x,\xi \in \mbb{R}^d$, for a mapping $\alpha: \mbb{R}^d \to (0,2)$ such that $\alpha_L := \inf_{x \in \mbb{R}^d} \alpha(x)>0$ and $\alpha \in C_b^{\gamma}(\mbb{R}^d)$ for some $\gamma \in (0,1)$ satisfying  \begin{equation*}
		\gamma> \gamma_0 :=\|\alpha\|_{\infty}-\alpha_L.
	\end{equation*}
	For any $T>0$, $\kappa \in (0,\alpha_L)$ and $\eps \in (\gamma_0,\min\{\gamma,\alpha_L\})$ there exists a constant $C>0$ such that the semigroup $(P_t)_{t \geq 0}$ of $(X_t)_{t \geq 0}$ satisfies \begin{equation}
		\|P_t u\|_{\mc{C}_b^{\kappa + \min\{\delta,\gamma\}-\eps}(\mbb{R}^d)} \leq C (1+|\log t|) t^{-\kappa/\alpha_L}  \|u\|_{\mc{C}_b^{\min\{\delta,\gamma\}}(\mbb{R}^d)}, \qquad u \in \mc{C}_b^{\delta}(\mbb{R}^d), \label{app-eq31}
	\end{equation} 
	for all $\delta>0$ and $t \in (0,T]$. The constant $C>0$ depends continuously on $\alpha_L \in (0,2)$, $\kappa-\alpha_L \in (0,2)$, $(\eps-\|\alpha\|_{\infty})/\alpha_L \in (1,\infty)$, $\|\alpha\|_{C_b^{\gamma}(\mbb{R}^d)} \in [0,\infty)$ and $T \in [0,\infty)$.
\end{prop}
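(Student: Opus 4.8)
The plan is to re-run the parametrix argument of Proposition~\ref{app-1} -- based on the representation \eqref{app-eq3}, $p = p_0 + p_0 \circledast \Phi$ with $p_0(t,x,y) = p^{\alpha(y)}(t,x-y)$ -- while now exploiting the H\"older regularity of $u$ in order to gain the extra $\min\{\delta,\gamma\}$ orders of smoothness. Put $\beta := \min\{\delta,\gamma\}$; since $\mc{C}_b^{\delta}(\mbb{R}^d) \subseteq \mc{C}_b^{\beta}(\mbb{R}^d)$ with $\|u\|_{\mc{C}_b^{\beta}(\mbb{R}^d)} \le \|u\|_{\mc{C}_b^{\min\{\delta,\gamma\}}(\mbb{R}^d)}$, I would only use that $u \in \mc{C}_b^{\beta}(\mbb{R}^d)$. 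Let $k \in \mbb{N}$ be the smallest integer strictly larger than $\kappa+\beta$; since $\kappa < \alpha_L < 2$ and $\beta \le \gamma < 1$ we have $k \le 3$, and because $k > \kappa+\beta-\eps$ the $\mc{C}_b^{\kappa+\beta-\eps}(\mbb{R}^d)$-norm is equivalent to the one built from the differences $\Delta_h^k$, cf.\ \eqref{def-eq3} and \cite[Section~2.7]{triebel78}. As $\|P_t u\|_{\infty} \le \|u\|_{\infty} \le \|u\|_{\mc{C}_b^{\beta}(\mbb{R}^d)}$, and since the case $\beta \le \eps$ is weaker than and already follows from Proposition~\ref{app-1}, it suffices to bound $|\Delta_h^k P_t u(x_0)|$ by $C(1+|\log t|)\,t^{-\kappa/\alpha_L}\,|h|^{\kappa+\beta-\eps}\,\|u\|_{\mc{C}_b^{\beta}(\mbb{R}^d)}$ for all $x_0 \in \mbb{R}^d$, $|h| \le 1$, $t \in (0,T]$, assuming henceforth $\beta > \eps$.

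The decisive first step is to use that the stable-like process $(X_t)_{t \ge 0}$ is conservative, so $\int_{\mbb{R}^d} p(t,x,y)\,dy = 1$; since $\Delta_h^k$ annihilates constants this yields
\begin{equation*}
	\Delta_h^k P_t u(x_0) = \int_{\mbb{R}^d} \bigl( u(y) - u(x_0) \bigr) \, \Delta_h^{k,x} p(t,\cdot,y)(x_0) \, dy ,
\end{equation*}
with $\Delta_h^{k,x}$ acting in the first space variable, and this lets us insert the weight $|u(y)-u(x_0)| \le \|u\|_{\mc{C}_b^{\beta}(\mbb{R}^d)}\min\{|y-x_0|^{\beta},2\}$ -- the source of the $\beta$-fold gain of regularity. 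Next I would estimate $\Delta_h^{k,x} p(t,\cdot,y)(x_0)$ exactly as in the proof of Proposition~\ref{app-1}: for the parametrix $p_0$, Taylor's formula together with the derivative bounds for $p^{\varrho}$ up to order $k$ from Appendix~\ref{pix} gives both an \eqref{app-eq6}-type bound $|\Delta_h^{k,x} p_0(t,\cdot,y)(x_0)| \le c\,|h|^{\kappa} t^{-\kappa/\alpha_L} U(t,x_0,y,h)$, where $U$ is a finite sum of terms $S(\,\cdot - y, \alpha(y), t)$, and the complementary bound $|\Delta_h^{k,x} p_0(t,\cdot,y)(x_0)| \le c\,|h|^{k} t^{-k/\alpha_L} U(t,x_0,y,h)$ obtained by placing all $k$ differences on the $k$-th order derivative. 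The correction term $p_0 \circledast \Phi$ is treated with the same manipulations, carrying the weight $\min\{|y-x_0|^{\beta},2\}$ through the inner $y$-integral against $\Phi$ and using \eqref{app-eq5} (respectively its weighted refinements from \cite{matters}, which are available precisely for weights of order $\le \gamma$, reflecting the $\gamma$-H\"older regularity of the coefficient $\alpha$); the resulting time integrals are of Beta-function type and reproduce the factor $t^{-\kappa/\alpha_L}$ up to a logarithm in the borderline case -- whence the $(1+|\log t|)$ in \eqref{app-eq31}. It is in the restriction to weights of order $\le\gamma$ at this step that the cap $\beta = \min\{\delta,\gamma\}$ originates.

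Combining these two steps reduces the whole estimate to weighted spatial integrals of the type $\int_{\mbb{R}^d} \min\{|y-x_0|^{\beta},2\}\,S(\eta-y,\alpha(y),t)\,dy$ with $\eta \in \overline{B(x_0,2|h|)}$, which I would bound by splitting at $|y-\eta| = t^{1/\alpha_L}$ and invoking the integrability estimate \eqref{app-eq7}, obtaining a bound of the form $c(1+|\log t|)\,t^{\beta/\alpha_L} + c\,|h|^{\beta}$. Reconciling the various powers $t^{\beta/\alpha(y)}$ arising from the pointwise indices $\alpha(y) \in [\alpha_L,\|\alpha\|_{\infty}]$ with the single scale $t^{1/\alpha_L}$ introduces a mismatch of order $\beta(\tfrac{1}{\alpha_L}-\tfrac{1}{\|\alpha\|_{\infty}})$ which is absorbed into the loss $\eps$; this is exactly why $\eps > \gamma_0 = \|\alpha\|_{\infty}-\alpha_L$ is imposed, while $\eps < \min\{\gamma,\alpha_L\}$ guarantees convergence of the tail integrals. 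Finally one distinguishes $|h| \le t^{1/\alpha_L}$ and $|h| > t^{1/\alpha_L}$: in the first regime one uses the $|h|^{k} t^{-k/\alpha_L}$-bound, absorbs $|h|^{\beta} \le t^{\beta/\alpha_L}$, and notes $|h|^{k-(\kappa+\beta-\eps)} \le 1 \le t^{(\kappa+\beta-k)/\alpha_L}$ because $k > \kappa+\beta$ and $t \le T \le 1$; in the second regime the $|h|^{\kappa} t^{-\kappa/\alpha_L}$-bound together with $|h|^{\kappa-\eps} t^{-\kappa/\alpha_L} \ge 1$ (valid since $|h| > t^{1/\alpha_L}$, whether or not $\kappa \ge \eps$) gives $|h|^{\beta} \le |h|^{\kappa+\beta-\eps} t^{-\kappa/\alpha_L}$; either way one reaches the asserted bound, and the continuous dependence of $C$ on the listed parameters follows, as in Proposition~\ref{app-1}, from the continuous dependence of each constant occurring. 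The hardest part will be the bookkeeping of the variable-index estimates: controlling the dependence of $p^{\varrho}$ (and of differences of such densities) on the stability index $\varrho$, establishing the weighted refinements of \eqref{app-eq5}, and verifying that the balance conditions $\eps \in (\gamma_0,\min\{\gamma,\alpha_L\})$ really do render every spatial and temporal integral convergent with the claimed powers of $t$.
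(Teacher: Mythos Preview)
Your conservativity-and-weight approach is a natural reflex, but it is genuinely different from the paper's proof, and there is a real gap in your explanation of \emph{where the cap $\min\{\delta,\gamma\}$ comes from}.

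The paper does \emph{not} subtract $u(x_0)$ and push a weight $|y-x_0|^{\beta}$ through the kernel. Instead it performs a change of variables $y \mapsto y+h$ to write
\[
	\Delta_h P_t u(x) \;=\; \underbrace{\int_{\mbb{R}^d} (\Delta_h u)(y)\,\bigl(p(t,x+h,y)-p(t,x,y)\bigr)\,dy}_{=:J_1}
	\;-\; \underbrace{\int_{\mbb{R}^d} u(y+h)\,q(t,x,y)\,dy}_{=:J_2},
\]
where $q(t,x,y) := p(t,x+h,y)-p(t,x+2h,y+h)-p(t,x,y)+p(t,x+h,y+h)$ is a ``diagonally shifted'' second difference of the kernel. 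The term $J_1$ is handled by applying Proposition~\ref{app-1} to $v:=\Delta_h u$, which has $\|v\|_{\infty}\le \|u\|_{\mc{C}_b^{\delta}}|h|^{\delta}$; this gives $|h|^{\kappa+\delta}$ with no loss and no $\gamma$-constraint. The $\gamma$-cap arises entirely from $J_2$: for the parametrix one has
\[
	q_0(t,x,y) \;=\; \int_{\alpha(y)}^{\alpha(y+h)} \partial_{\varrho}\bigl(p^{\varrho}(t,x+h-y)-p^{\varrho}(t,x-y)\bigr)\,d\varrho,
\]
so the length of the $\varrho$-interval, $|\alpha(y+h)-\alpha(y)| \le C|h|^{\gamma}$, is what produces the extra $|h|^{\gamma}$. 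The $(1+|\log t|)$ comes from the estimates \eqref{pix-eq13} on $\partial_{\varrho}p^{\varrho}$, not from borderline Beta integrals. For the $\Phi$-part of $J_2$ the paper proves a dedicated lemma (Lemma~\ref{app-27}) bounding $\int|\Phi(t,z+h,y+h)-\Phi(t,z,y)|\,dy \le C\,|h|^{\gamma-\eps}\,t^{-1+\lambda}$; this is where the conditions $\eps>\gamma_0$ and $\gamma>\gamma_0$ are actually used.

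Your mechanism for the $\gamma$-cap---that ``weighted refinements of \eqref{app-eq5} are available precisely for weights of order $\le \gamma$''---is not correct: there is no such restriction on the weight order in the $\Phi$-estimates, and no such weighted bound is established in the paper or (to my knowledge) in \cite{matters}. In your scheme the H\"older regularity of $\alpha$ enters only implicitly through the existence of $\Phi$, so nothing in the argument singles out $\gamma$ as a ceiling on the gain; if the computation went through you would get a gain of roughly $\beta$ minus an index-mismatch loss, with no reason to cap $\beta$ at $\gamma$. Conversely, when $\gamma$ is large relative to $\|\alpha\|_{\infty}$ (e.g.\ $\alpha_L=0.1$, $\|\alpha\|_{\infty}=0.15$, $\gamma=0.9$), the weighted integral $\int \min\{|y-x_0|^{\beta},2\}\,S(\eta-y,\alpha(y),t)\,dy$ saturates at order $t$ once $\beta>\|\alpha\|_{\infty}$, and the resulting bound $|h|^k t^{1-k/\alpha_L}$ does \emph{not} dominate $|h|^{\kappa+\beta-\eps}t^{-\kappa/\alpha_L}$ unless $\beta-\eps \le \alpha_L$, which can fail. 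So the approach as sketched does not reach the stated exponent in all admissible regimes.

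In short: the paper's key idea is the algebraic decomposition that isolates the regularity of $u$ in $J_1$ and the regularity of $\alpha$ in $J_2$; the $\gamma$ in $\min\{\delta,\gamma\}$ and the $\eps$-loss both come from $J_2$, via $\partial_{\varrho}p^{\varrho}$ and the shifted-$\Phi$ estimate of Lemma~\ref{app-27}, and not from any weighted-kernel argument.
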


For the proof of the Schauder estimates, Corollary~\ref{ex-23}, we will apply Proposition~\ref{app-25} for an isotropic stable-like process $(X_t)_{t \geq 0}$ with symbol $q(x,\xi)=|\xi|^{\alpha(x)}$ for a ``truncated'' function $\alpha$ of the form \begin{equation*}
	\alpha(x) := (\varrho(x_0)-\delta) \vee \varrho(x) \wedge (\varrho(x_0)+\delta), \qquad x \in \mbb{R}^d
\end{equation*}
where $x_0 \in \mbb{R}^d$ is fixed and $\delta>0$ is a constant which we can choose as small as we like; in particular $\gamma_0 :=\|\alpha\|_{\infty}-\alpha_L \leq 2 \delta$ is small and therefore the assumptions $\eps>\gamma_0$ and $\gamma>\gamma_0$ in Proposition~\ref{app-25} are not a restriction. Let us mention that both assumptions, i.\,e.\ $\eps>\gamma_0$ and $\gamma>\gamma_0$, come into play when estimating one particular term in the proof of Proposition~\ref{app-25}, see \eqref{app-eq61} below; a more careful analysis of this term would probably allow us to relax these two conditions. 

\begin{proof}[Proof of Proposition~\ref{app-25}]
	 Fix $\eps \in (\gamma_0, \gamma \wedge \alpha_L)$, $\kappa \in (0,\alpha_L)$ and $T>0$. First of all, we note that it clearly suffices to show \eqref{app-eq31} for $u \in \mc{C}_b^{\delta}(\mbb{R}^d)$ with $\delta \leq \gamma \leq 1$. Throughout the first part of this proof, we will assume that \begin{equation}
		\kappa \leq 1. \label{app-eq33}
	\end{equation}
	Under \eqref{app-eq33} the assertion follows if we can show that \begin{equation*}
		|\Delta_h^2 P_t u(x)| \leq C \|u\|_{\mc{C}_b^{\delta}(\mbb{R}^d)} (1+|\log(t)|) t^{-\kappa/\alpha_L} |h|^{\kappa+\delta-\eps}, \qquad x \in \mbb{R}^d, \, |h| \leq 1, \, t \in (0,T]
	\end{equation*}
	where $\Delta_h^2$ denotes as usual the iterated difference operator, cf.\ \eqref{def-eq3}. For the proof of this inequality we use again the parametrix construction of the transition density $p$ of $(X_t)_{t \geq 0}$, \begin{equation}
		p(t,x,y) = p_0(t,x,y) + (p_0 \circledast \Phi)(t,x,y), \qquad t>0, \, x,y \in \mbb{R}^d \label{app-eq35}
	\end{equation}
	where \begin{equation}
		p_0(t,x,y) = p^{\alpha(y)}(t,x-y),\qquad t>0, \, x,y \in \mbb{R}^d, \label{app-eq36}
	\end{equation}
	see Appendix~\ref{pix} for details. Since \begin{align*}
		\Delta_h P_t u(x) 
		&=  \int_{\mbb{R}^d} \Delta_h u(y) p(t,x,y) \, dy - \int_{\mbb{R}^d} \left( u(y+h) p(t,x,y) - u(y) p(t,x+h,y) \right) \, dy \\
		&= \int_{\mbb{R}^d} \Delta_h u(y) p(t,x,y) \, dy - \int_{\mbb{R}^d} u(y+h) (p(t,x,y)-p(t,x+h,y+h)) \, dy
	\end{align*}
	we find that $\Delta_h^2 P_t f(x) = J_1-J_2$ where \begin{align} \label{app-eq40} \begin{aligned} 
		J_1 &:= \int_{\mbb{R}^d} \Delta_h u(y) \left( p(t,x+h,y)-p(t,x,y) \right) \, dy \\
		J_2 &:= \int_{\mbb{R}^d} u(y+h) q(t,x,y) \, dy.\end{aligned}
	\end{align}
	with \begin{equation*}
		q(t,x,y) := p(t,x+h,y)-p(t,x+2h,y+h)-p(t,x,y)+p(t,x+h,y+h)
	\end{equation*}
	for fixed $h$.
	We estimate the terms separately. For fixed $h \in \mbb{R}^d$, $|h| \leq 1$, define an auxiliary function $v$ by $v(y) := \Delta_h u(y)$. Proposition~\ref{app-1} gives \begin{equation*}
		|J_1| 
		\leq |h|^{\kappa} \|P_t v \|_{\mc{C}_b^{\kappa}(\mbb{R}^d)} 
		\leq C_1 |h|^{\kappa} \|v\|_{\infty} t^{-\kappa/\alpha_L}, \qquad t \in (0,T],
	\end{equation*}
	and so, by the definition of $v$ and the H\"older continuity of $u$, \begin{equation*}
		|J_1| \leq C_1 |h|^{\kappa+\delta} \|u\|_{\mc{C}_b^{\delta}(\mbb{R}^d)} t^{-\kappa/\alpha_L}, \qquad t \in (0,T].
	\end{equation*}
	It remains to establish the corresponding estimate for $J_2$, and to this end we use the representation \eqref{app-eq35} for the transition density $p$.  \par
	\textbf{Step 1:} There exists a constant $c_1>0$ such that 
	\begin{equation}
		q_0(t,x,y):= p_0(t,x+h,y)-p_0(t,x+2h,y+h)-p_0(t,x,y) + p_0(t,x+h,y+h) \label{app-eq47}
	\end{equation}
	satisfies \begin{equation*}
		\int_{\mbb{R}^d} |q_0(t,x,y)| \,dy \leq c_1 |h|^{\kappa+\gamma} (1+|\log(t)|) t^{-\kappa/\alpha_L} \fa x,h \in \mbb{R}^d, \, t \in (0,T].
	\end{equation*}
	\emph{Indeed:} If we denote by $p^{\varrho}$ the transition density of the $d$-dimensional isotropic $\varrho$-stable L\'evy process, $\varrho \in (0,2)$, then there is a constant $c_2>0$ such that \begin{equation}
		\int_{\mbb{R}^d} \left| \frac{\partial}{\partial \varrho} p^{\varrho}(t,x) \right| \, dx \leq c_2 (1+|\log(t)|) \quad \int_{\mbb{R}^d} \left| \frac{\partial}{\partial x_j} \frac{\partial}{\partial \varrho} p^{\varrho}(t,x) \right| \, dx \leq c_2 (1+|\log(t)|) t^{-1/\alpha_L} \label{app-eq44}
	\end{equation}
	for all $t \in (0,T]$, $j \in \{1,\ldots,d\}$ and $\varrho \in [\alpha_L,\|\alpha\|_{\infty}] \subseteq (0,2]$, cf.\ Lemma~\ref{pix-3}. To shorten the notation, we fix $x,h \in \mbb{R}^d$ and $t \in (0,T]$, and write $q_0(y)$ for the function defined in \eqref{app-eq47}. By the very definition of $p_0$, cf.\ \eqref{app-eq36}, we have \begin{equation*}
		|q_0(y)| = |p^{\alpha(y)}(t,x+h-y) - p^{\alpha(y+h)}(t,x+h-y)-p^{\alpha(y)}(t,x-y) + p^{\alpha(y+h)}(t.x-y)|,
	\end{equation*}
	and so, by the fundamental theorem of calculus and the mean value theorem, \begin{align}
		|q_0(y)| 
		&= \left| \int_{\alpha(y)}^{\alpha(y+h)} \left(\partial_{\varrho} p^{\varrho}(t,x+h-y) - \partial_{\varrho} p^{\varrho}(t,x-y) \right) \, d\varrho \right| \label{app-eq46} \\
		&\leq |h| \int_{\alpha(y)}^{\alpha(y+h)} \left| \nabla_x \partial_{\varrho} p^{\varrho}(t,\eta_{\varrho}(x,h)-y) \right| \, d\varrho \notag
	\end{align}
	for some intermediate value $\eta_{\varrho}(x,h) \in B(x,h)$. Integrating with respect to $y$ and using \eqref{app-eq44} we obtain that \begin{align}
		\int_{\mbb{R}^d} |q_0(y)| \, dy
		&\leq c_3 (1+|\log(t)|) t^{-1/\alpha_L} |h| \sup_{y \in \mbb{R}^d} \int_{\alpha(y)}^{\alpha(y+h)} d\varrho\\
	&	\leq c_3 \|\alpha\|_{\mc{C}_b^{\gamma}(\mbb{R}^d)} (1+|\log(t)|) t^{-1/\alpha_L} |h|^{1+\gamma}. \label{app-eq48}
	\end{align}
	On the other hand, it follows from \eqref{app-eq46} and the H\"older continuity of $\alpha$ that \begin{align*}
		\int_{\mbb{R}^d} |q_0(y)| \, dy
		\leq |h|^{\gamma} \|\alpha\|_{\mc{C}_b^{\gamma}(\mbb{R}^d)} \sup_{\varrho \in [\alpha_L,\|\alpha\|_{\infty}]} \sup_{\eta \in \mbb{R}^d} \int_{\mbb{R}^d} |\partial_{\varrho} p^{\varrho}(t,\eta-y)| \, dy.
	\end{align*}
	Hence, by \eqref{app-eq44}, \begin{align}
		\int_{\mbb{R}^d} |q_0(y)| \, dy
		\leq c_4 |h|^{\gamma} (1+|\log(t)|). \label{app-eq50}
	\end{align}
	Combining \eqref{app-eq48} and \eqref{app-eq50} we find that \begin{equation*}
		\int_{\mbb{R}^d} |q_0(y)| \, dy \leq c_5 |h|^{\kappa+\gamma} (1+|\log(t)|) t^{-\kappa/\alpha_L}, \qquad \kappa \in [0,\alpha_L];
	\end{equation*}
	the reasoning is very similar to the proof of Lemma~\ref{ineq-1}, alternatively we can use an interpolation theorem. \par
	\textbf{Step 2:} There exists a constant $c>0$ such that \begin{equation*}
		|J_2| \leq c |h|^{\kappa+\delta-\eps} \|u\|_{\mc{C}_b^{\delta}(\mbb{R}^d)} (1+|\log(t)|) t^{-\kappa/\alpha_L} \fa t\in (0,T], \, |h| \leq 1, \, x \in \mbb{R}^d;
	\end{equation*}
	recall that $\eps \in (\gamma_0,\alpha_L \wedge \gamma)$ has been fixed at the beginning of the proof.\par
	\emph{Indeed:} Because of the decomposition \eqref{app-eq35}, we have $J_2 = J_{2,1} + J_{2,2}$ for \begin{align} \label{app-eq52} \begin{aligned}
		J_{2,1} &:= \int_{\mbb{R}^d} u(y+h) q_0(t,x,y) \, dy \\
		J_{2,2} &:= \int_{\mbb{R}^d} u(y+h) \left((p_0 \circledast \Phi)(t,x+h,y) - (p_0 \circledast \Phi)(t,x+2h,y+h)\right) \, dy \\
		&\qquad +\int_{\mbb{R}^d} u(y+h) \left((p_0 \circledast \Phi)(t,x+h,y+h) -(p_0 \circledast \Phi)(t,x,y) \right) \, dy \end{aligned}
	\end{align}
	with $q$ defined in \eqref{app-eq47}. It follows from Step 1 that \begin{equation*}
		|J_{2,1}| \leq c_1 \|u\|_{\mc{C}_b^{\delta}(\mbb{R}^d)} (1+|\log(t)|) t^{-\kappa/\alpha_L} |h|^{\kappa+\delta}, \quad t \in (0,T].
	\end{equation*}
	It remains to estimate $J_{2,2}$. By the definition of the time-space convolution, we have \begin{align*}
		&(p_0 \circledast \Phi)(t,x+h,y) - (p_0 \circledast \Phi)(t,x+2h,y+h)-(p_0 \circledast \Phi)(t,x,y) + (p_0 \circledast \Phi)(t,x+h,y+h) \\
		&= \int_0^t \!\! \int_{\mbb{R}^d} (p_0(t-s,x+h,z)-p_0(t-s,x,z)) \Phi(s,z,y) \, dz \, ds \\
		&\quad - \int_0^t \!\! \int_{\mbb{R}^d} (p_0(t-s,x+2h,z)-p_0(t-s,x+h,z)) \Phi(s,z,y+h) \, dz \, ds \\
		&= \int_0^t \!\! \int_{\mbb{R}^d}q_0(t-s,x,z) \Phi(s,z,y) \, dz \, ds \\
		&\quad - \int_0^t\!\! \int (p_0(t-s,x+2h,z+h)-p_0(t-s,x+h,z+h)) (\Phi(s,z+h,y+h)-\Phi(s,z,y)) \, dz \, ds \\
		&=: H_1(t,y)-H_2(t,y).
	\end{align*}
	Integrating with respect to $y$ and applying Tonelli's theorem, we obtain that \begin{align*}
		\left| \int_{\mbb{R}^d} u(y+h) H_1(t,y) \, dy \right|  
		&\leq \|u\|_{\infty} \int_0^t \left( \sup_{\eta \in \mbb{R}^d} \int_{\mbb{R}^d} |\Phi(s,\eta,y)| \, dy \right) \left( \int_{\mbb{R}^d} |q_0(t-s,x,z)| \, dz \right) \, ds.
	\end{align*}
	Thus, by \eqref{app-eq5} and Step 1, \begin{align}
		\left| \int_{\mbb{R}^d} u(y+h) H_1(t,y) \, dy \right|  
		\leq c_6 |h|^{\kappa+\gamma} \|u\|_{\infty} \int_0^t s^{-1+\lambda_1} (1+|\log(t-s)|) (t-s)^{-\kappa/\alpha_L} \, ds \label{app-eq60}
	\end{align}
	for a suitable constant $c_6>0$ and $\lambda_1>0$. It remains to estimate $H_2$. We claim that there exist constants $c_7>0$ and $\lambda_2>0$ such that  \begin{equation}
		\sup_{z \in \mbb{R}^d} \int_{\mbb{R}^d} |\Phi(t,z+h,y+h)-\Phi(t,z,y)| \, dy \leq c_7 |h|^{\gamma-\eps} t^{-1+\lambda_2} \label{app-eq61}
	\end{equation}
	for all $t \in (0,T]$ and $|h| \leq 1$; here $\eps \in (\gamma_0,\alpha_L \wedge \gamma)$ is the constant which we have chosen at the beginning of the proof. We postpone the proof of \eqref{app-eq61} to the end of this subsection, see Lemma~\ref{app-27} below. Using \eqref{app-eq61} and the fact that
	\begin{equation*}
		\int_{\mbb{R}^d}  |p_0(t-s,x+2h,z+h)-p_0(t-s,x+h,z+h)| \, dz \leq c_{8} |t-s|^{-\kappa/\alpha_L} |h|^{\kappa}
	\end{equation*}
	for some constant $c_{8}>0$, which follows by a similar reasoning as in the first part of the proof of Proposition~\ref{app-1}, we obtain that \begin{align*}
		\left| \int_{\mbb{R}^d} u(y+h) H_2(t,y) \, dy \right|
		\leq c_7 c_{8} \|u\|_{\infty} |h|^{\gamma+\kappa-\eps} \int_0^t s^{-1+\lambda_2} (t-s)^{-\kappa/\alpha_L} \, ds.
	\end{align*}
	Combining this estimate with \eqref{app-eq60} gives \begin{equation*}
		|J_{2,2}| \leq (c_6+c_7 c_8)  \|u\|_{\infty} |h|^{\gamma+\kappa-\eps} \int_0^t s^{-1+\lambda} (t-s)^{-\kappa/\alpha_L} (1+|\log(t-s)|) \, ds. 
	\end{equation*}
	Hence, \begin{align*}
		|J_{2,2}| 
		&\leq c_{9}  \|u\|_{\infty}|h|^{\gamma+\kappa-\eps} t^{-\kappa/\alpha_L} \int_0^1 r^{-1+\lambda} (1-r)^{-\kappa/\alpha_L} (1+|\log(1-r)|) \, dr
	\end{align*}
	for all $t \in (0,T]$ where $\lambda := \min\{\lambda_1,\lambda_2\}$. This finishes the proof of Step 2 and, hence, of Proposition~\ref{app-25} for the case $\kappa \leq 1$. 
	
	If $\kappa>1$, we need to estimate the iterated differences of third order $\Delta_h^3 P_t u(x)$. Fix $|h| \leq 1$. Since $\Delta_h^2 P_t f(x) =J_1(x) -J_2(x)$ with $J_1=J_1(x)$, $J_2=J_2(x)$ defined in \eqref{app-eq40}, we have \begin{equation*}
		\Delta_h^3 P_t u(x) = \Delta_h J_1(x) - \Delta_h J_2(x).
	\end{equation*}
	As before, we estimate the terms separately. If we define an auxilary function $v(y) := \Delta_h u(y)$, then, by \eqref{app-eq40}, \begin{align*}
		\Delta_h J_1(x)
		&= \int_{\mbb{R}^d} v(y) \left( p(t,x+2h,y)-2p(t,x+h,y) +p(t,x,y) \right) \, dy \\
		&= P_t v(x+2h)-2P_t v(x+h)+P_t v(x) = \Delta_h^2 P_t v(x).
	\end{align*}
	By Proposition~\ref{app-1}, this gives \begin{align*}
		|\Delta_h J_1(x)|
		\leq C_1 |h|^{\kappa} \|v\|_{\infty} t^{-\kappa/\alpha_L}, \quad t \in (0,T],\,x \in \mbb{R}^d,
	\end{align*}
	and so, by the definition of $v$ and the H\"{o}lder continuity of $u$, \begin{equation*}
		|\Delta_h J_1(x)|
		\leq C_1 \|u\|_{\mc{C}_b^{\delta}(\mbb{R}^d)} |h|^{\kappa+\delta} t^{-\kappa/\alpha_L}, \quad t \in (0,T], x \in \mbb{R}^d.
	\end{equation*}
	In order to estimate $\Delta_h J_2$ we use a similar procedure as in the case $\kappa \leq 1$. Denote by $q_0$ the function defined in \eqref{app-eq47}.
	
	\textbf{Step 3:} There exists a constant $c_1>0$ such that \begin{equation*}
		\int_{\mbb{R}^d} |q_0(t,x+h,y)-q_0(t,x,h)| \, dy \leq c_1 |h|^{\kappa+\gamma} (1+|\log(t)|) t^{-\kappa/\alpha_L}, \quad |h| \leq 1, \, x \in \mbb{R}^d,  \in (0,T].
	\end{equation*}
	\emph{Indeed:} By definition of $q_0$ and definition of $p_0$, cf.\ \eqref{app-eq36}, \begin{align*}
		q_0(t,x+h,y)-q_0(t,x,y)
		&= p_0(t,x,y)-p_0(t,x+3h,y+h)+2p_0(t,x+2h,y+h)\\
		&\quad -2p_0(t,x+h,y)-p_0(t,x+h,y+h) + p_0(t,x+2h,y) \\
		&= - p^{\alpha(y+h)}(t,x-y+2h) + 2p^{\alpha(y+h)}(t,x-y+h) - p^{\alpha(y+h)}(x-y) \\ 
		&\quad+ p^{\alpha(y)}(t,x-y+2h) - 2p^{\alpha(y)}(t,x-y+h) + p^{\alpha(y)}(t,x-y)
	\end{align*}
	where $p^{\varrho}$ denotes as usual the transition density of the L\'evy process with characteristic exponent $|\xi|^{\varrho}$. Hence, \begin{equation}
		q_0(t,x+h,y)-q_0(t,x,y)
		= \int_{\alpha(y)}^{\alpha(y+h)} \partial_{\varrho} \left( - p^{\varrho}(t,x-y+2h)+2p^{\varrho}(t,x-y+h) - p^{\varrho}(t,x-y) \right) \, d\varrho. \label{app-eq91}
	\end{equation}
	Applying Taylor's formula, integrating with respect to $y$ and using Lemma~\ref{pix-3}, it follows that \begin{align*}
		\int_{\mbb{R}d} |q_0(t,x+h,y)-q_0(t,x,y)| \, dy
		&\leq c_2 (1+|\log(t)|) t^{-2/\alpha_L} |h|^2 \sup_{y \in \mbb{R}^d} \int_{\alpha(y)}^{\alpha(y+h)} \, d\varrho \\
		&\leq c_2 \|\alpha\|_{\mc{C}_b^{\gamma}(\mbb{R}^d)} (1+|\log(t)|) |h|^{2+\gamma} t^{-2/\alpha_L}.
	\end{align*}
	On the other hand, \eqref{app-eq91} gives \begin{align*}
		|q_0(t,x+h,y)-q_0(t,x,y)|
		\leq 2 \sum_{j=0}^2 \int_{\alpha(y)}^{\alpha(y+h)} |\partial_{\varrho} p^{\varrho}(t,x-y+jh)| \, d\varrho.
	\end{align*}
	Another application of Lemma~\ref{pix-3} (with $k=0$) yields \begin{align*}
		\int_{\mbb{R}^d} |q_0(t,x+h,y)-q_0(t,x,y)| \, dy
		\leq c_3 \|\alpha\|_{\mc{C}_b^{\gamma}(\mbb{R}^d)} |h|^{\gamma} (1+|\log(t)|),
	\end{align*}
	and combining this with the previous estimate we get the assertion by a standard interpolation argument.
	
	\textbf{Step 4:} There exists a constant $c>0$ such that \begin{equation*}
		|J_2(x+h)-J_2(x)| \leq c |h|^{\kappa+\delta-\eps} \|u\|_{\mc{C}_b^{\delta}(\mbb{R}^d)} (1+|\log(t)|) t^{-\kappa/\alpha_L}, \quad t \in (0,T], \, |h| \leq 1, \, x \in \mbb{R}^d,
	\end{equation*}
	with $\eps$ chosen at the beginning of the proof.
	
	\emph{Indeed:} As in the first part of this proof, we write $J_2=J_{2,1}+J_{2,2}$ where \begin{align*}
		J_{2,1}(x) = \int_{\mbb{R}^d} u(y+h) q_0(t,x,y) \, dy \qquad J_{2,2}(x) = \int u(y+h) (H_1(t,x,y)-H_2(t,x,y)) \, dy,
	\end{align*}
	cf.\ Step 2. By Step 3, \begin{align*}
		|J_{2,1}(x+h)-J_{2,1}(x)|
		\leq c_1 \|u\|_{\infty} |h|^{\kappa+\delta} (1+|\log(t)|) t^{-\kappa/\alpha_L},
	\end{align*}
	and so it just remains to estimate $J_{2,2}(x+h)-J_{2,2}(x)$. It follows from the definition of $H_1$ and Fubini's theorem that \begin{align*}
		&\left| \int_{\mbb{R}^d} u(y+h) (H_1(t,x+h,y)-H_1(t,x,y)) \, dy \right|\\
		&\leq \int_0^t \left( \sup_{\eta \in \mbb{R}^d} |\Phi(s,\eta,y)| \, dy \right) \int_{\mbb{R}^d} |q_0(t-s,x+h,z)-q_0(t-s,x,z)| \, dz \, ds.
	\end{align*}
	By \eqref{app-eq5} and Step 3, there exist constants $c_4>0$ and $\lambda_1>0$ such that \begin{align*}
		\left| \int_{\mbb{R}^d} u(y+h) (H_1(t,x+h,y)-H_1(t,x,y)) \, dy \right|
		\leq c_4 |h|^{\kappa+\gamma} \int_0^t s^{-1+\lambda_1} (1+|\log(t-s)|) (t-s)^{-\kappa/\alpha_L} \, ds 
	\end{align*}
	for any $t \in (0,T]$, $x \in \mbb{R}^d$ and $|h| \leq 1$. For $H_2$ we note that \begin{align*}
		H_2(t,x+h,y)-H_2(t,x,y)
		= \int_0^t\!\!\int_{\mbb{R}^d} r(t-s,x,z) (\Phi(s,z+h,y+h)-\Phi(s,z,y)) \, dz \, ds
	\end{align*}
	where \begin{align*}
		r(t-s,x,z)
		&:= p_0(t-s,x+3h,z+h)-2p_0(t-s,x+2h,z+h)+ p_0(t-s,x+h,z+h)
	\end{align*}
	for fixed $h$. Applying Taylor's formula and using \eqref{pix-eq11}, we obtain that \begin{align*}
		\int_{\mbb{R}^d} |r(t-s,x,z)| \, dz
		\leq c_5 |h|^{\kappa} (t-s)^{-\kappa/\alpha_L},
	\end{align*}
	see the proof of Proposition~\ref{app-1} for a very similar reasoning. Combining this estimate with \eqref{app-eq61}, \begin{equation*}
		\left| \int_{\mbb{R}^d} u(y+h) (H_2(t,x+h,y)-H_2(t,x,y)) \, dy \right|
		\leq c_6 \|u\|_{\infty} |h|^{\gamma+\kappa-\eps} \int_0^t s^{-1+\lambda_2} (t-s)^{-\kappa/\alpha_L} \, ds
	\end{equation*}
	for suitable constants $\lambda_2>0$ and $c_6>0$. This gives the desired estimates for $J_{2,2}$, see the end of Step 2 for details, and hence for $J_2$.
\end{proof}

\begin{lem} \label{app-27} 
	Let $(X_t)_{t \geq 0}$ be a Feller process with symbol $q(x,\xi) = |\xi|^{\alpha(x)}$ satisfying the assumptions of Proposition~\ref{app-25}, and denote by \begin{equation*}
		p(t,x,y) =p_0(t,x,y) + (p_0 \circledast \Phi)(t,x,y)
	\end{equation*}
	the parametrix representation of the transition density $p$ of $(X_t)_{t \geq 0}$. For any $T>0$ and any $\eps \in (\gamma_0,\gamma \wedge \alpha_L)$ there exist finite constants $C>0$ and $\lambda>0$ such that \begin{equation*}
		\int_{\mbb{R}^d} |\Phi(t,x+h,y+h)-\Phi(t,x,y)| \, dy
		\leq C |h|^{\gamma-\eps} t^{-1+\lambda}
	\end{equation*}
	for all $x \in \mbb{R}^d$, $|h| \leq 1$ and $t \in (0,T]$. The constant $C>0$ depends continuously on $\alpha_L \in (0,2)$, $\kappa-\alpha_L \in (0,2)$, $(\eps-\|\alpha\|_{\infty})/\alpha_L \in (1,\infty)$, $\|\alpha\|_{C_b^{\gamma}(\mbb{R}^d)} \in [0,\infty)$ and $T \in [0,\infty)$. The constant $\lambda>0$ depends continuously on $(\eps-\|\alpha\|_{\infty})/\alpha_L \in (1,\infty)$ and $(\gamma-\|\alpha\|_{\infty})/\alpha_L \in (1,\infty)$.
\end{lem}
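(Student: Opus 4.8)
The proof rests on the Levi/parametrix representation recalled in Appendix~\ref{pix}: one has $\Phi = \sum_{k \geq 1} F^{\circledast k}$, where the zero-order kernel is
\begin{equation*}
  F(t,x,y) = \int_{z \neq 0} \Big( p^{\alpha(y)}(t,x-y+z) - p^{\alpha(y)}(t,x-y) - z \cdot \nabla_x p^{\alpha(y)}(t,x-y) \I_{(0,1)}(|z|) \Big) \Big( \frac{c_{d,\alpha(x)}}{|z|^{d+\alpha(x)}} - \frac{c_{d,\alpha(y)}}{|z|^{d+\alpha(y)}} \Big) \, dz ,
\end{equation*}
so that $F(t,x,y)$ depends on $(x,y)$ only through the triple $(x-y,\alpha(x),\alpha(y))$; write $F(t,x,y) = \Psi(t,x-y,\alpha(x),\alpha(y))$. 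The crucial structural observation is that the \emph{diagonal} shift $(x,y) \mapsto (x+h,y+h)$ leaves the spatial slot $x-y$ untouched and only moves the two index slots, by amounts bounded by $\|\alpha\|_{\mc{C}_b^{\gamma}(\mbb{R}^d)} |h|^{\gamma}$. The plan is therefore: \emph{(i)} prove the diagonal-increment estimate
\begin{equation*}
  \int_{\mbb{R}^d} |F(t,x+h,y+h)-F(t,x,y)| \, dy \leq C |h|^{\gamma-\eps} t^{-1+\lambda_0}, \qquad t \in (0,T], \,|h| \leq 1 ,
\end{equation*}
for some $\lambda_0>0$; and \emph{(ii)} transfer this bound through the time-space convolution to the full series $\Phi$.

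For step \emph{(i)} I would telescope $\Psi(t,w,\alpha(x+h),\alpha(y+h)) - \Psi(t,w,\alpha(x),\alpha(y))$ into a change of the first index (with the second frozen) plus a change of the second index, and estimate each piece by $\|\alpha\|_{\mc{C}_b^{\gamma}(\mbb{R}^d)}|h|^{\gamma}$ times the corresponding $\varrho$-derivative of $\Psi$. In the first piece only the L\'evy density $c_{d,\varrho}/|z|^{d+\varrho}$ depends on the varying index, and its $\varrho$-derivative carries a factor $1+|\log|z||$; combining this with the bounds of Lemma~\ref{pix-3} for $p^{\alpha(y)}$ and $\nabla_x p^{\alpha(y)}$ and splitting $\int_{|z| \leq t^{1/\alpha_L}} + \int_{|z|>t^{1/\alpha_L}}$ yields a bound $c|h|^{\gamma} t^{-1+\lambda_1}$. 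In the second piece the varying index $\varrho = \alpha(\cdot)$ sits both in $p^{\varrho}$ and in the L\'evy density, so one needs the estimates of Lemma~\ref{pix-3} for $\partial_{\varrho} p^{\varrho}$ and $\partial_{x_j}\partial_{\varrho} p^{\varrho}$, which produce factors $1+|\log t|$; together with the mismatch between the spatial decay rates $|z|^{-d-\alpha(x)}$ and $|z|^{-d-\alpha(y)}$ — controlled precisely because $\gamma > \gamma_0$ — this gives $c|h|^{\gamma}(1+|\log t|) t^{-1+\lambda_2}$. Finally $|h|^{\gamma}(1+|\log t|)$ is absorbed into $|h|^{\gamma-\eps} t^{-\eps'}$ for small $\eps'>0$, the hypothesis $\eps > \gamma_0$ being used to keep $\lambda_0 := \min\{\lambda_1,\lambda_2\} - \eps'$ strictly positive. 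The accompanying plain estimates $\int_{\mbb{R}^d}|F(t,x,y)| \, dy \leq c\, t^{-1+\lambda_0}$ and $\int_{\mbb{R}^d}|\Phi(t,x,y)| \, dy \leq c\, t^{-1+\lambda_*}$ are already available from \cite{matters}, cf.\ \eqref{app-eq5}.

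For step \emph{(ii)} I would use the identity $\Phi = F + F \circledast \Phi$. Writing $\Delta^h g(t,x,y) := g(t,x+h,y+h)-g(t,x,y)$ and performing the change of variables $z \mapsto z+h$ in the convolution, one obtains
\begin{equation*}
  \Delta^h \Phi(t,x,y) = \Delta^h F(t,x,y) + \int_0^t \!\! \int_{\mbb{R}^d} \Delta^h F(t-s,x,z) \, \Phi(s,z+h,y+h) \, dz \, ds + \int_0^t \!\! \int_{\mbb{R}^d} F(t-s,x,z) \, \Delta^h \Phi(s,z,y) \, dz \, ds .
\end{equation*}
Integrating in $y$, inserting the bound from \emph{(i)} together with the two bounds from \cite{matters}, and iterating this inequality (summing the resulting series of Beta-function factors, which converges since all exponents exceed $-1$), one arrives at $\int_{\mbb{R}^d} |\Delta^h \Phi(t,x,y)| \, dy \leq C |h|^{\gamma-\eps} t^{-1+\lambda}$ with $\lambda := \min\{\lambda_0,\lambda_*\}$, shrunk by an arbitrarily small amount if necessary to absorb logarithmic factors generated by the convolutions. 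The continuous dependence of $C$ and $\lambda$ on the parameters listed in the statement follows because every constant produced along the way depends continuously on them. The main obstacle is step \emph{(i)}: obtaining quantitative, uniform-in-$y$ control of how the isotropic stable heat kernel $p^{\varrho}(t,\cdot)$ and its derivatives change as the stability index $\varrho = \alpha(y)$ varies — in particular handling the logarithmic factors from $\partial_{\varrho} p^{\varrho}$ and the mismatch between the spatial decay rates of $p^{\alpha(x)}$ and $p^{\alpha(y)}$, which is exactly where one must sacrifice the $\eps$ and impose $\gamma, \eps > \gamma_0$; step \emph{(ii)} is routine Levi-method bookkeeping.
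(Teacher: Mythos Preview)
Your Step~(ii) is fine and is equivalent to the paper's induction on the convolution powers $F^{\circledast i}$; the Volterra identity $\Phi = F + F\circledast\Phi$ together with a Gronwall-type iteration is a legitimate alternative.

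The gap is in Step~(i). Your first-order telescope in the two index slots produces a ``first piece'' which in the Fourier picture reads
\begin{equation*}
  (2\pi)^{-d} \int_{\mbb{R}^d} \bigl(|\xi|^{\alpha(x)}-|\xi|^{\alpha(x+h)}\bigr)\, e^{i\xi\cdot(y-x)}\, e^{-t|\xi|^{\alpha(y)}} \, d\xi ,
\end{equation*}
carrying the factor $|h|^{\gamma}$ but \emph{no factor in $|x-y|$}. The entire mechanism behind the basic estimate $\int_{\mbb{R}^d}|F(t,x,y)|\,dy \leq Ct^{-1+\lambda}$ from \cite{matters} is that the bracket $|\xi|^{\alpha(y)}-|\xi|^{\alpha(x)}$ in $F$ contributes $|\alpha(y)-\alpha(x)| \leq C|x-y|^{\gamma}$, and this factor is what lifts the time exponent from $-\|\alpha\|_{\infty}/\alpha_L$ to $-(\|\alpha\|_{\infty}-\gamma)/\alpha_L$. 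Without it, your first piece integrates in $y$ to at best $C|h|^{\gamma}(1+|\log t|)\,t^{-1}$ (already the contribution from $|x-y|\lesssim t^{1/\alpha(x)}$ is of this size), and one cannot trade a power of $|h|$ for the missing positive power of $t$ since $h$ and $t$ vary independently. The same defect recurs in the sub-piece of your second piece where only the $\nu_{\alpha(y)}$-slot moves.

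The paper's remedy is to keep the bracket change together. Under the diagonal shift the bracket changes by the \emph{second-order} increment
\begin{equation*}
  \bigl(|\xi|^{\alpha(y+h)}-|\xi|^{\alpha(y)}\bigr) - \bigl(|\xi|^{\alpha(x+h)}-|\xi|^{\alpha(x)}\bigr) = \Delta_h g(y)-\Delta_h g(x), \qquad g(z):=|\xi|^{\alpha(z)},
\end{equation*}
and Lemma~\ref{ineq-2} bounds this by $C\|g\|_{\mc{C}_b^{\gamma}(\mbb{R}^d)}\,|h|^{\gamma-\eps}|x-y|^{\eps}$. This recovers the crucial $|x-y|^{\eps}$ factor; the resulting $y$-integral then scales like $t^{-(\|\alpha\|_{\infty}-\eps)/\alpha_L}$, which is $t^{-1+\lambda_1}$ with $\lambda_1>0$ \emph{precisely because} $\eps>\gamma_0 = \|\alpha\|_{\infty}-\alpha_L$. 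The remaining change, in the exponential $e^{-t|\xi|^{\alpha(y)}}$, is handled separately (the paper's term $D_2$) and that piece retains the original bracket with its built-in $|x-y|^{\gamma}$ factor, so $\gamma>\gamma_0$ suffices there. Thus the hypotheses $\eps>\gamma_0$ and $\gamma>\gamma_0$ are not merely used to absorb logarithms, as you suggest, but are exactly what makes the respective time exponents strictly less than $1$.
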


\begin{proof}
	Fix $\eps \in (\gamma_0,\alpha_L \wedge \gamma)$. To keep the calculations as simple as possible we consider $T:=1$. For the proof we use that $\Phi$ has the representation \begin{equation}
		\Phi(t,x,y) = \sum_{i=1}^{\infty} F^{\circledast i}(t,x,y), \qquad t>0, \, x,y \in \mbb{R}^d \label{app-eq63}
	\end{equation}
	where $F^{\circledast i} := F \circledast F^{\circledast (i-1)}$ denotes the $i$-th convolution power of \begin{equation*}
		F(t,x,y) :=  (2\pi)^{-d} \int_{\mbb{R}^d} \left( |\xi|^{\alpha(y)}-|\xi|^{\alpha(x)} \right) e^{i \xi \cdot (y-x)} e^{-t |\xi|^{\alpha(y)}} \, d\xi, \qquad t>0, \, x,y \in \mbb{R}^d,
	\end{equation*}
	cf.\ Appendix~\ref{pix}. \par
	\textbf{Step 1:} There exist constants $C>0$ and $\lambda>0$ such that \begin{equation}
		\int_{\mbb{R}^d} |F(t,x+h,y+h)-F(t,x,y)| \, dy \leq C |h|^{\gamma-\eps} t^{-1+\lambda} \quad \text{for all} \, \, x \in \mbb{R}^d,\, |h| \leq 1, \,t \in (0,1). \label{app-eq65}
	\end{equation}
	\emph{Indeed:} For fixed $|h| \leq 1$ we write \begin{equation*}
		F(t,x+h,y+h)-F(t,x,y) = (2\pi)^{-d} \left(D_1(t,x,y) + D_2(t,x,y)\right)
	\end{equation*}
	where \begin{align*}
		D_1(t,x,y) &:= \int_{\mbb{R}^d} \left( \left( |\xi|^{\alpha(y+h)}-|\xi|^{\alpha(y)} \right)- \left( |\xi|^{\alpha(x+h)}-|\xi|^{\alpha(x)} \right) \right) e^{i \xi \cdot (y-x)} e^{-t |\xi|^{\alpha(y)}} \, d\xi \\
		D_2(t,x,y) &:= \int_{\mbb{R}^d} \left( |\xi|^{\alpha(y)}-|\xi|^{\alpha(x)} \right) e^{i \xi \cdot (y-x)} \left( e^{-t |\xi|^{\alpha(y+h)}}-e^{-t |\xi|^{\alpha(y)}} \right) \, d\xi.
	\end{align*}
	We estimate the terms separately. As $\alpha \in \mc{C}_b^{\gamma}(\mbb{R}^d)$ it follows that $x \mapsto r^{\alpha(x)} \in \mc{C}_b^{\gamma}(\mbb{R}^d)$ for any fixed $r \geq 0$ and \begin{equation*}
		\|r^{\alpha(\cdot)}\|_{\mc{C}_b^{\gamma}(\mbb{R}^d)} \leq \left(\|\alpha\|_{\mc{C}_b^{\gamma}(\mbb{R}^d)} |\log(r)| +1 \right)  \max\{r^{\alpha_L},r^{\|\alpha\|_{\infty}}\}.
	\end{equation*}
	Applying Lemma~\ref{ineq-2} we find that there exists a constant $c_1>0$ such that \begin{align*}
		\left| \left( r^{\alpha(y+h)} - r^{\alpha(y)} \right)- \left( r^{\alpha(x+h)}- r^{\alpha(x)} \right) \right|
		&\leq c_1 |h|^{\gamma-\eps} |x-y|^{\eps} \|r^{\alpha(\cdot)}\|_{\mc{C}_b^{\gamma}(\mbb{R}^d)}  \\
		&\leq c_1' |h|^{\gamma-\eps} |x-y|^{\eps} (|\log(r)|+1) \max\{r^{\alpha_L},r^{\|\alpha\|_{\infty}}\}
	\end{align*}
	for all $r \geq 0$, $x,y \in \mbb{R}^d$ and $|h| \leq 1$. By \cite[(proof of) Theorem 4.7]{matters} this implies that there is a constant $c_2>0$ such that \begin{equation*}
		|D_1(t,x,y)|
		\leq c_2 |h|^{\gamma-\eps} |x-y|^{\eps} \min \left\{(1+|\log(t)|) t^{-(d+\|\alpha\|_{\infty})/\alpha_L}, \frac{1+|\log(|x-y|)|}{\min\{|x-y|^{d+\alpha_L},|x-y|^{d+\|\alpha\|_{\infty}}\}} \right\}
	\end{equation*}
	for all $x,y \in \mbb{R}^d$, $t \in (0,1)$ and $|h| \leq 1$. Splitting up the domain of integration into three parts, \begin{equation*}
		\{y \in \mbb{R}^d; |x-y| < t^{1/\alpha_L}\} \quad \{y \in \mbb{R}^d; t^{1/\alpha_L} \leq |x-y| \leq 1\} \quad \{y \in \mbb{R}^d; |x-y| >1\}
	\end{equation*}
	we obtain that $\int_{\mbb{R}^d} |D_1(t,x,y)| \,dy$ is bounded by \begin{align*}
		&c_2 |h|^{\gamma-\eps} \left((1+|\log(t)|) t^{-(d+|\alpha\|_{\infty}-\eps)/\alpha_L} \int_{|z| < t^{1/\alpha_L}} \, dz +  \int_{t^{1/\alpha_L} \leq |z| \leq 1} \frac{1+|\log(|z|)|}{|z|^{d+\|\alpha\|_{\infty}-\eps}} \, dz + \int_{|z|>1} \frac{1+|\log(|z|)|}{|z|^{d+\alpha_L-\eps}} \, dz \right) \\
		&\leq c_2' |h|^{\gamma-\eps} (1+|\log(t)|) t^{-(|\alpha\|_{\infty}-\eps)/\alpha_L}.
	\end{align*}
	As $\eps> \gamma_0=\|\alpha\|_{\infty}-\alpha_L$ this means that there exists $\lambda_1>0$ such that \begin{equation*}
		\int_{\mbb{R}^d} |D_1(t,x,y)| \, dy \leq c_3 t^{-1+\lambda_1} |h|^{\gamma-\eps}, \qquad t \in (0,1), \,x \in \mbb{R}^d.
	\end{equation*}
	In order to estimate the second term we note that \begin{align*}
		D_2(t,x,y) = -t \int_{\alpha(y)}^{\alpha(y+h)} \int_{\alpha(x)}^{\alpha(y)} \int_{\mbb{R}^d} (\log(|\xi|))^2 |\xi|^u e^{i \xi \cdot (y-x)} e^{-t |\xi|^{\varrho}} \, d\xi \, du \, d\varrho.
	\end{align*}
	It follows from \cite[Theorem 4.7]{matters} and the H\"{o}lder continuity of $\alpha$ that there exists a constant $c_4>0$ such that \begin{equation*}
		|D_2(t,x,y)|
		\leq c_4 t |h|^{\gamma} |x-y|^{\gamma} \min \left\{ (1+|\log(t)|^2) t^{-(d+\|\alpha\|_{\infty})/\alpha_L}, \frac{1+|\log(|x-y|)|^2}{\min\{|x-y|^{d+\alpha_L},\|x-y|^{d+\|\alpha\|_{\infty}}\}} \right\}.
	\end{equation*}
	Now we can proceed exactly as in the first part of this step to conclude that \begin{equation*}
		\int_{\mbb{R}^d} |D_2(t,x,y)| \, dy 
		\leq c_5 |h|^{\gamma} (1+|\log(t)|^2) t^{-(\|\alpha\|_{\infty}-\gamma)/\alpha_L} 
		\leq c_5' |h|^{\gamma} t^{-1+\lambda_2}
	\end{equation*}
	for all $x \in \mbb{R}^d$, $|h| \leq 1$ and $t \in (0,1)$ and suitable constants $c_5,c_5',\lambda_2>0$; for the second estimate we used that $\gamma>\gamma_0 = \|\alpha\|_{\infty}-\alpha_L$. \par
	\textbf{Step 2:} For any $\eps \in (\gamma_0,\min\{\gamma,\alpha_L\})$ there exist constants $C>0$ and $\lambda>0$ such that \begin{equation}
		\int_{\mbb{R}^d} |F^{\circledast i}(t,x+h,y+h)-F^{\circledast i}(t,x,y)| \, dy \leq 2^{i} C^{i} \frac{\Gamma(\lambda)^{i}}{\Gamma(i \lambda)} t^{-1+i \lambda} |h|^{\gamma-\eps} \label{app-eq67}
	\end{equation}
	for all $i \in \mathbb{N}$, $x \in \mbb{R}^d$, $|h| \leq 1$ and $t \in (0,1)$. \par
	\emph{Indeed:} Fix $\epsilon \in (\gamma_0,\min\{\gamma,\alpha\})$. There exist constants $C>0$ and $\lambda>0$ such that \begin{align}
		\int_{\mbb{R}^d} |F^{\circledast i}(t,x,y)| \, dy \leq C^{i} \frac{\Gamma(\lambda)^{i}}{\Gamma(i \lambda)} t^{-1+i \lambda} \label{app-eq69}
	\end{align}
	for all $x \in \mbb{R}^d$, $i \geq 1$ and $t \in (0,1)$, cf.\ Appendix~\ref{pix}. Without loss of generality, we may assume that $C>0$ and $\lambda>0$ are such that \eqref{app-eq65} holds (otherwise we enlarge $C>0$ and choose $\lambda>0$ smaller). We claim that \eqref{app-eq67} holds for this choice of $C>0$ and $\lambda>0$ and prove this by induction. For $i=1$ the estimate is a direct consequence of \eqref{app-eq65}. Now assume that \eqref{app-eq67} holds for some $i \geq 1$. By the very definition of the time-space convolution, we have \begin{align*}
		(F \circledast F^{\circledast i})(t,x+h,y+h)
		&= \int_0^t \int_{\mbb{R}^d}  F(t-s,x+h,z) F^{\circledast i}(s,z,y+h) \, dz \, ds \\
		&= \int_0^t \int_{\mbb{R}^d}  F(t-s,x+h,z+h) F^{\circledast i}(s,z+h,y+h) \, dz \, ds
	\end{align*}
	and so \begin{equation*}
		|(F \circledast F^{\circledast i})(t,x+h,y+h)-(F \circledast F^{\circledast i})(t,x,y)| \leq I_1(t,x,y) + I_2(t,x,y)
	\end{equation*}
	for \begin{align*}
		I_1(t,x,y) &:= \int_0^t\int_{\mbb{R}^d} \left| (F(t-s,x+h,z+h)-F(t-s,x,z)) F^{\circledast i}(s,z+h,y+h) \right| \, dz \, ds \\
		I_2(t,x,y) &:= \int_0^t\int_{\mbb{R}^d} \left| (F^{\circledast i}(s,z+h,y+h)-F^{\circledast i}(s,z,y)) F(t-s,x,z) \right| \, dz \, ds.
	\end{align*}
	Using first \eqref{app-eq69} and then \eqref{app-eq65} we obtain \begin{align*}
		\int_{\mbb{R}^d} |I_1(t,x,y)| \, dy
		\leq C^{i+1} \frac{\Gamma(\lambda)^{i}}{\Gamma(i \lambda)} |h|^{\gamma-\eps} \int_0^t (t-s)^{-1+\lambda} s^{-1+i\lambda} \, ds
	\end{align*}
	for all $x \in \mbb{R}^d$, $|h| \leq 1$ and $t \in (0,1)$. In order to estimate the second term, we use \eqref{app-eq69} with $i=1$ and our induction hypothesis to find that \begin{equation*}
		\int_{\mbb{R}^d} |I_2(t,x,y)| \, dy
		\leq 2^i C^{i+1} \frac{\Gamma(\lambda)^{i}}{\Gamma(i \lambda)} |h|^{\gamma-\eps} \int_0^t (t-s)^{-1+\lambda} s^{-1+i\lambda} \, ds
	\end{equation*}
	for all $x \in \mbb{R}^d$, $|h| \leq 1$ and $t \in (0,1)$. Combining both estimates gives that $F^{\circledast (i+1)} = F \circledast F^{\circledast i}$ satisfies \begin{align*}
		\int_{\mbb{R}^d} |F^{\circledast (i+1)}(t,x+h,y+h)&-F^{\circledast (i+1)}(t,x,y)| \, dy \\
		&\leq (2C)^{i+1} \frac{\Gamma(\lambda)^{i}}{\Gamma(i \lambda)} |h|^{\gamma-\eps} \int_0^t (t-s)^{-1+\lambda} s^{-1+i\lambda} \, ds.
	\end{align*}
	Performing a change of variables, $s \rightsquigarrow tr$, and using the product formula for the Beta function, $B(u,v) = \Gamma(u) \Gamma(v)/\Gamma(u+v)$, we get \begin{align*}
		\int_0^t (t-s)^{-1+\lambda} s^{-1+i\lambda} \, ds
		= t^{-1+(i+1) \lambda} B(\lambda,i \lambda)
		= t^{-1+(i+1) \lambda} \frac{\Gamma(i) \Gamma(i \lambda)}{\Gamma((i+1) \lambda)}.
	\end{align*}
	Plugging this identity in the previous estimate shows that \eqref{app-eq67} holds for $i+1$, and this finishes the proof of Step 2. \par
	\textbf{Conclusion of the proof:} Fix $\eps \in (\gamma_0,\gamma \wedge \alpha_L)$. Since, by \eqref{app-eq63}, \begin{equation*}
		|\Phi(t,x+h,y+h)-\Phi(t,x,y)| \leq \sum_{i=1}^{\infty} |F^{\circledast i}(t,x+h,y+h)-F^{\circledast i}(t,x,y)| 
	\end{equation*}
	it follows from the monotone convergence theorem that \begin{equation*}
		\int_{\mbb{R}^d} |\Phi(t,x+h,y+h)-\Phi(t,x,y)| \, dy
		\leq  \sum_{i=1}^{\infty} \int_{\mbb{R}^d} |F^{\circledast i}(t,x+h,y+h)-F^{\circledast i}(t,x,y)|  \, dy,
	\end{equation*}
	and so, by Step 2, \begin{equation*}
		\int_{\mbb{R}^d} |\Phi(t,x+h,y+h)-\Phi(t,x,y)| \, dy
		\leq |h|^{\gamma-\eps} t^{-1+\lambda} \sum_{i \geq 1} 2^i C^i \frac{\Gamma(\lambda)^{i}}{\Gamma(i \lambda)}
	\end{equation*}
	for all $x \in \mbb{R}^d$, $|h| \leq 1$ and $t \in (0,1)$ and suitable constants $C>0$ and $\lambda>0$ (not depending on $x$, $h$, $t$). 	It is not difficult to see that the series on the right-hand side converges, see \cite[Lemma A.6]{matters} for details, and consequently we have proved the desired estimate.
\end{proof}

\subsection{Auxiliary result for the proof of Theorem~\ref{ex-21}} \label{iso-aux}

Let $(X_t)_{t \geq 0}$ be an isotropic stable-like process with symbol $q(x,\xi) = |\xi|^{\alpha(x)}$ for a H\"{o}lder continuous mapping $\alpha: \mbb{R}^d \to (0,2)$ with $\alpha_L := \inf_x \alpha(x)>0$. From Proposition~\ref{app-1} and Proposition~\ref{feller-7} we obtain immediately that any function $f$ in the Favard space $F_1$ associated with $(X_t)_{t \geq 0}$ satisfies the a-priori estimate \begin{equation}
\|f\|_{\mc{C}_b^{\kappa}(\mbb{R}^d)} \leq c (\|A_e f\|_{\infty}+\|f\|_{\infty}) \label{ex-eq21}
\end{equation}
for $\kappa \in (0,\alpha_L)$; in particular, $F_1 \subseteq \mc{C}_b^{\alpha_L-}(\mbb{R}^d)$. For the proof of Theorem~\ref{ex-21} we need the following auxiliary result which will allow us to derive an improved a priori estimate once we have shown that $f \in F_1$ is sufficiently regular on $\{x \in \mbb{R}^d; \alpha(x) \leq 1\}$. 

\begin{lem} \label{app-3}
	Let $(X_t)_{t \geq 0}$ be a Feller process with extended infinitesimal generator $(A_e,\mc{D}(A_e))$, Favard space $F_1$ and symbol $q(x,\xi)=|\xi|^{\alpha(x)}$ for a H\"{o}lder continuous mapping $\alpha: \mbb{R}^d \to (0,2)$ such that \begin{equation*}
	0 < \alpha_L := \inf_{x \in \mbb{R}^d} \alpha(x) \leq \sup_{x \in \mbb{R}^d} \alpha(x) < 2.
	\end{equation*}
	Let $f \in F_1$ be such that for any $\eps \in (0,\alpha_L)$ there exists a constant $M(\eps)>0$ such that  \begin{equation}
	|\Delta_h f(x)|=|f(x+h)-f(x)| \leq M(\eps) |h|^{\alpha(x)-\eps}, \qquad |h| \leq 1, \label{app-eq9}
	\end{equation}
	for any $x \in \{\alpha \leq 1\}$. Then there exists for any $\theta \in (0,1)$ a  constant $C=C(\alpha,\theta)$ such that \begin{equation*}
	|\Delta_h^2 f(x)| \leq C |h|^{1-\theta} (\|A_e f\|_{\infty}+\|f\|_{\infty}+M(\theta/12)), \qquad |h| \leq 1,
	\end{equation*}
	for any $x \in \{\alpha \geq 1\}$.
\end{lem}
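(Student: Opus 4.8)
The plan is to combine Dynkin's formula from Theorem~\ref{gen-3} with the parametrix representation of the transition density of $(X_t)_{t\ge0}$ recalled in Appendix~\ref{pix}. Since $|\Delta_h^2 f(x)|\le4\|f\|_\infty$, only $|h|$ small is at issue, and the case $\alpha_L>1-\theta$ is immediate from Proposition~\ref{app-1} and Proposition~\ref{feller-7} (which give $f\in\mc{C}_b^{\alpha_L-\eps}(\mbb{R}^d)$ for every $\eps\in(0,\alpha_L)$). So I would fix $x$ with $\alpha(x)\ge1$, a small $|h|$, an auxiliary time $t>0$ to be optimized, and write
\[
\Delta_h^2 f(x)=\Delta_h^2\bigl(f-P_tf\bigr)(x)+\Delta_h^2 P_tf(x).
\]
Dynkin's formula gives $\|f-P_tf\|_\infty\le t\|A_ef\|_\infty$, so the first term is $O(t\|A_ef\|_\infty)$. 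For the second, using $\int\Delta_{h,x}^2 p(t,x,y)\,dy=0$, write $\Delta_h^2 P_tf(x)=\int\Delta_{h,x}^2 p(t,x,y)\,(f(y)-f(x))\,dy$ and insert $p(t,x,y)=p^{\alpha(y)}(t,x-y)+(p_0\circledast\Phi)(t,x,y)$, treating the convolution correction exactly as in the proof of Proposition~\ref{app-1} (its contribution is controlled through $\int|\Phi|$, at the price of an extra integrable power of $t$).

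Next I would split the $y$-integral into three regimes. For $|x-y|>\eta$ (a fixed small radius), the smoothness of $z\mapsto p^{\varrho}(t,z)$ gives $|\Delta_{h,x}^2 p^{\alpha(y)}(t,x-y)|\lesssim|h|^2\,t\,|x-y|^{-d-\alpha(y)-2}$, so with $|f(y)-f(x)|\le2\|f\|_\infty$ this piece is $\lesssim|h|^2\|f\|_\infty\le|h|^{1-\theta}\|f\|_\infty$. For $|x-y|\le\eta$ with $\alpha(y)\le1$, continuity of $\alpha$ forces $\alpha(y)\ge\alpha(x)-\omega(\eta)$ with $\omega$ the modulus of continuity, so hypothesis \eqref{app-eq9}, applied at the base point $y\in\{\alpha\le1\}$ with $\eps=\theta/12$, yields $|f(x)-f(y)|\le M(\theta/12)\,|x-y|^{\alpha(y)-\theta/12}\le M(\theta/12)\,|x-y|^{1-\theta/12-\omega(\eta)}$; combined with the local smoothing bound $|\Delta_{h,x}^2 p^{\alpha(y)}(t,x-y)|\lesssim(|h|t^{-1/(\alpha(x)-\omega(\eta))})^\kappa S(x-y,\alpha(y),t)$, $\kappa\in[0,2]$, and the elementary inequality $\int S(x-y,\alpha(y),t)\,|x-y|^\gamma\,dy\lesssim t^{\gamma/\alpha(x)}$, an optimization of $\kappa$ and of $t\sim|h|^{\mathrm{power}}$ makes this piece $\lesssim M(\theta/12)\,|h|^{1-\theta}$. (When $x$ lies within distance $\lesssim|h|$ of $\{\alpha\le1\}$ one can argue instead by chaining: the triangle inequality and \eqref{app-eq9} at a nearby point $z\in\{\alpha\le1\}$ give $|\Delta_h^2 f(x)|\le4M(\theta/12)\,(\mathrm{dist}(x,\{\alpha\le1\})+2|h|)^{1-\theta/12-\omega(|h|)}\lesssim|h|^{1-\theta}$ in that range.)

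The remaining — and hardest — regime is $|x-y|\le\eta$ with $\alpha(y)>1$, where the only a priori control on $f$ is the crude global bound $f\in\mc{C}_b^{\alpha_L-\eps}(\mbb{R}^d)$. Here I would exploit the symmetry of the isotropic kernels $\nu(x,dy)=c_{d,\alpha(x)}|y|^{-d-\alpha(x)}\,dy$ — equivalently, rewrite the small-jump contribution in terms of symmetric second differences $\delta_y^2 f(x)=f(x+y)+f(x-y)-2f(x)=\Delta_y^2 f(x-y)$, using the identity $\Delta_h^2 f(x)=\delta_h^2 f(x+h)$ — and close the estimate by a finite bootstrap: if $f$ is known to be $\mc{C}^{\gamma(\cdot)}$ near $x$ with $\gamma(x)<\alpha(x)$, feeding this $\gamma$ into the bad regime upgrades the exponent from $\gamma(x)$ to $2\alpha(x)/(\alpha(x)+2-\gamma(x))$, which is strictly larger; starting from $\gamma_0=\alpha_L-\eps$ (Proposition~\ref{app-1} and Proposition~\ref{feller-7}) and iterating a number of times depending only on $\theta$ and $\alpha_L$ pushes the exponent beyond $1-\theta$ on all of $\{\alpha\ge1\}$, while propagating the constants through the finitely many steps — together with the single use of \eqref{app-eq9} at exponent $\theta/12$ in the good regime — gives the stated bound with constant proportional to $\|A_ef\|_\infty+\|f\|_\infty+M(\theta/12)$. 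I expect the real difficulty to sit precisely in this bad regime: making the symmetrization rigorous for the stable-\emph{like} (hence non-symmetric) process, estimating the parametrix correction $(p_0\circledast\Phi)$ uniformly in $x$ throughout the iteration, and keeping the book-keeping of the constants under control.
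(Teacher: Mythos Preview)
Your approach is fundamentally different from the paper's, and the part you flag as ``the real difficulty'' is precisely the gap. The paper does \emph{not} work with the semigroup of $(X_t)$, does not use the parametrix decomposition of $p$, and does not bootstrap. Instead it applies the localization machinery behind Theorem~\ref{feller-9} once: it introduces an auxiliary Feller process $(Y_t)$ with symbol $|\xi|^{\tilde\alpha(z)}$ for the \emph{truncated-from-below} index $\tilde\alpha(z):=\max\{1-3\theta,\alpha(z)\}$, so that $\inf_z\tilde\alpha(z)\ge 1-3\theta$ and hence, by Proposition~\ref{app-1}, the semigroup of $Y$ satisfies a \emph{global} estimate $\|T_tu\|_{\mc{C}_b^{1-4\theta}}\lesssim t^{-(1-4\theta)/(1-3\theta)}\|u\|_\infty$. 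One then picks a smooth cutoff $\kappa$ with $\kappa\equiv0$ on $\{\alpha\le1-2\theta\}$ and $\kappa\equiv1$ on $\{\alpha\ge1-\theta\}$ and shows that $v:=f\kappa$ lies in the Favard space of $Y$ by estimating $t^{-1}|\mbb{E}^z v(Y_{t\wedge\tau_\delta^z})-v(z)|$ as in the proof of Theorem~\ref{feller-9}. The cross term $\mbb{E}^z[(\Delta f)(\Delta\kappa)]$ is the only place where \eqref{app-eq9} is used: on $\{\alpha\le1\}$ it gives $|f(Y)-f(z)|\le(M+\|f\|_\infty)\,|Y-z|^{\alpha(z)-\theta/3}$, and Lemma~\ref{feller-15} then controls the moment; on $\{\alpha>1\}$ one has $\kappa\equiv1$ on the ball so the cross term vanishes up to a maximal-inequality contribution. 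Proposition~\ref{feller-7} now yields $f\kappa\in\mc{C}_b^{1-4\theta}(\mbb{R}^d)$, and since $\kappa\equiv1$ near $\{\alpha\ge1\}$ this gives the claim (after rescaling $\theta\mapsto\theta/4$, which is where the constant $\theta/12$ comes from).

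The reason your route runs into trouble is structural: the semigroup of $X$ only smooths to order $\alpha_L$, which can be arbitrarily small, so no finite combination of $\|A_ef\|_\infty$ and $\Delta_h^2P_tf(x)$ will produce an exponent close to $1$ without additional input on $f$ in the region $\{\alpha>1\}$ --- hence your need for a bootstrap. But the bootstrap step has two genuine obstacles you do not resolve. First, the symmetrization you invoke is a L\'evy-process identity for kernels of the form $g(t,x-y)$; the parametrix $p_0(t,x,y)=p^{\alpha(y)}(t,x-y)$ depends on $y$ through $\alpha(y)$, so $\int p_0(t,x,y)(f(y)-f(x))\,dy$ cannot be rewritten as an integral of $\delta_y^2 f(x)$ against a symmetric kernel. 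Second, when you pass the factor $f(y)-f(x)$ through the correction $(p_0\circledast\Phi)(t,x,y)=\int_0^t\!\!\int p_0(t-s,x,z)\Phi(s,z,y)\,dz\,ds$, the inner integral over $y$ against $\Phi(s,z,y)$ sees $f(y)-f(x)$ with $z$, not $x$, as the nearby point, so your local regularity assumption at $x$ does not help; the estimate in Proposition~\ref{app-1} works only because it uses $\|u\|_\infty$, not a local H\"older norm. The paper's device of replacing $X$ by $Y$ sidesteps both issues: it never needs regularity of $f$ on $\{\alpha>1\}$ beyond boundedness, because the auxiliary semigroup already provides the smoothing.
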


\begin{proof}
	The idea of the proof is similar to the proof of Theorem~\ref{feller-9}. For fixed $0<\theta<\min\{\alpha_L,1/4\}$ define $\tilde{\alpha}(x) := \max\{1-3\theta,\alpha(x)\}$. By \cite[Theorem 5.2]{matters} there exists a Feller process $(Y_t)_{t \geq 0}$ with symbol $p(x,\xi) := |\xi|^{\tilde{\alpha}(x)}$ and the $(L,C_c^{\infty}(\mbb{R}^d))$-martingale problem for the generator $L$ of $(Y_t)_{t \geq 0}$ is well-posed. Since $\alpha$ is H\"{o}lder continuous, there exists $\delta>0$ such that \begin{equation}
	|x-z| \leq 2\delta \implies |\alpha(x)-\alpha(z)| \leq \theta. \label{app-eq11} 
	\end{equation}
	As usual, we denote by \begin{equation*}
	\tau_{\delta}^x := \inf\{t>0; |Y_t-x|>\delta\}
	\end{equation*}
	the exit time from the closed ball $\overline{B(x,\delta)}$. Pick $\kappa \in C_b^{\infty}(\mbb{R}^d)$, $0 \leq \kappa \leq 1$, such that $\kappa(x)=0$ for any $x \in \{\alpha \leq 1-2\theta\}$ and $\kappa(x)=1$ for $x \in \{\alpha \geq 1-\theta\}$, see Lemma~\ref{app-5} for the existence of such a mapping. \par 
	
	\textbf{Step 1:} We are going to show that for any $f \in F_1$ the product $v:=f \cdot \kappa$ is in the domain $\mc{D}(L_e)$ of the extended generator of $(Y_t)_{t \geq 0}$; we will use a similar reasoning as in the proof of Theorem~\ref{feller-9}, i.\,e.\ we will estimate \begin{equation*}
	\frac{1}{t} \sup_{x \in \mbb{R}^d} |\mbb{E}^x v(Y_{t \wedge \tau_{\delta}^x})-v(x)|. 
	\end{equation*}
	Clearly, \begin{align*}
	|\mbb{E}^x v(Y_{t \wedge \tau_{\delta}^x})-v(x)| \leq I_1(x)+I_2(x)+I_3(x)
	\end{align*}
	where \begin{align*}
	I_1(x) &:= |\kappa(x) \mbb{E}^x(f(Y_{t \wedge \tau_{\delta}^x})-f(x))| \\
	I_2(x) &:= |f(x) \mbb{E}^x(\kappa(Y_{t \wedge \tau_{\delta}^x})-\kappa(x))| \\
	I_3(x) &:= \left|\mbb{E}^x \big( (f(Y_{t \wedge \tau_{\delta}^x})-f(x))(\kappa(Y_{t \wedge \tau_{\delta}^x})-\kappa(x)) \big) \right|.
	\end{align*}
	We are going to estimate the terms separately; we start with $I_1$. If $x \in \{\alpha \geq 1-2\theta\}$, then it follows from \eqref{app-eq11} that $B(x,2\delta) \subseteq \{\alpha \geq 1-3\theta\}$ and therefore \begin{equation}
		q(z,\xi) = |\xi|^{\alpha(z)} = |\xi|^{\tilde{\alpha}(z)} = p(z,\xi) \fa z \in B(x,2\delta), \,\xi \in \mbb{R}^d. \label{app-eq13}
	\end{equation}
	Applying Lemma~\ref{feller-13} we find that \begin{equation*}
	I_1(x) = |\kappa(x) \mbb{E}^x(f(X_{t \wedge \tau_{\delta}^x(X)})-f(x))|
	\end{equation*}
	where $\tau_{\delta}^x(X)$ is the exit time of $(X_t)_{t \geq 0}$ from $\overline{B(x,\delta)}$.  As $f \in F_1$ an application of Dynkin's formula \eqref{gen-eq6} gives \begin{equation*}
	I_1(x) \leq t \|A_e f\|_{\infty}.
	\end{equation*}
	If $x \in \{\alpha<1-2\theta\}$, then $\kappa(x)=0$ by the very definition of $\kappa$, and so $I_1(x)=0$. Hence, \begin{equation*}
	\sup_{x \in \mbb{R}^d} I_1(x) \leq t \|A_e f\|_{\infty}.
	\end{equation*}
	For $I_2$ we note that $\kappa \in C_b^{\infty}(\mbb{R}^d) \subseteq \mc{D}(L)$, and therefore an application of the (classical) Dynkin formula gives \begin{equation*}
	\sup_{x \in \mbb{R}^d} I_2(x) 
	\leq t \|f\|_{\infty} \|L\kappa\|_{\infty}.
	\end{equation*}
	To estimate $I_3$ we consider two cases separately. If $x \in \{\alpha \leq 1\}$, then it follows from our assumption on the regularity of $f$, cf.\ \eqref{app-eq9}, and the Lipschitz continuity of $\kappa$ that \begin{equation*}
	|f(Y_{t \wedge \tau_{\delta}^x})-f(x)| \cdot |\kappa(Y_{t \wedge \tau_{\delta}^x})-\kappa(x)|
	\leq 4 (\|f\|_{\infty}+M(\theta/3)) \|\kappa\|_{C_b^1(\mbb{R}^d)} \min\{|Y_{t \wedge \tau_{\delta}^x}-x|^{\alpha(x)-\theta/3+1},1\}.
	\end{equation*}
	Applying Lemma~\ref{feller-15} we find that there exists a constant $c_2=c_2(\alpha_L,\|\alpha\|_{\infty})>0$ such that \begin{align}
	I_3(x)
	\leq c_2 (\|f\|_{\infty}+M(\theta/3))  \|\kappa\|_{C_b^1(\mbb{R}^d)} \sup_{|z-x| \leq \delta} \int_{y \neq 0} \min\{1,|y|^{\alpha(x)-\theta/3+1}\} \frac{1}{|y|^{d+\tilde{\alpha}(z)}} \, dy. \label{app-eq15} \tag{$\star$}
	\end{align}
	For $x \in \mbb{R}^d$ with $\alpha(x) \leq 1-2\theta$ we note that it follows from the definition of $\tilde{\alpha}$ that $\tilde{\alpha}(z) \geq 1-3\theta$ for all $z \in \mbb{R}^d$, and so \begin{align*}
	\sup_{x \in \{\alpha \leq 1-2\theta\}} I_3(x)
	\leq c_2 (\|f\|_{\infty}+M(\theta/3)) \left( \int_{|y| \leq 1} |y|^{-d+2\theta/3} \, dy  + \int_{|y|>1} |y|^{-d-1+3\theta} \, dy \right)<\infty.
	\end{align*}
	If $1-2\theta \leq \alpha(x) \leq 1$, then $\alpha(z) = \tilde{\alpha}(z)$ for all $|z-x| \leq \delta$; using \eqref{app-eq11} we find from \eqref{app-eq15} that \begin{equation*}
	\sup_{x \in \{1-2\theta \leq \alpha \leq 1\}} I_3(x)
	\leq c_2 (\|f\|_{\infty}+M(\theta/3)) \left( \int_{|y| \leq 1} |y|^{-d+1-4\theta/3} \, dy + \int_{|y|>1} |y|^{-d-\alpha_L} \, dy \right)<\infty.
	\end{equation*}
	Finally, if $x \in \{\alpha>1\}$, then $\overline{B(x,\delta)} \subseteq \{\alpha \geq 1-\theta\}$, and therefore $\kappa(z)=1$ for any $|z-x| \leq \delta$; hence, \begin{equation*}
	|f(Y_{t \wedge \tau_{\delta}^x})-f(x)| \cdot |\kappa(Y_{t \wedge \tau_{\delta}^x})-\kappa(x)|
	\leq 2\|f\|_{\infty} \I_{\{\tau_{\delta}^x \leq t\}}
	\end{equation*} 
	which implies \begin{equation*}
	I_3(x) \leq 2 \|f\|_{\infty} \mbb{P}^x(\tau_{\delta}^x \leq t).
	\end{equation*}
	Applying the maximal inequality \eqref{max} we get \begin{align*}
	I_3(x) \leq c_3 \|f\|_{\infty} t \sup_{|z-x| \leq \delta} \sup_{|\xi| \leq \delta^{-1}} |p(z,\xi)|
	\end{align*}
	for some absolute constant $c_3>0$. As $|p(z,\xi)| \leq |\xi|^2$ for all $\xi \in \mbb{R}^d$ this shows that \begin{align*}
	\sup_{x \in \{\alpha>1\}} I_3(x) 
	&\leq c_3 \|f\|_{\infty} t \delta^{-2}.
	\end{align*}
	Combining the estimates we conclude that \begin{equation*}
	\sup_{t>0} \frac{1}{t} \sup_{x \in \mbb{R}^d} |\mbb{E}^x v(Y_{t \wedge \tau_{\delta}^x})-v(x)|
	\leq c_4 (\|f\|_{\infty}+\|A_e f\|_{\infty}+M(\theta/3)) 
	\end{equation*}
	for some constant $c_4 = c_4(\theta,\delta,\alpha_L,\|\alpha\|_{\infty},\|L\kappa\|_{\infty})$. \par
	\textbf{Step 2:} Applying Corollary~\ref{gen-5} we find that $v=f \cdot \kappa$ is in the Favard space $F_1^Y$ of order $1$ associated with $(Y_t)_{t \geq 0}$ and \begin{equation*}
	\|L_e(f \cdot \kappa)\|_{\infty} \leq c_5 (\|f\|_{\infty}+\|A_e f\|_{\infty}+M(\theta/3)). 
	\end{equation*} 
	Since Proposition~\ref{app-1} shows that the semigroup $(T_t)_{t \geq 0}$ associated with $(Y_t)_{t \geq 0}$ satisfies the H\"older estimate \begin{equation*}
	 \|T_t u\|_{\mc{C}_b^{1-4\theta}(\mbb{R}^d)} \leq c_6 \|u\|_{\infty} t^{-(1-4\theta)/(1-3\theta)}, \qquad t \in (0,1], \,u \in \mc{B}_b(\mbb{R}^d)
	\end{equation*}
	for $c_6 = c_6(\alpha,\theta)>0$, it follows from Proposition~\ref{feller-7} that \begin{equation*}
	\|f \cdot \kappa\|_{\mc{C}_b^{1-4\theta}(\mbb{R}^d)} \leq c_7 (\|f\|_{\infty}+\|A_e f\|_{\infty}+M(\theta/3)) 
	\end{equation*}
	for some constant $c_7>0$ which does not depend on $f$. Finally, we note that for any $x \in \{\alpha \geq 1\}$ we have $\kappa(z)=1$ for $z \in \overline{B(x,\delta)}$, and therefore it follows for all $|h| \leq \delta/2$ that \begin{align*}
	|f(x+2h)-2f(x+h)+f(x)| 
	&= |\kappa(x+2h) f(x+2h) -2 \kappa(x+h) f(x+h)+ \kappa(x) +f(x)| \\
	&\leq c_7 |h|^{1-4\theta} (\|f\|_{\infty}+\|A_e f\|_{\infty}+M(\theta/3)). \qedhere
	\end{align*}
\end{proof}

\subsection{Proof of Theorem~\ref{ex-21} and Corollary~\ref{ex-23} }
\label{iso-proofs}

\begin{proof}[Proof of Theorem~\ref{ex-21}] 
	Fix $\eps \in (0,\alpha_L)$. Since $\alpha$ is H\"{o}lder continuous there exists $\delta>0$ such that \begin{equation}
	|\alpha(x)-\alpha(y)| \leq \frac{\eps}{2} \fa |x-y| \leq  4 \delta. \label{ex-st3} \tag{$\star$}
	\end{equation}
	Moreover, $\|\alpha\|_{\infty}<2$ implies that we can choose $\theta \in (0,\alpha_L)$ such that $\alpha(x)<2-\theta$ for all $x \in \mbb{R}^d$; without loss of generality, we may assume that $\eps \leq \theta$. We divide the proof in two steps. In the first part, we will establish the H\"{o}lder regularity of functions $f \in F_1$ at points $x \in \mbb{R}^d$ such that $\alpha(x)\leq 1+\alpha_L-\theta$. In the second part, we will consider the remaining points.  \par \medskip
	
	\textbf{Step 1:} There exists a constant $C_1>0$ such that \begin{equation}
	|\Delta_h^2 f(x)| \leq C_1 |h|^{\alpha(x)-\eps} (\|A_e f\|_{\infty}+\|f\|_{\infty}) \fa f \in F_1, \, |h| \leq \delta, \, x \in \{\alpha \leq \alpha_L+ 1-\theta\}.   \label{ex-eq23}
	\end{equation}
	\emph{Indeed:} Fix $x \in \mbb{R}^d$ such that $\alpha(x) \leq \alpha_L+1-\theta$ and define \begin{equation*}
		\alpha^x(z) := \max\{\alpha(z),\alpha(x)-\eps/2\}, \qquad z \in \mbb{R}^d.
	\end{equation*} 
	It is not difficult to see that $\|\alpha^x\|_{\mc{C}_b^{\gamma}(\mbb{R}^d)} \leq \|\alpha\|_{\mc{C}_b^{\gamma}(\mbb{R}^d)}$ and, moreover, \begin{equation*}
		\alpha^x_L := \inf_{z \in \mbb{R}^d} \alpha^x(z) \geq \alpha(x)-\frac{\eps}{2}>0.
	\end{equation*}
	It follows from \cite[Theorem 5.2]{matters} that there exists a Feller process with symbol $p(z,\xi) := |\xi|^{\alpha^x(z)}$ and that the $(L,C_c^{\infty}(\mbb{R}^d))$-martingale problem for the generator $L$ of $(Y_t)_{t \geq 0}$ is well-posed. Note that, by \eqref{ex-st3}, $\alpha^x(z)=\alpha(z)$ for $|z-x| \leq 4\delta$ and therefore \begin{equation*}
		q(z,\xi) = |\xi|^{\alpha(z)} = |\xi|^{\alpha^x(z)} = p^{(x)}(z,\xi)  \fa \xi \in \mbb{R}^d, \,|z-x| \leq 4 \delta.
	\end{equation*}
	Moreover, an application of Lemma~\ref{app-3} shows that there exists a constant $c_1=c_1(\eps,\alpha)$ such that the semigroup $(T_t)_{t \geq 0}$ associated with $(Y_t)_{t \geq 0}$ satisfies \begin{equation}
		\|T_t u\|_{\mc{C}_b^{\alpha(x)-\eps}(\mbb{R}^d)} \leq c_1 \|u\|_{\infty}   t^{-(\alpha(x)-\eps)/(\alpha(x)-\eps/2)} \label{ex-eq25}
	\end{equation}
	for any $u \in \mc{B}_b(\mbb{R}^d)$ and $t \in (0,1]$. This shows that the conditions \eqref{C1}-\eqref{C4} in Theorem~\ref{feller-9} are satisfied. By \eqref{ex-eq21} it follows from Theorem~\ref{feller-9} (with $\varrho(x) := \alpha_L-\theta/4$) that there exists a constant $c_2=c_2(\eps,\alpha)$ such that \begin{equation*}
		|\Delta_h^2 f(x)| \leq c_2 K(x) |h|^{\alpha(x)-\eps} (\|A_e f\|_{\infty}+\|f\|_{\infty}), \qquad f \in F_1, \,|h| \leq \delta
	\end{equation*}
	where \begin{equation*}
		K(x) := \sup_{z \in \mbb{R}^d} \int_{y \neq 0} \min\{1,|y|^2\} \frac{1}{|y|^{d+\alpha^x(z)}} dy +  \sup_{|z-x| \leq 4\delta} \int_{y \neq 0} \min\{1,|y|^{1+\alpha_L-\theta/4}\} \frac{1}{|y|^{d+\alpha^x(z)}} \, dy;
	\end{equation*}
	if we can show that $K:=\sup_{x \in \{\alpha \leq \alpha_L+1-\theta\}} K(x) < \infty$, this gives \eqref{ex-eq23}. To this end, we note that $\eps \leq \theta$ and \eqref{ex-st3} imply \begin{equation*}
		\alpha^x(z) = \alpha(z) \leq \alpha(x)+\frac{\eps}{2} \leq (\alpha_L+1-\theta) + \frac{\theta}{2} = \alpha_L+1-\frac{\theta}{2} \fa |z-x| \leq 4 \delta
	\end{equation*}
	and so \begin{equation*}
		K \leq  \sup_{\beta \in [\alpha_L,\|\alpha\|_{\infty}]} \int_{y \neq 0} \min\{1,|y|^2\} \frac{1}{|y|^{d+\beta}} \, dy + \sup_{\beta \in [\alpha_L,\alpha_L+1-\theta/2]} \int_{y \neq 0}  \min\{1,|y|^{1+\alpha_L-\theta/4}\} \frac{1}{|y|^{d+\beta}} \, dy < \infty.
	\end{equation*}
	\textbf{Step 2:} There exists $C_2>0$ such that \begin{equation*}
		|\Delta_h^2 f(x)| \leq C_2 |h|^{\alpha(x)-\eps} (\|A_e f\|_{\infty}+\|f\|_{\infty}) \fa f \in F_1, \, |h| \leq \delta, \, x \in \{\alpha \geq \alpha_L+1-\theta\}. 
	\end{equation*}
	\emph{Indeed:} It follows from Lemma~\ref{app-3} and Step 1 that there exists a constant $c_3>0$ such that \begin{equation}
	|\Delta_h^2 f(x)| \leq c_3  |h|^{1-\theta/2} (\|A_e f\|_{\infty}+\|f\|_{\infty}), \qquad |h| \leq 1, \label{ex-eq27}
	\end{equation}
	for any $f \in F_1$ and $x \in \{\alpha \geq 1\}$. Thanks to this improved a priori-estimate for $f \in F_1$ we can use a very similar reasoning as in the first part of the proof to deduce the desired estimate. If we set $\alpha^x(z) := \max\{\alpha(z),\alpha(x)-\eps/2\}$ for fixed $x \in \{\alpha \geq 1+\alpha_L-\theta\}$, then it follows exactly as in Step 1 that the Feller process $(Y_t)_{t \geq 0}$ with symbol $p(z,\xi) := |\xi|^{\alpha^x(z)}$ satisfies \eqref{C1}-\eqref{C4} in Theorem~\ref{feller-9}; in particular, \eqref{ex-eq25} holds for the associated semigroup $(T_t)_{t \geq 0}$. Because of \eqref{ex-eq27} we may apply Theorem~\ref{feller-9} with $\varrho(x) := 1-\theta/2$ to obtain \begin{align*}
		|\Delta_h^2 f(x)| \leq c_4 K(x) |h|^{\alpha(x)-\eps} (\|A_e f\|_{\infty}+\|f\|_{\infty}), \qquad f \in F_1
	\end{align*}
	for a constant $c_4$ (not depending on $f$ and $x$) and \begin{equation*}
		K(x) := \sup_{z \in \mbb{R}^d} \int_{y \neq 0} \min\{1,|y|^2\} \frac{1}{|y|^{d+\alpha^x(z)}} \, dy + \sup_{|z-x| \leq 4 \delta} \int_{y \neq 0} \min\{1,|y|^{2-\theta/2}\} \frac{1}{|y|^{d+\alpha^x(z)}} \, dy.
	\end{equation*}
	By our choice of $\theta$, we have $\alpha_L \leq \alpha^x(z) \leq \|\alpha\|_{\infty} < 2 - \theta$, and so \begin{equation*}
		\sup_{x \in \{\alpha \geq 1+\alpha_L-\theta\}} K(x)
		\leq 2\sup_{\beta \in [\alpha_L,\|\alpha\|_{\infty}]} \int_{y \neq 0} \min\{1,|y|^2\} \frac{1}{|y|^{d+\beta}} \, dy  + \int_{|y| \leq 1} |y|^{-d+\theta/2} \, dy < \infty. \qedhere
	\end{equation*}
\end{proof}

\begin{proof}[Proof of Corollary~\ref{ex-23}]
	We are going to apply Theorem~\ref{feller-12} to prove the assertion. To this end, we first need to construct for each $x \in \mbb{R}^d$ a Feller process $(Y_t^{(x)})_{t \geq 0}$ which satisfies \eqref{C1}-\eqref{C4} from Theorem~\ref{feller-9} as well as \eqref{S1}-\eqref{S4} from Theorem~\ref{feller-12}. Recall that $\alpha_L =\inf_x \alpha(x)>0$ and that $\gamma \in (0,1)$ is the H\"{o}lder exponent of $\alpha$. \par
	Fix $\eps \in (0,\alpha_L \wedge \gamma)$ and $x \in \mbb{R}^d$. Since $\alpha$ is H\"{o}lder continuous there exists $\delta>0$ such that \begin{equation}
		|\alpha(z+y)-\alpha(z)| \leq \frac{\eps}{4} \fa z \in \mbb{R}^d,\, |h| \leq \delta. \label{ex-eq28} \tag{$\star$}
	\end{equation}
	If we define \begin{equation*}
		\alpha^x(z) := (\alpha(x)-\eps/4) \vee \alpha(z) \wedge (\alpha(x)+\eps/4), \qquad z \in \mbb{R}^d,
	\end{equation*}
	then it follows from \cite[Theorem 5.2]{matters} that there exists a Feller process $(Y_t^{(x)})_{t \geq 0}$ with symbol $p^{(x)}(z,\xi) := |\xi|^{\alpha^x(z)}$ such that the martingale problem for its generator is well-posed. Moreover, by our choice of $\delta$, \begin{equation*}
		q(z,\xi) = |\xi|^{\alpha(z)} = |\xi|^{\alpha^x(z)} = p^{(x)}(z,\xi) \fa \xi \in \mbb{R}^d,\, |z-x| \leq 4 \delta,
	\end{equation*}
	and so \eqref{C1} and \eqref{C2} from Theorem~\ref{feller-9} hold. Applying Proposition~\ref{app-1} and Proposition~\ref{app-25}, it follows that the semigroup $(T_t^{(x)})_{t \geq 0}$ associated with $(Y_t^{(x)})_{t \geq 0}$ satisfies \begin{equation*}
		\|T_t^{(x)} u\|_{\mc{C}_b^{\kappa(x)}(\mbb{R}^d)} \leq c_1 \|u\|_{\infty} t^{-\beta(x)}, \quad u \in \mc{B}_b(\mbb{R}^d), \,t \in (0,1),
	\end{equation*}
	and \begin{equation*}
		\|T_t^{(x)} u\|_{\mc{C}_b^{\kappa(x)+\lambda}(\mbb{R}^d)} \leq c_1 \|u\|_{\mc{C}_b^{\lambda}(\mbb{R}^d)} t^{-\beta(x)}, \qquad u \in \mc{C}_b^{\lambda}(\mbb{R}^d), \,t \in (0,1),
	\end{equation*}
	for any $\lambda \leq \Lambda:=\gamma$ where $c_1>0$ is some constant (not depending on $u$, $t$, $x$) and \begin{equation*}
		\kappa(x) := \alpha(x)-\eps \qquad \quad \beta(x) := \frac{\alpha(x)-2\eps}{\alpha(x)-\eps/4}.
	\end{equation*}
	Consequently, we have established \eqref{C4} and \eqref{S3}. Since $\kappa$ is clearly uniformly continuous and bounded away from zero, we get immediately that \eqref{S5} holds. Moreover, as $\alpha$ is bounded away from zero and from two, it follows easily that \eqref{S1} and \eqref{S4} hold with $\alpha^{(x)}(z) := \alpha^x(z)$. Finally, we note that the H\"{o}lder condition \eqref{S2} on the symbol $p^{(x)}$ is a consequence of the H\"{o}lder continuity of $\alpha$, see Lemma~\ref{aux-1} below for details. \par
	We are now ready to apply Theorem~\ref{feller-12}. Let $f \in \mc{D}(A)$ be such that $Af= g \in \mc{C}_b^{\lambda}(\mbb{R}^d)$ for some $\lambda>0$. Without loss of generality, we may assume that $\lambda \leq \gamma$. Since $(X_t)_{t \geq 0}$ satisfies the assumptions of Theorem~\ref{ex-21}, it follows that $f \in \mc{C}_b^{\varrho(\cdot)}(\mbb{R}^d)$ for $\varrho(x) := \alpha(x)-\eps/4$ and, moreover, \begin{equation}
		\|f\|_{\mc{C}_b^{\varrho(\cdot)}(\mbb{R}^d)} \leq C_{\eps} (\|Af\|_{\infty} + \|f\|_{\infty}). \label{ex-eq29}
	\end{equation}
	Furthermore, by our choice of $\delta$, cf.\ \eqref{ex-eq28}, we find that \begin{equation*}
		\sigma := \inf_{x \in \mbb{R}^d} \inf_{|z-x| \leq 4\delta} (1+\varrho(x)-\alpha^x(z))
	\end{equation*}
	satisfies $\sigma \geq 1-\eps/4$. Applying Theorem~\ref{feller-12} we conclude that \begin{equation*}
		f \in \mc{C}_b^{\kappa(\cdot)+\min\{\gamma,\lambda,1-\eps/4\}-\eps/4}(\mbb{R}^d) \subseteq \mc{C}_b^{\alpha(\cdot)+\min\{\gamma,\lambda\}-2\eps}(\mbb{R}^d)
	\end{equation*}
	and  \begin{align*}
		\|f\|_{\mc{C}_b^{\alpha(\cdot)+\min\{\gamma,\lambda\}-2\eps}(\mbb{R}^d)} 
		&\leq C_{\eps}' (\|Af\|_{\mc{C}_b^{\lambda}(\mbb{R}^d)} + \|f\|_{\mc{C}_b^{\varrho(\cdot)}(\mbb{R}^d)}) 
		\leq C_{\eps}'' (\|Af\|_{\mc{C}_b^{\lambda}(\mbb{R}^d)} + \|f\|_{\infty})
	\end{align*}
	where we used \eqref{ex-eq29} for the last inequality.
\end{proof}

\begin{lem} \label{aux-1}
	For fixed $\alpha \in (0,2)$ denote by $\nu_{\alpha}$ the L\'evy measure of the isotropic $\alpha$-stable L\'evy process, i.\,e.\ \begin{equation}
		|\xi|^{\alpha} = \int_{y \neq 0} (1-\cos(y \cdot \xi)) \, \nu_{\alpha}(dy), \qquad \xi \in \mbb{R}^d. \label{aux-eq1}
	\end{equation}
	Let $\beta: \mbb{R}^d \to (0,2)$ be such that $\beta \in C_b^{\gamma}(\mbb{R}^d)$ for some $\gamma \in (0,1]$ and \begin{equation*}
		0 < \beta_L := \inf_{z \in \mbb{R}^d} \beta(z) \leq \sup_{z \in \mbb{R}^d} \beta(z)<2.
	\end{equation*}
	If $u: \mbb{R}^d \to \mbb{R}$ is a measurable mapping such that \begin{equation}
		|u(y)| \leq M \min\{|y|^{\beta(z)+r},1\}, \qquad y \in \mbb{R}^d, \label{aux-eq3}
	\end{equation}
	for some $z \in \mbb{R}^d$, $r>0$ and $M>0$, then there exist constants $K>0$ and $H>0$ (not depending on $u$ and $z$) such that \begin{equation*}
		\left| \int u(y) \, \nu_{\beta(z)}(dy) - \int u(y) \, \nu_{\beta(z+h)}(dy) \right| \leq M K |h|^{\gamma} \fa |h| \leq H.
	\end{equation*}
\end{lem}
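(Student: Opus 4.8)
The plan is to reduce everything to a single clean estimate on the $r^{\alpha(\cdot)}$-type differences and then exploit the explicit scaling of the stable L\'evy measure. First I would record the scaling identity for $\nu_{\alpha}$: from \eqref{aux-eq1} and the fact that $|\xi|^{\alpha} = c_{d,\alpha}\int (1-\cos(y\cdot\xi))|y|^{-d-\alpha}\,dy$ one has $\nu_{\alpha}(dy) = c_{d,\alpha}|y|^{-d-\alpha}\,dy$, so that for any measurable $u$ with $|u(y)|\le M\min\{|y|^{\beta(z)+r},1\}$ the integral $\int u(y)\,\nu_{\beta(z)}(dy)$ is absolutely convergent, uniformly in $z$, provided $\beta(z)+r$ stays in a compact subset of $(0,\infty)$ and $\beta(z)$ in a compact subset of $(0,2)$ — which holds by hypothesis. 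Then I would write the difference as
\begin{equation*}
	\int u(y)\,\bigl(\nu_{\beta(z)}(dy) - \nu_{\beta(z+h)}(dy)\bigr)
	= \int u(y)\,\bigl(c_{d,\beta(z)}|y|^{-d-\beta(z)} - c_{d,\beta(z+h)}|y|^{-d-\beta(z+h)}\bigr)\,dy.
\end{equation*}

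Next I would control the integrand pointwise. Set $\varrho_1 := \beta(z)$, $\varrho_2 := \beta(z+h)$; since $\beta\in C_b^{\gamma}(\mbb{R}^d)$ we have $|\varrho_1-\varrho_2|\le \|\beta\|_{\mc{C}_b^{\gamma}(\mbb{R}^d)}|h|^{\gamma}$. Writing $g(\varrho) := c_{d,\varrho}|y|^{-d-\varrho}$ and using that $\varrho\mapsto c_{d,\varrho}$ is $C^1$ on the compact interval $[\beta_L/2,\ (2+\sup\beta)/2]\subset(0,2)$, the mean value theorem gives
\begin{equation*}
	|g(\varrho_1)-g(\varrho_2)| \le C_0\,|y|^{-d}\,\max\{|y|^{-\varrho_1},|y|^{-\varrho_2}\}\,\bigl(1+|\log|y||\bigr)\,|\varrho_1-\varrho_2|,
\end{equation*}
the logarithmic factor coming from $\partial_{\varrho}|y|^{-d-\varrho} = -\log|y|\,|y|^{-d-\varrho}$. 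Multiplying by $|u(y)|\le M\min\{|y|^{\beta(z)+r},1\}$ and noting $\beta(z)=\varrho_1$, the bound near $y=0$ becomes $M\,|y|^{r}\,|y|^{-d}(1+|\log|y||)\,|\varrho_1-\varrho_2|$ (using $\max\{|y|^{-\varrho_1},|y|^{-\varrho_2}\}=|y|^{-\varrho_2}\le |y|^{-\varrho_1}|y|^{-|\varrho_1-\varrho_2|}$ and absorbing the small extra power $|y|^{-|\varrho_1-\varrho_2|}$ into the $|y|^r$ at the cost of requiring $|h|^{\gamma}\le H$ small enough that $|\varrho_1-\varrho_2|\le r/2$), which is integrable over $\{|y|\le 1\}$; and away from $y=0$ one uses $|u(y)|\le M$ together with $|y|^{-d}\max\{|y|^{-\varrho_1},|y|^{-\varrho_2}\}(1+|\log|y||)\le |y|^{-d-\beta_L/2}(1+|\log|y||)$, which is integrable over $\{|y|>1\}$. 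Integrating in $y$ then yields
\begin{equation*}
	\left|\int u(y)\,\nu_{\beta(z)}(dy) - \int u(y)\,\nu_{\beta(z+h)}(dy)\right|
	\le M\,K'\,|\varrho_1-\varrho_2| \le M\,K'\,\|\beta\|_{\mc{C}_b^{\gamma}(\mbb{R}^d)}\,|h|^{\gamma}
\end{equation*}
for $|h|\le H$, with $K',H$ depending only on $\beta_L$, $\sup\beta$, $r$ and the dimension. Setting $K := K'\|\beta\|_{\mc{C}_b^{\gamma}(\mbb{R}^d)}$ gives the claim.

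The only delicate point is the interplay between the small extra power $|y|^{-|\varrho_1-\varrho_2|}$ produced by the mismatch of exponents and the integrability near $y=0$: one must first shrink $H$ so that $|\varrho_1-\varrho_2|\le r/2$, guaranteeing the combined exponent $|y|^{r-|\varrho_1-\varrho_2|-d}$ (times a logarithm) is still integrable on the unit ball. Everything else is a routine application of the mean value theorem and the explicit form of $\nu_{\alpha}$; the constant $c_{d,\alpha}$ is known to depend smoothly (indeed real-analytically) on $\alpha\in(0,2)$, so its derivative is bounded on the relevant compact interval. No deeper input is needed — in particular this lemma is exactly the concrete verification of hypothesis \eqref{S2} of Theorem~\ref{feller-12} for the stable-like family, as used in the proof of Corollary~\ref{ex-23}.
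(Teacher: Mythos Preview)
Your proposal is correct and follows essentially the same approach as the paper. The only organizational difference is that the paper splits the difference into two pieces $I_1+I_2$ (one carrying $|c(\beta(z))-c(\beta(z+h))|$ and the other carrying $||y|^{-d-\beta(z)}-|y|^{-d-\beta(z+h)}|$) and estimates them separately, whereas you apply the mean value theorem directly to $\varrho\mapsto c_{d,\varrho}|y|^{-d-\varrho}$ in one stroke; both arguments hinge on the same three ingredients --- the explicit density $c_{d,\alpha}|y|^{-d-\alpha}$, smoothness of $\alpha\mapsto c_{d,\alpha}$ on compact subintervals of $(0,2)$, and a choice of $H$ ensuring $|\beta(z)-\beta(z+h)|\le r/2$ so that the exponent mismatch near $y=0$ is absorbed by the spare power $|y|^r$.
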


\begin{proof}
	It is well known that $\nu_{\alpha}(dy) = c(\alpha) |y|^{-d-\alpha}$ where $c(\alpha)$ is a normalizing constant such that \eqref{aux-eq1} holds. Noting that, by the rotational invariance of $\xi \mapsto |\xi|^{\alpha}$,  \begin{equation*}
		|\xi|^{\alpha} 
		= c(\alpha) \int_{y \neq 0} (1-\cos(y_1 |\xi|)) \frac{1}{|y|^{d+\alpha}} \, dy 
		=|\xi|^{\alpha} c(\alpha) \int_{y \neq 0} (1-\cos(y_1)) \frac{1}{|y|^{d+\alpha}} \, dy
	\end{equation*}
	for all $\xi \in \mbb{R}^d$, we find $c(\alpha)=1/h(\alpha)$ for \begin{equation*}
		h(\alpha) := \int_{y \neq 0} (1-\cos(y_1)) \frac{1}{|y|^{d+\alpha}} \, dy.
	\end{equation*}
	Using that \begin{align}
		\left| \frac{1}{r^{d+\alpha}}- \frac{1}{r^{d+\tilde{\alpha}}}\right|
		= \frac{1}{r^{2d+\alpha+\beta}} |r^{d+\tilde{\alpha}}-r^{d+\alpha}| 
		\leq |\log(r)| r^{-d} \max\{r^{-\alpha},r^{-\tilde{\alpha}}\} |\alpha-\tilde{\alpha}| \label{aux-eq5}
	\end{align}
	for any $r>0$ and $\alpha,\tilde{\alpha} \in I:=[\beta_L,\|\beta\|_{\infty}] \subseteq (0,2)$, it follows easily that \begin{equation*}
		|h(\alpha)-h(\tilde{\alpha})| \leq C_1 |\alpha-\tilde{\alpha}|, \qquad \alpha,\tilde{\alpha} \in I
	\end{equation*}
	for some constant $C_1>0$. As $\inf_{\alpha \in I} h(\alpha)>0$ this implies that $c(\alpha)=1/h(\alpha)$ satisfies \begin{equation}
		|c(\alpha)-c(\tilde{\alpha})| \leq C_2  |\alpha-\tilde{\alpha}|, \qquad \alpha,\tilde{\alpha} \in I \label{aux-eq7}
	\end{equation}
	for some constant $C_2>0$. \par
	Now let $u: \mbb{R}^d \to \mbb{R}$ be a measurable mapping such that \eqref{aux-eq3} holds for some $z \in \mbb{R}^d$, $M>0$ and $r>0$. Since $\nu_{\alpha}(dy) = c(\alpha) |y|^{-d-\alpha} \, dy$ we have \begin{equation*}
		\left| \int u(y) \, \nu_{\beta(z)}(dy) - \int u(y) \, \nu_{\beta(z+h)}(dy) \right| \leq I_1+I_2
	\end{equation*}
	where \begin{align*}
		I_1 &:= |c(\beta(z))-c(\beta(z+h))| \int_{y \in \mbb{R}^d} |u(y)| \frac{1}{|y|^{d+\beta(z)}} \, dy \\
		I_2 &:= c(\beta(z+h)) \int_{y \neq 0} |u(y)| \left| \frac{1}{|y|^{d+\beta(z)}}-\frac{1}{|y|^{d+\beta(z+h)}} \right| \, dy.
	\end{align*}
	By the first part of the proof, cf.\ \eqref{aux-eq7}, and by \eqref{aux-eq3}, we find \begin{equation*}
		I_1
		\leq C_2 M |\beta(z)-\beta(z+h)| \int_{y \in \mbb{R}^d} \min\{|y|^{\beta(z)+r},1\} \frac{1}{|y|^{d+\beta(z)}} \, dy
	\end{equation*}
	and so \begin{equation*}
		I_1 \leq C_2 M |h|^{\gamma} \|\beta\|_{C_b^{\gamma}(\mbb{R}^d)} \sup_{\alpha \in I} \int_{y \neq 0} \min\{|y|^{\alpha+r},1\} |y|^{d-\alpha} \, dy
		=: C_{3} M |h|^{\gamma}
	\end{equation*}
	for all $h \in \mbb{R}^d$. To estimate $I_2$ we choose $H>0$ such that \begin{equation*}
		|\beta(x)-\beta(x+h)| \leq \frac{\min\{r,\beta_L\}}{2} \fa x \in \mbb{R}^d, |h| \leq H.
	\end{equation*}
	By \eqref{aux-eq3} and \eqref{aux-eq5}, we get \begin{align*}
		I_2
		\leq M |\beta(z)-\beta(z+h)| \sup_{\alpha \in I} c(\alpha) \int_{y \neq 0} \min\{|y|^{\beta(z)+r},1\} |\log(|y|)| \frac{\max\{|y|^{-\beta(z)},|y|^{-\beta(z+h)}\}}{|y|^{d}} \, dy
	\end{align*}
	for all $|h| \leq H$. By our choice of $H$, it holds that \begin{equation*}
		\frac{\beta(z)}{2} \leq \beta(z)-\frac{\beta_L}{2} \leq \beta(z+h) \leq \beta(z) + \frac{r}{2} \fa |h| \leq H,
	\end{equation*}
	and therefore \begin{align*}
		I_2
		&\leq M |\beta(z)-\beta(z+h)| \sup_{\alpha \in I} c(\alpha)  \left( \int_{|y| \leq 1} |y|^{-d+r/2} |\log(|y|)| \ dy + \int_{|y|>1} |y|^{-d-\beta(z)/2} \log(|y|) \,dy \right) \\
		&\leq C_4 M |h|^{\gamma} 
	\end{align*}
	for all $|h|  \leq H$ and \begin{equation*}
		C_4 := \|\beta\|_{C_b^{\gamma}(\mbb{R}^d)} \sup_{\alpha \in I} c(\alpha)  \left( \int_{|y| \leq 1} |y|^{-d+r/2} |\log(|y|)| \ dy + \sup_{\alpha \in I} \int_{|y|>1} |y|^{-d-\alpha/2} \log(|y|) \,dy \right)<\infty. \qedhere
	\end{equation*}

\end{proof}

\appendix

\section{Extended generator} \label{app-gen}

In this section, we collect some material on the extended generator of a Feller process; in particular, we present the proofs of Theorem~\ref{gen-3} and Corollary~\ref{gen-5}. The extended infinitesimal generator was originally introduced by Kunita \cite{kunita69} and was studied quite intensively in the 80s, e.\,g.\ by Airault \& F\"{o}llmer \cite{foellmer74}, Bouleau \cite{bouleau81}, Hirsch \cite{hirsch84}, Meyer \cite{meyer76} and Mokobodzki \cite{moko78}. Recall the following definition, cf.\ Section~\ref{def}.

\begin{defn} \label{app-gen-1}
	Let  $(X_t)_{t \geq 0}$ be a Feller process with $\mu$-potential operators $(R_{\lambda})_{\lambda>0}$. A function $f$ is in the domain $\mc{D}(A_e)$ of the extended generator and $g=A_e f$ if \begin{enumerate}
		\item\label{app-gen-1-i} $f \in \mc{B}_b(\mbb{R}^d)$ and $g$ is a measurable function such that $\|R_{\lambda}(|g|)\|_{\infty}< \infty$ for some (all) $\lambda>0$,
		\item\label{app-gen-1-iii} $f = R_{\lambda}(\lambda f-g)$ for all $\lambda>0$.
	\end{enumerate}
\end{defn}
Condition \ref{app-gen-1}\eqref{app-gen-1-iii} may be replaced by
\begin{enumerate}[label*=\upshape (\roman*'),ref=\upshape \roman*'] \setcounter{enumi}{1} 
	\item\label{gen-1-iv} $M_t := f(X_t)-f(X_0) - \int_0^t g(X_s) \, ds$, $t \geq 0$, is a local $\mbb{P}^x$-martingale for any $x \in \mbb{R}^d$;
\end{enumerate}
cf.\ Meyer \cite{meyer76} or Bouleau \cite{bouleau81}. Moreover, it was shown in \cite{foellmer74} that the extended generator can also be defined in terms of pointwise limits \begin{equation*}
	\lim_{t \to 0} t^{-1} (\mbb{E}^x f(X_t)-f(x)),
\end{equation*}
see also Corollary~\ref{app-gen-5} below. The domain $\mc{D}(A_e)$ is, in general, quite large; this is indicated by the fact that it is possible to show under relatively weak assumptions (e.\,g.\ $C_c^{\infty}(\mbb{R}^d) \subseteq \mc{D}(A_e)$) that $\mc{D}(A_e)$ is closed under multiplication, cf.\ \cite[pp.~144]{meyer76} or \cite[Theorem 4.3.6]{bouleau91}. There is a close connection between the extended generator and the carr\'e du champ operator, cf.\ \cite[Section 4.3]{bouleau91} or \cite{meyer4}. The following statement is essentially due to Airault \& F\"ollmer \cite{foellmer74}.

\begin{thm} \label{app-gen-3}
	Let $(X_t)_{t \geq 0}$ be a Feller process with semigroup $(P_t)_{t \geq 0}$ and extended generator $(A_e,\mc{D}(A_e))$. The associated Favard space $F_1$ of order $1$, cf.\ \eqref{fav}, satisfies \begin{equation*}
		F_1 = \{f \in \mc{D}(A_e); \|A_e f\|_{\infty}< \infty\}.
	\end{equation*}
	If $f \in F_1$ then \begin{equation*}
		K(f) := \sup_{t \in (0,1)} \frac{1}{t} \|P_t f-f\|_{\infty} = \|A_e f\|_{\infty}
	\end{equation*}
	and, moreover, Dynkin's formula \begin{equation}
		\mbb{E}^x f(X_{\tau})-f(x) = \mbb{E}^x \left( \int_0^{\tau} A_e f(X_s) \, ds \right) \label{app-gen-eq6}
	\end{equation}
	holds for any $x \in \mbb{R}^d$ and any stopping time $\tau$ such that $\mbb{E}^x \tau<\infty$.
 \end{thm}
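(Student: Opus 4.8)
The plan is to prove the three assertions in turn, relying on the characterization of the extended generator via Definition~\ref{app-gen-1} and the martingale description \eqref{gen-1-iv}, and on the following elementary identity for the $\lambda$-potential operator: since $\lambda R_\lambda g - g = \lambda^{-1}\lambda^2 R_\lambda g - g$ and, for the semigroup, $\|P_t f - f\|_\infty \le t K(f)$ whenever $f \in F_1$, one has good control of $\|\lambda R_\lambda f - f\|_\infty$ as $\lambda \to \infty$.

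\textbf{Step 1: $F_1 \subseteq \{f \in \mc{D}(A_e); \|A_e f\|_\infty < \infty\}$, and $\|A_e f\|_\infty \le K(f)$.} Let $f \in F_1$, so $K(f) = \sup_{t \in (0,1)} t^{-1}\|P_t f - f\|_\infty < \infty$; by the semigroup property and a subadditivity argument (as in the proof of Lemma~\ref{feller-3}\eqref{feller-3-i}) this bound extends to all $t > 0$, namely $\|P_t f - f\|_\infty \le c(1+t) K(f)$. Then for $\lambda$ large, $g_\lambda := \lambda(\lambda R_\lambda f - f)$ satisfies $g_\lambda(x) = \lambda \int_0^\infty \lambda e^{-\lambda t}(P_t f(x) - f(x))\,dt$; one shows the family $(g_\lambda)$ is uniformly bounded (by $cK(f)$) using the moment bound just established, and that it converges weakly/vaguely — more precisely, that $R_\mu g_\lambda \to $ a limit as $\lambda \to \infty$, which forces the existence of $g \in \mc{B}_b(\mbb{R}^d)$ with $\|g\|_\infty \le K(f)$ and $f = R_\mu(\mu f - g)$ for all $\mu > 0$. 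The cleanest route is: from $\|P_t f - f\|_\infty \le tK(f)$ for $t \in (0,1)$ deduce that $t \mapsto P_t f(x)$ is Lipschitz in a neighbourhood of $0$ uniformly in $x$; together with the semigroup property this gives absolute continuity of $t\mapsto P_t f(x)$ and the existence of a bounded measurable $g$ with $P_t f - f = \int_0^t P_s g\,ds$, which is exactly condition \eqref{app-gen-1-iii} after taking Laplace transforms. This $g$ is $A_e f$ and $\|A_e f\|_\infty \le K(f)$.

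\textbf{Step 2: the reverse inclusion and $K(f) \le \|A_e f\|_\infty$.} Let $f \in \mc{D}(A_e)$ with $g := A_e f$, $\|g\|_\infty < \infty$. From $f = R_\lambda(\lambda f - g)$ one recovers (inverting the resolvent identity, or directly from \eqref{gen-1-iv}) the representation $P_t f - f = \int_0^t P_s g\,ds$ for a.e.\ $t$, hence for all $t$ by continuity. Then $\|P_t f - f\|_\infty \le \int_0^t \|P_s g\|_\infty\,ds \le t\|g\|_\infty$ by the contraction property of $(P_s)$, so $K(f) \le \|A_e f\|_\infty$ and $f \in F_1$. Combining with Step 1 gives $F_1 = \{f \in \mc{D}(A_e); \|A_e f\|_\infty < \infty\}$ and the equality $K(f) = \|A_e f\|_\infty$.

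\textbf{Step 3: Dynkin's formula.} Fix $f \in F_1$ and $x \in \mbb{R}^d$. By \eqref{gen-1-iv}, $M_t = f(X_t) - f(X_0) - \int_0^t A_e f(X_s)\,ds$ is a local $\mbb{P}^x$-martingale; since $f$ is bounded and $\|A_e f\|_\infty < \infty$, for any stopping time $\tau$ with $\mbb{E}^x \tau < \infty$ the process $(M_{t\wedge\tau})_{t\ge 0}$ is dominated: $|M_{t\wedge\tau}| \le 2\|f\|_\infty + \|A_e f\|_\infty\,\tau \in L^1(\mbb{P}^x)$. Hence $(M_{t\wedge\tau})$ is a uniformly integrable martingale, optional stopping applies, and letting $t \to \infty$ (using dominated convergence on both $f(X_{t\wedge\tau}) \to f(X_\tau)$ and $\int_0^{t\wedge\tau} A_e f(X_s)\,ds \to \int_0^\tau A_e f(X_s)\,ds$) yields $\mbb{E}^x M_\tau = \mbb{E}^x M_0 = 0$, which is exactly \eqref{app-gen-eq6}.

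\textbf{Main obstacle.} The delicate point is Step 1 — passing from the quantitative a priori bound on $\|P_t f - f\|_\infty$ to membership in $\mc{D}(A_e)$ with a bounded representative $A_e f$. One must produce the bounded measurable function $g$ and verify $f = R_\lambda(\lambda f - g)$ without already knowing differentiability of $t \mapsto P_t f$; the right tool is a weak-$*$ compactness / Laplace-transform argument for the difference quotients $t^{-1}(P_t f - f)$, or equivalently the construction of $g$ as a weak-$*$ limit point in $L^\infty$, exploiting that the $\lambda$-potentials $R_\lambda$ are positivity-preserving contractions. This is precisely where the Airault–F\"ollmer result \cite{foellmer74} is invoked, and the remaining steps are comparatively routine.
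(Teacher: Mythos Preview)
Your proposal is correct and follows essentially the same route as the paper: both invoke Airault--F\"ollmer \cite{foellmer74} for the hard direction $F_1 \subseteq \mc{D}(A_e)$ (the paper cites the pointwise-limit and martingale statement directly rather than sketching the weak-$*$ heuristics), and both obtain the reverse inclusion from the integral representation $P_t f - f = \int_0^t P_s A_e f\,ds$. The only notable differences are cosmetic: for Step~2 the paper upgrades the local martingale to a martingale via an $L^2$ bound and Doob's maximal inequality rather than by inverting the resolvent identity, and for Step~3 the paper simply cites \cite[Corollary~5.11]{foellmer74} whereas you spell out the (standard) domination-plus-optional-stopping argument.
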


\begin{proof}
	Denote by $(R_{\lambda})_{\lambda>0}$ the $\lambda$-potential operators of $(X_t)_{t \geq 0}$ and set \begin{equation*}
		\mc{D} := \{f \in \mc{B}_b(\mbb{R}^d); \|A_e f\|_{\infty}< \infty\}.
	\end{equation*}
	 First we prove $F_1 \subseteq \mc{D}$. Let $f \in F_1$. Airault \& F\"ollmer \cite[p.~320--322]{foellmer74} showed that the limit $g(x) = \lim_{t \to 0}t^{-1} (P_t f(x)-f(x))$ exists outside a set of potential zero and that \begin{equation*}
		M_t := f(X_t)-f(X_0) - \int_0^t g(X_s) \, ds, \qquad t \geq 0,
	\end{equation*}
	is a $\mbb{P}^x$-martingale for any $x \in \mbb{R}^d$; we set $g=0$ on the set of potential zero where the limit does not exist. Clearly, $\|g\|_{\infty} \leq K(f)<\infty$, and therefore it is obvious that $R_{\lambda}(|g|)$ is bounded for any $\lambda>0$. It remains to check \ref{app-gen-1}\eqref{app-gen-1-iii}. Since the martingale $(M_t)_{t \geq 0}$ has constant expectation, we have $P_t f = f + \int_0^t P_s g \, ds$, and so \begin{align*}
		\lambda \int_{(0,\infty)} e^{-\lambda t} P_t f(x) \, dt
		&= \lambda \int_{(0,\infty)} e^{-\lambda t} \left( f(x) + \int_0^t P_s g(x) \, ds \right) \, dt \\
		&= f(x) - \int_{(0,\infty)} \left( \frac{d}{dt} e^{-\lambda t} \right) \left( \int_0^t P_s g(x) \, ds \right) \, dt.
	\end{align*}
	Applying the integration by parts formula we find that \begin{equation*}
		\lambda \int_{(0,\infty)} e^{-\lambda t} P_t f(x) \, dt
		= f(x)+ \int_{(0,\infty)} e^{-\lambda t} P_t g(x) \, dt,
	\end{equation*}
	i.\,e.\ $\lambda R_{\lambda} f = f + R_{\lambda} g$. This proves $f \in \mc{D}(A_e)$,  $A_e f = g$ and $\|A_e f\|_{\infty} \leq K(f)$. \par
	If $f \in \mc{D}$, then the local martingale \begin{equation*}
		M_t = f(X_t)-f(X_0) - \int_0^t A_e f(X_s) \, ds
	\end{equation*}
	satisfies \begin{equation*}
		\mathbb{E}^x(M_{t \wedge \tau}^2) \leq (2\|f\|_{\infty}+\|A_e f\|_{\infty})^2 (1+t), \qquad t \geq 0, \,x \in \mbb{R}^d,
	\end{equation*} 
	for any stopping time $\tau$. It is immediate from Doob's maximal inequality that $\sup_{s \leq t} |M_s|$ is square-integrable, and this, in turn, implies that $(M_t)_{t \geq 0}$ is a martingale. In particular, $\mathbb{E}^x(M_t)=\mathbb{E}^x(M_0)$, i.\,e.\ \begin{equation*}
		\mbb{E}^x f(X_t)-f(x) = \mbb{E}^x \left( \int_0^t A_e f(X_s) \,d s \right),
	\end{equation*}
	and so $K(f) \leq \|A_e f\|_{\infty}<\infty$ and $f \in F_1$. Finally, we note that Dynkin's formula \eqref{app-gen-eq6} was shown in \cite[Corollary 5.11]{foellmer74} for any function $f \in \mc{B}_b(\mbb{R}^d)$ satisfying $K(f)<\infty$.
\end{proof}

\begin{kor} \label{app-gen-5}
	Let $(X_t)_{t \geq 0}$ be a Feller process with semigroup $(P_t)_{t \geq 0}$,  extended generator $(A_e,\mc{D}(A_e))$ and symbol $q$. Denote by \begin{equation*}
		\tau_r^x := \inf\{t>0; |X_t-x|>r\}
	\end{equation*}
	the exit time of $(X_t)_{t \geq 0}$ from the closed ball $\overline{B(x,r)}$. If the symbol $q$ has bounded coefficients, then the following statements are equivalent for any $f \in \mc{B}_b(\mbb{R}^d)$. 
 \begin{enumerate}
 		\item\label{app-gen-5-i} $f \in F_1$, i.e.\ $f \in \mc{D}(A_e)$ and $\sup_{t \in (0,1)} t^{-1} \|P_t f-f\|_{\infty} = \|A_e f\|_{\infty}< \infty$,
 		\item\label{app-gen-5-ii} There exists $r>0$ such that \begin{equation*}
 			K^{(1)}_r(f) := \sup_{t \in (0,1)} \sup_{x \in \mbb{R}^d} \frac{1}{\mbb{E}^x(t \wedge \tau_r^x)} |\mbb{E}^x f(X_{t \wedge \tau_r^x})-f(x)|<\infty. \end{equation*}
		\item\label{app-gen-5-iii} There exists $r>0$ such that \begin{equation*}
			K^{(2)}_r(f) := \sup_{t \in (0,1)} \frac{1}{t} \sup_{x \in \mbb{R}^d} |\mbb{E}^x f(X_{t \wedge \tau_r^x})-f(x)|<\infty. \end{equation*}
	\end{enumerate}
	If one (hence all) of the conditions is satisfied, then \begin{equation}
			A_e f(x) = \lim_{t \to 0} \frac{\mbb{E}^x f(X_{t \wedge \tau_r^x})-f(x)}{t} = \lim_{t \to 0} \frac{\mbb{E}^x f(X_{t \wedge \tau_r^x})-f(x)}{\mbb{E}^x (t \wedge \tau_r^x)} \label{app-gen-eq7}
	\end{equation}
	up to a set of potential zero for any $r \in (0,\infty]$. In particular, $\|A_e f\|_{\infty} \leq K^{(i)}_r(f)$ for $i \in \{1,2\}$ and $r \in (0,\infty]$.
\end{kor}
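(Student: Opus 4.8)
The plan is to prove Corollary~\ref{app-gen-5} by reducing everything to Theorem~\ref{app-gen-3}. First I would establish the implication \eqref{app-gen-5-i} $\Rightarrow$ \eqref{app-gen-5-ii}. Assume $f \in F_1$. By Theorem~\ref{app-gen-3}, Dynkin's formula \eqref{app-gen-eq6} applies to the stopping time $\tau := t \wedge \tau_r^x$, which has finite expectation since $\mbb{E}^x(t \wedge \tau_r^x) \leq t < \infty$. Hence
\begin{equation*}
	|\mbb{E}^x f(X_{t \wedge \tau_r^x})-f(x)| = \left| \mbb{E}^x \int_0^{t \wedge \tau_r^x} A_e f(X_s) \, ds \right| \leq \|A_e f\|_{\infty} \, \mbb{E}^x(t \wedge \tau_r^x),
\end{equation*}
which gives $K^{(1)}_r(f) \leq \|A_e f\|_{\infty} < \infty$ for every $r>0$, and this also proves \eqref{app-gen-5-i} $\Rightarrow$ \eqref{app-gen-5-iii} at once, since $\mbb{E}^x(t\wedge\tau_r^x)\le t$.

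Next I would handle \eqref{app-gen-5-ii} $\Rightarrow$ \eqref{app-gen-5-iii} (immediate, again because $\mbb{E}^x(t\wedge\tau_r^x)\le t$, so $K^{(2)}_r(f)\le K^{(1)}_r(f)$) and then the crucial implication \eqref{app-gen-5-iii} $\Rightarrow$ \eqref{app-gen-5-i}. Here the idea is to compare $\mbb{E}^x f(X_{t\wedge\tau_r^x})$ with $\mbb{E}^x f(X_t) = P_t f(x)$. Write
\begin{equation*}
	\mbb{E}^x f(X_t) - f(x) = \big(\mbb{E}^x f(X_{t\wedge\tau_r^x}) - f(x)\big) + \mbb{E}^x\big[\big(f(X_t) - f(X_{t\wedge\tau_r^x})\big)\I_{\{\tau_r^x \le t\}}\big].
\end{equation*}
The first term is bounded by $K^{(2)}_r(f)\, t$ by hypothesis. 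The second term is bounded by $2\|f\|_\infty \, \mbb{P}^x(\tau_r^x \le t)$, and since $q$ has bounded coefficients, the maximal inequality \eqref{max} gives $\mbb{P}^x(\tau_r^x \le t) = \mbb{P}^x(\sup_{s\le t}|X_s - x| > r) \le c\,t\, \sup_{|y-x|\le r}\sup_{|\xi|\le r^{-1}}|q(y,\xi)| \le c'\, t$ with $c'$ independent of $x$ (using boundedness of the coefficients, cf.\ the characterization $\sup_x\sup_{|\xi|\le1}|q(x,\xi)|<\infty$ and the scaling of $q$). Hence $\sup_x |P_t f(x) - f(x)| \le (K^{(2)}_r(f) + 2c'\|f\|_\infty)\, t$ for $t \in (0,1)$, so $f \in F_1$ by definition, and moreover $\|A_e f\|_\infty = \sup_{t\in(0,1)} t^{-1}\|P_t f - f\|_\infty \le K^{(2)}_r(f) + 2c'\|f\|_\infty < \infty$ by Theorem~\ref{app-gen-3}. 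A symmetric argument starting from $K^{(1)}_r(f)$, together with $\mbb{E}^x(t\wedge\tau_r^x) \ge t\,\mbb{P}^x(\tau_r^x > t) = t(1 - \mbb{P}^x(\tau_r^x \le t)) \ge t(1 - c't) \ge t/2$ for small $t$, closes the cycle of equivalences.

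For the limit formula \eqref{app-gen-eq7}, assume the equivalent conditions hold, so $f \in F_1 = \{f \in \mc{D}(A_e); \|A_e f\|_\infty < \infty\}$. By \cite{foellmer74} (as quoted in the proof of Theorem~\ref{app-gen-3}), $A_e f(x) = \lim_{t\to0} t^{-1}(P_t f(x) - f(x))$ outside a set of potential zero. Using the decomposition above together with $\mbb{P}^x(\tau_r^x\le t) \le c' t$, one gets $|t^{-1}(\mbb{E}^x f(X_{t\wedge\tau_r^x}) - f(x)) - t^{-1}(P_t f(x)-f(x))| \le 2c'\|f\|_\infty\, t \to 0$, which yields the first equality in \eqref{app-gen-eq7}. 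For the second equality, divide and multiply: $\mbb{E}^x(t\wedge\tau_r^x) = t(1 + o(1))$ as $t\to0$ (again from $\mbb{P}^x(\tau_r^x\le t)\le c't$ and dominated convergence, $\mbb{E}^x(t\wedge\tau_r^x)/t \to 1$), so $(\mbb{E}^x f(X_{t\wedge\tau_r^x})-f(x))/\mbb{E}^x(t\wedge\tau_r^x) = \frac{t}{\mbb{E}^x(t\wedge\tau_r^x)}\cdot\frac{\mbb{E}^x f(X_{t\wedge\tau_r^x})-f(x)}{t} \to A_e f(x)$. The main obstacle, and the only place requiring care, is making the error term $\mbb{P}^x(\tau_r^x \le t) = O(t)$ \emph{uniformly in $x$}: this is exactly where the bounded-coefficients assumption enters, via \eqref{max}. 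The case $r = \infty$ (i.e.\ no stopping) is just the original definition of $F_1$ combined with Theorem~\ref{app-gen-3}, so nothing new is needed there.
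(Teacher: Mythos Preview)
Your proof of the equivalences \eqref{app-gen-5-i}$\Leftrightarrow$\eqref{app-gen-5-ii}$\Leftrightarrow$\eqref{app-gen-5-iii} matches the paper's almost line for line: Dynkin's formula for \eqref{app-gen-5-i}$\Rightarrow$\eqref{app-gen-5-ii}, the trivial inequality $\mbb{E}^x(t\wedge\tau_r^x)\le t$ for \eqref{app-gen-5-ii}$\Rightarrow$\eqref{app-gen-5-iii}, and the decomposition of $\mbb{E}^x f(X_t)-f(x)$ together with the maximal inequality \eqref{max} for \eqref{app-gen-5-iii}$\Rightarrow$\eqref{app-gen-5-i}. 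That part is fine.

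There is, however, a genuine gap in your argument for the first equality in \eqref{app-gen-eq7}. You claim
\[
\Big|\tfrac{1}{t}\big(\mbb{E}^x f(X_{t\wedge\tau_r^x})-f(x)\big)-\tfrac{1}{t}\big(P_t f(x)-f(x)\big)\Big|\le 2c'\|f\|_\infty\, t,
\]
but the crude bound $|\mbb{E}^x f(X_t)-\mbb{E}^x f(X_{t\wedge\tau_r^x})|\le 2\|f\|_\infty\,\mbb{P}^x(\tau_r^x\le t)\le 2c'\|f\|_\infty\, t$ yields, after dividing by $t$, only the constant $2c'\|f\|_\infty$, not $2c'\|f\|_\infty\, t$. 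The maximal inequality gives $\mbb{P}^x(\tau_r^x\le t)=O(t)$, and $O(t)/t$ does not tend to zero. So the argument as written does not establish the limit.

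The paper fixes this by using Dynkin's formula a second time (which is available because \eqref{app-gen-5-i} has already been shown): since $f\in F_1$,
\[
\Big|\tfrac{\mbb{E}^x f(X_t)-f(x)}{t}-\tfrac{\mbb{E}^x f(X_{t\wedge\tau_r^x})-f(x)}{t}\Big|
\le \tfrac{1}{t}\,\|A_e f\|_\infty\,\mbb{E}^x\big(t-t\wedge\tau_r^x\big)
\le \|A_e f\|_\infty\,\mbb{P}^x(\tau_r^x\le t)\xrightarrow[t\to0]{}0,
\]
where the convergence uses only the right-continuity of sample paths. This also explains why the limit formula \eqref{app-gen-eq7} does not require bounded coefficients (as the paper remarks), whereas your approach would have tied it to the maximal inequality. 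Your treatment of the second equality in \eqref{app-gen-eq7} via $\mbb{E}^x(t\wedge\tau_r^x)/t\to 1$ is correct, and the paper argues in the same way, again invoking Dynkin's formula rather than the crude bound.
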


The proof of Corollary~\ref{app-gen-5} shows that the implications $\eqref{app-gen-5-i} \implies \eqref{app-gen-5-ii}$, $\eqref{app-gen-5-i} \implies \eqref{app-gen-5-iii}$ and $\eqref{app-gen-5-i} \implies \eqref{app-gen-eq7}$ remain valid if the symbol $q$ has unbounded coefficients.

\begin{proof}[Proof of Corollary~\ref{app-gen-5}]
	\eqref{app-gen-5-i} $\implies$ \eqref{app-gen-5-ii}: If $f \in F_1$ then it follows from Dynkin's formula \eqref{app-gen-eq6} that \begin{equation*}
		K_r^{(1)}(f) \leq \|A_e f\|_{\infty} < \infty \fa r>0.
	\end{equation*}
	\eqref{app-gen-5-ii} $\implies$ \eqref{app-gen-5-iii}: This is obvious because $\mbb{E}^x(t \wedge \tau_r^x) \leq t$. \par
	\eqref{app-gen-5-iii} $\implies$ \eqref{app-gen-5-i}: Fix $t \in (0,1)$. Clearly, \begin{equation*}
		|\mbb{E}^x f(X_t)-f(x)| \leq |\mbb{E}^x f(X_{t \wedge \tau_r^x})-f(x)| + |\mbb{E}^x (f(X_{t \wedge \tau_r^x})-f(X_t))|.
	\end{equation*}
	By assumption, the first term on the right-hand side is bounded by $K_r^{(2)}(f) t$. For the second term we note that \begin{equation*}
		|\mbb{E}^x (f(X_{t \wedge \tau_r^x})-f(X_t))|
		\leq 2 \|f\|_{\infty} \mbb{P}^x(\tau_r^x \leq t).
	\end{equation*}
	An application of the maximal inequality \eqref{max} for Feller processes shows that there exists an absolute constant $c>0$ such that \begin{equation*}
		|\mbb{E}^x (f(X_{t \wedge \tau_r^x})-f(X_t))|
		\leq 2ct  \|f\|_{\infty} \sup_{|y-x| \leq r} \sup_{|\xi| \leq r^{-1}} |q(y,\xi)| 
		\leq 2ct \|f\|_{\infty} \sup_{y \in \mbb{R}^d} \sup_{|\xi| \leq r^{-1}} |q(y,\xi)|;
	\end{equation*}
	note that the right-hand side is finite because $q$ has bounded coefficients. Combining both estimates gives \eqref{app-gen-5-i}.  \par
	Proof of \eqref{app-gen-eq7}: For $r=\infty$ this follows from \cite{foellmer74}, see the proof of Theorem~\ref{app-gen-3}. Fix $r \in (0,\infty)$. Applying Dynkin's formula \eqref{app-gen-eq6} we find \begin{align*}
		\left| \frac{\mbb{E}^x f(X_t)-f(x)}{t} - \frac{\mbb{E}^x f(X_{t \wedge \tau_r^x})-f(x)}{t} \right|
		\leq \frac{1}{t} \|A_e f\|_{\infty} \mbb{E}^x(t- \min\{\tau_r^x,t\})
		\leq \|A_e f\|_{\infty} \mbb{P}^x(\tau_r^x \leq t).
	\end{align*}
	The right-continuity of the sample paths of $(X_t)_{t \geq 0}$ gives $\mbb{P}^x(\tau_r^x \leq t) \to 0$ as $t \to 0$, and therefore we obtain that \begin{equation*}
		\lim_{t \to 0} \frac{\mbb{E}^x f(X_{t \wedge \tau_r^x})-f(x)}{t} = \lim_{t \to 0} \frac{\mbb{E}^x f(X_t)-f(x)}{t}.
	\end{equation*}
	Since the right-hand side equals $A_e f(x)$ up to a set of potential zero, see the proof of Theorem~\ref{app-gen-3}, this proves the first ``$=$'' in \eqref{app-gen-eq7}. Similarly, it follows from Dynkin's formula that \begin{align*}
		\left| \frac{\mbb{E}^x f(X_{t \wedge \tau_r^x})-f(x)}{t} - \frac{\mbb{E}^x f(X_{t \wedge \tau_r^x})-f(x)}{\mbb{E}^x(t \wedge \tau_r^x)} \right|
		&\leq \|A_e f\|_{\infty} \mbb{E}^x(\tau_r^x \wedge t) \left| \frac{1}{t}- \frac{1}{\mbb{E}^x(t \wedge \tau_r^x)} \right| \\
		&\leq \|A_e f\|_{\infty} \mbb{P}^x(\tau_r^x \leq t).
	\end{align*}
	As $\mbb{P}^x(\tau_r^x \leq t) \to 0$ we find that the right-hand side converges to $0$ as $t \to 0$, and this proves the second ``$=$'' in \eqref{app-gen-eq7}. 
\end{proof}

\section{Parametrix construction of the transition density} \label{pix}

Let $(X_t)_{t \geq 0}$ be an isotropic stable-like process with symbol $q(x,\xi) = |\xi|^{\alpha(x)}$ for a H\"{o}lder continuous mapping $\alpha: \mbb{R}^d \to (0,2)$ with $\alpha_L := \inf_x \alpha(x)>0$. For the proof of Proposition~\ref{app-1} the parametrix construction of the transition density of $(X_t)_{ \geq 0}$ in \cite{matters} plays a crucial role, see also \cite{parametrix}. In this section, we collect some results from \cite{matters} which are needed for our proofs. Throughout, $p^{\varrho}(t,x)$ denotes the transition density of an isotropic $\varrho$-stable L\'evy process,  $\varrho \in (0,2]$, i.e. \begin{equation}
	p^{\varrho}(t,x) = \frac{1}{(2\pi)^d} \int_{\mbb{R}^d} e^{ix \cdot \xi} e^{-t |\xi|^{\varrho}} \, d\xi \label{pix-eq1},
\end{equation}
and $\circledast$ is the time-space convolution, i.\,e. \begin{equation*}
	(f \circledast g)(t,x,y) := \int_0^t \!\! \int_{\mbb{R}^d} f(t-s,x,z) g(s,z,y) \, dz \, ds, \qquad t>0,\,x,y \in \mbb{R}^d. 
\end{equation*}
By \cite[Theorem 5.2, Theorem 4.25]{matters}, the transition density $p$ of $(X_t)_{t \geq 0}$ has the representation \begin{equation}
	p(t,x,y) = p_0(t,x,y)+ (p_0 \circledast \Phi)(t,x,y), \qquad t>0,\,x,y \in \mbb{R}^d  \label{pix-eq4}
\end{equation}
where $p_0$ is the zero-order approximation of $p$, defined by, 
\begin{equation}
	p_0(t,x,y) := p^{\alpha(y)}(t,x-y), \qquad t>0, \,x,y \in \mbb{R}^d, \label{pix-eq3}
\end{equation}
and $\Phi$ is a suitable function, see \eqref{pix-eq6} below for the precise definition. There exists for any $T>0$ a constant $C_1>0$ such that \begin{equation*}
	|p_0(t,x,y)| \leq C_1 S(x-y,\alpha(y),t), \qquad t \in (0,T), \,x,y \in \mbb{R}^d 
\end{equation*}
where \begin{equation*}
	S(x,\alpha,t) := \min \left\{ t^{-d/\alpha}, \frac{t}{|x|^{d+\alpha}} \right\}, 
\end{equation*}
cf.\ \cite[Section 4.1]{matters}. A straight-forward computation yields \begin{equation*}
	\forall 0<a<b \leq 2\::\: \sup_{t \in (0,T)}\sup_{z \in \mbb{R}^d} \sup_{\varrho \in [a,b]} \int_{\mbb{R}^d} S(z-y,\varrho,t) \, dy <\infty,
\end{equation*}
cf.\ \cite[Lemma 4.16]{matters} for details.  The function $\Phi$ in \eqref{pix-eq4} has the representation \begin{equation}
	\Phi(t,x,y) = \sum_{i=1}^{\infty} F^{\circledast i}(t,x,y), \qquad t>0, \, x,y \in \mbb{R}^d \label{pix-eq6}
\end{equation}
where $F^{\circledast i} := F \circledast F^{\circledast (i-1)}$ denotes the $i$-th convolution power of \begin{equation*}
	F(t,x,y) :=  (2\pi)^{-d} \int_{\mbb{R}^d} \left( |\xi|^{\alpha(y)}-|\xi|^{\alpha(x)} \right) e^{i \xi \cdot (y-x)} e^{-t |\xi|^{\alpha(y)}} \, d\xi, \qquad t>0, \, x,y \in \mbb{R}^d.
\end{equation*}
It is possible to show that \begin{equation*}
	\sup_{x \in \mbb{R}^d} \int_{\mbb{R}^d} |\Phi(t,x,y)| \, dy \leq C_2 t^{-1+\lambda}, \qquad t \in (0,T) 
\end{equation*}
for some constant $\lambda>0$ and $C_2=C_2(T)>0$, cf.\ \cite[Theorem 4.25(iii), Lemma A.8]{matters}. Moreover, by \cite[Lemma 4.21 \& 4.24]{matters} there exist constants $C_3=C_3(T)>0$ and $\lambda>0$ such that \begin{align*}
	\int_{\mbb{R}^d} |F^{\circledast i}(t,x,y)| \, dy \leq C_3^{i} \frac{\Gamma(\lambda)^{i}}{\Gamma(i \lambda)} t^{-1+i \lambda}, \quad x  \in \mbb{R}^d,\, t \in (0,T). 
\end{align*}
Because of the representation \eqref{pix-eq3}, the following estimates are a useful tool to derive estimates for the transition density $p$.

\begin{lem} \label{pix-3}
	Let $I=[a,b] \subset (0,2)$. For all $T>0$ and $k \in \mbb{N}_0$ there exists a constant $C>0$ such that the following estimates hold for any $\varrho \in [a,b]$, $x \in \mbb{R}^d$, $t \in (0,T)$ and any multiindex $\beta \in \mbb{N}_0^d$ with $|\beta|=k$: \begin{align}
		|\partial_x^{\beta} p^{\varrho}(t,x)| &\leq C t^{-|\beta|/\varrho} S(x,\varrho,t), \label{pix-eq11} \\
		\int_{\mbb{R}^d} \left| \frac{\partial^{\beta}}{\partial x^{\beta}} \frac{\partial}{\partial \varrho} p^{\varrho}(t,x) \right| \, dx &\leq C (1+|\log(t)|) t^{-|\beta|/\varrho}. \label{pix-eq13}
	\end{align}
\end{lem}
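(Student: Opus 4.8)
The plan is to reduce both estimates to the single scale $t=1$ by exploiting the self-similarity of the isotropic $\varrho$-stable L\'evy process. Starting from the Fourier representation \eqref{pix-eq1}, the substitution $\xi \rightsquigarrow t^{-1/\varrho}\eta$ gives, for any multiindex $\beta$ with $|\beta|=k$,
\begin{equation*}
	\partial_x^{\beta} p^{\varrho}(t,x) = (2\pi)^{-d} t^{-(d+k)/\varrho} \int_{\mbb{R}^d} (i\eta)^{\beta} e^{i y \cdot \eta} e^{-|\eta|^{\varrho}} \, d\eta = t^{-(d+k)/\varrho} (\partial^{\beta} p^{\varrho})(1,y), \qquad y := t^{-1/\varrho} x .
\end{equation*}
Hence \eqref{pix-eq11} follows once we know the pointwise bound
\begin{equation*}
	|(\partial^{\beta} p^{\varrho})(1,y)| \leq C \min\{1, |y|^{-d-\varrho}\}, \qquad y \in \mbb{R}^d, \, \varrho \in [a,b],
\end{equation*}
because $t^{-(d+k)/\varrho}\min\{1, |t^{-1/\varrho}x|^{-d-\varrho}\} = t^{-k/\varrho}\min\{t^{-d/\varrho}, t\,|x|^{-d-\varrho}\} = t^{-k/\varrho} S(x,\varrho,t)$. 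For the $\varrho$-derivative, differentiating the integrand produces $\partial_{\varrho} e^{-t|\xi|^{\varrho}} = -t|\xi|^{\varrho}\log|\xi|\,e^{-t|\xi|^{\varrho}}$; after the same substitution $\xi \rightsquigarrow t^{-1/\varrho}\eta$, using $t|\xi|^{\varrho} = |\eta|^{\varrho}$ and $\log|\xi| = \log|\eta| - \varrho^{-1}\log t$, one obtains
\begin{equation*}
	\partial_x^{\beta}\partial_{\varrho} p^{\varrho}(t,x) = t^{-(d+k)/\varrho}\bigl( G_{\beta}^{\varrho}(y) + \varrho^{-1}(\log t)\, H_{\beta}^{\varrho}(y) \bigr), \qquad y := t^{-1/\varrho}x,
\end{equation*}
where $G_{\beta}^{\varrho}$ and $H_{\beta}^{\varrho}$ are the inverse Fourier transforms of $-(i\eta)^{\beta}|\eta|^{\varrho}\log|\eta|\,e^{-|\eta|^{\varrho}}$ and of $(i\eta)^{\beta}|\eta|^{\varrho}e^{-|\eta|^{\varrho}}$, respectively. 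Integrating over $x$ and substituting back $y=t^{-1/\varrho}x$ then yields \eqref{pix-eq13} with exactly the factor $1+|\log t|$, provided $\sup_{\varrho\in[a,b]}(\|G_{\beta}^{\varrho}\|_{L^1} + \|H_{\beta}^{\varrho}\|_{L^1}) < \infty$. So everything comes down to proving the two displayed $t=1$ bounds uniformly for $\varrho$ in the compact set $[a,b] \subset (0,2)$.

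For the pointwise bound I would argue from $\partial_x^{\beta}p^{\varrho}(1,x) = (2\pi)^{-d}\int_{\mbb{R}^d}(i\xi)^{\beta}e^{ix\cdot\xi}e^{-|\xi|^{\varrho}}\,d\xi$. When $|x| \leq 1$ one estimates crudely by $(2\pi)^{-d}\int_{\mbb{R}^d}|\xi|^{k}e^{-|\xi|^{\varrho}}\,d\xi = c_d\,\varrho^{-1}\Gamma((k+d)/\varrho)$, which is bounded uniformly over $\varrho \in [a,b]$ since $(k+d)/\varrho$ stays in a compact subset of $(0,\infty)$. When $|x| > 1$ one fixes a cutoff $\chi \in C_c^{\infty}(\mbb{R}^d)$ with $\chi \equiv 1$ on $B(0,1)$ and splits the integral into a low-frequency part with amplitude $\chi(\xi)(i\xi)^{\beta}e^{-|\xi|^{\varrho}}$ and a high-frequency part. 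On the high-frequency part the amplitude and all its $\xi$-derivatives are bounded by $C e^{-|\xi|^{a}/2}$, so repeated integration by parts in $\xi$ gives decay faster than any power of $|x|$, with $\varrho$-uniform constants; on the low-frequency part the only obstruction to integration by parts is the singularity of $|\xi|^{\varrho}$ at the origin, and a standard dyadic decomposition $|\xi| \sim 2^{-j}$ (integrating by parts $N > d+\varrho$ times on each annulus and summing the resulting geometric series in $2^{j}|x|$) produces the bound $C|x|^{-d-\varrho}$; all the constants depend continuously, hence boundedly, on $\varrho \in [a,b]$. (One may equally well invoke the known pointwise estimates for $\alpha$-stable densities and their derivatives from \cite{matters} or the classical literature, observing that the proofs there are uniform over compact subintervals of $(0,2)$.) Since $\min\{1,|y|^{-d-\varrho}\} \leq \min\{1,|y|^{-d-a}\} \in L^1(\mbb{R}^d)$, this also gives $\sup_{\varrho\in[a,b]}\|(\partial^{\beta}p^{\varrho})(1,\cdot)\|_{L^1} < \infty$.

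For $G_{\beta}^{\varrho}$ and $H_{\beta}^{\varrho}$ I would first justify differentiation under the integral sign by dominated convergence: on a neighbourhood of any fixed $\varrho_0 \in [a,b]$ the integrands $(i\xi)^{\beta}e^{ix\cdot\xi}e^{-|\xi|^{\varrho}}$ and their $\varrho$-derivatives $-(i\xi)^{\beta}|\xi|^{\varrho}\log|\xi|\,e^{ix\cdot\xi}e^{-|\xi|^{\varrho}}$ admit a common integrable majorant. Then the same low/high-frequency analysis applies to $G_{\beta}^{\varrho}$ and $H_{\beta}^{\varrho}$: the extra factor $|\eta|^{\varrho}$ (resp.\ $|\eta|^{\varrho}\log|\eta|$) is harmless at high frequencies and, near the origin, only weakens the singularity of the amplitude, so one obtains pointwise bounds of the form $|G_{\beta}^{\varrho}(y)| + |H_{\beta}^{\varrho}(y)| \leq C(1+|\log|y||)\min\{1,|y|^{-d-\varrho}\}$ (or, if preferred, $\leq C\min\{1,|y|^{-d-a/2}\}$), which are integrable over $\mbb{R}^d$ with an $\varrho$-uniform constant. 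Inserting this into the two displays of the first paragraph produces \eqref{pix-eq11} and \eqref{pix-eq13}.

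The main obstacle is not any single estimate but the \emph{uniformity} in $\varrho$ over $[a,b]$: one must ensure that the constant produced by the origin singularity of $|\xi|^{\varrho}$ --- and of $|\xi|^{\varrho}\log|\xi|$ in the $\partial_{\varrho}$-case --- does not blow up as $\varrho$ ranges over the compact interval, i.e.\ that the number of integrations by parts and the dyadic bounds can be chosen once and for all. This is ultimately a continuity-in-$\varrho$ statement for a family of convergent integrals, but it has to be tracked carefully. A convenient alternative for the tail behaviour is subordination: writing $p^{\varrho}(1,x) = \int_0^{\infty}(4\pi u)^{-d/2}e^{-|x|^2/(4u)}g_{\varrho/2}(u)\,du$ with $g_{\varrho/2}$ the time-$1$ density of the $(\varrho/2)$-stable subordinator, whose bound $g_{\varrho/2}(u) \leq c\,u^{-1-\varrho/2}$ is uniform for $\varrho/2$ in compact subsets of $(0,1)$, immediately gives the $|x|^{-d-\varrho}$ tail; but extracting the $x$-derivatives and, above all, the $\partial_{\varrho}$-estimate from this representation is itself delicate, so I would keep the Fourier/scaling route as the primary argument and use subordination only as a consistency check.
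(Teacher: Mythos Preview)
Your argument is correct and complete in outline; the scaling reduction to $t=1$ is clean, the split of $\partial_x^{\beta}\partial_{\varrho}p^{\varrho}(t,x)$ into the $G_{\beta}^{\varrho}$ and $H_{\beta}^{\varrho}$ pieces is exactly right, and the Fourier-analytic bounds you sketch for these functions (low/high frequency splitting plus a dyadic argument near the origin) do go through uniformly over $\varrho\in[a,b]$. The paper, however, takes a rather different route for \eqref{pix-eq13}: rather than analysing $G_{\beta}^{\varrho}$ and $H_{\beta}^{\varrho}$ directly, it first establishes a \emph{pointwise} bound for $\partial_{\varrho}p^{\varrho,d}$ (no $x$-derivatives) by quoting \cite[Theorem~4.7]{matters}, and then handles the $x$-derivatives via the \emph{dimension walk} identity $\partial_{x_j}\partial_{\varrho}p^{\varrho,d}(t,x)=-2\pi x_j\,\partial_{\varrho}p^{\varrho,d+2}(t,x)$, which reduces each $x$-derivative to a multiplication by $x_j$ and a jump to dimension $d+2$; iterating gives the $L^1$ bound for arbitrary $|\beta|$. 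For \eqref{pix-eq11} the paper simply cites \cite[Theorem~4.12]{matters}.

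What each approach buys: the paper's dimension-walk trick is short and avoids redoing any oscillatory-integral analysis for the mixed derivatives --- one pointwise estimate in varying dimension suffices --- but it leans heavily on external references. Your approach is more self-contained and exposes precisely where the $(1+|\log t|)$ factor and the $t^{-k/\varrho}$ scaling come from; the price is that the uniformity in $\varrho$ of the dyadic/integration-by-parts estimates, which you correctly flag as the main point to track, has to be argued by hand rather than being inherited from a single reference.
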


\begin{proof}
	We only prove \eqref{pix-eq13}; for the proof of the pointwise estimate \eqref{pix-eq11} see \cite[Theorem 4.12]{matters}. Denote by $p^{\varrho}=p^{\varrho,d}$ the transition density of the $d$-dimensional isotropic $\varrho$-stable L\'evy process, $\varrho \in (0,2)$. It follows from the Fourier representation \eqref{pix-eq1} of $p^{\varrho}$ that $\varrho \mapsto p^{\varrho,d}(t,x)$ and $x \mapsto p^{\varrho,d}(t,x)$ are infinitely many differentiable and \begin{equation*}
		\partial_{\varrho} \partial_x^{\beta} p^{\varrho,d}(t,x) = -\frac{t}{(2\pi)^d} \int_{\mbb{R}^d} (i \xi)^{\beta}e^{ix \cdot \xi} e^{-t |\xi|^{\varrho}} |\xi|^{\varrho} \log(|\xi|) \, d\xi
	\end{equation*}
	for any $\varrho \in [a,b]$, $x \in \mbb{R}^d$, $t>0$ and $\beta \in \mbb{N}_0^d$. In particular, \begin{equation}
		\frac{\partial}{\partial \varrho} p^{\varrho,d}(t,x) = -t \frac{1}{(2\pi)^d} \int_{\mbb{R}^d} e^{ix \cdot \xi} e^{-t |\xi|^{\varrho}} |\xi|^{\varrho} \log(|\xi|) \, d\xi, \qquad t>0, \, x \in \mbb{R}^d, \label{pix-eq41} 
	\end{equation}
	and, by \cite[Theorem 4.7]{matters}, this implies that there exists a constant $c_2>0$ such that \begin{equation}
		\left| \frac{\partial}{\partial \varrho} p^{\varrho,d}(t,x) \right| 
		\leq c_2 \min\left\{(1+|\log(t)|) t^{-d/\varrho},  \frac{t}{|x|^{d+\varrho}} \left(1+|\log(|x)| \right) \right\} \label{pix-eq42}
	\end{equation}
	for all $t \in (0,T]$, $x \in \mbb{R}^d$ and $\varrho \in [a,b] \subseteq (0,2]$. By \eqref{pix-eq41}, $\partial_{\varrho} p^{\varrho,d}$ is the Fourier transform of a rotationally invariant function, and therefore it follows from the dimension walk formula for the Fourier transform, see e.\,g.\ \cite[Lemma 4.13]{matters} or \cite{schoenberg} and the references therein, that \begin{equation*}
		\frac{\partial}{\partial x_j} \frac{\partial}{\partial \varrho} p^{\varrho,d}(t,x) = -2 \pi x_j \frac{\partial}{\partial \varrho} p^{\varrho,d+2}(t,x)
	\end{equation*}
	for $j=1,\ldots,d$, $t>0$, $x \in \mbb{R}^d$ and $\varrho \in (0,2)$. Using \eqref{pix-eq42} for dimension $d+2$ we obtain that there is a constant $c_3>0$ such that \begin{equation}
		\int_{\mbb{R}^d} \left| \frac{\partial}{\partial x_j} \frac{\partial}{\partial \varrho} p^{\varrho,d}(t,x) \right| \, dx \leq c_3 (1+|\log(t)|) t^{-1/\alpha_L} \label{pix-eq44}
	\end{equation}
	for all $t \in (0,T]$, $j \in \{1,\ldots,d\}$ and $\varrho \in [\alpha_L,\|\alpha\|_{\infty}] \subseteq (0,2]$. By iteration, we get \eqref{pix-eq13}.
\end{proof}

\section{Inequalities for H\"{o}lder continuous functions}

We present two inequalities for H\"{o}lder continuous functions which we used in Section~\ref{iso}.

\begin{lem} \label{ineq-1}
	Let $f: \mbb{R}^d \to \mbb{R}$ be a function. If $x \in \mbb{R}^d$ and $M_1,M_2 >0$ are such that \begin{equation*}
		|\Delta_h^2 f(x)| \leq M_1 |h|^2 \quad \text{and} \quad |\Delta_h^2 f(x)| \leq M_2
	\end{equation*}
	for all $h \in \mbb{R}^d$, then \begin{equation*}
		|\Delta_h^2 f(x)| \leq |h|^{\kappa} \max\{M_1 r^{2-\kappa}, M_2 r^{-\kappa}\}
	\end{equation*}
	for any $r>0$, $h \in \mbb{R}^d$ and $\kappa \in [0,2]$.
\end{lem}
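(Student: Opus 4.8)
The statement is a pointwise interpolation between the two given bounds on $\Delta_h^2 f(x)$, and the threshold $r$ is precisely the value at which one bound takes over from the other. The plan is to fix $h \in \mbb{R}^d$ and $r>0$ and to split into the two cases $|h| \leq r$ and $|h|>r$, in each case rewriting one of the two hypotheses as a bound of the form $|h|^{\kappa} \cdot (\text{something} \leq \max\{M_1 r^{2-\kappa}, M_2 r^{-\kappa}\})$, using that $2-\kappa \geq 0$ and $-\kappa \leq 0$ for $\kappa \in [0,2]$.

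\textbf{Step 1 (small increments).} Suppose $|h| \leq r$. Starting from $|\Delta_h^2 f(x)| \leq M_1 |h|^2$, I would factor $|h|^2 = |h|^{\kappa} |h|^{2-\kappa}$ and use $|h|^{2-\kappa} \leq r^{2-\kappa}$, which is valid since $2-\kappa \geq 0$. This gives $|\Delta_h^2 f(x)| \leq M_1 r^{2-\kappa} |h|^{\kappa} \leq |h|^{\kappa} \max\{M_1 r^{2-\kappa}, M_2 r^{-\kappa}\}$.

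\textbf{Step 2 (large increments).} Suppose $|h| > r$. Starting from $|\Delta_h^2 f(x)| \leq M_2$, I would write $M_2 = M_2 |h|^{-\kappa} |h|^{\kappa}$ and use $|h|^{-\kappa} \leq r^{-\kappa}$, valid since $-\kappa \leq 0$ and $|h|>r>0$. This gives $|\Delta_h^2 f(x)| \leq M_2 r^{-\kappa} |h|^{\kappa} \leq |h|^{\kappa} \max\{M_1 r^{2-\kappa}, M_2 r^{-\kappa}\}$. Combining the two cases yields the claim for every $h$, $r$, $\kappa$.

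\textbf{Main obstacle.} There is essentially no obstacle here; the only things to watch are the monotonicity directions of $t \mapsto t^{2-\kappa}$ and $t \mapsto t^{-\kappa}$ on $(0,\infty)$ (both forced by $\kappa \in [0,2]$) and the trivial degenerate endpoints $\kappa=0$ and $\kappa=2$, where one of the two hypotheses already gives the conclusion directly. No smoothness or structural property of $f$ beyond the two stated inequalities is used, and the argument is purely scalar in the quantity $|\Delta_h^2 f(x)|$.
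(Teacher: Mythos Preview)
Your proof is correct and essentially identical to the paper's: both split into the cases $|h|\leq r$ and $|h|>r$, using the $M_1$-bound with $|h|^{2-\kappa}\leq r^{2-\kappa}$ in the first case and the $M_2$-bound with $|h|^{-\kappa}\leq r^{-\kappa}$ in the second.
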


\begin{proof}
	Fix $\kappa \in [0,2]$ and $r>0$. If $h \in \mbb{R}^d$ is such that $|h|>r$, then \begin{equation*}
	|\Delta_h^2 f(x)| \leq M_2 \leq M_2 \frac{|h|^{\kappa}}{r^{\kappa}}.
	\end{equation*}
	If $|h| \leq r$ then \begin{equation*}
	|\Delta_h^2 f(x)| \leq M_1 |h|^2 \leq M_1 |h|^{\kappa} r^{2-\kappa}. \qedhere
	\end{equation*}
\end{proof}

\begin{lem} \label{ineq-2}
	Let $f \in \mc{C}_b^{\gamma}(\mbb{R}^d)$ for some $\gamma \in (0,1)$. There exists a constant $C=C(\gamma)>0$ such that \begin{equation}
		|\Delta_h f(x)-\Delta_h f(y)| \leq C \|f\|_{\mc{C}_b^{\gamma}(\mbb{R}^d)} |x-y|^{\alpha} |h|^{\gamma-\alpha} \label{ineq-eq3}
	\end{equation}
	for all $\alpha \in [0,\gamma]$ and $x,y,h \in \mbb{R}^d$.
\end{lem}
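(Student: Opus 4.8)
\textbf{Proof proposal for Lemma~\ref{ineq-2}.}

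The statement interpolates between two elementary bounds on the second-order-type quantity $\Delta_h f(x) - \Delta_h f(y)$. The plan is to prove the two extreme cases $\alpha = 0$ and $\alpha = \gamma$ directly, and then obtain the full range $\alpha \in [0,\gamma]$ by a convexity (interpolation) argument in the spirit of Lemma~\ref{ineq-1}.

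First I would record the definition: since $\gamma \in (0,1)$, the H\"older--Zygmund space $\mc{C}_b^{\gamma}(\mbb{R}^d)$ coincides with the classical H\"older space, so $\|f\|_{\mc{C}_b^{\gamma}(\mbb{R}^d)}$ controls both $\|f\|_{\infty}$ and the seminorm $\sup_{u \neq v} |f(u)-f(v)|/|u-v|^{\gamma}$ (up to an absolute constant, cf.\ the discussion in Section~\ref{def}). Write $\Delta_h f(x) - \Delta_h f(y) = \big(f(x+h)-f(y+h)\big) - \big(f(x)-f(y)\big)$. For the case $\alpha = \gamma$ I would instead group the terms as $\big(f(x+h)-f(x)\big) - \big(f(y+h)-f(y)\big)$ and bound each difference by $\|f\|_{\mc{C}_b^{\gamma}} |h|^{\gamma}$, giving $|\Delta_h f(x)-\Delta_h f(y)| \leq 2\|f\|_{\mc{C}_b^{\gamma}} |h|^{\gamma} = 2\|f\|_{\mc{C}_b^{\gamma}} |x-y|^{0} |h|^{\gamma-0}$, i.e.\ the bound with $\alpha = 0$; conversely, grouping as $\big(f(x+h)-f(y+h)\big)-\big(f(x)-f(y)\big)$ and bounding each bracket by $\|f\|_{\mc{C}_b^{\gamma}}|x-y|^{\gamma}$ yields $|\Delta_h f(x)-\Delta_h f(y)| \leq 2\|f\|_{\mc{C}_b^{\gamma}} |x-y|^{\gamma} |h|^{0}$, the bound with $\alpha = \gamma$. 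So the two endpoint estimates are immediate, with constant $2$.

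It then remains to interpolate between $|\Delta_h f(x)-\Delta_h f(y)| \leq 2\|f\|_{\mc{C}_b^{\gamma}}|h|^{\gamma}$ and $|\Delta_h f(x)-\Delta_h f(y)| \leq 2\|f\|_{\mc{C}_b^{\gamma}}|x-y|^{\gamma}$. Set $D := |\Delta_h f(x)-\Delta_h f(y)|$, $a := |x-y|$ and write $\alpha = \gamma\vartheta$ for $\vartheta \in [0,1]$. From $D \leq 2\|f\|_{\mc{C}_b^{\gamma}}|h|^{\gamma}$ and $D \leq 2\|f\|_{\mc{C}_b^{\gamma}} a^{\gamma}$ one gets, raising the first to the power $1-\vartheta$ and the second to the power $\vartheta$ and multiplying,
\begin{equation*}
	D = D^{1-\vartheta} D^{\vartheta} \leq \big(2\|f\|_{\mc{C}_b^{\gamma}}\big)^{1-\vartheta}|h|^{\gamma(1-\vartheta)} \big(2\|f\|_{\mc{C}_b^{\gamma}}\big)^{\vartheta} a^{\gamma\vartheta} = 2\|f\|_{\mc{C}_b^{\gamma}} |x-y|^{\alpha} |h|^{\gamma-\alpha},
\end{equation*}
which is exactly \eqref{ineq-eq3} with $C(\gamma) = 2$ (so in fact the constant is absolute; one can keep a $\gamma$-dependent constant to absorb the equivalence-of-norms factor between the Zygmund norm and the classical H\"older norm). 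I do not expect any genuine obstacle here — the only point requiring a little care is making sure the two one-sided groupings of the four function values are set up so that each bracket is a genuine increment of $f$ over a displacement of size $|h|$ or $|x-y|$, and keeping track of whether one uses the Zygmund seminorm or the classical H\"older seminorm; since $\gamma \in (0,1)$ these agree up to constants, so this is purely cosmetic.
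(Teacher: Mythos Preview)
Your proof is correct and in fact more elementary than the paper's. Both arguments begin identically by establishing the two endpoint estimates
\[
|\Delta_h f(x)-\Delta_h f(y)| \leq c\,\|f\|_{\mc{C}_b^{\gamma}(\mbb{R}^d)}\,|h|^{\gamma}
\quad\text{and}\quad
|\Delta_h f(x)-\Delta_h f(y)| \leq c\,\|f\|_{\mc{C}_b^{\gamma}(\mbb{R}^d)}\,|x-y|^{\gamma}
\]
via the two natural groupings of the four function values. Where you diverge is in the interpolation step: you use the one-line geometric-mean trick $D = D^{1-\vartheta}D^{\vartheta} \leq A^{1-\vartheta}B^{\vartheta}$, which immediately gives the full range $\alpha \in [0,\gamma]$ with an explicit constant. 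The paper instead takes a detour: it first proves the midpoint case $\alpha = \gamma/2$ by invoking the real-interpolation identification $\mc{C}_b^{\gamma}(\mbb{R}^d) = (C_b(\mbb{R}^d),C_b^2(\mbb{R}^d))_{\gamma/2,\infty}$ (decomposing $f = u+v$ and estimating $\Delta_h u$, $\Delta_h v$ separately), and then obtains the remaining $\alpha$'s by case-splitting on whether $|h| \leq |x-y|$ or $|h| > |x-y|$ and combining the endpoint bounds with the midpoint bound. Your route avoids the interpolation-space machinery and the case analysis entirely; the paper's route, while heavier, hints at how one might proceed for $\gamma \geq 1$, where the Zygmund norm involves higher-order differences and the simple endpoint bounds are no longer available in the same form.
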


If $f:\mbb{R}^d \to \mbb{R}$ is Lipschitz continuous and bounded, then \eqref{ineq-eq3} holds for $\gamma=1$; the norm $\|f\|_{C_b^{\gamma}(\mbb{R}^d)}$ needs to be replaced by the sum of the supremum norm and the Lipschitz constant of $f$.

\begin{proof}
	By the very definition of the H\"{o}lder--Zygmund space $\mc{C}_b^{\gamma}(\mbb{R}^d)$ we have \begin{equation*}
		|f(x+h)-f(x)| \leq \|f\|_{\mc{C}_b^{\gamma}(\mbb{R}^d)} |h|^{\gamma} \I_{\{|h| \leq 1\}} + 2 \|f\|_{\infty} \I_{\{|h|>1\}} \leq 2 \|f\|_{\mc{C}_b^{\gamma}(\mbb{R}^d)} |h|^{\gamma}
	\end{equation*}
	for any $x,h \in \mbb{R}^d$. Hence, \begin{equation}
		|\Delta_h f(x)-\Delta_h f(y)| \leq |f(x+h)-f(x)| + |f(y+h)-f(y)| \leq 4 \|f\|_{\mc{C}_b^{\gamma}(\mbb{R}^d)} |h|^{\gamma} \label{ineq-eq5}
	\end{equation}
	and \begin{equation}
		|\Delta_h f(x)-\Delta_h f(y)| \leq |f(x)-f(y)| + |f(x+h)-f(y+h)| \leq 4 \|f\|_{\mc{C}_b^{\gamma}(\mbb{R}^d)} |x-y|^{\gamma} \label{ineq-eq6}
	\end{equation}
	for all $x,y,h \in \mbb{R}^d$, i.e.\,\eqref{ineq-eq3} holds for $\alpha=0$ and $\alpha=\gamma$. Next we show that \eqref{ineq-eq3} holds for $\alpha=\gamma/2$ and to this end we use interpolation theory. Let $f=u+v$ for $u \in C_b(\mbb{R}^d)$ and $v \in C_b^2(\mbb{R}^d)$. Clearly, \begin{equation*}
		|\Delta_h u(x)-\Delta_h u(y)| \leq 4 \|u\|_{\infty}
	\end{equation*}
	and, by the gradient theorem, \begin{align*}
		|\Delta_h v(x)-\Delta_h v(y)|
		= \left| h \int_0^1 (\nabla v(x+rh)-\nabla v(y+rh)) \, dr \right|
		\leq |h| \, |x-y| \, \|v\|_{C_b^2(\mbb{R}^d)}
	\end{align*}	
	for all $x,y,h \in \mbb{R}^d$. Hence, \begin{equation*}
		|\Delta_h f(x)-\Delta_h f(y)| \leq 4 \|u\|_{\infty} + |h| \, |x-y| \, \|v\|_{C_b^2(\mbb{R}^d)}, \qquad x,y,h \in \mbb{R}^d.
	\end{equation*} 
	Since $\mc{C}_b^{\gamma}(\mbb{R}^d)$ is the real interpolation space\footnote{More precisely, the norm on the interpolation space $(C_b(\mbb{R}^d),C_b^2(\mbb{R}^d))_{\gamma/2,\infty}$ is equivalent to the norm on $\mc{C}_b^{\gamma}(\mbb{R}^d)$.} $(C_b(\mbb{R}^d),C_b^2(\mbb{R}^d)_{\gamma/2,\infty}$, cf.\ \cite[Section 2.7.2]{triebel78}, this implies that there exists a constant $C>0$ such that \begin{equation}
		|\Delta_h f(x)-\Delta_h f(y)| \leq C |h|^{\gamma/2} |x-y|^{\gamma/2} \|f\|_{\mc{C}_b^{\gamma}(\mbb{R}^d)} \label{ineq-eq7}
	\end{equation}
	which shows \eqref{ineq-eq3} for $\alpha=\gamma/2$. Now let $\alpha \in (0,\gamma/2)$. For $|h| \leq |x-y|$ it follows from \eqref{ineq-eq5} that \begin{equation*}
		|\Delta_h f(x)-\Delta_h f(y)|
		\leq 4 \|f\|_{\mc{C}_b^{\gamma}(\mbb{R}^d)} |h|^{\gamma} \leq 4 \|f\|_{\mc{C}_b^{\gamma}(\mbb{R}^d)} |h|^{\alpha} |x-y|^{\gamma-\alpha}.
	\end{equation*}
	If $|h|>|x-y|$ then \eqref{ineq-eq7} gives \begin{equation*}
		|\Delta_h f(x)-\Delta_h f(y)|
		\leq C \|f\|_{\mc{C}_b^{\gamma}(\mbb{R}^d)} |x-y|^{\gamma/2} |h|^{\gamma/2} \leq C \|f\|_{\mc{C}_b^{\gamma}(\mbb{R}^d)} |x-y|^{\alpha} |h|^{\gamma/2+(\gamma/2-\alpha)}
	\end{equation*}
	where we used $\alpha<\gamma/2$ for the second estimate. For $\alpha \in (\gamma/2,\gamma)$ a very similar reasoning shows that \eqref{ineq-eq3} follows from \eqref{ineq-eq6} and \eqref{ineq-eq7}.
\end{proof}

\section{A separation theorem for closed subsets}

In Section~\ref{iso} we used the following result on the smooth separation of closed subsets of $\mbb{R}^d$.

\begin{lem} \label{app-5}
	Let $F,G \subseteq \mbb{R}^d$ be closed sets. If \begin{equation}
	d(F,G) = \inf\{|x-y|; x \in F, y \in G\}>0 \label{app-eq19}
	\end{equation}
	then there exists a function $f \in C_b^{\infty}(\mbb{R}^d)$, $0 \leq f \leq 1$, such that \begin{equation}
	f^{-1}(\{0\}) = F \quad \text{and} \quad f^{-1}(\{1\})=G. \label{app-eq21}
	\end{equation}
\end{lem}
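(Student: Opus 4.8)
The statement is a smooth version of Urysohn's lemma adapted to the situation where the two closed sets are separated by a positive distance. The plan is to produce $f$ explicitly by mollifying an intermediate Lipschitz function. First I would set $\delta := d(F,G) > 0$, which is positive by \eqref{app-eq19}, and consider the "signed distance" type function
\begin{equation*}
	g(x) := \frac{d(x,F)}{d(x,F) + d(x,G)}, \qquad x \in \mbb{R}^d,
\end{equation*}
where $d(x,A) := \inf\{|x-y|; y \in A\}$. Since $F$ and $G$ are closed and disjoint (their distance is positive), the denominator never vanishes, so $g$ is well-defined, continuous, takes values in $[0,1]$, and satisfies $g^{-1}(\{0\}) = F$ and $g^{-1}(\{1\}) = G$. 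Moreover $x \mapsto d(x,A)$ is $1$-Lipschitz for any nonempty $A$, and on the set $\{x; d(x,F\cup G) < \delta/3\}$ one of the two distances is bounded below by $\delta/3$, so the denominator $d(\cdot,F)+d(\cdot,G) \geq \delta/3$ everywhere; hence $g$ is globally Lipschitz.

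The function $g$ itself is not smooth, so the next step is to smooth it while preserving the level sets $\{g=0\} = F$ and $\{g=1\}=G$. The key observation is that $g \equiv 0$ on an open neighbourhood of $F$ is \emph{false} in general, so a naive convolution would not fix the zero set to be exactly $F$. To repair this, I would first compose $g$ with a smooth, nondecreasing cut-off $\varphi: [0,1] \to [0,1]$ satisfying $\varphi \equiv 0$ on $[0,1/3]$, $\varphi \equiv 1$ on $[2/3,1]$, and $0 \le \varphi \le 1$; set $h := \varphi \circ g$. Then $h$ is Lipschitz, $h \equiv 0$ on the open set $\{g < 1/3\} \supseteq F$, $h \equiv 1$ on the open set $\{g > 2/3\} \supseteq G$, and (because $\varphi$ is strictly increasing on a subinterval, or more simply because $\{g<1/3\}$ is an open neighbourhood of $F$ and $\{g > 2/3\}$ is an open neighbourhood of $G$) the sets $\{h=0\}$ and $\{h=1\}$ are open neighbourhoods of $F$ and $G$ respectively. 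Now pick $\eps \in (0, \delta/6)$ small enough that the $\eps$-neighbourhood $F_\eps := \{x; d(x,F) < \eps\}$ is still contained in $\{g < 1/3\}$ and similarly $G_\eps \subseteq \{g > 2/3\}$ (possible since these are open sets containing $F$, $G$), and fix a standard mollifier $\psi_\eps \in C_c^\infty(\mbb{R}^d)$, $\psi_\eps \geq 0$, $\int \psi_\eps = 1$, $\spt \psi_\eps \subseteq \overline{B(0,\eps)}$. Define
\begin{equation*}
	f := h \ast \psi_\eps.
\end{equation*}

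Finally I would verify that $f$ has the required properties. Convolution with a $C_c^\infty$ function gives $f \in C^\infty(\mbb{R}^d)$, and since $0 \le h \le 1$ we get $0 \le f \le 1$ and $\|f\|_\infty \le 1$; all derivatives of $f$ are bounded because $\partial^\beta f = h \ast \partial^\beta \psi_\eps$ and $h$ is bounded, so $f \in C_b^\infty(\mbb{R}^d)$. For the level sets: if $x \in F$, then $B(x,\eps) \subseteq F_\eps \subseteq \{h = 0\}$, so $f(x) = \int h(x-y)\psi_\eps(y)\,dy = 0$; conversely if $f(x) = 0$ then, since $h \geq 0$ and $\psi_\eps > 0$ on $B(0,\eps)$, we must have $h \equiv 0$ on $B(x,\eps)$, hence $g \le 1/3$ there, hence $d(x,F) = 0$ (otherwise $g(x)>0$ forces... more carefully: $h(x)=0$ means $g(x) \le 1/3$, so $d(x,F)/(d(x,F)+d(x,G)) \le 1/3$; combined with $d(x,G) \le \operatorname{diam}$-type bounds this does not immediately give $x\in F$) — so here I would instead argue directly that $\{h = 0\} = \{g \le 1/3\}$ is a set whose closure still... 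Actually the clean fix: choose $\varphi$ so that $\{h=0\}$ is \emph{exactly} a closed $\eta$-neighbourhood of $F$ for some $\eta \in (0,\eps)$; then $f(x) = 0 \iff B(x,\eps) \subseteq \{h=0\} \iff d(x,F) = 0 \iff x \in F$ (using $\eta < \eps$ is the wrong direction — one wants $B(x,\eps)\subseteq \overline{F_\eta}$ iff $d(x,F)\le \eta-\eps<0$, impossible). The honest resolution is: take $\eps$ and then choose $\varphi$ (equivalently the thresholds) so that $\{h=0\} \supseteq \overline{F_{2\eps}}$ and $\{h > 0\} \supseteq \{x; d(x,F) > 3\eps\}$ as well; then $f(x)=0 \iff h\equiv 0$ on $B(x,\eps) \iff d(x,F) \le 2\eps$ gives only $\{f=0\} = \overline{F_{2\eps}} \ne F$.

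\textbf{Main obstacle.} The genuine difficulty, as the attempted verification above shows, is that convolution \emph{enlarges} the zero set, so a single mollification of a function vanishing merely \emph{on} $F$ cannot produce a smooth function vanishing \emph{exactly} on $F$ unless $F$ already has nonempty interior structure. The correct strategy is therefore not "mollify once" but to build $f$ as a smooth function of the distance function restricted suitably, or — cleanest — to invoke the classical fact that for any closed set $F \subseteq \mbb{R}^d$ there exists $\rho_F \in C^\infty(\mbb{R}^d)$ with $\rho_F \geq 0$ and $\rho_F^{-1}(\{0\}) = F$ (a smooth function whose zero set is exactly $F$; see e.g. Whitney's theorem or the standard construction summing mollified bump functions over a countable cover of $\mbb{R}^d \setminus F$ by balls). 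Granting this, set $\rho := \rho_F$ and $\tilde\rho := \rho_G$ (smooth, nonnegative, zero sets exactly $F$ and $G$); then $\rho + \tilde\rho > 0$ everywhere because $F \cap G = \emptyset$ (indeed $d(F,G)>0$), so
\begin{equation*}
	f := \frac{\rho}{\rho + \tilde\rho}
\end{equation*}
is $C^\infty$, takes values in $[0,1]$, satisfies $f^{-1}(\{0\}) = \{\rho = 0\} = F$ and $f^{-1}(\{1\}) = \{\tilde\rho = 0\} = G$, and finally one arranges $f \in C_b^\infty$ by replacing $\rho, \tilde\rho$ with $\rho \wedge 1, \tilde\rho \wedge 1$ suitably smoothed near the threshold, or — simpler — by noting that $f$ and all its derivatives are bounded on the set $\{d(x,F\cup G) \ge \delta/2\}$ by a direct estimate (there $\rho+\tilde\rho$ is bounded below) and are likewise bounded near $F \cup G$ because near $F$ one has $f = \rho/(\rho+\tilde\rho)$ with $\tilde\rho$ bounded below, and symmetrically near $G$; a standard partition-of-unity argument glues these local bounds into global ones, yielding $f \in C_b^\infty(\mbb{R}^d)$ with \eqref{app-eq21}, which is the assertion. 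I would present the proof in exactly this form, citing the classical existence of a smooth function with prescribed closed zero set rather than reproving it.
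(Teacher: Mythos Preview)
Your final construction $f = \rho_F/(\rho_F + \rho_G)$ gives $f \in C^\infty(\mbb{R}^d)$ with the correct level sets, but the argument for $f \in C_b^\infty(\mbb{R}^d)$ has a genuine gap. The smooth functions $\rho_F,\rho_G$ with prescribed zero sets that you invoke (Whitney) come with no control on their derivatives and no uniform positive lower bound away from their zero sets: the standard construction may well produce functions that are tiny (though positive) far from $F$, so your claim that ``there $\rho+\tilde\rho$ is bounded below'' on $\{d(\cdot,F\cup G)\ge\delta/2\}$ is unjustified; and even granting such a lower bound, the quotient rule shows that $\partial^\beta f$ involves the derivatives of $\rho_F,\rho_G$, which are in general unbounded. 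The same objection applies to your ``near $F$'' and ``near $G$'' estimates. Since the whole point of the lemma---as the paper stresses in the paragraph preceding its proof---is precisely to upgrade the classical $C^\infty$ Urysohn result to $C_b^\infty$ using the hypothesis $d(F,G)>0$, this is not a detail that can be waved through with a partition-of-unity remark.

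The paper's proof is in fact a sharpening of your \emph{first}, abandoned idea. It picks $\eps>0$ so that the $\eps$-fattenings $F_\eps=F+\overline{B(0,\eps)}$ and $G_\eps$ are disjoint, applies the classical (merely $C^\infty$) smooth Urysohn lemma to obtain $h\in C^\infty(\mbb{R}^d)$ with $h^{-1}(\{0\})=F_\eps$ and $h^{-1}(\{1\})=G_\eps$, and then sets $f=h\ast\varphi$ for a mollifier $\varphi\ge 0$ with $\spt\varphi=\overline{B(0,\eps)}$. Boundedness of all derivatives is now automatic from $\partial^\beta f=h\ast\partial^\beta\varphi$ and $\|h\|_\infty\le 1$; the level-set verification works because $h$ already vanishes on an $\eps$-neighbourhood of $F$ \emph{before} mollifying with an $\eps$-supported kernel, so the ``convolution enlarges the zero set'' effect that made you abandon the mollification route is exactly compensated by the pre-fattening.
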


It is well known, see e.\,g.\ \cite{lee}, that for closed sets $F,G \subseteq \mbb{R}^d$ with \eqref{app-eq19} there exists $f \in C^{\infty}(\mbb{R}^d)$, $0 \leq f \leq 1$, satisfying \eqref{app-eq21}; however, we could not find a reference for the fact that \eqref{app-eq19} implies boundedness of the derivatives of $f$. It is not difficult to see that boundedness of the derivatives fails, in general, to hold if $d(F,G)=0$; consider for instance $F:=\mbb{R} \times (-\infty,0]$ and $G:= \{(x,y); y \geq e^x\}$.

\begin{proof}[Proof of Lemma~\ref{app-5}]
	As $d(F,G)>0$ we can choose $\eps>0$ such that the sets $$F_{\eps} := F+\overline{B(0,\eps)} \qquad G_{\eps} := G+\overline{B(0,\eps)}$$ are disjoint. It is known, see e.\,g.\ \cite[Problem 2-14]{lee}, that there exists $h \in C^{\infty}(\mathbb{R}^d)$, $0 \leq h \leq 1$, such that $h^{-1}(\{0\}) = F_{\eps}$ and  $h^{-1}(\{1\})=G_{\eps}$. Pick $\varphi \in C_c^{\infty}(\mathbb{R}^d)$, $\varphi \geq 0$, such that $\spt \varphi = \overline{B(0,\eps)}$ and $\int_{\mbb{R}^d} \varphi(y) \, dy=1$,  and set \begin{equation*}
	f(x) := (h \ast \varphi)(x) = \int_{\mathbb{R}^d} h(y) \varphi(x-y) \, dy, \qquad x \in \mbb{R}^d,
	\end{equation*}
	Since $f$ is the convolution of a bounded continuous function with a smooth function with compact support, it follows that $f$ is smooth and its derivatives are given by \begin{equation*}
	\partial_x^{\alpha} f(x) = \int_{\mathbb{R}^d} h(y) \partial_x^{\alpha} \varphi(x-y) \, dy, \qquad x \in \mbb{R}^d,
	\end{equation*}
	for any multi-index $\alpha \in \mathbb{N}_0^d$, see e.\,g.\ \cite{mims}. This implies, in particular, $\|\partial^{\alpha} f\|_{\infty} \leq \|\partial^{\alpha} \varphi\|_{L^1}< \infty$, and so $f \in C_b^{\infty}(\mathbb{R}^d)$. Moreover, as $\spt \varphi \subseteq \overline{B(0,\eps)}$, it is obvious that $f(x)=0$ for any $x \in F$ and $f(x)=1$ for $x \in G$. It remains to check that $0<f(x)<1$ for any $x \in (F \cup G)^c$. 
	
	Case 1: $x \in \mathbb{R}^d \backslash (F_{\eps} \cup G_{\eps})$. Then $0 < h(x)<1$, and therefore we can choose $r \in (0,\eps)$ such that \begin{equation*}
	0 < \inf_{|y-x| \leq r} h(y) \leq \sup_{|y-x| \leq r} h(y)<1. 
	\end{equation*}
	Since $\spt \varphi = \overline{B(0,\eps)} \supseteq \overline{B(0,r)}$ this implies  \begin{align*} 
	f(x) 
	&\leq \int_{\mbb{R}^d \backslash \overline{B(x,r)}} \varphi(x-y) \, dy + \sup_{|y-x| \leq r} h(y) \int_{\overline{B(x,r)}}\varphi(x-y) \, dy 
	< \int_{\mathbb{R}^d} \varphi(x-y) \, dy =1. 
	\end{align*} 
	A very similar estimate shows $f(x)>0$. \par
	Case 2: $x \in F_{\eps} \backslash F$. We have $\overline{B(x,\eps)} \cap F^c \neq \emptyset$, and therefore there exist $y \in \mathbb{R}^d$ and $r>0$ such that \begin{equation*}
	\overline{B(y,r)} \subseteq F^c \cap \overline{B(x,\eps)}.
	\end{equation*} 
	In particular \begin{equation*}
	0 < \inf_{z \in \overline{B(y,r)}} h(z) \leq \sup_{z \in \overline{B(y,r)}} h(z) < 1.
	\end{equation*}
	As $\spt \varphi = \overline{B(0,\eps)}$ it follows very similar as in the first case that $0<f(x)<1$. \par
	Case 3: $x \in G_{\eps} \backslash G$. Analogous to Case 2.
\end{proof}

\begin{ack}
	I am grateful to Ren\'e Schilling for valuable comments. Moreover, I would like to thank the \emph{Institut national des sciences appliqu\'ees de Toulouse, G\'enie math\'ematique et mod\'elisation} for its hospitality during my stay in Toulouse, where a part of this work was done.
\end{ack}


\begin{thebibliography}{99}\frenchspacing	

\bibitem{foellmer74} 
	Airault, H., F\"ollmer, H.: Relative Densities of Semimartingales. \emph{Invent. Math.} \textbf{27} (1974), 299--327.

\bibitem{bae} 
	Bae, J., Kassmann, M.: Schauder estimates in generalized H\"older spaces. arXiv 1505.05498

\bibitem{bass08} 
	Bass, R.F.: Regularity results for stable-like operators. \emph{J.\ Funct.\ Anal.} \textbf{259} (2009), 2693--2722.
	
\bibitem{bogdan17} 
	Bogdan, K., Sztonyk, P., Knopova, V.: Heat kernel of anisotropic nonlocal operators. Preprint arXiv 1704.03705.


\bibitem{ltp}
    B\"{o}ttcher, B., Schilling, R.\,L., Wang, J.: \emph{L\'evy-Type Processes: Construction, Approximation and Sample Path Properties}. Springer Lecture Notes in Mathematics vol.\ \textbf{2099} (vol.~III of the ``L\'evy Matters'' subseries). Springer, 2014.
	    
\bibitem{bouleau81} 
	Bouleau, N.: Propriet\'es d’invariance du domaine du g\'en\'erateur infinit\'esimal \'etendu d’un processus de Markov. \emph{S\'eminaire de Probabilit\'es (Strasbourg)} \textbf{15} (1981), 167--188.

\bibitem{bouleau91} Bouleau, N., Hirsch, F.: \emph{Dirichlet Forms and Analysis on Wiener Space}. De Gruyter, 1991.

\bibitem{butzer} Butzer, P. P., Berens, H.: \emph{Semi-Groups of Operators and Approximation}. Springer, Berlin 1967.

\bibitem{chen18} 
	Chen, X., Chen, Z.-Q., Wang, J.: Heat kernel for non-local operators with variable order.  Preprint arXiv 1811.09972.


\bibitem{chen17}
	Chen, Z.-Q., Zhang, X., Zhao, G.: Well-posedness of supercritical SDE driven by L\'evy processes with irregular drifts. Preprint arXiv 1709.04632.

\bibitem{meyer4}
	Dellacherie, C., Meyer, P.-A.,: \emph{Théorie du potentiel associ\'ee à une r\'esolvante - th\'eorie des processus de Markov}. Hermann, Paris 1987.

\bibitem{engel} 
	Engel, K.-J., Nagel, R.: \emph{One-Parameter Semigroups for Linear Evolution Equations}. Springer, New York 2000.

\bibitem{ethier} 
	Ethier, S.\,N., Kurtz, T.\,G.: \emph{Markov Processes: Characterization and Convergence}. Wiley, New York 1986.

\bibitem{hairer12} Hairer, M., Hutzenthaler, M., Jentzen, A.: Loss of regularity for Kolmogorov equations. \emph{Ann.\,Probab.} \textbf{43} (2015), 468--527.


\bibitem{hirsch84} 
	Hirsch, F.: Generateurs etendus et subordination au sens de Bochner. In: M. Brelot, G. Choquet, J. Deny, F. Hirsch, G. Mokobodzki (eds.): \emph{S\'eminaire de Th\'eorie du Potentiel Paris, No. 7}. Springer, Berlin 1984, pp.~134--156.


\bibitem{hoh}
	Hoh, W.: \emph{Pseudo-Differential Operators Generating Markov Processes}. Habilitationsschrift. Universit\"{a}t Bielefeld, Bielefeld 1998.

\bibitem{jacob123}
	Jacob, N.: \emph{Pseudo Differential Operators and Markov Processes I, II, III}. Imperial College Press/World Scientific, London 2001--2005.
	
	\bibitem{kol} 
		Kolokoltsov, V. N.: \emph{Markov processes, semigroups and generators}. De Gruyter, Berlin 2011.

	\bibitem{moments} 
		K\"{u}hn, F.: Existence and estimates of moments for Lévy-type processes. \emph{Stoch. Proc. Appl.} \textbf{127} (2017), 1018--1041.

	\bibitem{matters}
		K{\"u}hn, F.: \emph{L\'evy-Type Processes: Moments, Construction and Heat Kernel Estimates}. Springer Lecture Notes in Mathematics vol.\ \textbf{2187} (vol.~VI of the ``L\'evy Matters'' subseries). Springer, 2017.

	\bibitem{mp}
		K\"uhn, F.: On Martingale Problems and Feller Processes. \emph{Electron.\ J.\ Probab.} \textbf{23} (2018), 1--18.

	\bibitem{sde}
		K{\"u}hn, F.: Solutions of L\'evy-driven SDEs with unbounded coefficients as Feller processes. \emph{Proc. Amer. Math. Soc.} \textbf{146} (2018), 3591–3604.

	\bibitem{parametrix} 
		K\"{u}hn, F.: Transition probabilities of Lévy-type processes: Parametrix construction.  \emph{Math.\ Nachr.} \textbf{292} (2019), 358--376.
		
	\bibitem{perpetual} 
		K\"{u}hn, F.: Perpetual integrals via random time changes.  \emph{Bernoulli} \textbf{25} (2019), 1755--1769.
		
	\bibitem{reg-levy}
		K\"uhn, F.: Schauder estimates for equations associated with L\'evy generators. \emph{Integral Equations Operator Theory} \textbf{91}:10 (2019).
		
	\bibitem{schoenberg} 
		K\"{u}hn, F., Schilling, R.L.: A probabilistic proof of Schoenberg's theorem.  \emph{J.\ Math.\ Anal.\ Appl.} \textbf{476} (2019), 13--26.

	\bibitem{ihke}
		K\"{u}hn, F., Schilling, R.L.: On the domain of fractional Laplacians and related generators of Feller processes.   \emph{J.\ Funct.\ Anal.} \textbf{276} (2019), 2397--2439.
	
	\bibitem{euler-maruyama} 
		K\"uhn, F., Schilling, R.L.: Strong convergence of the Euler–Maruyama approximation for a class of Lévy-driven SDEs.  \emph{Stoch. Proc. Appl.} \textbf{129} (2019), 2654--2680.
		
	\bibitem{kulc18} 
		Kulczycki, T., Ryznar, M., Sztonyk, P.: Strong Feller property for SDEs driven by multiplicative cylindrical stable noise. Preprint arXiv 1811.05960.
	\bibitem{kulik15} 
		Kulik, A.: On weak uniqueness and distributional properties of a solution to an SDE with $\alpha$-stable noise. \emph{Stoch. Proc. Appl.} \textbf{129} (2019), 473--506.
	
	\bibitem{kunita69} 
		Kunita, H.: Absolute continuity of Markov processes and generators. \emph{Nagoya Math. J.} \textbf{36} (1969), 1--26.
	
\bibitem{lee} 
	Lee, J.\,M.: \emph{Introduction to Smooth Manifolds}. Springer, New York 2012.
	
\bibitem{wang18} 
	Liang, M., Wang, J.: Gradient Estimates and Ergodicity for SDEs Driven by Multiplicative L\'{e}vy Noises via Coupling. Preprint arXiv 1801.05936.

\bibitem{wang18-2} 
	Liang, M., Wang, J.: Spatial regularity of semigroups generated by L\'{e}vy type operators. \emph{Math.\ Nachr.} \textbf{292} (2019), 1551-1566.

\bibitem{wang18-3} 
	Liang, M., Schilling, R. L., Wang, J.: A Unified Approach to Coupling SDEs driven by L\'{e}vy Noise and Some Applications. Preprint arXiv 1811.08477.

\bibitem{lunardi} Lunardi, A.: \emph{Interpolation Theory}. Scuola Normale Superiore, Pisa 2009.

\bibitem{wang14} 
	Luo, D., Wang, J.: Coupling by reflection and H\"{o}lder regularity for non-local operators of variable order. \emph{Trans.\ Amer.\ Math.\ Soc.} \textbf{371} (2019), 431--459.
	
\bibitem{meyer76} 
	Meyer, P.-A.: D\'emonstration probabiliste de certaines inegalites de Littlewood-Paley. Expos\'e II: l'op\'erateur carr\'e du champ. \emph{S\'eminaire de Probabilit\'es (Strasbourg)}, \textbf{10} (1976), 142--161.

\bibitem{moko78} 
	Mokobodzki, G.: Sur l'algebre contenue dans le domaine etendu d'un generateur infinitesimal. In: F. Hirsch, G. Mokobodzki (eds.): \emph{S\'eminaire de Th\'eorie du Potentiel Paris, No. 3}. Springer, Berlin 1978, pp.~168--187.

\bibitem{priola15}
	Priola, E.: Stochastic flow for SDEs with jumps and irregular drift term. \emph{Stoch.\ Anal. Banach Center Publications} \textbf{105} (2015), 193--210.

\bibitem{rs-growth} 
	Schilling, R. L.: Growth and H\"older conditions for the sample paths of Feller processes. \emph{Probab. Theory Related Fields} \textbf{112} (1998), 565--611.

\bibitem{mims} 
	Schilling, R. L.: \emph{Measures, Integrals and Martingales}. Cambridge University Press, 2017 (2nd edition).

\bibitem{schnurr}
	Schilling, R.\,L., Schnurr, A.: The Symbol Associated with the Solution of a Stochastic Differential Equation. \emph{Electron.\ J.\ Probab.} \textbf{15} (2010), 1369--1393.

\bibitem{szcz18} 
	Szczypkowski, K.: Fundamental solution for super-critical non-symmetric L\'evy-type operators. Preprint arXiv 1807.04257.

\bibitem{stein} 
	Stein, E. M.: \emph{Singular integrals and differentiability properties of functions}. Princeton Univ. Press,  1970.
	
\bibitem{triebel78} 
	Triebel, H.: \emph{Interpolation theory, function spaces, differential operators}. North-Holland Pub. Co, 1978.


\end{thebibliography}
\end{document}